\newtheorem{thm}{Theorem}[section]
\newtheorem{prop}[thm]{Proposition}
\newtheorem{lemma}[thm]{Lemma}
\newtheorem{defn}[thm]{Definition}
\newtheorem{rmks}[thm]{Remarks}
\newtheorem{rmk}[thm]{Remark}
\newtheorem{cor}[thm]{Corollary}
\newcommand{\cat}[1]{\mathcal #1}
\newcommand{\x}{\times}
\newcommand{\hs}[1]{\hspace{#1pt}}
\newcommand{\vs}[1]{\vspace{#1pt}}
\newcommand{\tab}{\hs{10}}
\newcommand{\vtab}{\vs{15}}
\newcommand{\id}{\mathbb{I}}
\newcommand{\imply}{\tab \Rightarrow \tab}
\newcommand{\defeq}{\stackrel{def}{=}}
\renewcommand{\subset}{\subseteq}
\renewcommand{\supset}{\supseteq}
\newcommand{\Z}{\mathbb{Z}}
\newcommand{\embed}{\hookrightarrow}
\renewcommand{\O}{\cat{O}}
\newcommand{\listing}[3]{#1,\ #2, \ldots, #3}
\newcommand{\iso}{\ \tilde{\to}\ }
\newcommand{\squarediagram}[4]{
\begin{center}
$\begin{CD}
#1    @>>> #2   \\
@VVV       @VVV \\
#3    @>>> #4
\end{CD}$
\end{center}
}
\newcommand{\squarediagramword}[8]{
\begin{center}
$\begin{CD}
#1      @>#5>>  #2     \\
@V#6VV          @VV#7V \\
#3      @>#8>>  #4
\end{CD}$
\end{center}
}
\newcommand{\A}{\mathbb{A}}
\renewcommand{\P}{\mathbb{P}}
\newcommand{\limit}[1]{\displaystyle \lim_{#1}}
\newcommand{\repn}{representation}
\newcommand{\ga}{\alpha}
\newcommand{\gb}{\beta}
\newcommand{\gph}{\phi}
\newcommand{\gp}{\pi}
\newcommand{\gx}{\xi}
\newcommand{\gps}{\psi}
\newcommand{\go}{\omega}
\newcommand{\gs}{\sigma}
\renewcommand{\gg}{\gamma}
\newcommand{\gm}{\mu}
\renewcommand{\ge}{\eta}
\newcommand{\gPS}{\Psi}
\newcommand{\gPH}{\Phi}
\newcommand{\gD}{\Delta}
\newcommand{\gO}{\Omega}
\newcommand{\gc}{\chi}
\newcommand{\nbundle}[2]{\cat{N}_{#1 \embed #2}}
\newcommand{\claimend}{\tab $\triangle$}
\newcommand{\dual}[1]{#1^{\vee}}	
\newcommand{\Q}{\mathbb{Q}}
\newcommand{\R}{\mathbb{R}}
\newcommand{\C}{\mathbb{C}}
\newcommand{\pt}{{\rm Spec}\, k}
\newcommand{\morp}{morphism}
\newcommand{\etale}{\'etale}
\newcommand{\wrt}{with respect to}
\newcommand{\resp}{respectively}
\newcommand{\withoutlog}{ithout loss of generality}
\newcommand{\longto}{\longrightarrow}
\newcommand{\closure}[2]{{\rm clos}_{#2}{#1}}
\newcommand{\singular}[1]{{\rm Sing}(#1)}
\newcommand{\blowup}[2]{{\rm Blow}_{#2}{#1}}
\newcommand{\disjoint}{\amalg}
\newcommand{\spec}[1]{{\rm Spec}\, #1}
\newcommand{\divisor}[1]{{\rm div}\, #1}
\newcommand{\morpset}[3]{{\rm Mor}_{#1}(#2,#3)}
\newenvironment{statementslist}{\begin{tabular}[t]{p{15pt}p{380pt}}}{\end{tabular}}
\newcommand{\kernel}[1]{{\rm kernel}\, #1}
\newcommand{\stricttransform}[1]{\left\langle #1 \right\rangle}
\newcommand{\gradient}[1]{\nabla #1}
\newcommand{\picard}[2]{{\rm Pic}^{#1}(#2)}
\newcommand{\Endo}[1]{{\rm End}\,(#1)}
\newcommand{\homo}{homomorphism}
\renewcommand{\char}[1]{{\rm char\,#1}}
\renewcommand{\span}[2]{#1{\rm -span}\{#2\}}
\newcommand{\equi}{equivariant}
\newcommand{\proj}{projective}
\newcommand{\qproj}{quasi-projective}
\newcommand{\sm}{smooth}
\newcommand{\smcat}{\textit{Sm}}
\newcommand{\gsmcat}[1]{\textit{#1-Sm}}
\newcommand{\lazard}{\mathbb{L}}
\newcommand{\glin}[1]{$#1$-linearized}
\newcommand{\cobg}[3]{\cat{U}_{#1}^{#2}(#3)}
\newcommand{\cob}[1]{\cat{U}_{#1}}
\newcommand{\coblocaltheory}[2]{\cat{U}_{#1}[1/{#2}]}
\newcommand{\coblocal}[3]{\cat{U}_{#1}(#2)[1/{#3}]}
\newcommand{\cobglow}[3]{\cat{U}_{#2}^{#1}(#3)}
\renewcommand{\L}{\cat{L}}
\newcommand{\ggen}{globally\ generated}
\newcommand{\nice}{nice}
\newcommand{\adinvsh}{admissible\ invertible\ shea}
\newcommand{\adtower}{admissible\ tower}
\newcommand{\qadtower}{quasi-admissible\ tower}
\newcommand{\rsncd}{reduced\ strict\ normal\ crossing\ divisor}
\newcommand{\girred}{$G$-irreducible}
\newcommand{\rou}{root\ of\ unity}
\newcommand{\fgldiv}[1]{F^{1/#1}}
\newcommand{\good}{good}
\newcommand{\lfsheaf}[1]{locally free sheaf of rank $#1$}
\newcommand{\fixptmap}{\cat{F}}
\newcommand{\dpr}{ouble point relation}
\newcommand{\edpr}{xtended double point relation}
\newcommand{\gdpr}{eneralized double point relation}
\newcommand{\gdprdivisors}{$A_1, \ldots, A_n$, $B_1, \ldots, B_m$}
\newcommand{\gdprdivisorsequi}{A_1 + \cdots + A_n \sim B_1 + \cdots + B_m}
\newcommand{\gdprdivisorssum}{A_1 + \cdots + A_n + B_1 + \cdots + B_m}
\newcommand{\generalterm}{X_i \cdots U^p_k \cdots}
\newcommand{\fgl}{formal group law}
\newcommand{\apicard}[2]{{\rm A}\picard{#1}{#2}}
\newcommand{\inv}{invariant}
\newcommand{\equidim}{equidimensional}
\begin{document}

%Title
\title{Equivariant Algebraic Cobordism and Double Point Relations}
\author{C. L. Liu}
\begin{abstract}For a reductive connected group or a finite group over a field of characteristic zero, we define an equivariant algebraic cobordism theory by a generalized version of the double point relation of Levine-Pandharipande. We prove basic properties and the well-definedness of a canonical fixed point map. We also find explicit generators of the algebraic cobordism ring of the point when the group is finite abelian.
\end{abstract}

\address{Department of Mathematics, Michigan State University, East Lansing, MI 48824-1027, USA} 
\email{liuchun@math.msu.edu}
\date{\today}

\maketitle

\medskip

%{\small \noindent ABSTRACT.}\tab For a reductive connected group or a finite group over a field of characteristic zero, we define an equivariant algebraic cobordism theory by a generalized version of the double point relation of Levine-Pandharipande. We prove basic properties and the well-definedness of a canonical fixed point map. We also find explicit generators of the algebraic cobordism ring of the point when the group is finite abelian.

\bigskip
%\begin{comment}
\centerline{\sc Contents}
\medskip

\noindent \S 1. Introduction\\
\S 2. Notations and assumptions\\
\S  3. Geometric \equi\ algebraic cobordism $\cob{G}$\\
 \S 4. The Chern class operator $c(\L)$\\
\S 5. More properties for $\cob{G}$\\
\S 6. Generators for the \equi\ algebraic cobordism ring\\
\S 7. Fixed point map\\
\noindent  \ \ \  \ \ References\\

\bigskip
\medskip

\medskip

\section{Introduction}

Cobordism is a deep and well-developed theory in topology. According to Thom's definition, two dimension $d$ \sm\ oriented manifolds $M,N$ are said to be cobordant if there exists a dimension $d+1$ \sm\ oriented manifold with boundary $M \disjoint (-N)$ (Negative sign means opposite orientation). By definition, the set of all cobordism classes, with addition given by disjoint union and multiplication given by product, is called the oriented bordism ring $U_*$ (grading given by dimension). This ring was well-studied. For instance, Thom showed that the torsion free part can be described by $U_* \otimes_{\Z} \Q \cong \Q[x_{4k}\ |\ k \geq 1]$. In addition,  Milnor and Wall showed that all torsion has order 2. The main technique involved was the use of the Thom spectrum which we will briefly explain below. 

Consider a $SO(n)$-bundle $E$ over a manifold $X$. Let $D$ be the set of all vectors (fiberwise) with length $\leq 1$ and $S$ be the set of all vectors with length 1. Then, the Thom space is defined as the quotient space $D/S$ and denoted by $T(E)$. Now consider the classifying space $BSO(n)$ with universal $SO(n)$-bundle $E_n$. Denote the Thom space $T(E_n)$ by $MSO(n)$. Notice that $E_n \x \R^1$ becomes a $SO(n+1)$-bundle over $BSO(n)$ and hence induces the classifying map $BSO(n) \to BSO(n+1)$ and $E_n \x \R^1 \to E_{n+1}$. Apply the Thom space construction on both sides of the second map, we get 
$$MSO(n) \wedge S^1 \cong T(E_n \x \R^1) \to T(E_{n+1}) = MSO(n+1).$$
That defines the Thom spectrum $MSO$. We can then consider the homotopy groups of $MSO$, namely $\gp_k(MSO) \defeq \limit{\stackrel{\longto}{n}}\ \gp_{n+k}(MSO(n))$. The importance of the Thom spectrum comes from the isomorphism $U_k \iso \gp_k(MO)$ which is given by the Pontrjagin-Thom construction (see \cite{notesoncobordism}).

More generally, for a \sm\ oriented manifold $X$, we say two maps $f_1 : Y_1 \to X$ and $f_2 : Y_2 \to X$, where $Y_1,Y_2$ are both dimension $d$ \sm\ oriented manifolds, are cobordant if there exists a map $F : Z \to X$ such that $Z$ is a dimension $d+1$ \sm\ oriented manifold with boundary and $F|_{\partial Z} = f_1 \disjoint (-f_2)$ (Negative sign means opposite orientation on domain). The set of all cobordism classes with addition given by disjoint union is denoted by $U_*(X)$ (grading given by dimension of the domain of the map). That is the oriented bordism group.

Other than oriented bordism theory, there are other bordism (or cobordism) theories. For example, for a stably complex manifold $X$, we define 
$$MU_k(X) \defeq \limit{\stackrel{\longto}{n}}\ [S^{2n+k}, MU(n) \wedge X]$$ 
and
$$MU^k(X) \defeq \limit{\stackrel{\longto}{n}}\ [S^{2n-k} \wedge X, MU(n)]$$  
where $MU(n)$ is the Thom space of the universal $U(n)$-bundle over the classifying space $BU(n)$. This way, one defines the complex bordism theory (given by $MU_*(X)$) and the complex cobordism theory (given by $MU^*(X)$). Milnor showed \cite{complexcobordism} that the complex bordism ring $MU_*$ is just a polynomial ring $\Z[x_{2k}\ |\ k \geq 1]$ and $MU^* \cong MU_{-*}$. 

Moreover, this complex cobordism theory is equipped with Chern classes and it leads to what is called the formal group law. More precisely, for each complex vector bundle $E$ over $X$ of rank $r$, there are Chern classes $c_i(E) \in MU^{2i}(X)$ for $1 \leq i \leq r$ associated to it (see \cite{cobordismandktheory}). It turns out the complex cobordism group $MU^*(\C \P^{\infty})$ is given by the power series ring $MU^*[[s]]$ and the tensor product map $\C \P^{\infty} \x \C \P^{\infty} \to \C \P^{\infty}$ will define a Hopf-algebra structure on $MU^*(\C \P^{\infty})$. Thus, we obtain a map
$$MU^*[[s]] \cong MU^*(\C \P^{\infty}) \to MU^*(\C \P^{\infty} \x \C \P^{\infty}) \cong MU^*(\C \P^{\infty}) \hat{\otimes} MU^*(\C \P^{\infty}) \cong MU^*[[u,v]].$$
Denote the image of $s$ by $F \in MU^*[[u,v]]$. Since $\C \P^{\infty}$ is the classifying space for $U(1)$ and the elements $s,u$ and $v$ correspond to $c_1(\O(1))$, $c_1(\O(1,0))$ and $c_1(\O(0,1))$ respectively, we obtain the following relation for any pairs of complex line bundles $L_1,L_2$ over $X$ :
$$c_1(L_1 \otimes L_2) = F(c_1(L_1), c_1(L_2))$$
as elements inside $MU^*(X)$. This power series $F$ is called a formal group law over $MU^*$ (see \cite{formalgrouplaw}).

Unfortunately, because of the lack of the notion of boundary in the category of algebraic varieties, an algebraic version of cobordism theory can not be defined in a similar manner. There is a naive approach which turns out to be unsuccessful. We may define two dimension $d$ \sm\ \proj\ varieties $X$, $X'$ to be cobordant if there exists a \morp\ $Y \to \P^1$ where $Y$ is a dimension $d+1$ \sm\ \proj\ variety such that $X$, $X'$ are the fibers over $0$, $1$ respectively. This approach was also addressed by M. Levine and F. Morel (see remark 1.2.9 in \cite{LeMo} for more detail). Consider the case when $d=1$. Since the genus and the number of connected components are invariant under this concept of cobordism, we can not decompose a \sm\ genus $g$ curve. Hence, the cobordism group of curves will be much bigger than $\Z$, which is what we expect from the theory of complex cobordism.

Nevertheless, in \cite{LeMo}, Levine and Morel managed to define an algebraic cobordism theory $\gO$, which is an analog of the complex cobordism theory, in spite of the absence of notion of boundary. However, the definition is relatively complicated. Roughly speaking, if $X$ is a separated scheme of finite type over the ground field $k$, then we consider elements of the form $(f : Y \to X, \L_1, \ldots, \L_r)$ where $f$ is \proj, $Y$ is an irreducible \sm\ variety over $k$ and the sheaves $\L_i$ are line bundles over $Y$ (the order of $\L_i$ does not matter and the number $r$ of line bundles can be zero). The dimension of $(f : Y \to X, \L_1, \ldots, \L_r)$ is defined to be $\dim Y - r$. There is a natural notion of isomorphism on elements of this form. Denote the free abelian group generated by isomorphism classes of elements of this form by $\cat{Z}(X)$. Let $\underline{\gO}(X)$ be the quotient of $\cat{Z}(X)$ by the subgroup corresponding to imposing the axioms \textbf{(Dim)} and \textbf{(Sect)} (following the notations in \cite{LeMo}). The algebraic cobordism group $\gO(X)$ is defined to be the quotient of $\underline{\gO}(X) \otimes_{\Z} \lazard$, where $\lazard$ is the Lazard ring, by the $\lazard$-submodule corresponding to imposing the formal group law \textbf{(FGL)}.

This cobordism theory satisfies a number of basic properties, \textbf{(D1)-(D4)}, \textbf{(A1)-(A8)}, \textbf{(Dim), (Sect)} and \textbf{(FGL)} (following the notation in \cite{LeMo}). It also satisfies some more advanced properties, for example, the localization property and the homotopy invariance property. Moreover, the cobordism ring $\gO(\pt)$ will be isomorphic to the Lazard ring $\lazard$ when the characteristic of $k$ is 0, which is what we expect from the complex cobordism theory (see Corollary 1.2.11 and Theorem 4.3.7 in \cite{LeMo}).

One may wonder if it is possible to construct an algebraic cobordism theory via a more geometric approach. Suppose $X$ is a \sm\ variety over $k$. We may consider the abelian group $M(X)^+$ generated by isomorphism classes over $X$ of \proj\ \morp s $f : Y \to X$ where $Y$ is a \sm\ variety over $k$. The hope is that by imposing some reasonable relations, we will obtain an algebraic cobordism theory that also satisfies some previously mentioned properties. Such a construction was introduced by M. Levine and R. Pandharipande in \cite{LePa}. A relation called ``double point relation'' was introduced and it was shown that the theory $\go$ obtained by imposing this relation is canonically isomorphic to the theory $\gO$ under the assumption that the characteristic of $k$ is 0 (see Theorem 1 of \cite{LePa}). 

More precisely, let $\gph : Y \to X \x \P^1$ be a \proj\ \morp\ where $Y$ is an equidimensional \sm\ variety over $k$. Consider the fibers for the composition $Y \to X \x \P^1 \to \P^1$. Suppose the fiber $Y_{\gx}$ is a \sm\ divisor on $Y$ and the fiber $Y_0$ can be expressed as the union of two \sm\ divisors $A,B$ such that $A$ intersects $B$ transversely. Then, the double point relation is
\begin{eqnarray}
[Y_\gx \embed Y \to X] &=& [A \embed Y \to X] + [B \embed Y \to X] \nonumber\\
&& -\ [\P(\O \oplus \nbundle{A \cap B}{A}) \to A \cap B \embed Y \to X]. \nonumber
\end{eqnarray}

The objective of the current paper is to develop an algebraic cobordism theory of varieties with group action that assembles the theories of Levine-Morel and Levine-Pandharipande. For this, we go back to topology for inspiration. In topology, for a compact Lie group $G$, the concept of $G$-\equi\ bordism was first studied by Conner and Floyd (see \cite{periodicmap} or \cite{geovshomo}). In their approach, for a $G$-space $X$, we consider the set of maps $Y \to X$ where $Y$ is a stable almost complex $G$-manifold. Define the notion of $G$-bordism similarly to form the geometric unitary bordism group of $X$, denoted by $U^G_*(X)$. Another approach was pursued by Tom Dieck \cite{equibordism}. Let $\gx^G_n \to BU(n,G)$ be the universal unitary $n$-dimensional $G$-bundle and $MU(n,G)$ be its Thom space. Then, the homotopy theoretic unitary $G$-bordism group of $X$ is defined by
$$MU^G_{2k}(X) \defeq \limit{\stackrel{\longto}{V}}\ [\,S^V, \,MU(\dim_{\C} V - k,G) \wedge X\,]^G$$ 
and 
$$MU^G_{2k+1}(X) \defeq \limit{\stackrel{\longto}{V}}\ [\,S^V \wedge S^1, \,MU(\dim_{\C} V - k,G) \wedge X\,]^G$$ 
where $V$ runs through all unitary $G$-\repn s (see \cite{equibordism}). Inspired by the isomorphism between the principal $G$-bordism group over a point and the oriented bordism group $MSO(BG)$ (where $EG \to BG$ is the universal $G$-bundle) when $G$ is finite (see \cite{periodicmap}), there is also a third $G$-\equi\ bordism theory defined by the following equation :
$$MU^{G,h}_*(X) \defeq MU_*((X \x EG)/G).$$

In the case when $X$ is a point, there are some maps relating the three theories.
$$U^G_* \stackrel{a}{\to} MU^G_* \stackrel{b}{\to} MU^{G,h}_*$$
The map $a$ is given by the same Pontrjagin-Thom construction, but an inverse can not be constructed in the same manner due to the lack of transversality when there is group action. Indeed, the map $a$ is never surjective (unless the group $G$ is trivial) because there are non-trivial elements in the negative degrees of $MU^G_*$. However, the injectivity of the map $a$ was shown by Loffler and Comezana when $G$ is abelian (see \cite{equiunitarycobor} and \cite{equihomotopy}). On the other hand, when $G$ is abelian, the map $b$ identifies the $I$-adic completion of $MU^G_*$, where $I$ is the augmentation ideal, to $MU^{G,h}_*$ (see \cite{completionthmforMUspectra}). 

There are some computational results on different versions of \equi\ bordism ring. In \cite{generatorofbordismring}, Kosniowski gave a list of $G$-spaces which multiplicatively generate the geometric unitary bordism ring $U^G_*$ over $MU_*$ when $G$ is a cyclic group of prime order. When $G$ is an abelian compact Lie group, Sinha gave a list of elements and relations that describe the structure of the homotopy theoretic unitary bordism ring $MU^G_*$ as a $MU_*$-algebra (see \cite{computationofequibordring}). Since $MU^{G,h}_*$ can be identified with the $I$-adic completion of $MU^G_*$ when $G$ is abelian, we also obtain the structure of $MU^{G,h}_*$.

Following this pattern, we can expect to also have several different approaches to \equi\ algebraic cobordism theory. In order to define an analog of the homotopy theoretic bordism theory $MU^G$ in the algebraic geometry setup, one possible way is through Voevodsky's machinery of $\A^1$-homotopy theory (see \cite{A1homotopy}). A (non-\equi) algebraic cobordism theory defined this way is discussed in \cite{A1homotopy2}, but, to our knowledge, an \equi\ version of this has not yet been considered.

To define an analog of the theory $MU^{G,h}$, one can employ Totaro's approximation of $EG$. In \cite{EdGr}, Edidin and Graham successfully defined an equivariant Chow ring following this line of thought. For a given dimension $n$ algebraic space $X$ with $G$-action and for a fixed integer $i$, pick a $G$-\repn\ $V$ and an invariant open set $U$ inside such that $G$ acts freely on $U$ and the codimension of $V-U$ is larger than $n-i$. Then, $X \x U \to (X \x U)/G$ will be a principle $G$-bundle. Moreover, the Chow group $A_{i + \dim V - \dim G}((X \x U)/G)$ is indeed independent of the choice of the pair $(V,U)$. Hence, the equivariant Chow group $A^G_i(X)$ is defined to be $A_{i + \dim V - \dim G}((X \x U)/G)$. 

Unfortunately, since the independence of choice relies on the fact that the negative (cohomological) degrees of Chow groups are always zero, i.e. $A^i = 0$ whenever $i < 0$, equivariant algebraic cobordism theory can not be defined in the exact same manner. One approach is by considering a whole system of good pairs $\{(V,U)\}$ and define the equivariant algebraic cobordism group $\gO_G^i(X)$ to be the inverse limit of $\gO^i((X \x U)/G)$ (see \cite{HeLo} for more details). Another, possibly equivalent, approach was pursued by Krishna \cite{Kr}.

Aside from these two homotopical approaches, one can also define an \equi\ algebraic cobordism theory analogously to the geometric bordism theory $U^G$, namely by considering varieties with $G$-action and imposing the $G$-action also on the d\dpr. Suppose $G$ is an algebraic group over $k$ and $X$ is a \sm\ $G$-variety over $k$. This is what we do in this paper. We can consider the abelian group $M_G(X)^+$ generated by isomorphism classes of $G$-\equi\ \proj\ \morp\ $f : Y \to X$ where $Y$ is also a \sm\ $G$-variety. For a \morp\ $\gph : Y \to X \x \P^1$ where $Y$ is a \sm\ $G$-variety, $\P^1$ is equipped with the trivial action and $\gph$ is \proj\ and $G$-\equi\ satisfying the same conditions on the fibers $Y_{\gx}$ and $Y_0$ as before, we impose the exact same equation with all objects involved equipped with their naturally inherited $G$-actions. Then, all \morp s involved will also be naturally \equi.

For technical reasons, we focus on the case when the characteristic of $k$ is zero and $G$ is either a finite group or a connected reductive group. Observe that if there is a \proj, $G$-\equi\ \morp\ $Y \to X$ and \sm\ $G$-invariant divisors $Y_{\gx}$, $A$, $B$ on $Y$ satisfying the conditions in the d\dpr, then $Y_{\gx}$ is \equi ly linearly equivalent to $A + B$ and $Y_{\gx} + A + B$ is a \rsncd. Suppose we are given a \sm, $G$-invariant, very ample divisor $C$ on $Y$. Due to the lack of transversality in the \equi\ setting, the choice of the pairs of \sm\ $G$-invariant divisors $A$, $B$ such that $C \sim A + B$ and $A + B + C$ is a \rsncd\ may become seriously limited, if not impossible. To remedy this, it is preferable to impose a more general relation which we call g\gdpr. 
 
More precisely, suppose $X$, $Y$ are both \sm\ varieties with $G$-action and $\gph : Y \to X$ is an \equi\ \proj\ \morp. Assume there are \sm\ invariant divisors \gdprdivisors\ on $Y$ such that $A_1 + \cdots + A_n$ is \equi ly linearly equivalent to $B_1 + \cdots + B_m$ and $\gdprdivisorssum$ is a \rsncd. Then, the g\gdpr\ $GDPR(n,m)$ we will impose is of the form
\begin{eqnarray}
&&[A_1 \embed Y \to X] + [A_2 \embed Y \to X] + \cdots + [A_n \embed Y \to X] + \text{extra terms}  \nonumber\\
&=& [B_1 \embed Y \to X] + [B_2 \embed Y \to X] + \cdots + [B_m \embed Y \to X] + \text{extra terms} \nonumber
\end{eqnarray}
where the extra terms are of the form $[\P \to C \embed Y \to X]$ such that $C$ is the intersection of some of the divisors \gdprdivisors\ and $\P \to C$ is an \adtower\ (see subsection \ref{reductionoftowersubsection} for the definition). Denote the left hand side of the above equation by $L(\gph, \text{\gdprdivisors})$ and the right hand side by $R(\gph, \text{\gdprdivisors})$. Hence, we define the (geometric) \equi\ algebraic cobordism group, denoted by $\cobg{G}{}{X}$, to be the quotient of $M_G(X)^+$ by the abelian subgroup generated by 
$$L(\gph, \text{\gdprdivisors}) - R(\gph, \text{\gdprdivisors})$$ 
for all \equi\ \proj\ \morp s $\gph : Y \to X$ and all possible set of invariant divisors \gdprdivisors\ satisfying the conditions described above. We conjecture that the g\gdpr\ is indeed stronger than the d\dpr\ (See Remark \ref{redundantexceptionalobj} in the text).

An important observation is that the g\gdpr\ actually holds in the non-\equi\ theory $\go$. In other words, our \equi\ algebraic cobordism theory in the case when $G$ is trivial coincides with the non-\equi\ algebraic cobordism theory, i.e . $\cobg{\{1\}}{}{X} \cong \go(X)$ for all \sm\ varieties $X$. That means this theory $\cat{U}_G$ can be thought as a generalization of $\go$. In addition, although the g\gdpr\ may look tedious, it is actually easier to use because of the freedom of the number of divisors involved. 

Using this theory, we are able to define a ``fixed-point map'' which is similar to a well-known construction in topology. Recall the definition of the fixed point map in topology (see \cite{equibordism}). For simplicity, suppose $G$ is a finite group of prime order $p$. Then, there are exactly $p$ non-isomorphic irreducible complex $G$-\repn s. Denote them by $V_1, \ldots, V_p$. For a unitary $G$-manifold $M$, let $F$ be a component of the fixed point set $M^G$. The normal bundle of $F$ inside $M$ can be written as $\oplus_{i = 1}^p\ V_i \otimes N_i$ for some complex vector bundles $N_i$ over $F$ with no $G$-action. Compose the classifying map of $N_i$ with the natural map $BU(\text{rank of } N_i) \to BU$. We get a map $F \to BU$ which we will denote by $f_i$. Thus, the fixed point map
$$\gph : U^G_* \to \bigoplus_{i=1}^p MU_*(BU)$$
is given by sending $[M]$ to the sum of 
$$([f_1 : F \to BU], \ldots, [f_p : F \to BU])$$
over all components $F$. If we add up the elements $[f_i]$ and push them down to the bordism ring, we obtain a map
$$U^G_* \to \bigoplus_{i=1}^p MU_*(BU) \to MU_*(BU) \to MU_*$$
given by 
$$[M] \mapsto \sum_F ([f_1], \ldots, [f_p]) \mapsto \sum_{F,i} [f_i] \mapsto \sum_{F,i} [F] = p\,[M^G].$$

Assume the ground field $k$ has characteristic 0 as before. If the group $G$ is finite, then the fixed point locus of any \sm\ variety over $k$ is again \sm\ (Proposition 3.4 in \cite{Ed}). The same statement also holds when $G$ is reductive (by Proposition \ref{smoothfixedpointlocus}). So, for a \sm\ variety $X$, we have an abelian group homomorphism from $M_G(X)^+$ to $M(X^G)^+$ defined by sending $[Y \to X]$ to $[Y^G \to X^G]$, which we will also call fixed point map. One of our main results is the following Theorem (Corollary \ref{fixedpointmap2} in the text) which can be considered as an analog of the topological fixed point map.

\vtab

\textbf{Theorem 1.}
\textit{For any \sm\ $G$-variety $X$, sending $[Y \to X]$ to $[Y^G \to X^G]$ defines an abelian group homomorphism}
$$\fixptmap : \cobg{G}{}{X} \to \go(X^G).$$

\vtab

We also managed to find a set of generators for the \equi\ algebraic cobordism ring of the point $\pt$ when $G$ is a finite abelian group with exponent $e$ and $k$ contains a primitive $e$-th \rou. We can naturally embed the non-\equi\ algebraic cobordism ring 
$$\lazard \cong \go(\pt) \cong \cobg{\{1\}}{}{\pt}$$ 
inside the \equi\ algebraic cobordism ring $\cobg{G}{}{\pt}$ (by assigning trivial $G$-action) (see Corollary \ref{embedlazardcor}). This construction provides $\cobg{G}{}{\pt}$ with a $\lazard$-algebra structure. Then, the following Theorem describes a set of generators of $\cobg{G}{}{\pt}$ (see Theorem \ref{setofgenerator} for more detail).

\vtab

\textbf{Theorem 2.}
\textit{
Suppose $G$ is a finite abelian group with exponent $e$ and $k$ contains a primitive $e$-th \rou. Then, the \equi\ algebraic cobordims ring $\cobg{G}{}{\pt}$ is generated by the set of exceptional objects
$$\{E_{n, H, H'}\ |\ n \geq 0 \text{ and } G \supset H \supset H'\}$$
and the set of \adtower s over $\pt$ as a $\lazard$-algebra.}

\vtab

Here is the definition of the exceptional objects. For an integer $n \geq 0$ and a pair of subgroups $G \supset H \supset H'$, since $G$ is abelian, we can write
$$H/H' \cong H_1 \x \cdots \x H_a$$
where $H_i$ is a cyclic group of order $p_i^{m_i}$ for a prime $p_i.$ Define an $(H/H')$-action on 
$${\rm Proj}\ k[x_0, \ldots, x_n,v_1, \ldots, v_a]$$
by assigning $H_i$ to act faithfully on $\span{k}{v_i}$ and trivially on other generators, for all $1 \leq i \leq a$. Then, the exceptional object is defined as, with the natural $G$-action,
$$E_{n,H,H'} \defeq G/H \x {\rm Proj}\ k[x_0, \ldots, x_n,v_1, \ldots, v_a]\, / \, (v_1^{p_1^{m_1}} - g_1, \ldots, v_a^{p_a^{m_a}} - g_a)$$
where $g_i \in k[x_0, \ldots, x_n]$ is homogeneous with degree $p_i^{m_i}$ such that $E_{n,H,H'}$ is \sm\ with dimension $n$ ($[E_{n,H,H'}] \in \cobg{G}{}{\pt}$ is independent of the choice of $\{g_i\}$).

Let us now give a brief outline of this paper. In section 2, we state some basic notions and assumptions we will be using throughout the paper. In section 3, we give the precise definition of g\gdpr\ and also the definition of our \equi\ algebraic cobordism theory $\cat{U}_G$. We also show that the g\gdpr\ holds in the non-\equi\ theory $\go$. Then, a number of basic properties, namely \textbf{(D1)-(D4)} and \textbf{(A1)-(A8)} that does not involve the first Chern class operator, will be stated and proved. The last subsection will be devoted to the investigation of the case when the action is free. In this case, we show an isomorphism $\go(X/G) \cong \cobg{G}{}{X}$.

In section 4, we handle the (first) Chern class operator. We first define the notion of ``\nice'' \glin{G}\ invertible sheaves. Then, we define the Chern class operator $c(\L)$ for all such sheaves and prove the most important property of this operator : formal group law \textbf{(FGL)}. Next, we extend the definition to arbitrary \glin{G}\ invertible sheaves with stronger assumptions on $G$ and $k$ (in particular, $G$ is a finite abelian group). 

In section 5, we will first prove the rest of the list of basic properties, i.e. \textbf{(D1)-(D4)} and \textbf{(A1)-(A8)} that involve the Chern class operator. Then, we will show the properties \textbf{(Dim)} and \textbf{(FGL)}.

The whole section 6 will be devoted to proving the Theorem about the set of generators of the \equi\ cobordism ring $\cobg{G}{}{\pt}$ as a $\lazard$-algebra. The first subsection in section 6 will be dedicated to an interesting general technique which we will call splitting principle by blowing up along invariant smooth centers. Finally, in the last section, we will prove the well-definedness of the fixed point map, i.e. Theorem \ref{fixedpointmap}.

\medskip

\begin{center}
\textbf{Acknowledgments}
\end{center}

I would like to thank my advisor G. Pappas for useful conversations and helpful comments. This research was partially supported by NSF grants
DMS-1102208 and DMS-0802686.

\bigskip
\bigskip

\section{Notations and assumptions}
\label{notationsection}

Throughout this paper, we work over a field $k$ with characteristic $0$. We will denote by $\smcat$ the category of \sm\ \qproj\ schemes over $k$. We will often refer to this as varieties even though they do not have to be irreducible. The identity \morp\ will be denoted by $\id_X : X \to X$. The groups which act on varieties are either reductive connected groups or finite groups over $k$. So, they are always affine over $\pt$. We will often use the symbol $\gp_k$ to denote the structure \morp\ $X \to \pt$ and the symbol $\gp_i$ to denote the projection of $X_1 \x \cdots \x X_n$ onto its $i$-th component $X_i$. 

As in \cite{Mu}, an action of a group scheme $G$ on a variety $X$ is by definition a \morp\ $\gs : G \x X \to X$ such that

\begin{statementslist}
1. & The two \morp s $\gs \circ (\id_G \x \gs)$ and $\gs \circ (\gm \x \id_X)$ from $G \x G \x X$ to $X$ agree, where $\gm : G \x G \to G$ is the group law of $G$. \\
2. & The composition 

\begin{center}
$X \iso \pt \x X \stackrel{e \x \id_X}{\longto} G \x X \stackrel{\gs}{\longto} X$
\end{center}

\noindent is equal to $\id_X$, where $e$ is the identity \morp.
\end{statementslist}

For any $\ga \in G$ and $x \in X$, we will denote $\gs(\ga,x)$ by $\ga \cdot x$, or simply $\ga x$ if there is no confusion. We will say that the action is proper if the \morp\ $G \x X \to X \x X$ given by $(\ga,x) \mapsto (\ga \cdot x, x)$ is proper. Similarly, we will say the action is free if the above map is a closed immersion. This notion is stronger than the notion ``set-theoretically free''. According to Lemma 8 of \cite{EdGr}, set-theoretically free and proper implies free. In the case when $G$ is a finite group scheme, the two \morp s $\gs, \gp_2 : G \x X \to X$ are both proper. That means the \morp\ $G \x X \to X \x X$ above is proper. Hence, in this case, ``set-theoretically free'' is equivalent to free. Morphisms between $G$-varieties are always assumed to be $G$-\equi\ unless specified otherwise. We will denote by $\gsmcat{G}$ the category with objects in $\smcat$ with $G$ action and 
$$\morpset{\gsmcat{G}}{X}{Y} = \{f : X \to Y\ |\ f \text{ is $G$-\equi} \}.$$ 

If $X$ is in $\gsmcat{G}$ and $\cat{E}$ is a locally free coherent sheaf on $X$ with rank $r$, then a $G$-linearization of $\cat{E}$ is a collection of isomorphisms $\{\gph_{\ga} : \ga^* \cat{E}\iso\cat{E}\ |\ \ga \in G \}$ that satisfies the cocycle condition :
$$\gph_{\ga \gb} = \gph_{\gb} \circ (\gb^* \gph_{\ga}),$$
as isomorphisms from $(\ga \gb)^* \cat{E}$ to $\cat{E}$, for all $\ga, \gb \in G.$ There is a natural definition of isomorphism associated to it. The set of isomorphism classes of invertible sheaves on $X$ with a $G$-linearization will be denoted by $\picard{G}{X}$.

Recall the definition of transversality from \cite{LePa}. For objects $A,B,C \in \smcat$ and \morp s $f : A \to C$ and $g : B \to C$, we say $f,g$ are transverse if $A \x_C\ B$ is \sm\ and for all irreducible components $A' \subset A$ and $B' \subset B$ such that $f(A'), g(B')$ are both contained in the same irreducible component $C' \subset C$, we have either
$$\dim A' \x_{C'}\ B' = \dim A' + \dim B' - \dim C'$$
or $A' \x_{C'}\ B' = \emptyset$. If $A,B$ are both subschemes of $C$, we say that $A,B$ are transverse if the inclusion \morp s\ are transverse. If $f : A \to C$ and $x$ is point in $C$, we say that $x$ is a regular value of $f$ if the inclusion \morp\ $x \embed C$ and $f$ are transverse.

Also recall the definition of principal $G$-bundle from \cite{EdGr}. A \morp\ $f : X \to Y$ is called a principal $G$-bundle if $G$ acts on $X$, the \morp\ $f$ is flat, surjective, $G$-\equi\ for the trivial $G$-action on $Y$ and the \morp
$$G \x X \to X \x_Y X,$$ 
defined by $(\ga,x) \mapsto (\ga \cdot x, x)$, is an isomorphism.

If $X$, $Y$ are two objects in $\gsmcat{G}$, then $X \x Y$ is considered to be in $\gsmcat{G}$ with $G$ acting diagonally.

For a \morp\ $f : X \to Y$ and a point $y \in Y$, we denote the fiber product $\spec{k(y)} \x_Y X$ by $f^{-1}(y)$ where $k(y)$ is the residue field of $y$ and $\spec{k(y)} \to Y$ is the \morp\ corresponding to $y$. Similarly, if $Z$ is a subscheme of $Y$, then we denote $Z \x_{Y} X$ by $f^{-1}(Z)$. If $A, B$ are both subschemes of $X$, then we denote $A \x_{X} B$ by $A \cap B$.

An object $Y \in \gsmcat{G}$ is called \girred\ if there exists an irreducible component $Y'$ of $Y$ such that $G \cdot Y' = Y$.

For a locally free sheaf $\cat{E}$ of rank $r$ over a $k$-scheme $X$, the corresponding vector bundle $E$ over $X$ will be given by $$E \defeq \spec{{\rm Sym}\ \dual{\cat{E}}}.$$
The same applies on the case when $X$ is a $G$-scheme over $k$ and $\cat{E}$ is \glin{G}.

%\newpage

\bigskip

\bigskip

\section{Geometric \equi\ algebraic cobordism $\cob{G}$}
\label{basicpropertysection}

\subsection{Preliminaries}

Before digging into the \equi\ algebraic cobordism theory, we need to understand more about $G$-invariant divisors and \glin{G}\ invertible sheaves.

\vtab

Weil Divisors :

Let $X$ be a \girred\ object in $\gsmcat{G}$. A $G$-invariant, \girred\ reduced closed subscheme $D \subset X$ with codimension 1 will be called a prime $G$-invariant Weil divisor. A $G$-invariant Weil divisor is a finite $\Z$-linear combination of prime divisors, i.e. $D = \sum n_i D_i$. A $G$-invariant Weil divisor $D$ is called effective if $n_i$ are all non-negative. Let $\cat{K}$ be the sheaf of total quotient rings of $\O_X$, which has its natural $G$-action. We say that two $G$-invariant Weil divisors $D$, $D'$ are $G$-\equi ly linearly equivalent, denoted by $D \sim D'$, if there is an element $f \in {\rm H}^0(X, \cat{K}^*)^G$ such that $D - D' = \divisor{f}$ where $\divisor{f}$ is defined in the usual way.

\vtab

Cartier Divisors :

Similar to the definition of Cartier divisors in Ch II, section 6 in \cite{Ha}, a $G$-invariant Cartier divisor is an element in ${\rm H}^0(X, \cat{K}^*/\O^*)^G$. We say two $G$-invariant Cartier divisors $D$, $D'$ are $G$-\equi ly linearly equivalent if $D - D'$ is in the image of 
$${\rm H}^0(X, \cat{K}^*)^G \to {\rm H}^0(X, \cat{K}^*/\O^*)^G.$$ 
As usual, we will represent a $G$-invariant Cartier divisor by $\{(U_i, f_i)\}$ where $\{U_i\}$ is an open cover of $X$ and $f_i \in {\rm H}^0(U_i, \cat{K}^*)$. The (left) $G$-action on the sheaf $\cat{K}$ (or the sheaf $\O$) is given explicitly by 
$$(\ga \cdot f)(x) = f(\ga^{-1} \cdot x)$$ 
for any $f \in \cat{K}$ (or in $\O$) and $\ga \in G$. Then, the Cartier divisor $D$ being $G$-invariant implies 
$$\{(U_i, f_i)\} = \{(\ga \cdot U_i, \ga \cdot f_i)\}$$ 
as elements in ${\rm H}^0(X, \cat{K}^*/\O^*)$ for all $\ga \in G$. In other words, $(\ga \cdot f_i) / f_j$ is a unit in $\O_{(\ga \cdot U_i) \cap U_j}$ for all $i,j$. Since $X$ is \sm, we have a one-to-one correspondence between the set of $G$-invariant Weil divisors and the set of $G$-invariant Cartier divisors by the same construction as in \cite{Ha}. Moreover, the notion of $G$-\equi ly linearly equivalent is also preserved. Hence, from now on, we will use the two notions interchangeably. Furthermore, divisors are always assumed to be $G$-invariant unless specified otherwise and linear equivalence means $G$-\equi\ linear equivalence. 

\vtab

\glin{G}\ invertible sheaves :

For a given $G$-invariant divisor $D$ on a \sm\ $G$-variety $X$, we can construct a \glin{G}\ invertible sheaf naturally. We will denote it by $\O_X(D)$. Here is the construction.

The underlying invertible sheaf structure is given by the natural definition as in Ch II, section 6 in \cite{Ha} : if $D$ is represented by $\{(U_i, f_i)\}$, then we define $\O_X(D)$ by the following equation : 
$$\O_X(D)|_{U_i} \defeq \O_{U_i} f_i^{-1}$$
for all $i$.

The $G$-linearization of $\O_X(D)$ can be defined as the following. Consider the case when $D$ is a prime $G$-invariant divisor. Then, it defines an ideal sheaf $\cat{I}$ which is naturally \glin{G}. Then, the natural isomorphism $\O_X(-D) \cong \cat{I}$ induces a $G$-linearization on $\O_X(-D)$. Hence, we can define the $G$-linearization by taking the dual, namely, $\O_X(D) \defeq \dual{\O_X(-D)}$. In general, if $D = \sum n_i D_i$ for some prime $G$-invariant divisors $D_i$, then we define $\O_X(D) \defeq \otimes\, O_X(D_i)^{\otimes n_i}$.

The $G$-linearization structure on $\O_X(D)$ can be explicitly given as the following. For a given $\ga \in G$, we will define an isomorphism $\gph_{\ga} : \ga^* \O(D) \to \O(D)$. Let us consider the restriction on $U_i$, the domain becomes
\begin{eqnarray}
(\ga^* \O(D)) |_{U_i} & = & \ga^* (\O(D)|_{\ga U_i}) \nonumber\\
         & = & \ga^* (\O_{U_j \cap \ga U_i} f_j^{-1}) \nonumber\\
         && \text{(further restricted on $U_j \cap \ga U_i$)} \nonumber\\
         & = & \O_{U_i \cap \ga^{-1} U_j}\, \ga^{-1} \cdot f_j^{-1}. \nonumber
\end{eqnarray}
On the other hand, the codomain becomes $\O_{U_i \cap \ga^{-1} U_j}\, f_i^{-1}$ when restricted on $U_i \cap \ga^{-1} U_j$. Then, we define 
$$\gph_{\ga}|_{U_i \cap \ga^{-1} U_j} : \O\, \ga^{-1} \cdot f_j^{-1} \to \O f_i^{-1}$$
by sending $\ga^{-1} \cdot f_j^{-1}$ to $(f_i\, / \,(\ga^{-1} \cdot f_j))\, f_i^{-1}$. Since $\gph_{\ga}|_{U_i \cap \ga^{-1} U_j}$ is an identity map, $\gph_{\ga}$ is well-defined and is an isomorphism.

We need to check the cocycle condition 
$$\gph_{\ga \gb} = \gph_{\gb} \circ (\gb^* \gph_{\ga}) : (\ga \gb)^* \O(D) \to \O(D).$$ 
For simplicity, we will denote $\O_X$ (or other base) by simply $\O$. Notice that, by the above definition, $\gph_{\gb} : \gb^* \O(D) \to \O(D)$ corresponds to $\O\, \gb^{-1} \cdot f_j^{-1} \to \O f_i^{-1}$ and $\gph_{\ga} : \ga^* \O(D) \to \O(D)$ corresponds to $\O\, \ga^{-1} \cdot f_k^{-1} \to \O f_j^{-1}.$ So, the \morp\ $\gb^* \gph_{\ga} : \gb^* \ga^* \O(D) \to \gb^* \O(D)$ corresponds to $\O\, \gb^{-1} \ga^{-1} \cdot f_k^{-1} \to \O\, \gb^{-1} \cdot f_j^{-1}.$ On the other hand, $\gph_{\ga \gb} : (\ga \gb)^* \O(D) \to \O(D)$ corresponds to $\O\, \gb^{-1} \ga^{-1} \cdot f_k^{-1} \to \O f_i^{-1}$. Thus, the domains and codomains of $\gph_{\ga \gb}$ and $\gph_{\gb} \circ (\gb^* \gph_{\ga})$ are represented by the same generators and all the \morp s are identities. Hence, they commute.

It remains to check its independence of the choice of representations $\{(U_i,f_i)\}$ of the Cartier divisor. In other words, if $D$ is represented by $\{(U_i,f_i)\}$ where $f_i \in {\rm H}^0(U_i, \O^*)$, then the \glin{G}\ invertible sheaf it defined will be $G$-\equi ly isomorphic to the structure sheaf. To see this, we define a \morp\ from $\O(D)$ to $\O$ by patching the \morp s $\O f_i^{-1} \to \O$ in which we send $f_i^{-1}$ to $f_i^{-1}$. Then, it is a well-defined isomorphism. The commutativity of the following diagram implies that this \morp\ is $G$-\equi.
\squarediagram{\O\, \ga^{-1} \cdot f_j^{-1}}{\O}{\O f_i^{-1}}{\O}

This natural construction also takes $G$-\equi ly linearly equivalent divisors to isomorphic \glin{G}\ invertible sheaves, i.e. if $f$ is in ${\rm H}^0(X,\cat{K}^*)^G$, then $\O f^{-1}\iso\O$ by sending $f^{-1}$ to $1$.

Unfortunately, we do not have the one-to-one correspondence between the set of $G$-invariant divisor classes and the set of isomorphism classes of \glin{G} invertible sheaves. Here is a simple reason. If the $G$-action on $X$ is trivial, then the $G$-action on any $G$-invariant divisor will be trivial too. Hence, the $G$-action on the line bundle corresponding to $\O(D)$ must be trivial. But, there are certainly $G$-\equi\ line bundles over $X$ with non-trivial fiberwise $G$-actions.

\vtab

The following are some basic properties of $G$-\inv\ divisors.

\begin{prop} Suppose $X,Y$ are objects in $\gsmcat{G}$.

\begin{statementslist}
{\rm (1)} & If $f : X \to Y$ is a \morp\ in \gsmcat{G}\ and $D$ is a $G$-\inv\ divisor on $Y$ such that $f^*D$ is a $G$-\inv\ divisor on $X$, then $f^* \O(D) \cong \O(f^*D)$. \\
{\rm (2)} & If $D$ is a $G$-\inv\ divisor on $X$ and $Z$ is a closed subscheme of $X$ such that $Z \cap {\rm Supp} D$ is empty, then $\O_X(D)|_Z \cong \O_Z$. \\
{\rm (3)} & If $D$ is a $G$-\inv\ divisor on $X$, then $\O_X(D) \cong \O_X$ if and only if $D \sim 0$.
\end{statementslist}
\end{prop}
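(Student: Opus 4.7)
The plan is to reduce all three parts to the explicit Čech-style construction of $\O_X(D)$ described in the preceding paragraphs, and then just verify compatibility of the transition formula
$$\gph_{\ga}\bigl(\ga^{-1}\cdot f_j^{-1}\bigr) \;=\; \bigl(f_i/(\ga^{-1}\cdot f_j)\bigr)\, f_i^{-1}$$
with the operations in (1), (2), (3). Throughout, I will represent $D$ by data $\{(U_i,f_i)\}$ and work locally on a suitable $G$-invariant open cover.

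For (1), I first note that given the hypothesis that $f^*D$ is a well-defined $G$-invariant divisor, it is represented by $\{(f^{-1}(U_i),\, f_i\circ f)\}$, which is stable under the $G$-action because $f$ is $G$-equivariant. The underlying isomorphism $f^*\O_Y(D)\cong \O_X(f^*D)$ is the standard non-equivariant fact, realized by the patching $\O_{f^{-1}(U_i)}\cdot (f_i\circ f)^{-1} \to \O_{f^{-1}(U_i)}\cdot (f_i\circ f)^{-1}$ sending generator to generator. The only thing to verify is that this isomorphism intertwines the two $G$-linearizations. On the $f^*\O_Y(D)$ side, the linearization $f^*\gph_\ga$ is by definition the pullback of the transition formula above; on the $\O_X(f^*D)$ side, the linearization is the same formula with $(f_i,f_j)$ replaced by $(f_i\circ f, f_j\circ f)$. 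Because $\ga^{-1}\cdot(f_j\circ f) = (\ga^{-1}\cdot f_j)\circ f$ (using $G$-equivariance of $f$), the ratio $(f_i\circ f)/(\ga^{-1}\cdot(f_j\circ f))$ coincides with the pullback of $f_i/(\ga^{-1}\cdot f_j)$, so the two linearizations agree.

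For (2), the hypothesis $Z\cap \mathrm{Supp}\,D=\emptyset$ lets me shrink the cover so that each $f_i$ is invertible on $U_i\cap Z$. Then on each $U_i\cap Z$ the generator $f_i^{-1}$ of $\O_X(D)|_Z$ is itself a nowhere-vanishing regular function, and the assignment $f_i^{-1}\mapsto f_i^{-1}\cdot f_i = 1$ defines a global isomorphism $\O_X(D)|_Z \iso \O_Z$, patching because on overlaps we have $f_i^{-1}= (f_j/f_i) f_j^{-1}$ and $f_j/f_i$ is a unit. The $G$-equivariance check is identical in shape to the square diagram already displayed in the text: applying $\gph_\ga$ sends $\ga^{-1}\cdot f_j^{-1}\mapsto (f_i/(\ga^{-1}\cdot f_j))f_i^{-1}$, and both sides map to $1\in \O_Z$.

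For (3), the ``if'' direction is essentially the last paragraph before the proposition: if $D=\divisor f$ with $f\in H^0(X,\cat{K}^*)^G$, then the local representation $(X,f^{-1})$ gives $\O_X(D)=\O_X\cdot f\cdot f^{-1}\cong \O_X$ by sending $f^{-1}\mapsto 1$, and the diagram shown there exhibits the $G$-equivariance since $\ga^{-1}\cdot f = f$. For ``only if'', suppose $\gps : \O_X(D)\iso \O_X$ is an isomorphism of $G$-linearized sheaves. Set $s = \gps^{-1}(1)\in H^0(X,\O_X(D))$; by $G$-equivariance of $\gps$, $s$ is a $G$-invariant global section. Viewing $s$ in $H^0(X,\cat{K}^*)$ via the inclusion $\O_X(D)\embed \cat{K}$ given locally by $f_i^{-1}\mapsto f_i^{-1}$, I obtain an element $f\in H^0(X,\cat{K}^*)^G$ (invariance of $f$ follows from invariance of $s$ together with the explicit formula for $\gph_\ga$), and then $\divisor f = -D$, hence $D\sim 0$. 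The main technical point in the whole proposition, and the step I expect to need the most care, is keeping track of the transition cocycles in (1) and (3) so that the candidate isomorphism really is $G$-equivariant rather than merely $\O_X$-linear; everything else is essentially the non-equivariant proof.
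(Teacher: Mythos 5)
Your parts (1) and (3) are correct and follow essentially the paper's own strategy; in (1) you usefully spell out the $G$-equivariance check that the paper dismisses as easy, and in (3) your global section $s=\gps^{-1}(1)$ is exactly the paper's local element $h = a_i/g_i$ in disguise.

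Part (2), however, contains a real error. After refining the cover so that $f_i$ is a unit near $Z$, you define the trivialization by $f_i^{-1}\mapsto f_i^{-1}\cdot f_i = 1$ and claim this patches because $f_i^{-1}=(f_j/f_i)f_j^{-1}$ with $f_j/f_i$ a unit. But for an $\O_Z$-linear map $\gps$ one would then compute on the overlap $\gps(f_i^{-1}) = (f_j/f_i)\,\gps(f_j^{-1})$, i.e.\ $1=f_j/f_i$, which forces $f_i=f_j$ and so fails in general. The unit hypothesis on $f_j/f_i$ is automatic (it is the transition cocycle of the Cartier divisor) and does not rescue the gluing. The correct trivialization — the one the paper invokes via the preceding ``independence of representation'' paragraph — sends the local generator $f_i^{-1}$ to $f_i^{-1}$ itself, viewed inside $\O_Z\subset\cat{K}|_Z$ (this makes sense precisely because $f_i$ is a unit on $U_i\cap Z$), and this visibly patches and is $G$-equivariant by the same square already displayed in the paper. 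Equivalently, one checks that the constant section $1\in\cat{K}$ lies in $\O_X(D)|_Z$ and is nowhere vanishing there. Your overall plan for (2) — shrink the cover to make the $f_i$ units and then trivialize — is the right one, but the explicit formula you wrote down does not define a sheaf map, and neither does your proposed equivariance check once the map is corrected.
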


\begin{proof}
{\rm (1)}\tab Suppose $D$ is represented by $\{(U_i, g_i)\}$. Then, the $G$-\inv\ divisor $f^*D$ can be represented by $\{(f^{-1}(U_i), f^* g_i)\}$. Thus, 
$$(f^* \O(D)) |_{f^{-1}(U_i)} = f^* (\O_{U_i} g_i^{-1}) = \O_{f^{-1}(U_i)}\ f^* g_i^{-1}.$$
On the other hand, $\O(f^*D) |_{f^{-1}(U_i)} = \O_{f^{-1}(U_i)}\, f^* g_i^{-1}$. So they are isomorphic. The compatibility of the $G$-action is easy to check.

{\rm (2)}\tab Suppose $D$ is represented by $\{(U_i, g_i)\}$ and $i : Z \embed X$ is the closed immersion. Since $Z \cap {\rm Supp} D = \emptyset$, by refinement, we can assume $U_i$ either has empty intersection with $Z$ or, otherwise, $g_i$ is a unit in $\O_{U_i}$. Thus, $i^*D$ is a $G$-\inv\ divisor on $Z$ and can be represented by $\{(U_i \cap Z, g_i|_Z)\}$, or simply $\{(Z,1)\}$ by the independence of representation. That means 
$$\O_X(D)|_Z \cong \O_Z(i^*D) \cong \O_Z.$$

{\rm (3)}\tab As mentioned before, if $D$ and $D'$ are $G$-\equi ly linearly equivalent, then they define the same \glin{G}\ invertible sheaf, i.e. $\O_X(D) \cong \O_X(D')$. So, it is enough to show if $\O_X(D) \cong \O_X$, then $D \sim 0$. Suppose $D$ is represented by $\{(U_i,g_i)\}$. Then, the isomorphism $\O_X \to \O_X(D)$ over $U_i$ is given by sending 1 to $a_i g_i^{-1}$ for some $a_i \in \O_{U_i}^*$. The fact that the isomorphism is globally defined implies that $a_i (g_j / g_i) = a_j$. Thus, 
$$h \defeq \frac{a_i}{g_i} = \frac{a_j}{g_j} \in {\rm H}^0(X, \cat{K}^*).$$
Since $a_i g_i^{-1}$ corresponds to 1, the $G$-action on $h$ is trivial. Hence, the two $G$-\inv\ divisors $\{(U_i,g_i)\}$ and $\{(U_i,a_i)\}$ are $G$-\equi ly linearly equivalent. The result then follows from the fact that $a_i \in \O_{U_i}^*$.
\end{proof}

\begin{rmk}
\label{divisopicrmk}
\rm{
By property (3), we can consider the set of $G$-\inv\ divisor classes on $X$ as a natural subgroup of $\picard{G}{X}$.
}
\end{rmk}

We will also use the following fact about \proj\ bundles from time to time.

\begin{prop}
\label{isoprojbundle}
For an object $X \in \gsmcat{G}$, suppose $\L$ is in $\picard{G}{X}$ and $\cat{E}$ is a \glin{G}\ locally free sheaf of rank $r$ over $X$. Then $\P(\cat{E})$ and $\P(\cat{E} \otimes \L)$ are naturally isomorphic as $G$-\equi\ \proj\ bundles over $X$.
\end{prop}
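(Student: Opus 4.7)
The plan is to construct the isomorphism locally using a trivialization of $\L$, glue it to a global isomorphism of projective bundles, and then check that it respects the $G$-linearizations. The argument is essentially the classical (non-equivariant) fact that $\P(\cat{E}) \cong \P(\cat{E} \otimes \L)$, enhanced with a check that the equivariant structure goes along for the ride.

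First I would choose an open cover $\{U_i\}$ of $X$ on which $\L$ trivializes via a nowhere-vanishing section $s_i \in \Gamma(U_i, \L)$. On each $U_i$, tensoring with $s_i$ gives an $\O_{U_i}$-linear isomorphism $\gps_i \colon \cat{E}|_{U_i} \iso (\cat{E} \otimes \L)|_{U_i}$ defined by $v \mapsto v \otimes s_i$. Passing to duals and symmetric algebras, this induces an isomorphism of graded $\O_{U_i}$-algebras $\mathrm{Sym}((\cat{E} \otimes \L)^{\vee})|_{U_i} \iso \mathrm{Sym}(\cat{E}^{\vee})|_{U_i}$, and hence an isomorphism $\gPH_i \colon \P(\cat{E})|_{U_i} \iso \P(\cat{E} \otimes \L)|_{U_i}$ of $U_i$-schemes.

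Next I would verify gluing. On an overlap $U_i \cap U_j$, one has $s_j = g_{ij} s_i$ for some unit $g_{ij} \in \O^*(U_i \cap U_j)$, so the two isomorphisms $\gps_i, \gps_j$ of $\cat{E}|_{U_i \cap U_j}$ with $(\cat{E} \otimes \L)|_{U_i \cap U_j}$ differ by fiberwise multiplication by $g_{ij}$. Because scaling every fiber of $\cat{E}$ by a common nowhere-vanishing unit induces the identity on the associated \proj\ bundle, the $\gPH_i$ agree on overlaps and glue to a global isomorphism $\gPH \colon \P(\cat{E}) \iso \P(\cat{E} \otimes \L)$ of \proj\ bundles over $X$. (Equivalently, one can state this at the level of graded algebras as the standard fact that $\mathrm{Proj}$ is invariant under twisting the grading by the line bundle $\L^\vee$.)

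Finally I would check $G$-equivariance. Fix $\ga \in G$. The \glin{G}\ structure on $\L$ provides $\gph_{\ga} \colon \ga^* \L \iso \L$; applied to a trivializing section, $\gph_{\ga}(\ga^* s_i)$ and the trivializing section $s_j$ chosen over an open $U_j$ covering $\ga \cdot U_i$ differ by a nowhere-vanishing unit. Combined with the \glin{G}\ structure on $\cat{E}$ and the diagonal linearization on $\cat{E} \otimes \L$, the same scalar-fiber argument used for gluing shows that the action of $\ga$ intertwines $\gPH$ with the identity. Hence $\gPH$ is $G$-\equi, and since it is already an isomorphism of \proj\ bundles, we are done. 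The only real subtlety is bookkeeping of conventions for pullbacks of line bundles and the cocycle condition for the linearization; both reduce, after a trivialization, to the triviality of multiplication-by-a-unit on the projectivization.
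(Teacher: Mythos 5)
Your proof is correct and follows essentially the same strategy as the paper's: construct the isomorphism locally by tensoring with a trivializing section of $\L$, observe that on overlaps (and under the $G$-action) the local isomorphisms differ only by multiplication by a nowhere-vanishing scalar, and conclude since scalar multiplication acts trivially on ${\rm Proj}$. The only cosmetic differences are the direction of the map (you go $\P(\cat{E}) \to \P(\cat{E} \otimes \L)$, the paper the reverse) and the ${\rm Proj}$ convention (you insert duals, the paper takes ${\rm Proj}\ {\rm Sym}\ \cat{E}$ directly), neither of which affects the substance.
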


\begin{proof}
First of all, we define a \morp\ from $\P(\cat{E} \otimes \L)$ to $\P(\cat{E})$ without considering the group action. Let $\{U_i\}$ be an open cover of $X$ such that $\cat{E}|_{U_i}$ is trivial and $\L|_{U_i} \cong \O_{U_i} l_i$. Then, we define a \morp\ 
$$\gg : \cat{E}|_{U_i} \to \cat{E} \otimes \L|_{U_i}$$ 
by $e \mapsto e \otimes l_i$. This induces a \morp\ 
$$f|_{U_i} : \P(\cat{E} \otimes \L|_{U_i}) = {\rm Proj}\ {\rm Sym}\ \cat{E} \otimes \L|_{U_i} \to {\rm Proj}\ {\rm Sym}\ \cat{E}|_{U_i} = \P(\cat{E}|_{U_i}).$$ 
We claim that $\{f|_{U_i}\}$ will patch together to form a \morp\ from $\P(\cat{E} \otimes \L)$ to $\P(\cat{E})$ and it will be an isomorphism of \proj\ bundles over $X$.

Let $\gs_{\cat{E}}$, $\gs_{\L}$ be the transition functions of $\cat{E}$, $\L$ \resp\ from $U_i$ to $U_j$. Then, we have $\gs_{\L}(l_i) = a\, l_j$ for some $a \in \O_{U_j}^*$ and the transition function for $\cat{E} \otimes \L$ will be $\gs_{\cat{E}} \otimes \gs_{\L}$. Then, 
\begin{eqnarray}
(\gs_{\cat{E}} \otimes \gs_{\L}) \circ \gg(e) & = & (\gs_{\cat{E}} \otimes \gs_{\L})(e \otimes l_i) \nonumber\\
& = & \gs_{\cat{E}}(e) \otimes \gs_{\L}(l_i) \nonumber\\
& = & \gs_{\cat{E}}(e) \otimes a\, l_j \nonumber\\
& = & a\, (\gs_{\cat{E}}(e) \otimes l_j). \nonumber
\end{eqnarray}
On the other hand,
$$\gg \circ \gs_{\cat{E}}(e) = \gs_{\cat{E}}(e) \otimes l_j.$$
If we consider $\gs_{\cat{E}}(e) \otimes l_j$ and $a\, (\gs_{\cat{E}}(e) \otimes l_j)$ as elements in ${\rm Sym}\ \cat{E} \otimes \L$, then they agree, in homogeneous coordinates. Hence, $\{f|_{U_i}\}$ patch together to form a \morp\ $f$. Moreover, it is obviously an isomorphism and a \proj\ bundle \morp.

It remains to check if $f$ is $G$-\equi. The $G$-linearization on $\L$ is described by a set of iso\morp s $\{\gph_{\L,\ga} : \ga^* \L\iso\L\}$. When restricted on $U_i \cap \ga^{-1}U_j$, $\gph_{\L,\ga}$ defines an isomorphism from $\O l_j$ to $\O l_i$. So, $\gph_{\L,\ga}(l_j) = b_{\ga} l_i$ for some $b_{\ga} \in \O_{U_i \cap \ga^{-1}U_j}^*$. Similarly, if $\{\gph_{\cat{E},\ga}\}$ and $\{\gph_{\cat{E} \otimes \L,\ga}\}$ defines the $G$-linearizations on $\cat{E}$ and $\cat{E} \otimes \L$ respectively, then
$$\gg \circ \gph_{\cat{E},\ga}(e) = \gph_{\cat{E},\ga}(e) \otimes l_i.$$
On the other hand, 
\begin{eqnarray}
\gph_{\cat{E} \otimes \L,\ga} \circ \gg(e) & = & \gph_{\cat{E} \otimes \L,\ga} (e \otimes l_j) \nonumber\\
& = & \gph_{\cat{E},\ga}(e) \otimes \gph_{\L,\ga}(l_j) \nonumber\\
& = & \gph_{\cat{E},\ga}(e) \otimes b_{\ga} l_i \nonumber\\
& = & b_{\ga} (\gph_{\cat{E},\ga}(e) \otimes l_i). \nonumber
\end{eqnarray}
For the same reason, they agree in homogeneous coordinates and hence, $f$ is $G$-\equi.
\end{proof}

\vtab

\subsection{Generalized double point relation}
\label{gdprsubsection}

In \cite{LePa} (Definition 0.2), $\go_*(X)$ is defined as the quotient of the free abelian group generated by symbols $[f : Y \to X]$ where $Y$ is an object in $\smcat$ and $f$ is a \proj\ \morp, by an equivalence relation called double point relation. More precisely, suppose $Y \in \smcat$ is equidimensional and there is a \proj\ \morp\ $\gph : Y \to X \x \P^1$ such that a closed point $\gx \in \P^1$ is a regular value of $\gp_2 \circ \gph$ (In other words, $Y_{\gx} \defeq (\gp_2 \circ \gph)^{-1}(\gx)$ is a \sm\ divisor on $Y$), the fiber $Y_0 = A \cup B$ for some \sm\ divisors $A$, $B$ and $A + B$ is a \rsncd. Then, the double point relation is
\begin{eqnarray}
[Y_{\gx} \embed Y \to X] &=& [A \embed Y \to X] + [B \embed Y \to X] \nonumber\\
&& -\ [\P(\O \oplus \O(A)) \to A \cap B \embed Y \to X]. \nonumber
\end{eqnarray}

In addition, in section 5.2 in \cite{LePa}, a relation called e\edpr\ is also introduced. Suppose $Y \in \smcat$ is equidimensional and there is a \proj\ \morp\ $\gph : Y \to X$. In addition, we have divisors $A$, $B$, $C$ on $Y$ such that $A + B + C$ is a \rsncd\ and $C \sim A + B$. Then, the e\edpr\ is
\begin{eqnarray}
[C \embed Y \to X] &=& [A \embed Y \to X] + [B \embed Y \to X] \nonumber\\
&& -\ [\P(\O \oplus \O(A)) \to A \cap B \embed Y \to X] \nonumber\\
&& +\ [\P(\O \oplus \O(1)) \to \P(\O(-B) \oplus \O(-C)) \to A \cap B \cap C \embed Y \to X] \nonumber\\
&& -\ [\P(\O \oplus \O(-B) \oplus \O(-C)) \to A \cap B \cap C \embed Y \to X]. \nonumber
\end{eqnarray}

On one hand, if we assume $C$ does not intersect $A \cup B$, then this is the same as the d\dpr. On the other hand, Lemma 5.2 in \cite{LePa} showed that the e\edpr\ holds in the theory $\go$ defined by the d\dpr. In other words, the theory with double point relation imposed and the one with extended double point relation imposed are equivalent. One may then expect the existence of similar formulas when $Y_0 = A_1 \cup A_2 \cup A_3$ in the d\dpr\ setup, or when $B \sim A_1 + A_2 + A_3$ in the e\edpr\ setup. Indeed, it is possible to write a formula for arbitrary number of divisors. For induction purpose, we will consider the e\edpr\ setup.

More precisely, suppose $X$ is a separated scheme of finite type over $k$ and $\gph : Y \to X$ is a \proj\ \morp\ with $Y \in \smcat$ such that $Y$ is \equidim. Moreover, suppose there are divisors $A_1, \ldots, A_n,$ $B_1, \ldots, B_m$ on $Y$ such that 
$$\gdprdivisorsequi$$ 
and $\gdprdivisorssum$ is a \rsncd. Then, we expect a formula of the form
$$[A_1 \to X] + \cdots + [A_n \to X] + \text{extra terms} = [B_1 \to X] + \cdots + [B_m \to X] + \text{extra terms}.$$
We will give such a formula inductively. For this purpose, we will consider the following.

\begin{defn}
{\rm
Define a polynomial ring over $\Z$ :
$$\cat{R} \defeq \Z[\{X_i,Y_j,U^p_k,V^q_l\}]$$ 
where $i,j,k,l \geq 1$ and $1 \leq p,q \leq 3$.
}\end{defn}

Then, we define some elements in $\cat{R}$ inductively.

\begin{defn}
{\rm
Let $E_1^X$, $F_1^X \defeq 0$. For $n \geq 2$, define
\begin{eqnarray}
E_n^X &\defeq& E_{n-1}^X - (X_1 + \cdots + X_{n-1} + E_{n-1}^X) X_n U^1_{n-1} - X_n F_{n-1}^X \nonumber\\
F_n^X &\defeq& F_{n-1}^X + (X_1 + \cdots + X_{n-1} + E_{n-1}^X) X_n (U^2_n - U^3_n). \nonumber
\end{eqnarray}
Similarly, define $E_n^Y$, $F_n^Y$ by replacing $X$ by $Y$ and $U$ by $V$ in $E_n^X$, $F_n^X$ respectively, namely,
\begin{eqnarray}
E_1^Y,\ F_1^Y &\defeq& 0 \nonumber\\
E_n^Y &\defeq& E_{n-1}^Y - (Y_1 + \cdots + Y_{n-1} + E_{n-1}^Y) Y_n V^1_{n-1} - Y_n F_{n-1}^Y \nonumber\\
F_n^Y &\defeq& F_{n-1}^Y + (Y_1 + \cdots + Y_{n-1} + E_{n-1}^Y) Y_n (V^2_n - V^3_n) \nonumber
\end{eqnarray}
for $n \geq 2$. Also, for $n,m \geq 1$, define the elements $G^X_{n,m}$ as the following :
$$G^X_{n,m} \defeq X_1 + \cdots + X_n + E^X_n + (Y_1 + \cdots + Y_m)F^X_n + E^Y_mF^X_n.$$
Finally, define $G^Y_{n,m}$ by interchanging $X$ and $Y$ in $G^X_{n,m}$, namely,
$$G^Y_{n,m} \defeq Y_1 + \cdots + Y_n + E^Y_n + (X_1 + \cdots + X_m)F^Y_n + E^X_mF^Y_n.$$
}\end{defn}

For a \proj\ \morp\ $\gph : Y \to X$, such that $Y$ is \equidim, and divisors \gdprdivisors\ on $Y$ such that $\gdprdivisorsequi$ and $\gdprdivisorssum$ is a \rsncd, we define an abelian group homomorphism 
$$\cat{G} : \cat{R} \to \go(X)$$
as the following.

First of all, any term with $X_i$ such that $i > n$, or $Y_j$ such that $j > m$, or $U^p_k$ such that $k > n$, or $V^q_l$ such that $l > m$ will be sent to 0. (In fact, they have no effect on our formula. This part of definition is just for formality.) Then, we send
\begin{eqnarray}
1 & \mapsto & [Y \to X] \nonumber\\
X_i & \mapsto & [A_i \to Y \to X] \nonumber\\
Y_j & \mapsto & [B_j \to Y \to X] \nonumber\\
U^1_k & \mapsto & [\P(\O \oplus \O(D)) \to Y \to X] \nonumber\\
&& \text{where $D \defeq A_1 + \cdots + A_k$. Denote it by $[P^1_k \to X]$ for simplicity.} \nonumber\\
U^2_k &\mapsto& [\P(\O \oplus \O(1)) \to \P(\O(-A_k) \oplus \O(-D)) \to Y \to X] \nonumber\\
&& \text{Denote it by $[P^2_k \to X]$.} \nonumber\\
U^3_k &\mapsto& [\P(\O \oplus \O(-A_k) \oplus \O(-D)) \to Y \to X] \nonumber\\
&& \text{Denote it by $[P^3_k \to X]$.} \nonumber\\
V^q_l &\mapsto& [Q^q_l \to Y \to X] \nonumber\\
&& \text{where $Q^q_l$ is defined in the same manner with divisors $B_l$ and } \nonumber\\
&& \text{$D = B_1 + \cdots + B_l$ instead.} \nonumber
\end{eqnarray}
Finally, for a general term $X_i \cdots Y_j \cdots U^p_k \cdots V^q_l \cdots$, we send it to 
$$[A_i \x_Y \cdots \x_Y B_j \x_Y \cdots \x_Y P^p_k \x_Y \cdots \x_Y Q^q_l \x_Y \cdots \to Y \to X].$$
Then, the g\gdpr\ $GDPR(n,m)$ is the equality :
$$\cat{G}(G^X_{n,m}) = \cat{G}(G^Y_{m,n}).$$

\begin{rmk}
\label{gdprremark}
\rm{
In order for the homo\morp\ $\cat{G}$ to be well-defined, we need to check that, in general, the \morp
$$A_i \x_Y \cdots \x_Y B_j \x_Y \cdots \x_Y P^p_k \x_Y \cdots \x_Y Q^q_l \x_Y \cdots \to Y \to X$$
is \proj\ and its domain is \sm. Notice that 
$$\cat{G}(X_i^n) = [A_i \x_Y \cdots \x_Y A_i \to X] = [A_i \to X],$$ 
which is \proj\ and $A_i$ is \sm. Since $\gdprdivisorssum$ is a \rsncd, the same is true for $\cat{G}(\text{term with $X_i$, $Y_j$ only})$. In addition, the \morp s $P^p_k \to Y$ and $Q^q_l \to Y$ are both \proj\ and \sm. That means $\cat{G} : \cat{R} \to \go(X)$ is well-defined.

Observe that for any $n,m \geq 1$, the terms in $G^X_{n,m}$ or $G^Y_{m,n}$ are always of the form
$$X_i \cdots Y_j \cdots U^p_k \cdots V^q_l \cdots$$
where the powers for $X_i$, $Y_j$ are either 0 or 1. In other words, self intersection will never happen in any $GDPR(n,m)$. Moreover, $1 \leq i, k \leq n$ and $1 \leq j, l \leq m$. In addition, $\cat{G}(G^X_{n,m})$, $\cat{G}(G^Y_{m,n})$ are both in $\go_{\dim Y-1}(X)$.

The g\gdpr\ is indeed a generalization of d\dpr\ and e\edpr. For example, if we apply the definition on the setup $\id_X : Y = X \to X$ with $A_1 + A_2 \sim B_1$, we get
\begin{eqnarray}
E^X_2 &=& -\ X_1X_2U^1_1 \nonumber\\
F^X_2 &=& X_1X_2(U^2_2-U^3_2) \nonumber\\
G^X_{2,1} &=& X_1 + X_2 + E^X_2 + Y_1F^X_2 \nonumber\\
&=& X_1 + X_2 - X_1X_2U^1_1 + Y_1X_1X_2(U^2_2-U^3_2) \nonumber\\
G^Y_{1,2} &=& Y_1 \nonumber
\end{eqnarray}
Hence, the $GDPR(2,1)$ is the equality
\begin{eqnarray}
&&[A_1 \embed X] + [A_2 \embed X] \nonumber\\
&& -\ [\P(\O \oplus \O(A_1)) \to A_1 \cap A_2 \embed X] \nonumber\\
&& +\ [\P(\O \oplus \O(1)) \to \P(\O(-A_2) \oplus \O(-A_1 - A_2)) \to B_1 \cap A_1 \cap A_2 \embed X] \nonumber\\
&& -\ [\P(\O \oplus \O(-A_2) \oplus \O(-A_1 - A_2)) \to B_1 \cap A_1 \cap A_2 \embed X] \nonumber\\
&=& [B_1 \embed X], \nonumber
\end{eqnarray}
which is exactly the e\edpr\ as Lemma 5.2 in \cite{LePa}. If we further assume that $B_1$ is disjoint from $A_1$, $A_2$, then we get
$$[A_1 \embed X] + [A_2 \embed X] - [\P(\O \oplus \O(A_1)) \to A_1 \cap A_2 \embed X] = [B_1 \embed X],$$
which is the d\dpr\ in \cite{LePa} (because $\nbundle{A_1 \cap A_2}{A_2} \cong \O_{A_1 \cap A_2}(A_1)$).
}
\end{rmk}

\vtab

Our first goal is to prove $GDPR(n,m)$ holds in the theory $\go$. In other words, we will show that imposing the g\gdpr\ is equivalent to imposing the d\dpr.

To be more precise, suppose $X$ is a separated scheme of finite type over $k$ and $\gph : Y \to X$ is a \proj\ \morp\ such that $Y \in \smcat$ is \equidim. Moreover, suppose there are divisors \gdprdivisors\ on $Y$ such that $\gdprdivisorsequi$ and $\gdprdivisorssum$ is a \rsncd. We want to show 
$$\cat{G}(G^X_{n,m}) = \cat{G}(G^Y_{m,n})$$ 
where $\cat{G} : \cat{R} \to \go(X)$ is the corresponding group homo\morp.

First of all, observe that $\cat{G}(G^X_{n,m}) = \gph_*\circ \cat{G}'(G^X_{n,m})$ where $\cat{G}'$ is the map defined by the setup $\id : Y \to Y$ with the same set of divisors on $Y$. Similarly, $\cat{G}(G^Y_{m,n}) = \gph_*\circ \cat{G}'(G^Y_{m,n})$. Hence, we reduce to the case when $\gph = \id_X$. In particular, we may assume $X$ is in $\smcat$ and is \equidim.

Recall that in \cite{LePa}, suppose $X$ is a separated scheme of finite type over $k$ and $\L$ is an invertible sheaf over $X$. There is a corresponding operator $\tilde{c}_1(\L) \in \Endo{\go(X)}$ which is called the first Chern class operator. For simplicity, we will denote it by $c(\L)$ and call it Chern class operator for the rest of this paper.

We are going to prove $GDPR(n,m)$ by induction. For this purpose, we need to modify the definition of $\cat{G}$. Suppose $X \in \smcat$ is \equidim\ and there are divisors \gdprdivisors\ on $X$ such that $\gdprdivisorsequi$. Then, we define a ring homomorphism $\cat{G} : \cat{R} \to \Endo{\go(X)}$ by
\begin{eqnarray}
X_i & \mapsto & c(\O(A_i)) \nonumber\\
Y_j & \mapsto & c(\O(B_j)) \nonumber\\
U^a_k & \mapsto & {p_a}_* p_a^* \nonumber\\
&& \text{where $p_a : P^a_k \to X$} \nonumber\\
V^b_l & \mapsto & {q_b}_* q_b^* \nonumber\\
&& \text{where $q_b : Q^b_l \to X$} \nonumber
\end{eqnarray}
if $1 \leq i,k \leq n$ and $1 \leq j,l \leq m$ (The \morp s ${p_a}_*$, $p_a^*$, ${q_b}_*$, $q_b^*$ are all well-defined because $p_a$, $q_b$ are both \sm\ and \proj.). Otherwise, send them to zero.

For well-definedness of $\cat{G}$, we need to check the commutativity of some endomorphisms. Axiom \textbf{(A5)} in $\go$ implies that $c(\L)c(\L') = c(\L')c(\L)$. In addition, for $p : P^a_k \to X$, we have
$$c(\L)p_* p^* = p_* c(p^*\L) p^* = p_*  p^* c(\L)$$ 
and same for $q$. For the commutativity between $p_* p^*$ and $(p')_* (p')^*$ where $p : P \defeq P^a_k \to X$ and $p' : P' \defeq P^{a'}_{k'} \to X$, consider the following commutative diagram :

\squarediagramword{P \x_X P'}{P}{P'}{X}{\overline{p'}}{\overline{p}}{p}{p'}

\noindent By axiom \textbf{(A2)} in $\go$, 
$$p_* p^* (p')_* (p')^* = p_* {\overline{p'}}_* {\overline{p}}^* (p')^* = (p')_* {\overline{p}}_* {\overline{p'}}^* p^* = (p')_* (p')^* p_* p^*.$$ 
The commutativity between $q$ and $q'$, $p$ and $q$ follow from similar arguments. Hence, the ring homo\morp\ $\cat{G} : \cat{R} \to \Endo{\go(X)}$ is well-defined.

The statement we are going to prove is $\cat{G}(G^X_{n,m}) = \cat{G}(G^Y_{m,n})$ as elements in $\Endo{\go(X)}$. Notice that we do not assume $\gdprdivisorssum$ to be a \rsncd\ in the setup anymore. Moreover, if $A_i$ is a \sm\ divisor, then 
$$\cat{G}(X_i)[\id_X] = c(\O(A_i))[\id_X] = [A_i \embed X]$$ 
by the \textbf{(Sect)} axiom in \cite{LePa}. So, the statement corresponding to this modified $\cat{G}$ is actually stronger than what we aimed to prove at the beginning (we will make this more precise later). For simplicity, we will still call this statement $GDPR(n,m)$ within this proof.

\vtab

Here is the outline of the proof. We will prove that $GDPR(n,m)$ holds by assuming $GDPR(n,1)$. Then, for $n \geq 3$, we will prove $GDPR(n,1)$ by assuming $GDPR(n-1,1)$ and $GDPR(2,1)$. Thus, we reduce the proof of $GDPR(n,m)$ to the proof of $GDPR(2,1)$, which is essentially the e\edpr\ in \cite{LePa}. But since the definition of $\cat{G}$ is modified, $GDPR(2,1)$ becomes a stronger statement. Hence, there is still some works needed to be done.

Suppose $GDPR(n,1)$ holds. Then, for a given \equidim\ $X \in \smcat$ and divisors \gdprdivisors\ on $X$, let $C \defeq A_1 + \cdots + A_n$. Consider the setup corresponding to $A_1 + \cdots + A_n \sim C$. From $GDPR(n,1)$, we get $\cat{G}(G^X_{n,1}) = \cat{G}(G^Y_{1,n})$. That means, as elements in $\Endo{\go(X)}$,
$$c(\O(C)) = \cat{G}(X_1 + \cdots + X_n + E^X_n) + \cat{G}(F^X_n)c(\O(C)).$$
Similarly, by considering the setup $C \sim B_1 + \cdots + B_m$, we get
$$c(\O(C)) = \cat{G}'(Y_1 + \cdots + Y_m + E^Y_m) + \cat{G}'(F^Y_m)c(\O(C))$$
with corresponding $\cat{G}'$.

Now, consider the map $\cat{G}''$ corresponding to the setup $\gdprdivisorsequi$. Then, by observing that $\cat{G}=\cat{G}''$ on terms without $Y$ or $V$ and $\cat{G}'=\cat{G}''$ on terms without $X$ or $U$, we have
\begin{eqnarray}
&&c(\O(C)) \nonumber\\
&=& \cat{G}(X_1 + \cdots + X_n + E^X_n) + \cat{G}(F^X_n)c(\O(C)) \nonumber\\
&=& \cat{G}''(X_1 + \cdots + X_n + E^X_n) + \cat{G}''(F^X_n)\ (\cat{G}'(Y_1 + \cdots + Y_m + E^Y_m) + \cat{G}'(F^Y_m)c(\O(C))) \nonumber\\
&=& \cat{G}''(X_1 + \cdots + X_n + E^X_n) + \cat{G}''(F^X_n)\ (\cat{G}''(Y_1 + \cdots + Y_m + E^Y_m) + \cat{G}''(F^Y_m)c(\O(C))) \nonumber\\
&=& \cat{G}''(G^X_{n,m})+ \cat{G}''(F^X_nF^Y_m)c(\O(C)). \nonumber
\end{eqnarray}
On the other hand,
\begin{eqnarray}
&&c(\O(C)) \nonumber\\
&=& \cat{G}'(Y_1 + \cdots + Y_m + E^Y_m) + \cat{G}'(F^Y_m)c(\O(C)) \nonumber\\
&=& \cat{G}''(G^Y_{m,n})+ \cat{G}''(F^X_nF^Y_m)c(\O(C)). \nonumber
\end{eqnarray}
Then, the result follows from cancelling $\cat{G}''(F^X_nF^Y_m)c(\O(C))$ on both sides. That means it is enough to show $GDPR(n,1)$.

\vtab

Assume it is true for $n-1$ case and $n = 2$ case. Now, we start with a setup $A_1 + \cdots + A_n \sim B$. Let $C \defeq A_1 + \cdots + A_{n-1}$. Consider the setup $C + A_n \sim B$. Define $\gs \defeq {p_1}_* p_1^*$ and $\gs' \defeq {p_2}_* p_2^* - {p_3}_* p_3^*$ where 
\begin{eqnarray}
p_1 &:& \P(\O \oplus \O(C)) \to X \nonumber\\
p_2 &:& \P(\O \oplus \O(1)) \to \P(\O(-A_n) \oplus\O(-B)) \to X \nonumber\\
p_3 &:& \P(\O \oplus \O(-A_n) \oplus\O(-B)) \to X. \nonumber
\end{eqnarray}
Then, by $GDPR(2,1)$, we get
\begin{eqnarray}
c(\O(B)) &=& c(\O(C)) + c(\O(A_n)) \label{eqn1} \\
&& -\ c(\O(C)) c(\O(A_n)) \gs + c(\O(B))c(\O(C))c(\O(A_n))\gs'. \nonumber
\end{eqnarray}
By $GDPR(n-1,1)$ corresponding to the setup $A_1 + \cdots + A_{n-1} \sim C$, we have $\cat{G}'(G^X_{n-1,1}) = \cat{G}'(G^Y_{1,n-1})$ where $\cat{G}'$ is the corresponding ring homo\morp. That implies
\begin{eqnarray}
c(\O(C)) = \cat{G}'(X_1 + \cdots + X_{n-1} + E^X_{n-1}) + c(\O(C))\cat{G}'(F^X_{n-1}). \label{eqn2} 
\end{eqnarray}
Now, consider the setup $A_1 + \cdots + A_n \sim B$ and call the corresponding ring homo\morp\ $\cat{G}$. Then, $\cat{G}=\cat{G}'$ on terms involving only $X_i$, $U^p_k$, if $1 \leq i,k \leq n-1$. Also, we have $\cat{G}(X_n) = c(\O(A_n))$. 

For simplicity, we will drop the notation $\cat{G}$. Hence, as elements in $\Endo{\go(X)}$,
\begin{eqnarray}
&&c(\O(B)) \nonumber\\
&=& c(\O(C)) + X_n - c(\O(C)) X_n \gs + c(\O(B))c(\O(C))X_n\gs' \nonumber\\
&& \text{(by equation (\ref{eqn1}))} \nonumber\\
&=& (X_1 + \cdots + X_{n-1} + E^X_{n-1} + c(\O(C))F^X_{n-1}) + X_n \nonumber\\
&& -\ X_n \gs\ (X_1 + \cdots + X_{n-1} + E^X_{n-1} + c(\O(C))F^X_{n-1}) \nonumber\\
&& +\ c(\O(B))X_n\gs'\ (X_1 + \cdots + X_{n-1} + E^X_{n-1} + c(\O(C))F^X_{n-1}) \nonumber\\
&& \text{(by equation (\ref{eqn2}))} \nonumber\\
&=& X_1 + \cdots + X_{n-1} + X_n \nonumber\\
&& +\ E^X_{n-1} - (X_1 + \cdots + X_{n-1} + E^X_{n-1}) X_n \gs \nonumber\\
&& +\ c(\O(B))\gs'X_n(X_1 + \cdots + X_{n-1} + E^X_{n-1}) \nonumber\\
&& +\ c(\O(C))F^X_{n-1}  - c(\O(C))F^X_{n-1} X_n \gs + c(\O(B))c(\O(C))\gs'X_nF^X_{n-1} \nonumber\\
&=& X_1 + \cdots + X_n \nonumber\\
&& +\ (E^X_n + X_nF^X_{n-1}) \nonumber\\
&& +\ c(\O(B))(F^X_n - F^X_{n-1}) \nonumber\\
&& +\ c(\O(C))F^X_{n-1}  - c(\O(C))F^X_{n-1} X_n \gs + c(\O(B))c(\O(C))\gs'X_nF^X_{n-1}. \nonumber\\
&& \text{(by definition of $E^X_n$ and $F^X_n$ and the fact that $\gs = \cat{G}(U^1_{n-1})$ and $\gs' = \cat{G}(U^2_n - U^3_n$))} \nonumber
\end{eqnarray}
Notice that the last three terms are
\begin{eqnarray}
&&c(\O(C))F^X_{n-1}  - c(\O(C))F^X_{n-1} X_n \gs + c(\O(B))c(\O(C))\gs'X_nF^X_{n-1} \nonumber\\
&=&(c(\O(B))- X_n + c(\O(C)) X_n \gs - c(\O(B))c(\O(C))X_n\gs')\ F^X_{n-1} \nonumber\\
&& -\ c(\O(C))F^X_{n-1} X_n \gs + c(\O(B))c(\O(C))\gs'X_nF^X_{n-1} \nonumber\\
&& \text{(by equation (\ref{eqn1}))} \nonumber\\
&=& (c(\O(B)) - X_n)F^X_{n-1}. \nonumber
\end{eqnarray}
Hence,
\begin{eqnarray}
c(\O(B)) &=& X_1 + \cdots + X_n + E^X_n + X_nF^X_{n-1} + c(\O(B))(F^X_n - F^X_{n-1}) \nonumber\\
&& +\ (c(\O(B)) - X_n)F^X_{n-1} \nonumber\\
&=& X_1 + \cdots + X_n + E^X_n + c(\O(B))F^X_n, \nonumber
\end{eqnarray}
which is exactly $\cat{G}(G^Y_{1,n}) = \cat{G}(G^X_{n,1})$. That means it is enough to show $GDPR(2,1)$.

Suppose $X \in \smcat$ is \equidim\ and $\L$, $\cat{M}$ are two invertible sheaves over $X$. Define an element $H(\L,\cat{M}) \in \Endo{\go(X)}$ as the following :
\begin{eqnarray}
H(\L,\cat{M}) &\defeq& c(\L) + c(\cat{M}) - c(\L)c(\cat{M}){p_1}_*{p_1}^* \nonumber\\
&& +\ c(\L) c(\cat{M}) c(\L \otimes \cat{M}) ({p_2}_*{p_2}^*-{p_3}_*{p_3}^*) - c(\L \otimes \cat{M}) \nonumber\\
&& \text{where $p_1 : \P(\O \oplus \L) \to X$} \nonumber\\
&& p_2 : \P(\O \oplus \O(1)) \to \P(\dual{\cat{M}} \oplus \dual{(\L \otimes \cat{M})}) \to X \nonumber\\
&& p_3 : \P(\O \oplus \dual{\cat{M}} \oplus \dual{(\L \otimes \cat{M})}) \to X. \nonumber
\end{eqnarray}
Observe that if $A$, $B$, $C$ are divisors on $X$ such that $A + B \sim C$, then 
$$H(\O_X(A), \O_X(B)) \defeq \cat{G}(G^X_{2,1}) - \cat{G}(G^Y_{1,2})$$
where $\cat{G}$ is the ring homo\morp\ corresponding to the setup $A + B \sim C$. In other words, it is enough to show $H(\L,\cat{M})=0$ for any \equidim\ $X \in \smcat$ and invertible sheaves $\L$, $\cat{M}$ over $X$. For this purpose, we need the following two Lemmas.

\begin{lemma}
Suppose $f : X' \to X$ is a \morp\ in $\smcat$ such that $X$, $X'$ are both \equidim\ and $\L$, $\cat{M}$ are two invertible sheaves over $X$.

\begin{tabular}[t]{ll}
1. & If $f$ is \proj, then $H(\L,\cat{M})f_* = f_*H(f^*\L,f^*\cat{M})$. \\
2. & If $f$ is \sm, then $H(f^*\L,f^*\cat{M})f^* = f^*H(\L,\cat{M})$.
\end{tabular}
\end{lemma}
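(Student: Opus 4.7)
The plan is to verify both identities summand by summand in the definition of $H(\L,\cat{M})$. Each summand is a composition of first Chern class operators $c(-)$ and push-pull operators $p_{i,*}p_i^*$ attached to the three auxiliary projective bundles $P_1 = \P(\O\oplus\L)$, $P_2 = \P(\O\oplus\O(1))\to\P(\dual{\cat{M}}\oplus\dual{(\L\otimes\cat{M})})$ and $P_3 = \P(\O\oplus\dual{\cat{M}}\oplus\dual{(\L\otimes\cat{M})})$. The starting observation is that all three bundles are built purely from direct sums, tensor products, duals, and projectivization of the line bundles $\L,\cat{M}$, so base-changing along $f : X'\to X$ returns precisely the corresponding bundles $P'_i$ that appear in $H(f^*\L,f^*\cat{M})$. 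Denote the resulting Cartesian squares by
\[
\begin{CD}
P'_i @>\tilde f>> P_i \\
@Vp'_iVV @VVp_iV \\
X' @>f>> X
\end{CD}
\]
with $p_i$ smooth and projective.

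For part 1, the argument combines two standard facts about $\go$. First, the projection formula $c(\cat{N})\circ f_* = f_*\circ c(f^*\cat{N})$ holds for any projective $f$ and any invertible sheaf $\cat{N}$. Second, axiom \textbf{(A2)} gives the base-change identity $p_i^*f_* = \tilde f_*(p'_i)^*$ (applicable since $p_i$ is smooth), while functoriality of pushforward gives $f_*(p'_i)_* = p_{i,*}\tilde f_*$. Combining,
\[ p_{i,*}\,p_i^*\,f_* \;=\; p_{i,*}\,\tilde f_*\,(p'_i)^* \;=\; f_*\,(p'_i)_*\,(p'_i)^*. \]
Applying these rules to each of the five terms in $H(\L,\cat{M})f_*$ moves $f_*$ rightward past every operator, converting each $c(\cat{N})$ into $c(f^*\cat{N})$ and each $p_{i,*}p_i^*$ into $(p'_i)_*(p'_i)^*$, and the result is exactly $f_*H(f^*\L,f^*\cat{M})$.

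Part 2 is completely symmetric. First Chern class operators commute with arbitrary pullback, $c(f^*\cat{N})\circ f^* = f^*\circ c(\cat{N})$. When $f$ is smooth the squares above are Tor-independent, so smooth base change gives $(p'_i)_*\tilde f^* = f^*p_{i,*}$, and with $(p'_i)^*f^* = \tilde f^*p_i^*$ one obtains
\[ (p'_i)_*\,(p'_i)^*\,f^* \;=\; (p'_i)_*\,\tilde f^*\,p_i^* \;=\; f^*\,p_{i,*}\,p_i^*. \]
The same term-by-term substitution, now moving $f^*$ leftward past each operator in $H(f^*\L,f^*\cat{M})f^*$, yields $f^*H(\L,\cat{M})$.

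The lemma is essentially a bookkeeping statement and there is no deep obstacle. The only point requiring real attention is the initial identification of the bundles $P'_i$ as the base changes of $P_i$: because the $P_i$ are built entirely from constructions (direct sum, dual, tensor, $\P(-)$, together with the tautological $\O(1)$ for the $P_2$ term) that all commute with pullback of line bundles, the pulled-back $P_i$ are manifestly the $P'_i$ appearing on the right-hand side. This identification is what allows the axiomatic rewriting above to land back inside $H(f^*\L,f^*\cat{M})$ rather than in some variant involving incompatible bundles.
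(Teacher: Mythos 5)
Your proof is correct and follows essentially the same route as the paper: commute $c(-)$ with $f_*$ and $f^*$ via axioms \textbf{(A3)}/\textbf{(A4)}, commute $p_{i,*}p_i^*$ with $f_*$ and $f^*$ via axiom \textbf{(A2)} together with functoriality of pushforward/pullback, and observe that each $P_i$ base-changes along $f$ to the corresponding bundle built from $f^*\L,\ f^*\cat{M}$. The only cosmetic difference is that you invoke Tor-independence/smooth base change where the paper simply cites \textbf{(A2)} as a formal axiom of $\go$.
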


\begin{proof}
1.\tab Axiom \textbf{(A3)} in $\go$ implies that $c(\L)f_*=f_*c(f^*\L)$. For $p_i$, consider the commutative diagram

\squarediagramword{P^i \x_X X'}{X'}{P^i}{X}{p_i'}{f'}{f}{p_i}

\noindent By axiom \textbf{(A2)}, we have ${p_i}_*{p_i}^*f_* = {p_i}_*f'_*{p_i}'^* = f_*{p_i}'_*{p_i}'^*$ and the \morp s $p_i'$ are
\begin{eqnarray}
P^1 \x_X X' &=& \P(\O \oplus f^*\L) \to X' \nonumber\\
P^2 \x_X X' &=& \P(\O \oplus \O(1)) \to \P(f^*\dual{\cat{M}} \oplus f^* \dual{(\L \otimes \cat{M})}) \to X' \nonumber\\
P^3 \x_X X' &=& \P(\O \oplus f^*\dual{\cat{M}} \oplus f^* \dual{(\L \otimes \cat{M})}) \to X'. \nonumber
\end{eqnarray}

2.\tab Similarly, axiom \textbf{(A4)} implies that $c(f^*\L)f^*=f^*c(\L)$. For $p_i$, we can consider the same diagram above and we get ${p_i}'_*{p_i}'^*f^* = {p_i}'_*f'^*{p_i}^* = f^*{p_i}_*{p_i}^*.$
\end{proof}

\begin{lemma}
\label{sectaxiomlemma}
Suppose $X$ is a \sm\ $k$-scheme, $\L_1$, $\L_2, \ldots, \L_n$ are invertible sheaves over $X$ and $L_1$, $L_2, \ldots, L_n$ are the corresponding line bundles over $X$. Let $\tilde{X} \defeq L_1 \x_X L_2 \x_X \cdots \x_X L_n$ and $\gp : \tilde{X} \to X$ be the projection. Then, there are canonically defined global sections $s_i \in {\rm H}^0(\tilde{X},\gp^*\L_i)$ such that, for each $i$, the section $s_i$ will cut out a \sm\ divisor $D_i$ on $\tilde{X}$ and $D_1 + \cdots + D_n$ is a \rsncd.
\end{lemma}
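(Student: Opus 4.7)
The plan is to build each $s_i$ from a tautological section living on the total space $L_i$, then pull back to $\tilde X$ and identify each vanishing locus with the preimage of the zero section.

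First I would construct a canonical section on each $L_i$ individually. Recall that $L_i = \spec{{\rm Sym}\,\dual{\L_i}}$, so the inclusion $\dual{\L_i} \hookrightarrow {\rm Sym}\,\dual{\L_i}$ of the degree-one summand corresponds to a map $\gp_i^*\dual{\L_i}\to \O_{L_i}$, i.e.\ a tautological global section $t_i\in {\rm H}^0(L_i,\gp_i^*\L_i)$. By construction, $t_i$ vanishes to order one precisely along the zero section $X\embed L_i$. Let $q_i\colon \tilde X\to L_i$ denote the projection onto the $i$-th factor and set $s_i\defeq q_i^* t_i\in {\rm H}^0(\tilde X,\gp^*\L_i)$.

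Next I would identify the vanishing locus $D_i$ of $s_i$. Since taking pullback of sections commutes with pullback of zero schemes, $D_i=q_i^{-1}(X)$, where $X\embed L_i$ is the zero section. Using the fiber product description of $\tilde X$, this yields
$$D_i \;=\; L_1\x_X\cdots\x_X L_{i-1}\x_X X\x_X L_{i+1}\x_X\cdots\x_X L_n,$$
which is a product over $X$ of line bundles and hence smooth. More generally, for any nonempty subset $S\subseteq\{1,\ldots,n\}$ the scheme-theoretic intersection is
$$\bigcap_{i\in S} D_i \;=\; \prod_{j\notin S}{}^{\!\!/X}\, L_j,$$
which is again a vector bundle over $X$, in particular smooth and of codimension exactly $|S|$ in $\tilde X$. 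This codimension count is what confirms that the divisors meet transversely.

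Finally I would assemble the SNC verification. Each $D_i$ is a smooth divisor by the identification above. The codimension of any intersection $\bigcap_{i\in S}D_i$ equals $|S|$, which is the hallmark of transverse intersection of smooth divisors, and each irreducible component appears with multiplicity one (the section $t_i$, and hence $s_i$, vanishes to first order). Therefore $D_1+\cdots+D_n$ is a \rsncd\ on $\tilde X$. The construction is canonical because the section $t_i$ was built from the universal inclusion $\dual{\L_i}\hookrightarrow{\rm Sym}\,\dual{\L_i}$ with no choices involved. The only step requiring any real care is the local identification of $D_i$ with the preimage of the zero section together with the transversality check, and both follow from working in local trivializations where $L_i\cong \A^1\x X$ and $t_i$ becomes the coordinate function on $\A^1$.
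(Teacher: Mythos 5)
Your proof is correct and follows essentially the same approach as the paper: the tautological section you build from the degree-one inclusion $\dual{\L_i}\hookrightarrow{\rm Sym}\,\dual{\L_i}$ is exactly the section the paper writes in coordinates as $(x,v_1,\ldots,v_n)\mapsto(x,v_1,\ldots,v_n,v_i)$, and the identification of $D_i$ with the preimage of the zero section together with the codimension count on intersections is the same verification. You phrase the construction more intrinsically while the paper works directly in local coordinates, but the underlying idea is identical.
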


\begin{proof}
Define $s_i : \tilde{X} \to \tilde{X} \x_X L_i$ by $(x,v_1, \ldots ,v_n) \mapsto (x,v_1, \ldots ,v_n, v_i)$. This is a canonically defined global section of $\gp^*\L_i$. It cuts out the divisor $D_i \defeq \{(x,v_1, \ldots ,v_n)\ |\ v_i=0\}$. Moreover, the intersection of $D_{i_1}, \ldots , D_{i_j}$ is just $\{(x,v_1, \ldots ,v_n)\ |\ v_{i_1} = v_{i_2}= \cdots = v_{i_j}=0\}$, which is \sm\ and has codimension $j$ in $\tilde{X}$.
\end{proof}

We are now ready to prove $H(\L,\cat{M}) = 0$. First of all, 
$$H(\L,\cat{M})[f] = H(\L,\cat{M})f_*[\id] = f_*H(f^*\L,f^*\cat{M})[\id].$$ 
So, it is enough to consider the element $[\id]$. Let $\L_1$, $\L_2$, $\L_3$ be the invertible sheaves $\L$, $\cat{M}$, $\L \otimes \cat{M}$ \resp\ and $\gp : \tilde{X} \to X$ as in Lemma \ref{sectaxiomlemma}. Then, we have  
$$\gp^*H(\L,\cat{M})[\id] = H(\gp^*\L,\gp^*\cat{M})\gp^*[\id] = H(\gp^*\L,\gp^*\cat{M})[\id].$$ 
By the extended homotopy property in $\go$, $\gp^* : \go(X) \to \go(\tilde{X})$ is an isomorphism. That means it is enough to prove $H(\L,\cat{M})[\id] = 0$ when there are divisors $A$, $B$, $C$ on $X$ such that $\L \cong \O_X(A)$, $\cat{M} \cong \O_X(B)$, $C \sim A + B$ and $A + B + C$ is a \rsncd. In this case,
\begin{eqnarray}
&& H(\L,\cat{M})[\id] \nonumber\\
& = & c(\O(A))[\id] + c(\O(B))[\id] - c(\O(A))c(\O(B)){p_1}_*{p_1}^*[\id]\nonumber\\
&& +\ c(\O(A))\, c(\O(B))\, c(\O(C))\, ({p_2}_*{p_2}^*-{p_3}_*{p_3}^*)[\id] - c(\O(C))[\id]\nonumber\\
&=& [A \embed X] + [B \embed X] - {p_1}_*{p_1}^*c(\O(A))c(\O(B))[\id]\nonumber\\
&& +\ ({p_2}_*{p_2}^*-{p_3}_*{p_3}^*)\, c(\O(A))\, c(\O(B))\, c(\O(C))\, [\id] - [C \embed X] \nonumber\\
&& (\text{by \textbf{(Sect)} axiom in $\go$}) \nonumber
\end{eqnarray}
\begin{eqnarray}
&=& [A \embed X] + [B \embed X] - {p_1}_*{p_1}^*[A \cap B \embed X] \nonumber\\
&& +\ ({p_2}_*{p_2}^*-{p_3}_*{p_3}^*)[A \cap B \cap C \embed X] - [C \embed X] \nonumber\\
&& (\text{by \textbf{(Sect)} axiom}) \nonumber\\
&=& [A \embed X] + [B \embed X] - [\P(\O \oplus \O(A)) \to A \cap B \embed X]\nonumber\\
&& +\ [\P(\O \oplus \O(1)) \to \P(\O(-B) \oplus\O(-C)) \to A \cap B \cap C \embed X] \nonumber\\
&& -\ [\P(\O \oplus \O(-B) \oplus\O(-C))\to A \cap B \cap C \embed X] - [C \embed X] \nonumber\\
&=& 0 \nonumber
\end{eqnarray}
by the extended double point relation in \cite{LePa} (Lemma 5.2). Hence, we proved the following Proposition.

\begin{prop}
\label{gdprholdprop}
Suppose $X \in \smcat$ is \equidim\ and \gdprdivisors\ are divisors on $X$ such that $\gdprdivisorsequi$. Let $\cat{G} : \cat{R} \to \Endo{\go(X)}$ be the corresponding map constructed before. Then, $\cat{G}(G^X_{n,m}) = \cat{G}(G^Y_{m,n})$.
\end{prop}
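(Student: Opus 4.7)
The plan is to prove the identity by a layered induction that ultimately reduces everything to the extended double point relation of Levine--Pandharipande (Lemma 5.2 of \cite{LePa}). First I would observe that the modified version of $\cat{G}$ (as an endomorphism-valued map, using $c(\O(A_i))$ in place of $[A_i\embed X]$) is strong enough that we no longer need to assume that $\gdprdivisorssum$ is a \rsncd; this is crucial since the auxiliary divisors $C$ introduced in the induction are in general only linear combinations of the $A_i$ and $B_j$, not smooth components of a \rsncd. The reduction to $\id_X$ at the beginning is routine via axiom \textbf{(A2)} and the fact that $\cat{G}(G^X_{n,m}) = \gph_*\circ \cat{G}'(G^X_{n,m})$ for the corresponding setup on $Y$.

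Next I would carry out the two inductive reductions. For $GDPR(n,m)\Leftarrow GDPR(n,1)$, set $C\defeq A_1+\cdots+A_n$ and apply $GDPR(n,1)$ to both $A_1+\cdots+A_n\sim C$ and $C\sim B_1+\cdots+B_m$; this gives two expressions for $c(\O(C))$, and a direct substitution plus cancellation of the common term $\cat{G}''(F^X_n F^Y_m)c(\O(C))$ yields $\cat{G}''(G^X_{n,m})=\cat{G}''(G^Y_{m,n})$. For $GDPR(n,1)\Leftarrow GDPR(n-1,1)+GDPR(2,1)$, set $C\defeq A_1+\cdots+A_{n-1}$, apply $GDPR(2,1)$ to $C+A_n\sim B$ to expand $c(\O(B))$, then substitute the expansion of $c(\O(C))$ coming from $GDPR(n-1,1)$. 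The bookkeeping is tedious but mechanical: collect the pieces using the recursive definitions of $E^X_n$, $F^X_n$, and note that the surviving ``tail'' $c(\O(C))F^X_{n-1} - c(\O(C))F^X_{n-1}X_n\gs + c(\O(B))c(\O(C))\gs'X_nF^X_{n-1}$ simplifies to $(c(\O(B))-X_n)F^X_{n-1}$ by reusing the $GDPR(2,1)$ identity itself.

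The real content is therefore $GDPR(2,1)$. I would package it as the vanishing of the endomorphism
\begin{eqnarray}
H(\L,\cat{M}) &\defeq& c(\L)+c(\cat{M})-c(\L)c(\cat{M}){p_1}_*{p_1}^* \nonumber\\
&& +\ c(\L)c(\cat{M})c(\L\otimes\cat{M})({p_2}_*{p_2}^*-{p_3}_*{p_3}^*)-c(\L\otimes\cat{M}) \nonumber
\end{eqnarray}
for any invertible sheaves $\L,\cat{M}$ on a smooth equidimensional $X$. Two naturality lemmas (functoriality of Chern class operators under projective pushforward via \textbf{(A3)} and smooth pullback via \textbf{(A4)}, together with transverse base-change via \textbf{(A2)} for the $p_i$) allow me to reduce to showing $H(\L,\cat{M})[\id_X]=0$. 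Then I would use the construction in Lemma \ref{sectaxiomlemma}: set $\tilde X\defeq L_1\times_X L_2\times_X L_3$ where the $L_i$ are the line bundles for $\L,\cat{M},\L\otimes\cat{M}$ and use the extended homotopy property to see $\gp^*:\go(X)\to\go(\tilde X)$ is injective (in fact an isomorphism), while on $\tilde X$ the tautological sections cut out smooth divisors $D_1,D_2,D_3$ whose sum is a \rsncd.

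The main obstacle in this plan is the last step: after pulling back to $\tilde X$ one does not automatically have $D_3\sim D_1+D_2$, which is needed to invoke the extended double point relation. I would address this by noting that on $\tilde X$ the three pullback bundles $\gp^*\L,\gp^*\cat{M},\gp^*(\L\otimes\cat{M})$ satisfy the multiplicative relation of their defining line bundles, so one can replace $D_3$ by a linearly equivalent divisor supported transversally to $D_1+D_2$, keeping the \rsncd\ property; alternatively, one can enlarge the product by adding another factor so that a genuine \rsncd\ representative of $\L\otimes\cat{M}$ is available and then use the \textbf{(Sect)} axiom to rewrite each $c(\O(D_i))[\id]$ as $[D_i\embed\tilde X]$. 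Once in that position, the vanishing of $H(\gp^*\L,\gp^*\cat{M})[\id_{\tilde X}]$ is literally the extended double point relation of Lemma~5.2 in \cite{LePa}, and the proof closes.
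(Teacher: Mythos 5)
Your proposal follows the paper's proof essentially step for step: reduce to $[\id_X]$, carry out the two inductive reductions, package $GDPR(2,1)$ as the vanishing of $H(\L,\cat{M})$, use the naturality lemmas, and pull back to the line-bundle total space $\tilde{X}$ of Lemma~\ref{sectaxiomlemma}. The one spot where you overcomplicate is the final paragraph: on $\tilde{X}$ the isomorphism $\gp^*(\L\otimes\cat{M})\cong\gp^*\L\otimes\gp^*\cat{M}$, combined with $\O_{\tilde{X}}(D_i)\cong\gp^*\L_i$ for the tautological zero sections of Lemma~\ref{sectaxiomlemma}, already yields $D_3\sim D_1+D_2$ automatically (we are in the non-equivariant setting on a smooth variety), so no replacement of $D_3$ or enlargement of the product is needed before invoking the extended double point relation.
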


We can now apply this statement to prove that the g\gdpr\ holds in $\go$.

\begin{cor} (G\gdpr\ holds in $\go$)
\label{gdprholdcor}

Suppose $X$ is a separated scheme of finite type over $k$ and there is a \proj\ \morp\ $\gph : Y \to X$ such that $Y$ is in $\smcat$ and is \equidim. Moreover, suppose \gdprdivisors\ are divisors on $Y$ such that $\gdprdivisorsequi$ and $\gdprdivisorssum$ is a \rsncd. Let $\cat{G} : \cat{R} \to \go(X)$ be the corresponding map constructed before. Then, $\cat{G}(G^X_{n,m}) = \cat{G}(G^Y_{m,n})$.
\end{cor}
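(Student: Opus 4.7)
The plan is to reduce this statement to Proposition \ref{gdprholdprop}, which already handles the endomorphism-valued version on $\Endo{\go(Y)}$. The first step would be to observe that the homomorphism $\cat{G}$ built for the projective morphism $\gph : Y \to X$ factors as $\gph_* \circ \cat{G}_Y$, where $\cat{G}_Y : \cat{R} \to \go(Y)$ is the analogous map for the identity setup $\id_Y : Y \to Y$ with the same divisors $\gdprdivisors$ on $Y$. This factorization is essentially built into the definition: every monomial is sent by $\cat{G}$ to a class of the form $[Z \to Y \to X]$ with $Z$ a fiber product over $Y$, and this is $\gph_*$ of $[Z \to Y] = \cat{G}_Y(\text{same monomial})$; axiom \textbf{(A2)} handles the functoriality of pushforward.

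The second step is to identify $\cat{G}_Y$ with the endomorphism-valued map $\tilde{\cat{G}} : \cat{R} \to \Endo{\go(Y)}$ from Proposition \ref{gdprholdprop}, via the formula
\[
\cat{G}_Y(T) \;=\; \tilde{\cat{G}}(T)[\id_Y]
\]
for every monomial $T \in \cat{R}$. On the generators this is immediate: the \textbf{(Sect)} axiom in $\go$ gives $c(\O(A_i))[\id_Y] = [A_i \embed Y]$ since $A_i$ is smooth as a component of the \rsncd\ $\gdprdivisorssum$, and similarly for each $B_j$; for $U^a_k$ and $V^b_l$ the equalities ${p_a}_* p_a^* [\id_Y] = [P^a_k \to Y]$ and ${q_b}_* q_b^* [\id_Y] = [Q^b_l \to Y]$ hold tautologically. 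For a general monomial $\generalterm$ one iterates, using axioms \textbf{(A2)} and \textbf{(A3)} to commute Chern class operators past the smooth projective pushforwards coming from $p_a : P^a_k \to Y$ and $q_b : Q^b_l \to Y$, and applying \textbf{(Sect)} at each stage. The \rsncd\ hypothesis together with the smoothness of the projective bundles $P^a_k, Q^b_l \to Y$ ensures that each intersection and each fiber product arising in the induction is smooth, so \textbf{(Sect)} keeps applying and produces precisely the fiber-product class $[A_i \x_Y \cdots \x_Y P^a_k \x_Y \cdots \to Y]$ required by the definition of $\cat{G}_Y$.

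With these two steps in hand, Proposition \ref{gdprholdprop} applied on the smooth equidimensional variety $Y$ with divisors $\gdprdivisors$ yields
\[
\tilde{\cat{G}}(G^X_{n,m}) \;=\; \tilde{\cat{G}}(G^Y_{m,n}) \qquad \text{in } \Endo{\go(Y)}.
\]
Evaluating both sides at $[\id_Y]$ and applying $\gph_*$ gives $\cat{G}(G^X_{n,m}) = \cat{G}(G^Y_{m,n})$ in $\go(X)$, which is what we want. The main obstacle I anticipate is the bookkeeping in the second step: the terms appearing in $G^X_{n,m}$ and $G^Y_{m,n}$ are arbitrary monomials mixing $X_i, Y_j, U^p_k, V^q_l$, and one has to verify carefully — by induction on monomial length — that each iterated composition of Chern class operators and pushforward-pullbacks on $[\id_Y]$ reassembles into exactly the expected iterated fiber product over $Y$. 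Beyond this unpacking of definitions, the \rsncd\ hypothesis supplies all the transversality needed, and no further ingredients beyond the axioms of $\go$ and Proposition \ref{gdprholdprop} should be required.
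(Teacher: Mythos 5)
Your proposal is correct and follows essentially the same route as the paper: factor $\cat{G}$ through $\gph_*$ to reduce to the identity map on $Y$, then identify the $\go(Y)$-valued $\cat{G}_Y$ with evaluation of the endomorphism-valued map at $[\id_Y]$, and invoke Proposition~\ref{gdprholdprop}. The paper simply asserts the identity $\cat{G}(G^X_{n,m})[\id_X] = \cat{G}(G^X_{n,m})$ as ``the fact that,'' whereas you spell out the monomial-by-monomial verification via \textbf{(Sect)}, \textbf{(A2)}, \textbf{(A3)}, which is the honest content of that assertion.
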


\begin{proof}
By definition, $\cat{G}(G^X_{n,m}) = \gph_* \circ \cat{G}'(G^X_{n,m})$ and $\cat{G}(G^Y_{m,n}) = \gph_* \circ \cat{G}'(G^Y_{m,n})$ where $\cat{G}'$ is the map corresponding to the setup $\id_Y : Y \to Y$ with the same set of divisors. So, we may assume $\gph = \id_X$. Then, it follows from the fact that 
\begin{eqnarray}
&& \cat{G}(G^X_{n,m})[\id_X] \nonumber\\
&& \text{(the modified definition $\cat{G} : \cat{R} \to \Endo{\go(X)}$)} \nonumber\\
&=& \cat{G}(G^X_{n,m}) \nonumber\\
&& \text{(the original definition $\cat{G} : \cat{R} \to \go(X)$)} \nonumber
\end{eqnarray}
and similarly for $G^Y_{m,n}$.
\end{proof}

\begin{rmk}
\label{zerodivisorrmk}
\rm{
Notice that in the g\gdpr\ setup $\gph : Y \to X$ with $\gdprdivisorsequi$ on $Y$, we do not assume $A_i$ or $B_j$ to be nonempty. If $\cat{G}$ is the map corresponding to $\gdprdivisorsequi$ and $\cat{G}'$ is the map corresponding to 
$$A_1 + \cdots + A_n + \sum_{i=n+1}^N C_i \sim B_1 + \cdots + B_m + \sum_{j=m+1}^M D_j$$ 
where $\{C_i, D_j\}$ are zero divisors, then
$$\cat{G}(G^X_{n,m}) = \cat{G}'(G^X_{n,m}) = \cat{G}'(G^X_{N,M}) \text{\tab and\tab } \cat{G}(G^Y_{m,n}) = \cat{G}'(G^Y_{m,n}) = \cat{G}'(G^Y_{M,N}).$$

\begin{proof}
Notice that if a general term $\generalterm$ in $\cat{R}$ contains $X_i$ or $Y_j$ with $n+1 \leq i \leq N$ or $m+1 \leq j \leq M$, then $\cat{G}'(\generalterm) = 0$. By definition,
$$E^X_{n+1} = E^X_n + \sum \text{ terms with } X_{n+1}.$$
Inductively,
$$E^X_N = E^X_n + \sum \text{ terms with } X_i \text{ where } n+1 \leq i \leq N.$$
Similarly,
\begin{eqnarray}
F^X_N &=& F^X_n + \sum \text{ terms with } X_i \text{ where } n+1 \leq i \leq N \nonumber\\
E^Y_M &=& E^Y_m + \sum \text{ terms with } Y_j \text{ where } m+1 \leq j \leq M \nonumber\\
F^Y_M &=& F^Y_m + \sum \text{ terms with } Y_j \text{ where } m+1 \leq j \leq M. \nonumber
\end{eqnarray}
Hence,
\begin{eqnarray}
G^X_{N,M} &=& X_1 + \cdots + X_N + E^X_N + (Y_1 + \cdots + Y_M)F^X_N + E^Y_M F^X_N \nonumber\\
&=& X_1 + \cdots + X_n + E^X_n + (Y_1 + \cdots + Y_m)F^X_n + E^Y_m F^X_n \nonumber\\
&& +\ \sum \text{ terms with } X_i \text{ where } n+1 \leq i \leq N \nonumber\\
&& +\ \sum \text{ terms with } Y_j \text{ where } m+1 \leq j \leq M \nonumber\\
&=& G^X_{n,m} + \text{redundant terms}. \nonumber
\end{eqnarray}
That means $\cat{G'}(G^X_{N,M}) = \cat{G}'(G^X_{n,m}) = \cat{G}(G^X_{n,m})$. Similarly, $\cat{G}'(G^Y_{M,N}) = \cat{G}'(G^Y_{m,n}) = \cat{G}(G^Y_{m,n})$.
\end{proof}
}
\end{rmk}

\vtab

\subsection{Definition and basic properties}

Now we will define our \equi\ algebraic cobordism theory using the g\gdpr. 

\begin{defn}
{\rm
For an object $X$ in $\gsmcat{G}$, let $M_G(X)$ be the set of isomorphism classes over $X$ of \proj\ \morp s $f : Y \to X$ in $\gsmcat{G}$. Then, $M_G(X)$ is a monoid under disjoint union of domains, i.e.
$$[Y \to X] + [Y' \to X] \defeq [Y \disjoint Y' \to X].$$
We define the abelian group $M_G(X)^+$ as the group completion of $M_G(X)$.
}\end{defn}

The $i$-th graded piece (cohomological grading) : $(M_G(X)^+)^i$, when $X$ is equidimensional, is given by $[Y \to X]$ where $Y$ is equidimensional and $i = \dim X - \dim Y$. We also have homological grading $M_G(X)_i^+$ where $i$ denotes the dimension of $Y$, if $Y$ is \equidim.

\begin{rmk}
\rm{
The main reason for focusing on \qproj\ $X$ instead of just separated scheme of finite type over $k$ as in \cite{LePa} is because we will sometimes consider the quotient $X/G$ and the operation of taking quotient works better in the \qproj\ category.
}
\end{rmk}

Next, we will define the notion of \equi\ g\gdpr\ which is the \equi\ analog of the g\gdpr\ we just defined in section \ref{gdprsubsection}. To be more precise, we will consider the following setup.

Let $\gph : Y \to X$ be a \proj\ \morp\ in $\gsmcat{G}$ such that $Y$ is \equidim. In addition, \gdprdivisors\ are $G$-invariant divisors on $Y$ such that $\gdprdivisorsequi$ ($G$-\equi ly linearly equivalent) and $\gdprdivisorssum$ is a \rsncd. In this setup, we construct a corresponding abelian group homo\morp\ $\cat{G} : \cat{R} \to M_G(X)^+$ by the exact same definition as in section \ref{gdprsubsection}. Notice that all objects involved are \sm\ varieties with natural $G$-action and all \morp s involved are naturally $G$-\equi. We will call the collection of $\gph : Y \to X$ together with the divisors as above a g\gdpr\ setup over $X$, or GDPR setup.

\begin{defn}
{\rm
The \equi\ algebraic cobordism group $\cobg{G}{}{X}$ is defined as the quotient of $M_G(X)^+$ by the subgroup generated by all expressions $\cat{G}(G^X_{n,m}) - \cat{G}(G^Y_{m,n})$ where $\cat{G}$ corresponds to some GDPR setup over $X$.
}\end{defn}

\begin{rmk}
\rm{
As pointed out in remarks \ref{gdprremark}, if $\gph : Y \to X$ is the \morp\ defining $\cat{G}$, then $\cat{G}(G^X_{n,m})$, $\cat{G}(G^Y_{m,n})$ both lie in $M_G(X)_{\dim Y-1}^+$. Hence, if $X$ is equidimensional, we can define a homological (cohomological) grading on $\cobg{G}{}{X}$, namely
$$\cobg{G}{}{X} = \bigoplus_i\ \cobg{G}{i}{X} = \bigoplus_i\ \cobglow{G}{i}{X}$$
where $\cobglow{G}{i}{X}$ is defined as the quotient of $M_G(X)^+_i$ by the subgroup generated by all expressions $\cat{G}(G^X_{n,m}) - \cat{G}(G^Y_{m,n})$ such that $\cat{G}$ corresponds to some GDPR setup over $X$ where the dimension of the domain of $\gph$ is $i+1$. Similarly, the group $\cobg{G}{i}{X}$ is the quotient of $(M_G(X)^+)^i$ with GDPR setups over $X$ when the dimension of the domain of $\gph$ is $\dim X - i + 1$.
}
\end{rmk}

G\gdpr\ is a generalization of the d\dpr\ in the \equi\ configuration.

\begin{prop}
\label{equidprholds}
Suppose $\gph : Y \to X \x \P^1$ is a \proj\ \morp\ in $\gsmcat{G}$ (with trivial $G$-action on $\P^1$) such that $Y$ is \equidim. Let $\gx \in \P^1$ be a closed point. Assume  that the fiber $Y_{\gx} \defeq (\gp_2 \circ \gph)^{-1}(\gx)$ is a \sm\ $G$-invariant divisor on $Y$ and there exist \sm\ $G$-invariant divisors $A$, $B$ on $Y$ such that $Y_0 = A \cup B$ and $A$, $B$ intersect transversely, then
$$[Y_{\gx} \to X] = [A \to X] + [B \to X] - [\P(\O \oplus \O(A)) \to A \cap B \to X]$$
as elements in $\cobg{G}{}{X}$.
\end{prop}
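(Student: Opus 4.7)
The plan is to recognize this as a direct instance of the equivariant $GDPR(2,1)$ relation built into the definition of $\cobg{G}{}{X}$. We take the projective equivariant morphism $\pi_1 \circ \gph : Y \to X$ and try to apply $GDPR(2,1)$ with $A_1 = A$, $A_2 = B$ on one side and $B_1 = Y_\xi$ on the other.

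Before invoking the relation, I need to verify its two hypotheses. First, I must show $A + B \sim Y_\xi$ as $G$-invariant divisors, i.e.\ $G$-equivariantly linearly equivalent. Since $\P^1$ carries the trivial $G$-action, the rational function $t/(t-\xi)$ on $\P^1$ (giving $[0] - [\xi]$ as its divisor) is automatically $G$-invariant, and its pullback along $\pi_2 \circ \gph$ is an element of $H^0(Y, \cat{K}^*)^G$ whose divisor is $Y_0 - Y_\xi = A + B - Y_\xi$. Second, I must show that $A + B + Y_\xi$ is a reduced strict normal crossing divisor: $A + B$ is rsncd by the transversality hypothesis, and since $Y_\xi$ and $Y_0 = A \cup B$ are fibers of $\pi_2 \circ \gph$ over distinct closed points of $\P^1$, they are disjoint, so adjoining $Y_\xi$ preserves the rsncd condition.

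With both hypotheses in place, the tuple $(\pi_1 \circ \gph, A, B, Y_\xi)$ is a legitimate GDPR setup. Writing out $G^X_{2,1}$ and $G^Y_{1,2}$ from the definitions of $E^X_n$, $F^X_n$, $E^Y_m$, $F^Y_m$ (exactly the computation performed in Remark \ref{gdprremark} for the case $n=2, m=1$), the relation $\cat{G}(G^X_{2,1}) = \cat{G}(G^Y_{1,2})$ reads
\begin{eqnarray}
[A \to X] + [B \to X] &-& [\P(\O \oplus \O(A)) \to A \cap B \to X] \nonumber\\
&+& [\P(\O \oplus \O(1)) \to \P(\O(-B) \oplus \O(-A-B)) \to Y_\xi \cap A \cap B \to X] \nonumber\\
&-& [\P(\O \oplus \O(-B) \oplus \O(-A-B)) \to Y_\xi \cap A \cap B \to X] \nonumber\\
&=& [Y_\xi \to X] \nonumber
\end{eqnarray}
in $\cobg{G}{}{X}$. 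The last two terms are supported on $Y_\xi \cap A \cap B$, which is empty because $Y_\xi$ is disjoint from $A \cup B$, so by Remark \ref{zerodivisorrmk} (or simply because the corresponding classes in $M_G(X)^+$ are zero) they vanish, leaving the desired equality.

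The main obstacles are really just the two bookkeeping checks above: equivariant linear equivalence (handled by triviality of the action on $\P^1$) and the rsncd condition (handled by disjointness of fibers over distinct points). Once these are in place, the statement reduces to reading off $GDPR(2,1)$, exactly as in the non-equivariant case treated at the end of Remark \ref{gdprremark}, with all morphisms and divisors now carrying their natural $G$-equivariant structures.
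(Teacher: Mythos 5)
Your proof is correct and follows the paper's argument essentially verbatim: you verify the $G$-equivariant linear equivalence $Y_\xi \sim A + B$ via the trivial action on $\P^1$, verify the \rsncd\ condition by disjointness of fibers over distinct closed points, and read off the relation (the paper invokes $GDPR(1,2)$ with $Y_\xi$ on the "$A$" side rather than $GDPR(2,1)$ with it on the "$B$" side, but this is the same relation up to the built-in symmetry). The only cosmetic difference is that you explicitly spell out why the extra terms over $Y_\xi \cap A \cap B$ vanish, which the paper leaves implicit.
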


\begin{proof}
Since $Y_{\gx}$ is disjoint from $A$, $B$ and $A$, $B$ intersect transversely, $Y_{\gx} + A + B$ is a \rsncd\ on $Y$. In addition, since $\P^1$ has trivial $G$-action, $Y_{\gx} \sim A + B$. That defines a g\gdpr\ setup $\gp_1 \circ \gph : Y \to X$ with $Y_{\gx} \sim A + B$. Thus, we obtain the equality $\cat{G}(G^X_{1,2}) = \cat{G}(G^Y_{2,1})$ in $\cobg{G}{}{X}$ which is exactly
$$[Y_{\gx} \to X] = [A \to X] + [B \to X] - [\P(\O \oplus \O(A)) \to A \cap B \to X].$$
\end{proof}

\vtab

In \cite{LePa}, M. Levine and R. Pandharipande listed several natural axioms and properties that an algebraic cobordism theory should satisfy. Here, we will show the \equi\ version of some of them.

\textbf{(D1)} If $f : X \to X'$ in $\gsmcat{G}$ is \proj, then there is an abelian group homo\morp 
$$f_* : \cobglow{G}{*}{X} \to \cobglow{G}{*}{X'}.$$ 
Moreover, if $f,g$ are both \proj, then $(g \circ f)_* = g_* \circ f_*.$

\begin{proof}
As in the $\go_*$ theory of \cite{LePa}, the push-forward $f_*$ is given by sending $[h : Y \to X]$ to $[f \circ h : Y \to X']$. We need to check that it preserves the g\gdpr.

Suppose a g\gdpr\ on $X$ is defined by a \proj\ \morp\ $\gph : Y \to X$ in $\gsmcat{G}$ with $\gdprdivisorsequi$. It defines a homo\morp\ $\cat{G} : \cat{R} \to M_G(X)^+$. We can then consider the g\gdpr\ on $X'$ given by $f \circ \gph : Y \to  X'$ with the same set of divisors. This will also define a homo\morp\ $\cat{G}' : \cat{R} \to M_G(X')^+$. Thus, for a general term $X_i \cdots Y_j \cdots U^p_k \cdots V^q_l \cdots$ in $\cat{R}$,
\begin{eqnarray}
&&f_* \circ \cat{G}(X_i \cdots Y_j \cdots U^p_k \cdots V^q_l \cdots) \nonumber\\
&=& f_*[A_i \x_Y \cdots \x_Y  B_j \x_Y \cdots \x_Y P^p_k \x_Y \cdots \x_Y Q^q_l \x_Y \cdots \to X] \nonumber\\
&=& [A_i \x_Y \cdots \x_Y  B_j \x_Y \cdots \x_Y P^p_k \x_Y \cdots \x_Y Q^q_l \x_Y \cdots \to X \to X']. \nonumber
\end{eqnarray}
On the other hand,
\begin{eqnarray}
&&\cat{G}'(X_i \cdots Y_j \cdots U^p_k \cdots V^q_l \cdots) \nonumber\\
&=& [A_i \x_Y \cdots \x_Y  B_j \x_Y \cdots \x_Y P^p_k \x_Y \cdots \x_Y Q^q_l \x_Y \cdots \to X \to X']. \nonumber
\end{eqnarray}
That implies $f_* \circ \cat{G} = \cat{G}'$. In particular, $f_* \circ \cat{G}(G^X_{n,m}) = \cat{G}'(G^X_{n,m})$ and $f_* \circ \cat{G}(G^Y_{m,n}) = \cat{G}'(G^Y_{m,n})$, which means $f_* \circ \cat{G}(G^X_{n,m}) = f_* \circ \cat{G}(G^Y_{m,n})$ in $\cobg{G}{}{X'}$. So, the group homo\morp\ $f_* : \cobglow{G}{}{X} \to \cobglow{G}{}{X'}$ is well-defined. Clearly, it preserves the homological grading and $(g \circ f)_* = g_* \circ f_*$.
\end{proof}

\vtab

\textbf{(D2)} If $f : X' \to X$ in $\gsmcat{G}$ is \sm\ such that $X,X'$ are both equidimensional, then there is an abelian group homo\morp
$$f^* : \cobg{G}{*}{X} \to \cobg{G}{*}{X'}.$$

\begin{proof}
Let $[Y \to X]$ be an element $\cobg{G}{}{X}$, then we define the pull-back $f^*[Y \to X]$ as $[Y \x_X X' \to X']$. First of all, $Y \x_X X'$ is a \sm\ variety with natural diagonal $G$-action and the \morp\ $Y \x_X X' \to X'$ is \proj\ and $G$-\equi.

Consider a GDPR setup over $X$ given by $\gph : Y \to X$ with divisors \gdprdivisors\ on $Y$ and $\cat{G}$ be the corresponding map. We have the following commutative diagram :

\squarediagramword{Y' \defeq Y \x_X X'}{Y}{X'}{X}{f'}{\gph'}{\gph}{f}

\noindent We obtain a g\gdpr\ setup over $X'$ given by $\gph' : Y' \to X'$ with divisors $f'^*A_1, \ldots, f'^*A_n, f'^*B_1, \ldots, f'^*B_m$ on $Y'$. Let $\cat{G}'$ be the corresponding homo\morp. The smoothness of $f'$ implies that $f'^*A_1 + \cdots + f'^*A_n + f'^*B_1 + \cdots + f'^*B_m$ is still a \rsncd. Observe that if $P^1_k = \P(\O \oplus \O(D))$ is a $G$-\equi\ \proj\ bundle over $Y$, then $P^1_k \x_Y Y' \cong \P(\O \oplus \O(f'^*D))$, as $G$-\equi\ \proj\ bundles over $Y'$. So, 
$$\cat{G}'(U^1_k) = [P^1_k \x_Y Y' \to Y'] = f^*[P^1_k \to Y] = f^* \circ \cat{G}(U^1_k).$$ 
Similar statements \wrt\ $U^p_k$ and $V^q_l$ also hold. For a general term,
\begin{eqnarray}
&&f^* \circ \cat{G}(X_i \cdots U^p_k \cdots) \nonumber\\
&=& f^*[A_i \x_Y \cdots \x_Y P^p_k \x_Y \cdots \to X] \nonumber\\
&=& [(A_i \x_Y \cdots \x_Y P^p_k \x_Y \cdots) \x_X X' \to X']. \nonumber
\end{eqnarray}
On the other hand,
\begin{eqnarray}
&&\cat{G}'(X_i \cdots U^p_k \cdots) \nonumber\\
&=& [(A_i \x_Y Y') \x_{Y'} \cdots \x_{Y'} (P^p_k \x_Y Y') \x_{Y'} \cdots \to X']. \nonumber\\
&=& [(A_i \x_Y \cdots \x_Y  P^p_k \x_Y \cdots ) \x_Y Y' \to X']. \nonumber\\
&=& [(A_i \x_Y \cdots \x_Y P^p_k \x_Y \cdots ) \x_X X' \to X']. \nonumber
\end{eqnarray}
That shows the well-definedness of $f^* : \cobg{G}{}{X} \to \cobg{G}{}{X'}$. Since $f$ is \sm, taking fiber product with $f : X' \to X$ preserves codimension. Thus, $f^*$ preserves the cohomological grading.
\end{proof}

\vtab

\textbf{(D3)} In \cite{LePa}, there is a discussion of the first Chern class operator. This will be addressed in the next section.

\vtab

\textbf{(D4)} For each pair $(X, X')$ of objects in $\gsmcat{G}$, there is a bilinear, graded pairing
$$\x : \cobglow{G}{i}{X} \x \cobglow{G}{j}{X'} \to \cobglow{G}{i+j}{X \x X'}$$
which is commutative, associative and admits a distinguished element $1 \in \cobglow{G}{0}{\pt}$ as a unit.

\begin{proof}
The definition is standard. We define 
$$[f : Y \to X] \x [f' : Y' \to X'] \defeq [f \x f' : Y \x Y' \to X \x X'].$$

Suppose a GDPR setup over $X$ is given by $\gph : Z \to X$ with divisors \gdprdivisors\ on $Z$ and $\cat{G}$ be the corresponding homo\morp. We need to show $$\cat{G}(G^X_{n,m}) \x [f' : Y' \to X'] = \cat{G}(G^Y_{m,n}) \x [f' : Y' \to X'].$$ 
W\withoutlog, we may assume $Y'$ is \equidim. Consider the GDPR setup over $X \x X'$ given by $\gph \x f' : Z \x Y' \to X \x X'$ with divisors $\gp_1^*A_1, \ldots, \gp_1^*A_n$, $\gp_1^*B_1, \ldots, \gp_1^*B_m$ on $Z \x Y'$. Let $\cat{G}'$ be the corresponding \homo. 

Observe that if $P^1_k = \P(\O_Z \oplus \O_Z(D))$, then $P^1_k \x Y' = \P(\O_{Z \x Y'} \oplus \O_{Z \x Y'}(\gp_1^*D))$. So, 
$$\cat{G}'(U^1_k) = [P^1_k \x Y' \to X \x X'] = [P^1_k \to X] \x [Y' \to X'] = \cat{G}(U^1_k) \x [Y' \to X'].$$ 
Similar statements \wrt\ $U^p_k$ and $V^q_l$ also hold. For a general term,
\begin{eqnarray}
&&[f'] \x \cat{G}(X_i \cdots U^p_k \cdots) \nonumber\\
&=& [f'] \x [A_i \x_Z \cdots \x_Z P^p_k \x_Z \cdots \to X] \nonumber\\
&=& [(A_i \x_Z \cdots \x_Z P^p_k \x_Z \cdots) \x Y' \to X \x X']. \nonumber
\end{eqnarray}
On the other hand,
\begin{eqnarray}
&&\cat{G}'(X_i \cdots U^p_k \cdots) \nonumber\\
&=& [(A_i \x Y') \x_{Z \x Y'} \cdots \x_{Z \x Y'} (P^p_k \x Y') \x_{Z \x Y'} \cdots \to X \x X']. \nonumber\\
&=& [(A_i \x_Z \cdots \x_Z P^p_k \x_Z \cdots) \x Y' \to X \x X']. \nonumber
\end{eqnarray}
That shows the well-definedness of $\x$. It is not hard to see that this product is graded, associative and commutative. The unit in $\cobglow{G}{0}{\pt}$ is simply $[\id : \pt \to \pt]$.
\end{proof}

\begin{rmk}
\rm{
We will refer to 
$$\x : \cobglow{G}{i}{X} \x \cobglow{G}{j}{X'} \to \cobglow{G}{i+j}{X \x X'}$$ 
as the external product. This external product gives $\cobglow{G}{*}{\pt}$ a graded ring structure and $\cobglow{G}{*}{X}$ a graded $\cobglow{G}{*}{\pt}$-module structure. In addition, if $f : X \to X'$ is a \proj\ \morp\ in $\gsmcat{G}$, then the push-forward $f_* : \cobglow{G}{*}{X} \to \cobglow{G}{*}{X'}$ will be a graded $\cobglow{G}{*}{\pt}$-module homomorphism. Similarly, if $f : X \to X'$ in $\gsmcat{G}$ is \sm\ such that $X$, $X'$ are \equidim, then the pull-back $f^* : \cobg{G}{*}{X'} \to \cobg{G}{*}{X}$ will be a graded $\cobg{G}{*}{\pt}$-module homomorphism.
}
\end{rmk}

\vtab

The following two properties can be easily derived from the definitions, similarly to \cite{LePa}.

\vtab

\textbf{(A1)} If $f : X \to X'$ and $g : X' \to X''$ are both \sm\ and $X$, $X'$, $X''$ are all \equidim, then
$$(g \circ f)^* = f^* \circ g^*.$$
Moreover, $\id^*$ is the identity \homo. 
\begin{flushright} 
$\square$ 
\end{flushright}

\textbf{(A2)} If $f : X \to Z$ is \proj\ and $g : Y \to Z$ is \sm\ such that $X$, $Y$, $Z$ are all \equidim, then we have $g^*f_* = f'_*g'^*$ in the pull-back square
\squarediagramword{X \x_Z Y}{X}{Y}{Z}{g'}{f'}{f}{g}
\begin{flushright} 
$\square$ 
\end{flushright}

\vtab

\textbf{(A3), (A4), (A5)} in \cite{LePa} are properties involving the Chern class operator. Hence, they will be addressed in the next section.

\vtab

\textbf{(A6)} If $f,g$ are \proj, then
$$\x \circ (f_* \x g_*) = (f \x g)_* \circ \x.$$

\begin{proof}
Let $f : X \to X'$ and $g : Z \to Z'$. The statement follows from the commutativity of the following diagram, which is easy to check.
\squarediagramword{\cobg{G}{}{X} \x \cobg{G}{}{Z}}{\cobg{G}{}{X \x Z}}{\cobg{G}{}{X'} \x \cobg{G}{}{Z'}}{\cobg{G}{}{X' \x Z'}}{\x}{f_* \x g_*}{(f \x g)_*}{\x}
\end{proof}

\vtab

\textbf{(A7)} If $f,g$ are \sm\ with \equidim\ domains and codomains, then
$$\x \circ (f^* \x g^*) = (f \x g)^* \circ \x.$$

\begin{proof}
It follows from the commutativity of the previous diagram with vertical arrows reversed.
\end{proof}

\vtab

\subsection{Results for free action}
\label{freeactionisosubsection}

Before handling the definition of the first Chern class operator, let us mention some useful observations here first. 

Consider the set of objects $Y \in \gsmcat{G}$ such that the geometric quotient (definition 0.6 in \cite{Mu}\,) $Y/G$ exists as scheme over $k$, lies in $\smcat$ and the map $Y \to Y/G$ is a principal $G$-bundle. Denote this set of objects by $\cat{D}$. Notice that if $Y/G$ exists and $Y \to Y/G$ is a principal $G$-bundle, then $Y \to Y/G$ is locally trivial in the \etale\ topology (see \cite{EdGr}) and hence, $Y$ is \sm\ if and only if $Y/G$ is \sm\ (Proof in Proposition \ref{domainisind}). Therefore, an object $Y \in \gsmcat{G}$ lies in $\cat{D}$ if and only if $Y/G$ exists, is \qproj\ over $k$ and $Y \to Y/G$ is a principal $G$-bundle. We will consider $\cat{D}$ as a full subcategory of $\gsmcat{G}$. 

Suppose $X$ is a variety in $\cat{D}$, it turns out that there is a one-to-one correspondence between \morp s $Z \to X/G$ in the category $\smcat$ and $G$-\equi\ \morp s $Y \to X$ in the category $\gsmcat{G}$. This important observation will lead us to the proof of the isomorphism
$$\go(X/G)\iso\cobg{G}{}{X}$$
for any $X \in \cat{D}$.

Throughout this paper, we will call going from $X$ to $X/G$ ``descent'' and going from $X/G$ to $X$ ``ascent''. 

\begin{prop}
\label{domainisind}
If $f : Y \to X$ is a \morp\ in $\gsmcat{G}$ and $X$ is in $\cat{D}$, then $Y$ is also in $\cat{D}$.
\end{prop}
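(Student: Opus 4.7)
The plan is to construct $Y/G$ by pulling back the local trivializations of the principal bundle $p : X \to X/G$ to $Y$, observing that $Y$ then becomes \etale-locally a product $G \times (\text{base})$, and gluing the bases via \etale\ descent.

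First, I would choose an \etale\ cover $\{U_\alpha \to X/G\}$ on which $p$ trivializes via $G$-\equi\ isomorphisms $\phi_\alpha : X \times_{X/G} U_\alpha \iso G \times U_\alpha$ over $U_\alpha$, and write $s_\alpha : U_\alpha \to X$ for the resulting sections. Set $Z_\alpha \defeq Y \times_X U_\alpha$ (via $s_\alpha$). Using $\phi_\alpha$ and the \equi ance of $f$, one obtains a canonical $G$-\equi\ isomorphism
$$Y \times_{X/G} U_\alpha \;\cong\; G \times Z_\alpha$$
with $G$ acting by translation on the left factor; concretely, $(y, g, u) \mapsto (g, (g^{-1} y, u))$. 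Each $Z_\alpha$ is \qproj\ over $U_\alpha$ as a base change of the \proj\ \morp\ $f$, and it is \sm: the scheme $Y \times_{X/G} U_\alpha$ is \sm\ (\etale\ over the \sm\ $Y$) and $G$ is \sm\ (reductive, or finite \etale\ in characteristic $0$), so smoothness descends from $G \times Z_\alpha$ to $Z_\alpha$.

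On double overlaps $U_{\alpha\beta} \defeq U_\alpha \times_{X/G} U_\beta$, the two sections $s_\alpha$ and $s_\beta$ differ by a cocycle $c_{\alpha\beta} : U_{\alpha\beta} \to G$ with $s_\beta = c_{\alpha\beta} \cdot s_\alpha$. This cocycle induces an isomorphism $Z_\alpha|_{U_{\alpha\beta}} \iso Z_\beta|_{U_{\alpha\beta}}$ given by $(y, u) \mapsto (c_{\alpha\beta}(u) \cdot y, u)$, which is well-defined because $f(c_{\alpha\beta}(u) \cdot y) = c_{\alpha\beta}(u) \cdot s_\alpha(u) = s_\beta(u)$, and which satisfies the cocycle identity on triple overlaps because $c_{\alpha\beta}$ does. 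Since each $Z_\alpha \to U_\alpha$ is \qproj, \etale\ descent is effective and the $Z_\alpha$ glue to a \sm\ \qproj\ scheme $q : Y/G \to X/G$ with $(Y/G) \times_{X/G} U_\alpha \cong Z_\alpha$. Composing with $X/G \to \pt$ places $Y/G$ in $\smcat$. Under the identification above, the pullback of $Y \to Y/G$ over $U_\alpha$ is the trivial projection $G \times Z_\alpha \to Z_\alpha$, so $Y \to Y/G$ is \etale-locally trivial, hence a principal $G$-bundle; both the universal property of the quotient and the isomorphism $Y \cong (Y/G) \times_{X/G} X$ then follow by checking \etale-locally on $X/G$.

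The main obstacle I anticipate is the effectiveness of the gluing step: one must verify that the $G$-valued cocycle $c_{\alpha\beta}$, which came from the principal bundle structure on $X$, genuinely produces valid descent data on the $Z_\alpha$, and that the descent yields an honest scheme rather than merely an algebraic space. Quasi-projectivity of each $Z_\alpha \to U_\alpha$, inherited from projectivity of $f : Y \to X$, is precisely what makes \etale\ descent effective here and delivers $Y/G$ as a \qproj\ scheme over $k$.
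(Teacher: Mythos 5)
Your route differs substantially from the paper's: you try to build $Y/G$ directly by \'etale descent of the slices $Z_\alpha$, whereas the paper finds a $G$-linearized relatively ample sheaf on $Y$ (Sumihiro for connected reductive $G$, the norm $\otimes_{\alpha\in G}\alpha^*\L$ for finite $G$) and then invokes Proposition~7.1 of Mumford's GIT to produce $Y/G$ as a quasi-projective scheme and $Y\to Y/G$ as a principal bundle in one step, with smoothness of $Y/G$ following from \'etale local triviality. You correctly flag that the effectiveness of the gluing step is the crux, but your dismissal of it is not justified: the assertion that quasi-projectivity of $Z_\alpha\to U_\alpha$ by itself makes \'etale (or fppf) descent effective is false. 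Effective descent holds automatically for quasi-affine morphisms, but for quasi-projective morphisms one needs, in addition, compatible descent data on a relatively ample invertible sheaf (SGA~1, Exp.~VIII, 7.7--7.8). Without that, the descent only yields an algebraic space, and there is no general reason for it to be a scheme, let alone quasi-projective.

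To close the gap you would have to descend a relatively ample sheaf along your cocycle $c_{\alpha\beta}$. Tracing this through, the natural candidate comes from a relatively ample $\L$ on $Y$ over $X$, and the cocycle acts compatibly on the induced sheaves over the $Z_\alpha$ precisely when $\L$ (or a positive power) carries a $G$-linearization. For connected reductive $G$ this is exactly the content of Sumihiro's theorem, which uses that $Y$ is normal (here: smooth); for finite $G$ one uses the norm construction. In other words, the missing ingredient in your argument is precisely the nontrivial input the paper's proof is built around, and once you add it you may as well take the GIT shortcut the paper takes.
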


\begin{proof}
Recall that the group scheme $G$ we are working with is either a reductive connected group over $k$ or a finite group.

Consider the case when $G$ is connected and reductive. Since $Y$ is \qproj, the map $Y \to X$ is \qproj. Then, there exists an invertible sheaf $\L$ over $Y$ (may not be \glin{G}) which is very ample relative to $X$. By Theorem 1.6 in \cite{Su}, since $Y$ is normal, there exists a positive integer $m$ such that $\L^m$ (\,$\defeq \L^{\otimes m}$) admits a $G$-linearization. Then, by Proposition 7.1 in \cite{Mu}, we have the following commutative diagram in which $Y/G$ is \qproj\ and $Y \to Y/G$ is a principal $G$-bundle.

\squarediagram{Y}{X}{Y/G}{X/G}

\noindent Since $Y \to Y/G$ is a principal $G$-bundle, the \morp\ $Y \to Y/G$ is locally trivial in the \etale\ topology. That means that $Y/G$ can be covered by \etale\ neighborhoods $W$ for which we have the following commutative diagram :

\squarediagramword{W \x G}{Y}{W}{Y/G}{\text{\etale}}{}{}{\text{\etale}}

\noindent Hence, $Y$ is \sm\ if and only if $Y/G$ is \sm. 

For the case when $G$ is finite, just replace $\L^m$ by $\otimes_{\ga \in G}\ \ga^* \L$.
\end{proof}

The following is mostly a standard application of descent theory, but we need to make sure we preserve the \sm ness and \qproj ness assumptions.

\vtab

\begin{prop}
\label{ascentdescent}
For any object $X \in \cat{D}$,

\begin{statementslist}
{\rm (1)} & There is a one-to-one correspondence between the set of \morp s $f : Z \to X/G$ in $\smcat$ and the set of \morp s $g : Y \to X$ in $\gsmcat{G}$, given by sending $Z \to X/G$ to its fiber product with $X \to X/G$. Moreover, its inverse is given by sending $Y \to X$ to $Y/G \to X/G$. \\
{\rm (2)} & The above map defines a one-to-one correspondence between the set of \proj\ \morp s $f : Z \to X/G$ in $\smcat$ and the set of \proj\ \morp s $g : Y \to X$ in $\gsmcat{G}$. \\
{\rm (3)} & The above map defines a one-to-one correspondence between the set of vector bundles $E' \to X/G$ and the set of $G$-equivariant vector bundles $E \to X$.
\end{statementslist}
\end{prop}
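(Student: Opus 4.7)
The plan is to construct the two maps in each part and verify they are mutually inverse, keeping track of smoothness, quasi-projectivity, and projectivity throughout. Write $\pi: X \to X/G$ for the structure map, which is a principal $G$-bundle and hence faithfully flat and locally trivial in the étale topology (as used in the proof of Proposition \ref{domainisind}).

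For (1), the ascent is $Z \mapsto Y := Z \times_{X/G} X$ with $G$-action inherited from the second factor, and projection $g: Y \to X$; the descent is $(Y \to X) \mapsto (Y/G \to X/G)$, which makes sense because Proposition \ref{domainisind} places $Y$ in $\cat{D}$. To see that $Y$ lies in $\gsmcat{G}$: since $\pi$ is étale-locally trivial, so is its base change $Y \to Z$, hence étale-locally $Y \cong Z \times G$, which is smooth because $Z$ and $G$ are. For quasi-projectivity, note that $X/G$ is separated (being quasi-projective), so the diagonal $X/G \to X/G \times X/G$ is a closed immersion; hence $Y = Z \times_{X/G} X \hookrightarrow Z \times X$ is closed, and $Z \times X$ is quasi-projective, so $Y$ is. The two constructions are mutually inverse: starting from $f: Z \to X/G$, one has a natural isomorphism $Y/G \cong Z$ since $Y \to Z$ is a principal $G$-bundle pulled back from $\pi$, recovering $f$; conversely, given $g: Y \to X$, the natural map $Y \to (Y/G) \times_{X/G} X$ is an isomorphism because both sides are principal $G$-bundles over $Y/G$ with the same cocycle (equivalently, the map is an isomorphism étale-locally on $Y/G$, where both become $U \times G$).

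For (2), projectivity is preserved in both directions. In the ascent direction this is immediate: projective morphisms are stable under base change, so if $f: Z \to X/G$ is projective then $g: Y \to X$ is projective. In the descent direction, suppose $g: Y \to X$ is projective and $G$-equivariant. Projectivity is fpqc-local on the target in the sense that a morphism $f: Z \to X/G$ is projective iff its base change along the faithfully flat $\pi$ is; this can be verified by pulling back a $g$-very ample $G$-linearized line bundle (obtained from any $g$-very ample line bundle on $Y$ by tensoring over $G$ or raising to a sufficiently high power as in the proof of Proposition \ref{domainisind}) to a $f$-very ample line bundle on $Z = Y/G$ via standard descent along the principal bundle $Y \to Z$.

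For (3), the argument is parallel descent theory for vector bundles along the principal $G$-bundle $\pi: X \to X/G$. Given $E' \to X/G$, take $E := E' \times_{X/G} X$, which is a $G$-equivariant vector bundle on $X$ of the same rank (vector-bundle-ness is preserved by pullback). Given a $G$-equivariant vector bundle $E \to X$, the projection $E \to X$ is $G$-equivariant and affine, so its $G$-quotient $E/G$ exists and the induced map $E/G \to X/G$ is a vector bundle, because étale-locally on $X/G$ where $X \cong U \times G$ and $E \cong U \times G \times \mathbb{A}^r$ with the standard $G$-action on the second factor, the quotient is $U \times \mathbb{A}^r$. Mutual inverseness follows from the equivariant descent isomorphism $E \cong (E/G) \times_{X/G} X$ used already in (1), applied to the total space instead of the base.

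The main obstacle across all three parts is verifying that the quasi-projective, smooth, projective, and locally-free hypotheses descend and ascend correctly; the most delicate point is the quasi-projectivity of $Y$ in the ascent of (1), where one must exploit the separatedness of $X/G$ to embed $Y$ as a closed subscheme of the quasi-projective $Z \times X$, and the descent of projectivity in (2), which hinges on constructing a $G$-linearized $g$-very ample line bundle on $Y$ in order to descend it to an $f$-very ample line bundle on $Y/G$.
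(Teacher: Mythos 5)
Your proposal follows the same route as the paper: descent theory along the principal $G$-bundle $\pi:X\to X/G$, using étale-local triviality to carry smoothness and local-freeness in both directions, and separatedness of $X/G$ to realize the fiber product as a closed subscheme of $Z\times X$ for quasi-projectivity. Parts (1) and (3) are fine (the paper simply cites Lemma 1 of Edidin--Graham for (3) rather than spelling out the étale-local trivialization as you do).

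In part (2), the descent direction has an imprecision worth flagging. The blanket claim that ``projectivity is fpqc-local on the target'' is false in general; what is fpqc-local is \emph{properness}. Your repair via descending a $G$-linearized relatively ample line bundle can be made to work (relative ampleness is fpqc-local on the target, and one separately descends properness), but as written the argument is blurry about what exactly is being descended and why the descended sheaf is still $f$-very ample; and it silently uses that $Y/G$ is quasi-projective (from Proposition \ref{domainisind}) to upgrade ``proper with relatively ample bundle'' to ``projective''. The paper's route is cleaner: cite descent of properness (Proposition 2 of Edidin--Graham) to get $Y/G\to X/G$ proper, then invoke quasi-projectivity of $Y/G$ to conclude the morphism is projective, without touching line bundles at all.
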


\begin{proof}
{\rm (1)}\tab For ascent, consider the following commutative diagram :

\squarediagramword{Z \x_{X/G} X}{X}{Z}{X/G}{g}{}{}{f}

\noindent There is a natural $G$-action on $Z \x_{X/G} X$ and $g$ is $G$-\equi. Since $X, Z$ are \qproj, $Z \x_{X/G} X$ is \qproj.

\vtab

Claim 1 : If $X$ is an object in $\cat{D}$, then the \morp\ $X \to X/G$ is \sm.

Since $X \to X/G$ is a principal $G$-bundle, it is flat and locally trivial in the \etale\ topology. Thus, we have the following commutative diagram :

\squarediagramword{W \x G}{X}{W}{X/G}{\text{\etale}}{}{}{\text{\etale}}

\noindent Let $x$ be a point in $X/G$ and $K$ be the algebraic closure of $k(x)$. Then, by taking fiber product with $\spec{K} \to \spec{k(x)}$, we have the following commutative diagram :

\squarediagramword{W_K \x G}{X_K}{W_K}{\spec{K}}{\text{\etale}}{}{}{\text{\etale}}

\noindent Clearly, $\dim X_K = \dim W_K \x G = \dim G$ and $X_K$ is regular. The claim then follows from Theorem 10.2 in Ch III in \cite{Ha}. \claimend

\vtab

Since the \morp\ $X \to X/G$ is \sm\ and $Z$ is \sm, $Z \x_{X/G} X$ is \sm. That shows the well-definedness of ascent.

For descent, consider the following commutative diagram :

\squarediagramword{Y}{X}{Y/G}{X/G}{g}{}{}{f\ \defeq\ g/G}

\noindent By Proposition \ref{domainisind}, $Y$ is in $\cat{D}$. So, $Y/G$ is in $\smcat$. The fact that these two constructions are inverse to each other is standard and follows from descent theory.

\vtab

{\rm (2)}\tab Ascent clearly preserves \proj ness. For descent, it follows from the descent of properness (Proposition 2 of \cite{EdGr}) and the fact that $Y/G$ is \qproj.

\vtab

{\rm (3)}\tab Ascent clearly takes vector bundles to $G$-\equi\ vector bundles. For descent, it follows from Lemma 1 of \cite{EdGr}.
\end{proof}

We are now ready to prove the following Theorem.

\begin{thm}
\label{ascentdescent2}
Suppose $X$ is an object in $\cat{D}$. Sending $[Z \to X/G]$ to $[Z \x_{X/G} X \to X]$ defines an abelian group iso\morp
$$\gPS : \go^*(X/G) \to \cobg{G}{*}{X}.$$
\end{thm}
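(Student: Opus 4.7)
The plan is to construct $\Psi$ together with an inverse $\Phi: \cobg{G}{*}{X} \to \go^*(X/G)$ sending $[Y \to X]$ to $[Y/G \to X/G]$, and verify that both are well-defined group homomorphisms. By Proposition \ref{ascentdescent}, ascent (fiber product with $X \to X/G$) and descent (quotient by $G$) give mutually inverse bijections on generators, which both preserve projectivity and smoothness of the source, as well as codimension (since $X \to X/G$ is smooth with relative dimension $\dim G$). Consequently $\Psi$ is an iso\morp\ of abelian groups at the level of $M(X/G)^+ \cong M_G(X)^+$, and the whole content of the theorem is that the defining relations correspond to each other.

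For the $\Psi$ direction, I would start with a d\dpr\ setup over $X/G$: a \proj\ \morp\ $\gph : Z \to X/G \x \P^1$ with $Z_{\gx}$ \sm\ and $Z_0 = A \cup B$ a transverse union of \sm\ divisors. Ascending via $X \to X/G$ (and equipping $\P^1$ with the trivial $G$-action) yields $\tilde{Z} \defeq Z \x_{X/G} X \to X \x \P^1$, which is \proj\ and $G$-\equi. Because $X \to X/G$ is locally trivial in the \etale\ topology (being a principal $G$-bundle), smoothness of the fibers, smoothness of the ascended divisors $\tilde{A}, \tilde{B}$, and their transversality are all preserved. Thus the ascended data is an \equi\ d\dpr\ setup over $X$ in the sense of Proposition \ref{equidprholds}, and the corresponding relation holds in $\cobg{G}{}{X}$; the auxiliary \proj\ bundle $\P(\O \oplus \O(A))$ ascends to $\P(\O \oplus \O(\tilde{A}))$ by Proposition \ref{ascentdescent}(3). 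Hence $\Psi$ kills all d\dpr s and descends to a homo\morp\ $\go^*(X/G) \to \cobg{G}{*}{X}$.

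For the $\Phi$ direction, I would take a g\gdpr\ setup $\gph : Y \to X$ with $G$-\inv\ divisors \gdprdivisors\ such that $\gdprdivisorsequi$ and $\gdprdivisorssum$ is a \rsncd. Applying Proposition \ref{ascentdescent} and Proposition \ref{domainisind}, $Y/G \to X/G$ is a \proj\ \morp\ in $\smcat$; each invariant divisor $A_i$ descends to a divisor $A_i/G$ on $Y/G$ (since $Y \to Y/G$ is flat), and the \etale-local triviality of $Y \to Y/G$ implies that the descended divisors still form a \rsncd. The $G$-\equi\ linear equivalence $\gdprdivisorsequi$ descends to ordinary linear equivalence on $Y/G$, because a $G$-\inv\ rational function realizing the equivalence determines a rational function downstairs. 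The auxiliary \proj\ bundles $P^p_k, Q^q_l$ are built from $G$-\equi\ invertible sheaves and therefore descend by Proposition \ref{ascentdescent}(3) to the analogous \proj\ bundles over $Y/G$, and fiber products of the $A_i$'s and these bundles over $Y$ descend to the corresponding fiber products over $Y/G$. We therefore obtain a g\gdpr\ setup over $X/G$, which by Corollary \ref{gdprholdcor} holds in $\go^*(X/G)$. Hence $\Phi$ is well-defined.

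Since $\Psi$ and $\Phi$ are inverse to each other on generators by Proposition \ref{ascentdescent}(1),(2), they are inverse on cobordism groups, proving the Theorem. The main technical obstacle is bookkeeping: checking that every ingredient appearing in the g\gdpr\ (\sm\ invariant divisors, the \rsncd\ condition, \equi\ linear equivalence, and the \proj-bundle extra terms) is respected by both ascent and descent. This is ultimately a consequence of the \etale-local triviality of principal $G$-bundles, which lets one reduce every check to the trivial case $Y \to Y \x G / G$, where it is obvious.
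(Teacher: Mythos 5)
Your proposal is correct and follows essentially the same strategy as the paper's proof: define $\Psi$ by ascent and $\Psi^{-1}$ by descent, kill the d\dpr\ relations on the $\Psi$ side by appealing to Proposition \ref{equidprholds} after ascending along the \etale-locally trivial map $X \to X/G$, and kill the g\gdpr\ relations on the $\Psi^{-1}$ side by descending and invoking Corollary \ref{gdprholdcor}. The only place where you gloss over a step the paper proves explicitly is the compatibility of descent with fiber products over $Y$ (the claim $(Z \x_Y Z')/G \cong Z/G \x_{Y/G} Z'/G$), but this is a routine consequence of Proposition \ref{ascentdescent} and does not affect the correctness of the argument.
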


\begin{proof}
Define the inverse \homo\ $\gPS^{-1}$ by sending $[Y \to X]$ to $[Y/G \to X/G]$. We will call $\gPS$ ``ascent'' and $\gPS^{-1}$ ``descent''.

First of all, we need to prove that $\gPS$ is well-defined. By Proposition \ref{ascentdescent}, $\gPS$ is well-defined at the level of $M(X/G)^+$. In this proof, we will denote the fiber product with $X \to X/G$ by a star, i.e. $W^* \defeq W \x_{X/G} X$. We also denote by $\gp : X \to X/G$ the projection. Consider the following commutative diagram :

\squarediagramword{Y^*}{X \x \P^1}{Y}{X/G \x \P^1}{\gph^*}{}{}{\gph}

\noindent where $\gph$ corresponds to a d\dpr\ setup over $X/G$ (the fiber $Y_{\gx}$ is a \sm\ divisor, $Y_0 = A \cup B$ for some \sm\ divisors $A$, $B$ and $A$, $B$ intersect transversely).

We want to show that $\gph^*$ gives an \equi\ d\dpr\ setup over $X$. Notice that $Y^*$ is in $\cat{D}$ because $X$ is in $\cat{D}$ (Proposition \ref{domainisind}). So, $Y^*$ is \sm\ and the projection $Y^* \to Y$ is \sm\ (claim 1 in the proof of Proposition \ref{ascentdescent}). Then, $Y^*$ is \equidim, $(Y_{\gx})^* = (Y^*)_{\gx}$, $A^*$ and $B^*$ are $G$-\inv\ divisors on $Y^*$, 
$$A^* \cup B^* = (A \cup B)^* = (Y_0)^* = (Y^*)_0$$
and $A^*$, $B^*$ intersect transversely. Clearly, $\gph^*$ is \proj. Hence, that gives us an \equi\ d\dpr\ setup over $X$. By Proposition \ref{equidprholds}, we obtain the following equation in $\cobg{G}{}{X}$ :
\begin{eqnarray}
\label{eqn3} [Y_{\gx}^* \to X] = [A^* \to X] + [B^* \to X] - [\P(\O_{D^*} \oplus \O_{D^*}(A^*)) \to X]
\end{eqnarray}
\noindent where $D \defeq A \cap B$.

On the other hand, the d\dpr\ on $X/G$ corresponding to $\gph$ is
$$[Y_{\gx} \to X/G] = [A \to X/G] + [B \to X/G] - [\P(\O_D \oplus \O_D(A)) \to X/G].$$
If we apply $\gPS$ on this equation, we will get
\begin{eqnarray}
\label{eqn4} [Y_{\gx}^* \to X] = [A^* \to X] + [B^* \to X] - [\P(\O_D \oplus \O_D(A)) \x_{X/G} X \to X].
\end{eqnarray}
Since 
$$\P(\O_D \oplus \O_D(A)) \x_{X/G} X \cong \P(\,\gp^*(\O_D \oplus \O_D(A))\,) \cong \P(\O_{D^*} \oplus \O_{D^*}(A^*)),$$ 
equations (\ref{eqn3}) and (\ref{eqn4}) are equivalent. This finishes the first half of the proof : well-definedness of $\gPS$.

It remains to show the well-definedness of the inverse $\gPS^{-1}$. By Proposition \ref{ascentdescent}, it is well-defined at the level of $M_G(X)^+$. That means for a given GDPR setup $\gph : Y \to X$ with divisors \gdprdivisors\ on $Y$ and corresponding \homo\ $\cat{G}$, we need to show
$$\gPS^{-1} \circ \cat{G}(G^X_{n,m}) = \gPS^{-1} \circ \cat{G}(G^Y_{m,n})$$
as elements in $\go(X/G).$

First of all, $Y$ is in $\cat{D}$ (by Proposition \ref{domainisind}) implies that $Y/G$ is in $\smcat$ and is \equidim. In addition, for all $i$, the $G$-\inv\ divisor $A_i$ is in $\cat{D}$. So, $A_i/G$ is in $\smcat$. Moreover, $\dim A_i/G = \dim A_i - \dim G$ implies that $A_i/G$ is a \sm\ divisor on $Y/G$. By similar arguments, 
$$A_1/G + \cdots + A_n/G + B_1/G + \cdots + B_m/G$$ 
is a \rsncd\ on $Y/G$. On the other hand, by definition, there exists $f \in {\rm H}^0(Y, \cat{K}^*)^G$ such that 
$$A_1 + \cdots + A_n - B_1 - \cdots - B_m = \divisor{f}.$$ 
By the fact that ${\rm H}^0(Y, \cat{K}^*)^G \cong {\rm H}^0(Y/G, \cat{K}^*)$, we can consider $f$ as an element in ${\rm H}^0(Y/G, \cat{K}^*)$ and deduce that 
$$A_1/G + \cdots + A_n/G - B_1/G - \cdots - B_m/G = \divisor{f}.$$ 
By Proposition \ref{ascentdescent}, $\gph/G : Y/G \to X/G$ is \proj. Hence, we obtain a GDPR setup over $X/G$ given by $\gph/G : Y/G \to X/G$ with divisors $A_1/G, \ldots, A_n/G$, $B_1/G, \ldots, B_m/G$ on $Y/G$. Let $\cat{G}'$ be the corresponding \homo. By Corollary \ref{gdprholdcor}, 
$$\cat{G}'(G^X_{n,m}) = \cat{G}'(G^Y_{m,n})$$
in $\go(X/G)$. So, it will be enough to show $\cat{G}' = \gPS^{-1} \circ \cat{G}$. We will need the following claim first.

\vtab

Claim : For \morp s $Z \to X$ and $Z' \to X$ with $X$, $Z$, $Z' \in \cat{D}$, we have the following isomorphism :
$$(Z \x_X Z')/G \cong Z/G \x_{X/G} Z'/G.$$

Notice that
\begin{eqnarray}
(Z/G \x_{X/G} Z'/G) \x_{X/G} X & \cong & Z/G \x_{X/G} (Z'/G \x_{X/G} X) \nonumber\\
& \cong & Z/G \x_{X/G} Z' \nonumber\\
&& (\text{by Proposition \ref{ascentdescent}}) \nonumber\\
& \cong & Z/G \x_{X/G} X \x_X Z' \nonumber\\
& \cong & Z \x_X Z' \nonumber\\
&& (\text{by Proposition \ref{ascentdescent}}). \nonumber
\end{eqnarray}
Again, by Proposition \ref{ascentdescent}, we get 
$$Z/G \x_{X/G} Z'/G \cong ((Z/G \x_{X/G} Z'/G) \x_{X/G} X)\, /G \cong (Z \x_X Z')/G.$$ 
The proves the claim. \claimend

\vtab

Consider a general term $X_i \cdots U^p_k \cdots$ in $\cat{R}$. On one hand,
\begin{eqnarray}
\cat{G}'(X_i \cdots U^p_k \cdots) &=& [A_i/G \x_{Y/G} \cdots \x_{Y/G} (P^p_k)' \x_{Y/G} \cdots \to X/G] \nonumber\\
&& \text{where $(P^p_k)'$ is the corresponding tower defined by $\{A_i/G\}$.} \nonumber
\end{eqnarray}
On the other hand,
\begin{eqnarray}
\gPS^{-1} \circ \cat{G}(X_i \cdots U^p_k \cdots) &=& \gPS^{-1} [A_i \x_Y \cdots \x_Y P^p_k \x_Y \cdots \to X] \nonumber\\
&& \text{where $P^p_k$ is the corresponding tower defined by $\{A_i\}$} \nonumber\\
&=& [(A_i \x_Y \cdots \x_Y P^p_k \x_Y \cdots)/G \to X/G] \nonumber\\
&=& [A_i/G \x_{Y/G} \cdots \x_{Y/G} P^p_k/G \x_{Y/G} \cdots \to X/G] \nonumber\\
&& \text{(by the claim)}. \nonumber
\end{eqnarray}
Thus, it remains to show $(P^p_k)' \cong P^p_k /G$. Consider the case when $p = 1$. Let $D$ be the divisor $A_1 + \cdots + A_k$. Then, we have
\begin{eqnarray}
(P^1_k)' \x_{Y/G} Y & \cong & \P(\, \gp^*(\O_{Y/G} \oplus \O_{Y/G}(D/G)) \,) \nonumber\\
& \cong & \P(\O_Y \oplus \O_Y(D)) \nonumber\\
& = & P^1_k. \nonumber
\end{eqnarray}
By Proposition \ref{ascentdescent}, we have $(P^1_k)' \cong P^1_k /G$. Similarly, $(P^p_k)' \cong P^p_k /G$ for $p =2,3$.
\end{proof}

When $X$ is an object in $\cat{D}$, there are some natural formulas relating the push-forward, pull-back and external product with their non-\equi\ versions.

\begin{prop}
\label{formulaswhenind}
Suppose $f : X' \to X$ is a \morp\ in $\cat{D}$.

\begin{statementslist}
{\rm (1)} & If $f$ is \proj, then $f/G$ is \proj, we have push-forward 

\begin{center}
$(f/G)_* : \go(X'/G) \to \go(X/G)$
\end{center}

\noindent and

\begin{center}
$f_* = \gPS \circ (f/G)_* \circ \gPS^{-1}$
\end{center} 
 
\noindent as \morp s from $\cobg{G}{}{X'}$ to $\cobg{G}{}{X}$. \\
{\rm (2)} & If $f$ is \sm\ and $X$, $X'$ are both \equidim, then $f/G$ is \sm, we have pull-back 

\begin{center}
$(f/G)^* : \go(X/G) \to \go(X'/G)$
\end{center}

\noindent and

\begin{center}
$f^* = \gPS \circ (f/G)^* \circ \gPS^{-1}$
\end{center}

\noindent as \morp s from $\cobg{G}{}{X}$ to $\cobg{G}{}{X'}$.
\end{statementslist}
\end{prop}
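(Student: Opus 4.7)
The plan is to split the verification into two tasks for each part: first confirm that $f/G$ has the claimed property (projective in (1), smooth in (2)), then verify the displayed identity on generators via Proposition \ref{ascentdescent}, which is already available.

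For projectivity of $f/G$ in part (1), I would invoke descent of properness (Proposition 2 of \cite{EdGr}) applied to the faithfully flat surjection $X \to X/G$ to deduce that $f/G$ is proper; since $X'/G$ is \qproj\ as $X' \in \cat{D}$, this upgrades to projectivity. For smoothness of $f/G$ in part (2), I would observe that the composition $X' \to X \to X/G$ is \sm\ (since $f$ is \sm\ and $X \to X/G$ is \sm\ by Claim 1 in the proof of Proposition \ref{ascentdescent}), factor it as $X' \to X'/G \to X/G$ with the first map \sm\ and surjective (hence faithfully flat), and apply the standard descent of smoothness along such covers to conclude $f/G$ is \sm.

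For the push-forward identity, I would pick a generator $[Y \to X']$ of $\cobg{G}{}{X'}$; then $Y \in \cat{D}$ by Proposition \ref{domainisind}. Unwinding the definitions gives
\[
\gPS \circ (f/G)_* \circ \gPS^{-1}[Y \to X'] = [Y/G \x_{X/G} X \to X],
\]
whereas $f_*[Y \to X'] = [Y \to X]$. These agree because two applications of Proposition \ref{ascentdescent}(1) furnish iso\morp s $X' \cong X'/G \x_{X/G} X$ and $Y \cong Y/G \x_{X'/G} X'$, from which associativity of fiber products yields
\[
Y \cong Y/G \x_{X'/G} X' \cong Y/G \x_{X'/G} (X'/G \x_{X/G} X) \cong Y/G \x_{X/G} X
\]
as $G$-varieties over $X$. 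The pull-back identity is handled analogously: for $[Z \to X]$ one computes
\[
\gPS \circ (f/G)^* \circ \gPS^{-1}[Z \to X] = [(Z/G \x_{X/G} X'/G) \x_{X'/G} X' \to X'],
\]
which collapses to $[Z/G \x_{X/G} X' \to X']$ by absorbing $X'/G$, while $f^*[Z \to X] = [Z \x_X X' \to X']$; these coincide via the isomorphism $Z \cong Z/G \x_{X/G} X$ from Proposition \ref{ascentdescent}(1).

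The only nonformal point is the descent of smoothness for $f/G$; everything else reduces to associativity of fiber products and the ascent/descent bijection already established. I expect the main obstacle to be cleanly justifying smoothness of $f/G$ without reproving descent of smoothness along smooth surjections from scratch, but this is a standard \textsc{EGA}-type fact that can simply be cited.
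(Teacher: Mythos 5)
Your proposal is correct and follows essentially the same route as the paper: the paper establishes projectivity of $f/G$ by citing Proposition \ref{ascentdescent}(2), smoothness of $f/G$ by citing descent of smoothness from Proposition 2 of \cite{EdGr}, and then verifies the two displayed identities on generators by unwinding $\gPS$ and $\gPS^{-1}$ exactly as you do, using the Cartesian squares furnished by Proposition \ref{ascentdescent}(1). The only cosmetic difference is in the smoothness step: the paper simply invokes fppf descent of smoothness along the base cover $X \to X/G$ applied to the square $X' \cong X'/G \times_{X/G} X$, whereas you phrase it as descent along the smooth surjection $X' \to X'/G$ after factoring $X' \to X/G$; both reduce to the same standard EGA-type statement and both hinge on the smoothness of $X \to X/G$ established in Claim 1 of Proposition \ref{ascentdescent}, so there is no mathematical gap.
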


\begin{proof}
{\rm (1)}\tab First of all, $f/G$ is \proj\ by Proposition \ref{ascentdescent}. Also, $X/G$, $X'/G$ are both in $\smcat$. Hence, the push-forward $(f/G)_* : \go(X'/G) \to \go(X/G)$ is well-defined. Moreover, by definition,
\begin{eqnarray}
\gPS \circ (f/G)_* \circ \gPS^{-1}\ [Y \to X'] & = & \gPS \circ (f/G)_*\ [Y/G \to X'/G] \nonumber\\
                                               & = & \gPS\ [Y/G \to X/G] \nonumber\\
                                               & = & [Y/G \x_{X/G} X \to X] \nonumber\\
                                               & = & [Y \to X]. \nonumber
\end{eqnarray}

{\rm (2)}\tab By the descent of \sm ness (Proposition 2 of \cite{EdGr}), the \morp\ $f/G$ is \sm. Also, $X/G$, $X'/G \in \smcat$ are both \equidim. Hence, the pull-back $(f/G)^* : \go(X/G) \to \go(X'/G)$ is well-defined. Moreover, 
\begin{eqnarray}
\gPS \circ (f/G)^* \circ \gPS^{-1}\ [Y \to X] & = & \gPS \circ (f/G)^*\ [Y/G \to X/G] \nonumber\\
                                              & = & \gPS\ [Y/G \x_{X/G} X'/G \to X'/G] \nonumber\\
                                              & = & [Y/G \x_{X/G} X'/G \x_{X'/G} X' \to X'] \nonumber\\
                                              & = & [Y/G \x_{X/G} X' \to X'] \nonumber\\
                                              & = & [Y/G \x_{X/G} X \x_X X' \to X'] \nonumber\\
                                              & = & [Y \x_X X' \to X'] \nonumber\\
                                              &  & \text{by Proposition \ref{ascentdescent}.} \nonumber
\end{eqnarray}
\end{proof}

There is a also similar formula for the external product, which is somewhat harder to state. We need some trivial facts first.

Let $\gg : G \to H$ be a group scheme homomorphism between the group schemes $G$, $H$. Then, for all $X \in \gsmcat{H}$, it induces a natural abelian group homomorphism
$$\gPH_{\gg} : \cat{U}_H(X) \to \cat{U}_G(X)$$
by sending $[Y \to X]$ with $H$-actions to $[Y \to X]$ with $G$-actions via $\gg$. This \homo\ obviously respects GDPR, so $\gPH_{\gg}$ is well-defined.

Denote the ascending \homo\ corresponding to $G$-action as $\gPS_G : \go(X/G) \to \cobg{G}{}{X}$.

\begin{prop}
Suppose $X$, $X'$ are two objects in $\cat{D}$. Then, the external product 
$$\x : \cobg{G}{}{X} \x \cobg{G}{}{X'} \to \cobg{G}{}{X \x X'}$$ 
of the element $(a,b) \in \cobg{G}{}{X} \x \cobg{G}{}{X'}$ can be given by
$$a \x b = \gPH_{\gD} \circ \gPS_{G \x G} (\gPS_G^{-1} a \x  \gPS_G^{-1} b)$$
where $\gD : G \to G \x G$ is the diagonal \morp.
\end{prop}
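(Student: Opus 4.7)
The plan is to verify the formula on generators, unwinding both sides with the help of Proposition~\ref{ascentdescent} and Proposition~\ref{formulaswhenind}. Since all the homomorphisms $\gPS_G$, $\gPS_{G \x G}$, $\gPH_{\gD}$ and the external product are already shown to be well-defined at the level of $\cat{U}$-groups, it suffices to check the identity on a class $a = [f : Y \to X] \in \cobg{G}{}{X}$ and $b = [f' : Y' \to X'] \in \cobg{G}{}{X'}$.

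First I would compute the left hand side. By definition, $a \x b = [f \x f' : Y \x Y' \to X \x X']$, where $G$ acts diagonally on $Y \x Y'$ and on $X \x X'$. Next I would compute the right hand side step by step. By definition of $\gPS_G^{-1}$ we get $\gPS_G^{-1}(a) = [Y/G \to X/G]$ and $\gPS_G^{-1}(b) = [Y'/G \to X'/G]$ in $\go$, and their (non-equivariant) external product is $[Y/G \x Y'/G \to X/G \x X'/G]$. Observe that $X \x X'$ lies in the category $\cat{D}$ for the group $G \x G$ acting componentwise, since
\[
(X \x X')/(G \x G) \cong X/G \x X'/G,
\]
and that $X \x X' \to X/G \x X'/G$ is a principal $G \x G$-bundle (it is the product of two principal bundles). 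Thus $\gPS_{G \x G}$ applied to the above class gives
\[
\bigl[(Y/G \x Y'/G)\x_{X/G \x X'/G}(X \x X') \longto X \x X'\bigr]
\]
with its natural $G \x G$-action.

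The key algebro-geometric input is the identification
\[
(Y/G \x Y'/G)\x_{X/G \x X'/G}(X \x X') \;\cong\; (Y/G \x_{X/G} X) \x (Y'/G \x_{X'/G} X') \;\cong\; Y \x Y',
\]
where the last isomorphism uses Proposition~\ref{ascentdescent}(1) (ascent for each factor). Under this isomorphism, the $G \x G$-linearization pulled back from the product principal bundle structure is precisely the componentwise action on $Y \x Y'$. Finally, $\gPH_{\gD}$ restricts the $G \x G$-action along the diagonal $\gD : G \to G \x G$, yielding the diagonal $G$-action $g \cdot (y,y') = (gy, gy')$. This is exactly the equivariant structure on $Y \x Y'$ used in the left hand side, so both sides equal $[Y \x Y' \to X \x X']$ in $\cobg{G}{}{X \x X'}$.

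The only step that requires real care is the identification of the $G \x G$-linearization on the iterated fiber product with the componentwise action on $Y \x Y'$, together with the check that restricting along $\gD$ reproduces the diagonal action. This is essentially a bookkeeping of group actions on fiber products, and I do not anticipate a serious obstacle beyond carefully tracking the cocycle data through the isomorphisms above.
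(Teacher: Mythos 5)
Your proposal is correct and is essentially a careful unwinding of the definitions, which is exactly what the paper's one-line proof ("Follows from the definition") has in mind. The key identifications you single out — that $X \x X'$ lies in $\cat{D}$ for $G \x G$, that the iterated fiber product recovers $Y \x Y'$ via Proposition~\ref{ascentdescent}, and that restricting the componentwise $G \x G$-action along $\gD$ gives the diagonal $G$-action — are precisely the bookkeeping needed, and they all go through as you indicate.
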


\begin{proof}
Follows from the definition.
\end{proof}

\bigskip

\bigskip

\section{The Chern class operator $c(\L)$}
\label{chernclasssection}

Suppose $X$ is an object in $\gsmcat{G}$ and $\L$ is a \glin{G}\ invertible sheaf over $X$. Our goal in this section is to define an abelian group homo\morp 
$$c(\L) : \cobglow{G}{*}{X} \to \cobglow{G}{*-1}{X}$$ 
which satisfies some natural properties.

Recall that in section 4 of \cite{LePa}, when $\L$ is a \ggen\ invertible sheaf over a $k$-scheme $X \in \smcat$, $c(\L) : \go_*(X) \to \go_{*-1}(X)$ is defined as follow. Let $[f : Y \to X]$ be an element in $\go(X)$ such that $Y$ is irreducible. Since $f^*\L$ is a \ggen\ invertible sheaf over $Y$, there is a \sm\ divisor $H$ on $Y$ such that $\O_Y(H) \cong f^*\L$. Then, we define $c(\L)[f : Y \to X] \defeq [H \embed Y \to X]$.

It is natural to try to give a similar version in our \equi\ setting. However, since there is no assumption on how the group $G$ acts on the scheme $X$, there is no guarantee that even a single non-zero invariant global section of $\L$ can be found. For example, if the action on $X$ is transitive, then no matter how nice a \glin{G}\ invertible sheaf $\L$ over $X$ is, there is no invariant global section that cuts out an invariant divisor. Hence, $c(\L)[\id : X \to X]$ can not be defined in a similar manner. 

Moreover, even if there is an invariant section cutting out a \sm\ \inv\ divisor, it may not be generic. For example, take $G \defeq GL(2)$ and $X \defeq \P^2$ with action 
$$\left(
\begin{array}{cc}
a & b \\
c & d
\end{array}
\right)
\cdot
\left(
\begin{array}{c}
x \\
y \\
z
\end{array}
\right)
\defeq
\left(
\begin{array}{ccc}
a & b & 0 \\
c & d & 0 \\
0 & 0 & 1
\end{array}
\right)
\left(
\begin{array}{c}
x \\
y \\
z
\end{array}
\right)$$
Consider the case when $\L = \O(1)$, which is naturally $G$-linearized. Then, there is only one invariant section $s \in {\rm H}^0(X, \L)^G$ that cuts out an \inv\ divisor, namely $s = z$. In this case, for a \proj\ map $f : Y \to X$, we can not define $c(\L)[f : Y \to X]$ by $f^*(s)$ because there is no reason to believe that $H_{f^* s}$ (the subscheme cut out by $f^*s$) will be \sm, or even a divisor. So, it is important that the choice of section is generic. Indeed, we will see later that this freedom of choice is essential for the well-definedness of our Chern class operator.

\vtab

\subsection{First approach}

As pointed out in the subsection \ref{freeactionisosubsection}, the theory $\cob{G}$ works nicely in the subcategory $\cat{D}$. Hence, our first approach is to restrict to this subcategory and define the Chern class operator. We first need a little lemma to ensure we stay inside the \qproj\ setup.

\begin{lemma}
\label{lemmaqprojvbundle}
If $X$ is \qproj\ over $k$ and $\pi : E \to X$ is a vector bundle, then $E$ is \qproj\ over $k$.
\end{lemma}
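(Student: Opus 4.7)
The plan is to compactify $E$ to a projective bundle $\overline{E}$ over $X$ and then realize $\overline{E}$ as a closed subscheme of some $X \x \P^N$, which is itself \qproj\ over $k$ because $X$ is.

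Write $E = \spec{{\rm Sym}\ \dual{\cat{E}}}$ for the rank $r$ locally free sheaf $\cat{E}$ corresponding to $E$, and set $\overline{E} \defeq \P(\dual{\cat{E}} \oplus \O_X) = {\rm Proj}\ {\rm Sym}(\dual{\cat{E}} \oplus \O_X)$. Dehomogenizing at the degree-one element $t$ that corresponds to the summand $\O_X$, the distinguished open $D_+(t) \subset \overline{E}$ is canonically $\spec{{\rm Sym}\ \dual{\cat{E}}} = E$, so $E$ sits inside $\overline{E}$ as the open complement of the sub-projective-bundle $\P(\dual{\cat{E}}) \subset \overline{E}$. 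It therefore suffices to show that $\overline{E}$ is \qproj\ over $k$.

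Fix a very ample line bundle $\O_X(1)$ coming from the \qproj\ structure on $X$. By Serre's theorem, for $n$ sufficiently large the twist $(\dual{\cat{E}} \oplus \O_X) \otimes \O_X(n)$ is generated by finitely many global sections, which yields a surjection
$$\O_X(-n)^{\oplus(N+1)} \twoheadrightarrow \dual{\cat{E}} \oplus \O_X$$
of locally free sheaves on $X$. By right-exactness of ${\rm Sym}$ and the corresponding closed immersion of ${\rm Proj}$'s, this induces a closed immersion of $X$-schemes $\overline{E} \embed \P(\O_X(-n)^{\oplus(N+1)})$. Proposition \ref{isoprojbundle} (applied with trivial group action) identifies the target with $\P(\O_X^{\oplus(N+1)}) \cong X \x \P^N$, which is \qproj\ over $k$. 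Hence $\overline{E}$ is closed in a \qproj\ $k$-scheme and thus \qproj\ over $k$, and so is its open subscheme $E$.

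No step presents a genuine obstacle; the only substantive input is the existence of the surjection above, which is the standard consequence of having a very ample line bundle on $X$ at hand.
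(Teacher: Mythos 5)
Your proof is correct and follows the same route as the paper: compactify $E$ to $\P(\dual{\cat{E}} \oplus \O_X)$, argue that this is quasi-projective over $k$, and conclude since $E$ is an open subscheme. The paper simply asserts the quasi-projectivity of the projective bundle from the fact that $\P(\dual{\cat{E}} \oplus \O_X) \to X$ is projective, whereas you spell out the Serre-twisting surjection and the identification via Proposition~\ref{isoprojbundle} — a more detailed but equivalent justification.
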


\begin{proof}
Consider $\P(\dual{\cat{E}} \oplus \O_X)$ where $\cat{E}$ is the locally free sheaf over $X$ corresponding to $E$. Since $\P(\dual{\cat{E}} \oplus \O_X) \to X$ is \proj, the scheme $\P(\dual{\cat{E}} \oplus \O_X)$ is \qproj. Then, $E$ can be considered as an open set inside $\P(\dual{\cat{E}} \oplus \O_X)$, hence is \qproj.
\end{proof}

Here is the natural definition of $c(\L)$ when $X$ is in $\cat{D}$.

\begin{defn}
{\rm
Suppose $X$ is an object in $\cat{D}$ and $\L$ is a sheaf in $\picard{G}{X}$. We define the Chern class operator $c(\L) : \cobglow{G}{*}{X} \to \cobglow{G}{*-1}{X}$ by 
$$c(\L) \defeq \gPS \circ c(\gp_* \L^G) \circ \gPS^{-1}$$
where $\gp : X \to X/G$ is the quotient map and $\gPS : \go(X/G)\iso\cobg{G}{}{X}$ is the ascent iso\morp\ defined in subsection \ref{freeactionisosubsection}.
}\end{defn}

Since $X$ is in $\cat{D}$, the sheaf $\gp_* \L^G$ over $X/G$ is invertible. Hence, the abelian group homo\morp\ $c(\gp_* \L^G) : \go_*(X/G) \to \go_{*-1}(X/G)$ is well-defined (see sections 4 and 9 in \cite{LePa} for more detail).

\begin{rmk}
\rm{
\label{rmkchernclass}
For $X \in \cat{D}$ and $\L \in \picard{G}{X}$ such that $\L$ is \ggen\ by invariant sections, we can construct $c(\L)[f : Y \to X]$ by following the definitions of $\gPS$ and $c(\gp_* \L^G)$.

First, descend $Y \to X$ to get $Y/G \to X/G$. Then, $(f/G)^*(\gp_* \L^G)$ will be a \ggen\ invertible sheaf over $Y/G$ (A \glin{G}\ invertible sheaf $\L$ being \ggen\ by invariant sections is equivalent to $\gp_*\L^G$ being \ggen). Pick a global section $s \in {\rm H}^0(Y/G, (f/G)^*(\gp_* \L^G))$ that cuts out a \sm\ divisor $H_s$ on $Y/G$. Then, ascend $H_s \to Y/G \to X/G$ to obtain $[H_s \x_{X/G} X \to X]$. Thus,
$$c(\L)[f : Y \to X] = [H_s \x_{X/G} X \to X].$$

It can be seen that $c(\L)[f : Y \to X]$ can also be obtained in the following way. Since $\L$ is \ggen\ by invariant sections, $f^*\L$ is also \ggen\ by invariant sections. Pick a section $s' \in{\rm H}^0(Y, f^*\L)^G$ that cuts out an invariant \sm\ divisor $H_{s'}$ on $Y$. Then, 
$$c(\L)[f : Y \to X] = [H_{s'} \to Y \to X].$$
}
\end{rmk}

\vtab

Because of the natural isomorphism between $\cobg{G}{}{X}$ and $\go(X/G)$ when $X$ is in $\cat{D}$, we can now easily show the \equi\ versions of some properties of the Chern class operator listed in \cite{LePa}, namely \textbf{(A3)-(A5)}, \textbf{(A8)}, \textbf{(Dim)}, etc.

\vtab

\subsection{Second approach}
\label{secondapproachsubsection}

Instead of imposing a restriction on $X$, we may impose a restriction on $\L$. Our second approach is to first define the notion of a ``\nice'' \glin{G}\ invertible sheaf. Then, we define the Chern class operator for ``\nice'' sheaves $\L$ and extend this definition to more general \glin{G}\ invertible sheaves through the formal group law.

Before proceeding to describe this second approach, let us recall the definition of the \fgl\ and some basic properties.

We denote the Lazard ring by $\lazard$ (see section 1.1 in \cite{LeMo}). Let $\{a_{ij}\}$ with $i,j \geq 0$ and $(i,j) \neq (0,0)$ be the standard set of generators of the Lazard ring, i.e. $\lazard = \Z[a_{ij}]$. Then, the \fgl\ $F$ is the power series in $\lazard[[u,v]]$ :
$$F(u,v) = \sum_{i,j \geq 0} a_{ij} u^i v^j = u + v + \sum_{i,j \geq 1} a_{ij} u^i v^j$$
(see section 2.4.3 in \cite{LeMo}). To help our intuition, we will think of the \fgl\ as giving ``addition''. By definition, we have 
\begin{eqnarray}
F(u,0)       & = & u. \nonumber\\
F(u,v)       & = & F(v,u). \nonumber\\
F(u, F(v,w)) & = & F(F(u,v),w) \nonumber
\end{eqnarray}
and the relations on $a_{ij}$ are the ones imposed by these equalities. 

Moreover, there is a power series $\gc(u) \in \lazard[[u]]$ that satisfies
$$F(u,\gc(u)) = 0.$$
The power series $\gc(u)$ can be regarded as giving the ``inverse'' of $u$. Hence, we can define ``subtraction'' by
$$F^-(u,v) \defeq F(u, \gc(v)).$$
For our purpose, we also need the notion of ``multiplication by a positive integer'' :
$$F^n(u) \defeq F(u, F(u, \cdots F(u,u) \cdots ))$$ 
\begin{center}
($n-1$ times application of $F$)
\end{center}
Finally, we will need the notion ``division by a positive integer''. For simplicity, denote $\lazard \otimes_{\Z} \Z[\frac{1}{n}]$ by $\lazard_n$. The Lazard's Theorem states that $\lazard$ is a polynomial algebra over integers with infinitely many generators (see \cite{lazardthm}). In particular, $\lazard$ has no torsion and $\lazard \embed \lazard_n$.

\begin{lemma}
For all $n \geq 1$, there exists a power series in $\lazard_n[[u]]$, denoted by $\fgldiv{n}(u)$, such that 
$$\fgldiv{n}(F^n(u)) = F^n(\fgldiv{n}(u)) = u.$$

\end{lemma}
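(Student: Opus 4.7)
The plan is to construct $\fgldiv{n}(u)$ as the compositional inverse of the power series $F^n(u) \in \lazard[[u]]$, working over $\lazard_n$ so that the leading coefficient is a unit.

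First I would verify by induction on $n$ that the constant term of $F^n(u)$ is zero and that its linear coefficient equals $n$. The base case $F^1(u) = u$ is trivial. For the inductive step, from $F(u,v) = u + v + \sum_{i,j \geq 1} a_{ij} u^i v^j$ and the assumption $F^{n-1}(u) = (n-1)u + O(u^2)$, we compute
\[
F^n(u) = F(u, F^{n-1}(u)) = u + F^{n-1}(u) + \sum_{i,j \geq 1} a_{ij} u^i F^{n-1}(u)^j = nu + O(u^2).
\]

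Next I would invoke the standard fact that a power series $g(u) = cu + c_2 u^2 + c_3 u^3 + \cdots$ with coefficients in a commutative ring $R$ admits a two-sided compositional inverse in $R[[u]]$ if and only if $c \in R^*$. Applied with $R = \lazard_n$ and $g = F^n$, where the linear coefficient $n$ is invertible by construction of $\lazard_n$, this yields a unique $\fgldiv{n}(u) = \frac{1}{n} u + b_2 u^2 + b_3 u^3 + \cdots \in \lazard_n[[u]]$ with $\fgldiv{n}(F^n(u)) = u$. Concretely, the coefficients $b_k$ are determined recursively: having fixed $b_1, \ldots, b_{k-1}$, the coefficient $b_k$ is the unique element of $\lazard_n$ making the coefficient of $u^k$ in $\fgldiv{n}(F^n(u))$ vanish, and an explicit polynomial expression in $\frac{1}{n}$ and the $a_{ij}$'s can be written down from Newton's formula.

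Finally, I would note that the left and right compositional inverses of $F^n$ agree: writing $h(u) = F^n(\fgldiv{n}(u))$, its linear coefficient is $n \cdot \frac{1}{n} = 1$, so $h$ itself admits a compositional inverse $h^{-1}$, and then
\[
F^n(\fgldiv{n}(u)) = h(u) = h(\fgldiv{n}(F^n(u))) \circ \text{(...)}
\]
More directly, composing the identity $\fgldiv{n} \circ F^n = \mathrm{id}$ on the right with $\fgldiv{n}$ and using associativity of composition in $\lazard_n[[u]]$ (valid since all series in question have zero constant term) gives $\fgldiv{n} \circ F^n \circ \fgldiv{n} = \fgldiv{n}$, and then composing on the left with the left inverse of $\fgldiv{n}$ (which exists for the same reason, its linear coefficient $1/n$ being a unit) yields $F^n \circ \fgldiv{n} = \mathrm{id}$, completing the proof.

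I do not anticipate a serious obstacle here: the whole content is the classical existence of compositional inverses of power series with invertible linear term, and the only input particular to this paper is the computation of the linear coefficient of $F^n$, which follows immediately from $F(u,v) \equiv u + v \pmod{(u,v)^2}$. The reason to pass from $\lazard$ to $\lazard_n$ is precisely, and only, to invert $n$.
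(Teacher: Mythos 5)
Your proposal is correct and takes essentially the same approach as the paper: compute that the linear coefficient of $F^n(u)$ equals $n$ by induction, then construct $\fgldiv{n}$ by solving the recursion for its coefficients over $\lazard_n$, where $n$ is invertible. The only cosmetic difference is in establishing the two-sided inverse property: the paper compares coefficients of $F^n(\fgldiv{n}(u))$ directly, while you use the algebra of composition of power series with zero constant term; both are standard and equivalent.
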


\begin{proof}
Let $F^n(u) \defeq \sum_{i \geq 1} a_i u^i$ for some $a_i \in \lazard$.

\vtab

Claim : $a_1 = n$.

We proceed by induction on $n$. Obviously, the claim is true for $n = 1$. Suppose the claim is true for $n-1$. Notice that we can always ignore terms with degree of $u$ greater than 1. Hence,
\begin{eqnarray}
F^n(u) &=& F(u, F^{n-1}(u)) \nonumber\\
&=& u + F^{n-1}(u) + \text{higher degree terms} \nonumber\\
&=& u + (n-1)(u) + \cdots  \nonumber\\
&=& nu + \cdots. \nonumber
\end{eqnarray}
That proves the claim. \claimend

\vtab

Let $\fgldiv{n}(u) \defeq \sum_{i \geq 1} b_i u^i \in \lazard_n[[u]]$ with coefficients $\{b_i\}$ yet to be determined. The equality we want is 
$$u = \fgldiv{n}(F^n(u)) = b_1 (a_1 u + a_2 u^2 + \cdots) + b_2 (a_1 u + a_2 u^2 + \cdots)^2 + \cdots.$$
That gives us the following set of equations :
\begin{eqnarray}
1 & = & b_1 a_1 \nonumber\\
0 & = & b_1 a_2 + b_2 a_1^2 \nonumber\\
0 & = & b_1 a_3 + b_2 2 a_1 a_2 + b_3 a_1^3 \text{, etc.} \nonumber
\end{eqnarray}
Thus, we have $b_1 = 1 / a_1 = 1 / n \in \lazard_n$. After $b_1, \ldots, b_{i-1}$ are determined, we can define $b_i \in \lazard_n$ by the equation \wrt\ $u^i$ and the fact that the term corresponding to $b_i$ is just $b_i a_1^i = n^i b_i$. That gives us a power series $\fgldiv{n}(u) \in \lazard_n[[u]]$ such that $u = \fgldiv{n}(F^n(u))$.

To show the second equality $F^n(\fgldiv{n}(u)) = u$, let $F^n(\fgldiv{n}(u)) \defeq \sum_{i \geq 1} c_i u^i$. Then, 
\begin{eqnarray}
b_1 u + b_2 u^2 \cdots & = & \fgldiv{n}(u) \nonumber\\
                       & = & \fgldiv{n}(F^n(\fgldiv{n}(u))) \nonumber\\
                       & = & \fgldiv{n}(\sum_{i \geq 1} c_i u^i) \nonumber\\
                       & = & b_1 (c_1 u + c_2 u^2 + \cdots) + b_2 (c_1 u + c_2 u^2 + \cdots)^2 + \cdots. \nonumber
\end{eqnarray}
By comparing the coefficients, we obtain the following set of equations :
\begin{eqnarray}
b_1 & = & b_1 c_1 \nonumber\\
b_2 & = & b_1 c_2 + b_2 c_1^2 \nonumber\\
b_3 & = & b_1 c_3 + b_2 2 c_1 c_2 + b_3 c_1^3 \text{, etc.} \nonumber
\end{eqnarray}
Since $b_1 = \frac{1}{n}$, the first equation implies $c_1 = 1$. Substituting $c_1 = 1$ into the second equation implies that $c_2 = 0$. Inductively, $c_i = 0$ for all $i \geq 2$. Hence, $F^n(\fgldiv{n}(u)) = u$.
\end{proof}

\begin{rmk}
\rm{
By examining the proof carefully, it can be shown that if $\fgldiv{n}(u) = \sum_{i \geq 1}\ b_i u^i$, then $n^{i(i+1) / 2} b_i \in \lazard$.
}
\end{rmk}

\vtab

As mentioned at the beginning of this subsection, we will start by defining the notion of a \nice\ $G$-\equi\ invertible sheaf.

\begin{defn}
{\rm
Suppose $X$ is an object in $\gsmcat{G}$ and $\L$ is a sheaf in $\picard{G}{X}$. We say that $\L$ is \nice\ if there exists a \morp\ in \gsmcat{G}, $\gps : X \to \P^n$ (with trivial $G$-action on $\P^n$) such that $\L \cong \gps^* \O(1)$.
}\end{defn}

Here are some basic properties.

\begin{lemma}
Suppose $X$ is an object in $\gsmcat{G}$.

\begin{tabular}{ll}
1. & The structure sheaf $\O_X$ is \nice. \\
2. & If the sheaves $\L$, $\L' \in \picard{G}{X}$ are both \nice, then $\L \otimes \L'$ is also \nice. \\
3. & If $f : X \to Y$ is a \morp\ in $\gsmcat{G}$ and $\L \in \picard{G}{Y}$ is \nice, then $f^* \L$ is \nice.
\end{tabular}
\end{lemma}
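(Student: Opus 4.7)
The plan is to handle the three parts by producing explicit equivariant morphisms to projective spaces, using classical constructions (the constant map, the Segre embedding, and composition) that are automatically equivariant because $G$ acts trivially on the target.

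For (1), I would take $\psi : X \to \P^0$ to be the structure morphism (or, if $\P^0$ is felt awkward, a constant map $X \to \P^1$ landing at a single $k$-point). Since $G$ acts trivially on the target, $\psi$ is trivially $G$-equivariant, and $\psi^*\O(1) \cong \O_X$ as $G$-linearized invertible sheaves.

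For (2), given equivariant morphisms $\psi : X \to \P^n$ and $\psi' : X \to \P^m$ with $\L \cong \psi^*\O(1)$ and $\L' \cong \psi'^*\O(1)$, I would form $(\psi,\psi') : X \to \P^n \times \P^m$ and then compose with the Segre embedding $\sigma : \P^n \times \P^m \hookrightarrow \P^{(n+1)(m+1)-1}$. Since $G$ acts trivially on $\P^n$, $\P^m$, and on the ambient projective space, the composition is a morphism in $\gsmcat{G}$. The well-known identity $\sigma^*\O(1) \cong \pi_1^*\O(1) \otimes \pi_2^*\O(1)$ then gives $(\sigma \circ (\psi,\psi'))^*\O(1) \cong \psi^*\O(1) \otimes \psi'^*\O(1) \cong \L \otimes \L'$, and I would check that this isomorphism respects the $G$-linearizations (which it does, since the linearization on the tensor product is the tensor of the linearizations, and the trivial action on the target imposes no extra constraint).

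For (3), given $\psi : Y \to \P^n$ equivariant with $\L \cong \psi^*\O(1)$, the composition $\psi \circ f : X \to \P^n$ is $G$-equivariant because both $\psi$ and $f$ are, and functoriality of pullback gives $(\psi \circ f)^*\O(1) \cong f^*\psi^*\O(1) \cong f^*\L$ as $G$-linearized invertible sheaves.

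None of these steps look to be an obstacle; the only mild verification is in (2), namely checking that the tensor-product isomorphism obtained from the Segre embedding is compatible with the $G$-linearizations. This is automatic: both sides carry the linearization inherited from $\L$ and $\L'$ via pullback, and the Segre morphism is $G$-invariant, so the pullback isomorphism is $G$-equivariant by naturality.
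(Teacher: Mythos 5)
Your proposal is correct and matches the paper's own argument step for step: $X \to \P^0$ for part (1), the Segre embedding after forming $X \to \P^n \times \P^m$ for part (2), and composition with the functoriality of pullback for part (3). The extra remarks you include about compatibility of $G$-linearizations are accurate and simply spell out what the paper leaves implicit.
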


\begin{proof}
1.\tab By considering the map $\gps : X \to \P^0 \cong \pt.$

2.\tab Suppose we have two \morp s $\gps : X \to \P^n$ and $\gps' : X \to \P^m$ such that $\gps^* \O(1) \cong \L$ and $\gps'^* \O(1) \cong \L'$. Let $\gps''$ be the following composition :
\begin{center}
$\begin{CD}
X @>{\gps \x \gps'}>> \P^n \x \P^m @>{Segre}>> \P^N.
\end{CD}$
\end{center}
Then, $\gps''^* \O(1) \cong \L \otimes \L'$.

3.\tab By definition.
\end{proof}

We will start with a definition of the Chern class operator which depends on $\gps$. Suppose that $\L$ is a sheaf in $\picard{G}{X}$ and there is a map $\gps : X \to \P^n$ such that $\gps^* \O(1) \cong \L$. We would like to define
$c_{\gps}(\L)[f : Y \to X]$ as $[Y \x_{\P^n} H \to Y \to X]$ where $H$ is a hyperplane in $\P^n$ such that $Y \x_{\P^n} H$ is a \sm\ \inv\ divisor on $Y$. Clearly, it is enough to consider the case when $Y$ is \girred. In what follows, we will show that this is well-defined, i.e. that such an $H$ exists, that this element is independent of the choice of $H$ and that the construction respects GDPR.

\begin{lemma}
\label{bertinilemma}
Denote the dual \proj\ space $\P( {\rm H}^0(\P^n, \O(1)) )$ by $(\P^n)^*$. Then, there is a non-empty open set $U$ in $(\P^n)^*$ such that for any section $s$ in $U$, the closed subscheme $Y \x_{\P^n} H \subset Y$, where $H$ is the hyperplane in $\P^n$ cut out by the section $s$, is a \sm\ \inv\ divisor on $Y$ .
\end{lemma}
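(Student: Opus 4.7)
The statement has three separate requirements for the closed subscheme $Y \x_{\P^n} H \subset Y$: that it be $G$-invariant, that it be smooth, and that it be a divisor. I will verify each in turn, collecting hypotheses into a single non-empty open $U \subset (\P^n)^*$.

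The invariance is automatic. Since $\gps$ is equivariant with respect to the trivial $G$-action on $\P^n$, so is $\gps \circ f : Y \to \P^n$ (here $f : Y \to X$ is the projective morphism). Any closed subscheme of $\P^n$ is $G$-invariant for the trivial action, so its scheme-theoretic preimage in $Y$ is $G$-invariant as well. In particular no hypothesis on $H$ is needed for this part.

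For smoothness, I would use the classical Bertini argument via the incidence variety. Let
\[
I = \{(y, [s]) \in Y \x (\P^n)^* \,:\, (\gps \circ f)(y) \in \{s = 0\}\},
\]
and consider the two projections $p_1 : I \to Y$ and $p_2 : I \to (\P^n)^*$. The first projection $p_1$ realises $I$ as a $\P^{n-1}$-bundle over $Y$, so $I$ is smooth (using that $Y$ is smooth and $G$-irreducible, hence equidimensional). The fibre of $p_2$ over $[s]$ is precisely $Y \x_{\P^n} H_s$. Since $\mathrm{char}\,k = 0$, generic smoothness applied to $p_2$ yields a dense open $U_1 \subset (\P^n)^*$ such that $p_2$ is smooth over $U_1$; in particular each fibre $Y \x_{\P^n} H_s$ with $[s] \in U_1$ is smooth.

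To guarantee that the preimage is a divisor (not all of $Y$), observe that $Y \x_{\P^n} H_s = Y$ iff $H_s \supset (\gps \circ f)(Y)$. The set of hyperplanes containing this image is the annihilator of its linear span in $H^0(\P^n, \O(1))$, a linear subspace which is proper as long as the image is non-empty. Its complement $U_2 \subset (\P^n)^*$ is therefore open and dense. Setting $U = U_1 \cap U_2$ gives the required open set. The one subtle point, which I expect to be the main thing to address cleanly, is the degenerate case in which $(\gps \circ f)(Y)$ is a single point; then for $[s] \in U_2$ the preimage is empty, which I interpret as the zero divisor (smooth by convention). In the remaining case, the image has positive dimension and a dimension count on the irreducible fibres of $p_1$ confirms that the generic fibre of $p_2$ is of pure codimension one in $Y$.
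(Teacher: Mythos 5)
Your proof is correct and follows essentially the same route as the paper: both introduce the incidence variety $\cat{H}=I\subset Y\times(\P^n)^*$ and conclude via the generic fibre of the second projection. The only cosmetic differences are that you establish smoothness of $I$ by identifying it as a $\P^{n-1}$-bundle over $Y$ (cleaner than the paper's explicit local computation) and you spell out the degenerate case of an empty preimage, which the paper folds into its notion of ``regular value.''
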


\begin{proof}
This is a variation of the Bertini's Theorem when $\char{k} = 0$. We have $f : Y \to X$ and $\gps : X \to \P^n$ as above. Let $\cat{H}$ be the analog of the universal Cartier divisor, i.e.
$$\cat{H} \defeq \{\,(y, s)\ |\ s(\gps \circ f(y)) = 0\,\} \subset Y \x (\P^n)^*.$$

\vtab

Claim : $\cat{H}$ is \sm\ and of dimension $\dim Y + n - 1$.

Let $\P^n = {\rm Proj}\ k[x_0, \ldots, x_n]$ and $(\P^n)^* = {\rm Proj}\ k[c_0, \ldots, c_n]$. Let $D(x_i)$ be the affine open subscheme of $\P^n$ given by $x_i \neq 0$ and similarly for $D(c_i)$. Also, let $\spec{A}$ be an affine open subscheme of $(\gps \circ f)^{-1}(D(x_i))$. Then, $\gps \circ f$ is locally given by a map $\spec{A} \to \spec{k[x_0/x_i, \ldots, x_n/x_i]}$, which corresponds to sending the elements $x_j/x_i$ to some elements $a_j \in A$. So, the universal Cartier divisor $\cat{H}$ is locally given by the equation $\sum_{j \neq i} (c_j/c_i) a_j = 0$ inside $\spec{A} \x D(c_i)$. Hence, the claim is true. \claimend

\vtab

Consider the projection $\cat{H} \to (\P^n)^*$. For a section $s \in (\P^n)^*$, the fiber is exactly $Y \x_{\P^n} H$ where $H$ is the hyperplane cut out by $s$. Hence, the open set we want will be the set of regular values of this projection map.
\end{proof}

\begin{lemma}
Let $s$, $s'$ be two sections in $(\P^n)^*$, cutting out $H$, $H'$ respectively, such that $Y \x_{\P^n} H$ and $Y \x_{\P^n} H'$ are both \sm\ \inv\ divisors on $Y$. Then we have 
$$[Y \x_{\P^n} H \to X] = [Y \x_{\P^n} H' \to X]$$
as elements in $\cobg{G}{}{X}$.
\end{lemma}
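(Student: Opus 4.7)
The plan is to reduce the claim to two applications of the GDPR$(1,1)$ instance of the generalized double point relation from Section \ref{gdprsubsection}, bridged by a sufficiently generic auxiliary section $s''$. Throughout, I use that $G$ acts trivially on $\P^n$, so every global section of $\O(1)$ is tautologically $G$-\inv\ and any ratio of such sections is a $G$-\inv\ rational function on $\P^n$. Pulling back via $\gps \circ f$ therefore shows that the two divisors $D := Y \x_{\P^n} H$ and $D' := Y \x_{\P^n} H'$ are $G$-\equi ly linearly equivalent as smooth $G$-\inv\ divisors on $Y$.

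The crucial step is the choice of $s''$: apply Lemma \ref{bertinilemma} simultaneously to the three equivariant maps $Y \to \P^n$, $D \to \P^n$, and $D' \to \P^n$. Each domain is a smooth $G$-variety equipped with an equivariant map to $\P^n$, which is all that the Bertini argument in the proof of Lemma \ref{bertinilemma} requires of its input. The intersection of the three resulting non-empty open subsets of $(\P^n)^*$ is non-empty, so we may choose $s''$ so that $D'' := Y \x_{\P^n} H''$ is a smooth $G$-\inv\ divisor on $Y$ and both $D \cap D''$ and $D' \cap D''$ are smooth of codimension $2$ in $Y$. In particular $D + D''$ and $D' + D''$ are \rsncd s on $Y$, and the first paragraph gives $D \sim D'' \sim D'$ equivariantly. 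This is essentially the only nontrivial point; the rest is bookkeeping.

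Now the data $(f : Y \to X;\, D, D'')$ satisfies all hypotheses of a GDPR setup with $n = m = 1$. Since $E^X_1 = F^X_1 = E^Y_1 = F^Y_1 = 0$ by definition, we have $G^X_{1,1} = X_1$ and $G^Y_{1,1} = Y_1$, so the imposed relation $\cat{G}(G^X_{1,1}) = \cat{G}(G^Y_{1,1})$ collapses to
\[ [D \embed Y \to X] = [D'' \embed Y \to X] \]
in $\cobg{G}{}{X}$. Running the identical argument with the pair $(D', D'')$ in place of $(D, D'')$ and chaining the two equalities yields $[Y \x_{\P^n} H \to X] = [Y \x_{\P^n} H' \to X]$. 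The only mild obstacle, namely the simultaneous smoothness/transversality of $D'', D \cap D'', D' \cap D''$, is handled uniformly by the generic-smoothness argument of Lemma \ref{bertinilemma} applied three times.
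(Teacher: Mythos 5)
Your proof is correct and reaches the conclusion via the same mechanism as the paper, namely $GDPR(1,1)$ applied to $G$-equivariantly linearly equivalent divisors on $Y$. The paper's own proof is terser: it observes that $D := Y \times_{\P^n} H$ and $D' := Y \times_{\P^n} H'$ are $G$-equivariantly linearly equivalent (pulling back $H \sim H'$ along $\gps \circ f$) and then invokes $GDPR(1,1)$ directly, without pausing to verify the \rsncd\ condition $D + D'$ that the definition of a GDPR setup imposes. That condition can fail — two smooth invariant pullback divisors need not meet transversely, or may even coincide — so the direct application is, strictly speaking, not licensed by the definition. Your interposition of a generic $D''$ (chosen via three simultaneous applications of the Bertini-type Lemma \ref{bertinilemma}, to $Y$, $D$, and $D'$) makes $D + D''$ and $D' + D''$ genuine \rsncd s, and chaining two instances of $GDPR(1,1)$ then yields the claim with every hypothesis visibly satisfied. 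So the route is the paper's route, but your bridging step closes a gap that the paper glosses over; one minor point worth flagging explicitly is that the generic choice also ensures $D''$ shares no component with $D$ or $D'$ (forced by $D \cap D''$ having codimension $2$), so the sums are in fact reduced.
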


\begin{proof}
Observe that $H$, $H'$ are \equi ly linearly equivalent divisors on $\P^n$. Thus,
$$Y \x_{\P^n} H = (\gps \circ f)^*H \sim (\gps \circ f)^*H' = Y \x_{\P^n} H'$$
as \inv\ divisors on $Y$. The result then follows from $GDPR(1,1)$.
\end{proof}
	
\begin{lemma}
Sending $[Y \to X]$ to $[Y \x_{\P^n} H \to X]$ defines an abelian group homo\morp\ from $\cobglow{G}{*}{X}$ to $\cobglow{G}{*-1}{X}$.
\end{lemma}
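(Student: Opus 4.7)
My plan is to verify that the operation $[Y \to X] \mapsto [Y \x_{\P^n} H \to X]$ descends from the free abelian group $M_G(X)^+$ to the quotient $\cobg{G}{}{X}$. Additivity on disjoint unions and the grading drop by one are immediate from the construction, and the preceding two lemmas handle both the existence of a suitable $H$ and the independence on the choice of $H$. The real content is therefore to check that the assignment respects every g\gdpr\ relation.

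So fix a GDPR setup: a \proj\ $G$-\equi\ \morp\ $\gph:Y\to X$ with $Y$ \equidim\ and \sm, and invariant divisors \gdprdivisors\ on $Y$ with $\gdprdivisorsequi$ and $\gdprdivisorssum$ a \rsncd. Let $\cat{G}:\cat{R}\to M_G(X)^+$ be the associated \homo. Each term $\cat{G}(\generalterm)$ is a class of the form $[Z\to Y\to X]$ where $Z$ is a fiber product over $Y$ of certain $A_i$'s, $B_j$'s, and the \proj\ bundles $P^p_k$, $Q^q_l$; there are only finitely many such $Z$ occurring across the two expressions $\cat{G}(G^X_{n,m})$ and $\cat{G}(G^Y_{m,n})$. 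Call the resulting finite collection $\{Z_\ga\to Y\}_{\ga\in I}$; each $Z_\ga$ is \sm, and the composition $Z_\ga\to Y\to X\stackrel{\gps}{\to}\P^n$ is a \morp\ of \sm\ varieties.

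Applying the Bertini-type argument of the preceding Lemma simultaneously to $Y$ and to each $Z_\ga$, I obtain nonempty open sets $U,U_\ga\subset(\P^n)^*$ whose intersection is still nonempty. Pick any $s$ in that common open set and let $H$ be the hyperplane it cuts out; then $\tilde Y\defeq Y\x_{\P^n}H$ and $\tilde Z_\ga\defeq Z_\ga\x_{\P^n}H = Z_\ga\x_Y\tilde Y$ are all \sm\ $G$-\inv\ divisors in the corresponding ambient varieties. Because $\tilde Y\to Y$ is a \sm\ divisorial pullback, the pulled-back divisors $\tilde A_i\defeq A_i\x_Y\tilde Y$ and $\tilde B_j\defeq B_j\x_Y\tilde Y$ are again \sm\ invariant, their sum is again a \rsncd\ on $\tilde Y$ (since $\tilde Y$ is transverse to all strata of $\gdprdivisorssum$, which are precisely among the $Z_\ga$), and $\sum\tilde A_i\sim\sum\tilde B_j$ by pulling back the defining equivariant rational function. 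Moreover, pullback commutes with the \proj\ bundle constructions (by Proposition \ref{isoprojbundle}), so the tower $P^p_k$ pulls back to the tower defined on $\tilde Y$ by the divisors $\tilde A_1,\ldots,\tilde A_k$, and similarly for $Q^q_l$. Hence the data $(\tilde\gph:\tilde Y\to X,\ \tilde A_i,\tilde B_j)$ form a bona fide GDPR setup over $X$ whose associated \homo\ $\tilde{\cat G}$ satisfies $\tilde{\cat G}(T) = c_\gps(\L)(\cat G(T))$ for every monomial $T\in\cat R$.

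The identity $\tilde{\cat G}(G^X_{n,m})=\tilde{\cat G}(G^Y_{m,n})$ in $\cobg{G}{}{X}$ is therefore exactly the g\gdpr\ for the new setup, so it holds by definition; translating it back yields $c_\gps(\L)(\cat{G}(G^X_{n,m})) = c_\gps(\L)(\cat{G}(G^Y_{m,n}))$ in $\cobg{G}{}{X}$, which is the desired compatibility. I expect the principal obstacle to be the simultaneous transversality: there are several distinct fiber-product schemes $Z_\ga$ of varying dimensions (including the higher-codimension strata of the \rsncd\ and the \proj\ bundle towers on them), and one has to make sure that all the corresponding Bertini open subsets of $(\P^n)^*$ can be taken to have a common point; this is where finiteness of $I$ and the characteristic-zero Bertini theorem (as packaged in Lemma \ref{bertinilemma}) are essential.
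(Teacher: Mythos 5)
Your argument is essentially the same as the paper's: apply Bertini simultaneously to the finitely many relevant schemes, pick a generic hyperplane $H$, observe that the hyperplane sections of the original GDPR data give a new GDPR setup whose associated homomorphism computes $c_{\gps}(\L)\circ\cat{G}$, and conclude by the defining relations in $\cobg{G}{}{X}$. (The paper runs the new setup over $X_H$ and push-forwards along $X_H\embed X$, while you run it directly over $X$; these are interchangeable.) Two small inaccuracies worth flagging, neither fatal: the compatibility of projective bundles with base change is not Proposition \ref{isoprojbundle} (that result concerns twisting by a line bundle, not pull-back — the paper instead writes $\P(\O_Y\oplus\O_Y(D))_H\cong\P(\O_{Y_H}\oplus\O_{Y_H}(D_H))$ directly); and the collection $\{Z_\ga\}$ to which you apply Bertini should be enlarged to include \emph{all} nonempty intersections $A_{i_1}\cap\cdots\cap B_{j_s}$, not merely the fiber products arising as domains in $\cat{G}(G^X_{n,m})$ and $\cat{G}(G^Y_{m,n})$, since transversality to every stratum of $\gdprdivisorssum$ is what makes $\sum\tilde A_i+\sum\tilde B_j$ a \rsncd\ on $\tilde Y$, and not every such product of $X_i$'s and $Y_j$'s actually occurs as a monomial in $G^X_{n,m}$ or $G^Y_{m,n}$. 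Both are trivial fixes and do not change the shape of the proof.
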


\begin{proof}
As before, let $\cat{G}$ be the map corresponding to a GDPR setup $Y \to X$ with divisors \gdprdivisors\ on $Y$. We need to show
$$c_{\gps}(\L) \circ \cat{G}(G^X_{n,m}) = c_{\gps}(\L) \circ \cat{G}(G^Y_{m,n}).$$

For simplicity, we will denote $X \x_{\P^n} H$ by $X_H$. Consider the \proj\ \morp\ $Y_H \to X_H$. By the freedom of choice of $H$, we may assume $X_H$ is a \sm\ \inv\ divisor on $X$ and the same for $Y_H$. In particular, $Y_H$, $X_H$ are both in $\gsmcat{G}$ and $Y_H$ is \equidim. Similarly, we may assume the same property holds for ${A_i}_H$ and ${B_j}_H$ and also, 
$${A_1}_H + \cdots + {A_n}_H + {B_1}_H + \cdots + {B_m}_H$$ 
is a \rsncd\ on $Y_H$. Since the divisors are given by pull-back along $Y_H \to Y$, we have 
$${A_1}_H + \cdots + {A_n}_H \sim {B_1}_H + \cdots + {B_m}_H.$$ 
Thus, we can define a map $\cat{G}' : \cat{R} \to \cobg{G}{}{X_H}$ by the GDPR setup $Y_H \to X_H$ with ${A_1}_H + \cdots + {A_n}_H \sim {B_1}_H + \cdots + {B_m}_H$. So, it is enough to show
$$c_{\gps}(\L) \circ \cat{G} = i_* \circ \cat{G}'$$
where $i : X_H \embed X$.

For a general term $\generalterm$,
\begin{eqnarray}
c_{\gps}(\L) \circ \cat{G}(\generalterm) &=& c_{\gps}(\L)[A_i \x_Y \cdots \x_Y P^p_k \x_Y \cdots \to X] \nonumber\\
&=& [(A_i \x_Y \cdots \x_Y P^p_k \x_Y \cdots )_H \to X] \nonumber\\
&=& [{A_i}_H \x_{Y_H} \cdots \x_{Y_H} {(P^p_k)}_H \x_{Y_H} \cdots \to X_H \to X]. \nonumber
\end{eqnarray}
Hence, it is enough to show ${(P^p_k)}_H$ is the same as the corresponding tower given by \inv\ divisors $\{{A_i}_H\}$. The $p=1$ case follows from the fact that 
$$\P(\O_Y \oplus \O_Y(D))_H \cong \P(\O_{Y_H} \oplus \O_{Y_H}(D_H))$$ 
and the $p=2,3$ cases can be proved similarly. That shows the well-definedness of the \homo. The fact that it sends $\cobglow{G}{*}{X}$ to $\cobglow{G}{*-1}{X}$ is clear.
\end{proof}

Hence, we have the following definition.

\begin{defn}
{\rm
Suppose that $\L$ is a sheaf in $\picard{G}{X}$ such that there exists an \equi\ \morp\ $\gps : X \to \P^n$ with $\gps^*\O(1) \cong \L$. We define the Chern class operator $c_{\gps}(\L) : \cobglow{G}{*}{X} \to \cobglow{G}{*-1}{X}$ by 
$$c_{\gps}(\L)[f : Y \to X] \defeq [Y \x_{\P^n} H \to Y \to X]$$
where $H$ is a hyperplane in $\P^n$ such that $Y \x_{\P^n} H$ is an invariant \sm\ divisor on $Y$.
}\end{defn}

We definitely do not want the definition of the Chern class operator to depend on the particular \morp\ $\gps : X \to \P^n$. 

\begin{lemma}
$c_{\gps}(\L)$ is independent of $\gps$.
\end{lemma}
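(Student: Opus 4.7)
The plan is to reduce the desired independence to a single application of $GDPR(1,1)$. Fix an element $[f : Y \to X] \in \cobg{G}{}{X}$ with $Y$ $G$-irreducible, and two $G$-equivariant morphisms $\gps : X \to \P^n$, $\gps' : X \to \P^m$ (with trivial $G$-actions on the targets) equipped with $G$-equivariant isomorphisms $\gps^* \O(1) \cong \L \cong \gps'^* \O(1)$. I need to show $c_{\gps}(\L)[f] = c_{\gps'}(\L)[f]$.

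The first step is to upgrade the Bertini argument in Lemma \ref{bertinilemma} to produce hyperplanes from both projective spaces simultaneously. Consider the universal incidence variety
$$\cat{H} \defeq \{(y,s,s') \in Y \times (\P^n)^* \times (\P^m)^* \mid s((\gps \circ f)(y)) = 0 = s'((\gps' \circ f)(y))\}.$$
An entirely local computation, analogous to the one in Lemma \ref{bertinilemma} but now with two independent linear equations coming from disjoint coordinate systems on the two projective factors, should show $\cat{H}$ is smooth of dimension $\dim Y + (n-1) + (m-1)$. Applying generic smoothness in characteristic zero to the projection $\cat{H} \to (\P^n)^* \times (\P^m)^*$ and intersecting with the two Bertini open sets from Lemma \ref{bertinilemma} applied to $\gps \circ f$ and $\gps' \circ f$ separately, I obtain a pair $(H, H')$ of hyperplanes such that $Y_H := Y \times_{\P^n} H$ and $Y_{H'} := Y \times_{\P^m} H'$ are smooth $G$-invariant divisors on $Y$ and their sum is a reduced strict normal crossing divisor.

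By definition $c_{\gps}(\L)[f] = [Y_H \to X]$ and $c_{\gps'}(\L)[f] = [Y_{H'} \to X]$, so it remains to identify these two classes. Since $G$ acts trivially on $\P^n$ and $\P^m$, every section of $\O(1)$ on either factor is automatically $G$-invariant; pulling back gives $G$-invariant sections of $f^*\L$ cutting out $Y_H$ and $Y_{H'}$ respectively. Their ratio is a $G$-invariant rational function on $Y$ with divisor $Y_H - Y_{H'}$, so $Y_H \sim Y_{H'}$ as $G$-invariant divisors. The data $(f : Y \to X;\, Y_H;\, Y_{H'})$ is thus a bona fide $GDPR(1,1)$ setup; noting $E^X_1 = F^X_1 = 0$ (and similarly for $Y$), the polynomials $G^X_{1,1}$ and $G^Y_{1,1}$ collapse to $X_1$ and $Y_1$ respectively, so the relation reads
$$[Y_H \embed Y \to X] = [Y_{H'} \embed Y \to X]$$
in $\cobg{G}{}{X}$, which is exactly the claim.

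The main obstacle is verifying the double Bertini step, namely the simultaneous transversality required to promote two individually generic hyperplane sections into a reduced strict normal crossing divisor. This is a mild extension of Lemma \ref{bertinilemma} and should go through because the two defining linear conditions on $\cat{H}$ involve independent homogeneous coordinates on different projective factors, so they do not interfere with the smoothness estimate; no conceptually new ingredient beyond Lemma \ref{bertinilemma} and the characteristic zero hypothesis is required.
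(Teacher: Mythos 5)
Your proof is correct, but it takes a genuinely different route from the paper's. The paper constructs an auxiliary equivariant morphism $\gps_3 : X \to \P^N$ from the joint span of the pulled-back sections $\gps^* H^0(\P^n,\O(1))$ and $\gps'^* H^0(\P^m,\O(1))$ inside $H^0(X,\L)^G$, so that $\gps$ (resp.\ $\gps'$) becomes the composition of $\gps_3$ with a linear map; it then reduces to comparing $c_{\gps}$ with $c_{\gps_3}$, and finishes by a \emph{direct identification} of divisors, $Y \x_{\P^n} (\P^n \cap H) = Y \x_{\P^N} H$ for a single generic hyperplane $H \subset \P^N$, without invoking any $GDPR$. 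Your argument stays with the original two morphisms, upgrades the Bertini step to a simultaneous one (the universal incidence variety over $(\P^n)^* \x (\P^m)^*$ is smooth of codimension $2$ because the two defining linear conditions are supported on disjoint factors, so generic smoothness in characteristic $0$ gives a pair $(H,H')$ with $Y_H + Y_{H'}$ a \rsncd), observes $Y_H \sim Y_{H'}$ from the fact that both are zero loci of $G$-invariant sections of $f^*\L$ (sections of $\O(1)$ on the trivially-acted-on $\P^n$, $\P^m$ pull back to invariant sections, and an isomorphism of $G$-linearized sheaves preserves invariants), and then applies $GDPR(1,1)$. What the paper's route buys is the avoidance of the second Bertini argument; what yours buys is directness and the explicit verification of the \rsncd\ hypothesis needed for $GDPR(1,1)$, which the preceding independence-of-$H$ lemma in the paper applies somewhat tacitly. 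One small point worth spelling out, which you flag but do not fully carry out, is the smoothness of your two-factor incidence variety $\cat{H}$: the check really is a verbatim extension of the paper's Lemma \ref{bertinilemma} because the two linear equations involve disjoint sets of coordinates on $(\P^n)^*$ and $(\P^m)^*$, so each has a nonvanishing partial in its own factor wherever the equation is satisfied, and the two gradients are therefore everywhere independent.
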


\begin{proof}
Suppose we have two \equi\ \morp s $\gps_1 : X \to \P^n$ and $\gps_2 : X \to \P^m$ such that $\gps_1^* \O(1) \cong \L \cong \gps_2^* \O(1)$. Consider the pull-back of sections 
$$\gps_1^* : {\rm H}^0(\P^n,\O(1)) \to {\rm H}^0(X,\L).$$ 
Then, the image of $\gps_1^*$ will lie in ${\rm H}^0(X, \L)^G$ and the same for $\gps_2$. Let $\{s_{1i}\}$ be a $k$-basis for ${\rm H}^0(\P^n, \O(1))$ and $\{s_{2j}\}$ be a $k$-basis for ${\rm H}^0(\P^m, \O(1))$. Then, $\span{k}{\gps_1^* s_{1i},\ \gps_2^* s_{2j}}$ will be a finite dimensional vector space in ${\rm H}^0(X,\L)^G$. In addition, it is base-point free. This defines an \equi\ morphism $\gps_3 : X \to \P^N$ which can be factored as $X \to \P^n \embed \P^N$ or $X \to \P^m \embed \P^N$. Also, $\gps_3^*\O(1) \cong \L$. Thus, it is enough to show $c_{\gps_1}(\L) = c_{\gps_3}(\L)$.

Consider an element $[Y \to X]$ in $\cobg{G}{}{X}$. Pick a hyperplane $H \subset \P^N$ such that $\P^n \cap H$ is a hyperplane in $\P^n$ (this is equivalent to $\P^n \x_{\P^N} H$ being a \sm\ divisor on $\P^n$) and $Y \x_{\P^N} H$ is a \sm\ divisor on $Y$. Then,
\begin{eqnarray}
c_{\gps_1}(\L)[Y \to X] &=& [Y \x_{\P^n} (\P^n \cap H)] \nonumber\\
&=& [Y \x_{\P^N} H] \nonumber\\
&=& c_{\gps_3}(\L)[Y \to X]. \nonumber
\end{eqnarray}
\end{proof}

Hence, for a \nice\ \glin{G}\ invertible sheaf $\L$ over $X \in \gsmcat{G}$, we have a natural definition of the Chern class operator
$$c(\L) : \cobglow{G}{*}{X} \to \cobglow{G}{*-1}{X}.$$

\vtab

\subsection{Special pull-back and the formal group law}

Recall that in the $\go_*$ theory in \cite{LePa}, we have the following property (Proposition 9.4 in \cite{LePa}).

For any $X \in \smcat$ and invertible sheaves $\L$, $\cat{M}$ over $X$, we have
$$c(\L \otimes \cat{M}) = F(c(\L), c(\cat{M}))$$
as abelian group endo\morp s on $\go(X)$ where $F \in \lazard[[u,v]] \cong \go(\pt)[[u,v]]$ is the \fgl\ with $\go(X)$ considered as a $\go(\pt)$-module by the external product. Since the Chern class operator always cuts down the dimension of the domain by one, $F(c(\L), c(\cat{M}))$ indeed acts as a finite sum on any given element in $\go(X)$.

We will follow the notation in \cite{LePa} and denote this property by \textbf{(FGL)}. Our objective in this subsection is to prove it holds in our \equi\ setting, when all \glin{G}\ invertible sheaves involved are \nice. First of all, we will need some basic facts.

\begin{prop}
\label{equiprojmorp}
Suppose $f: Y \to X$ is a \morp\ in $\gsmcat{G}$. Then, there exists a $G$-\repn\ $V$ and an \equi\ immersion $i : Y \embed \P(V) \x X$ such that $f = \gp_2 \circ i$. If we further assume $f$ to be \proj, then $i$ will be a closed immersion.
\end{prop}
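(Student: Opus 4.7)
The plan is to first construct a $G$-\equi\ locally closed immersion $j : Y \embed \P(V)$ for some finite-dimensional $G$-\repn\ $V$, and then combine it with $f$ to obtain the desired $(j,f) : Y \to \P(V) \x X$.

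First I would produce a $G$-linearized ample invertible sheaf on $Y$. Since $Y$ is \qproj, it admits an ample invertible sheaf $\cat{M}$, not a priori \glin{G}. When $G$ is finite, $\bigotimes_{\ga \in G} \ga^* \cat{M}$ is ample and carries a natural $G$-linearization. When $G$ is reductive connected, Theorem 1.6 of \cite{Su} (used already in Proposition \ref{domainisind}) provides an integer $m \geq 1$ for which $\cat{M}^{\otimes m}$ admits a $G$-linearization, and this power remains ample. In either case we obtain a $G$-linearized ample invertible sheaf $\L$ on $Y$, and after passing to a further power I may assume $\L$ is very ample.

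Next, choose finitely many sections $s_1, \ldots, s_r \in {\rm H}^0(Y, \L)$ defining a (non-\equi) immersion $Y \embed \P^{r-1}$. The $G$-linearization on $\L$ makes ${\rm H}^0(Y, \L)$ a locally finite $G$-module: every finite subset is contained in a finite-dimensional $G$-invariant subspace. For $G$ finite this is obvious since the orbit of a finite set is finite; for $G$ reductive connected it is the standard local finiteness of rational $G$-modules. Let $W \subset {\rm H}^0(Y, \L)$ be such a finite-dimensional $G$-subrepresentation containing all the $s_i$, and set $V \defeq \dual{W}$. Because $W$ contains the $s_i$, the sections in $W$ are base-point free and separate points and tangent vectors of $Y$, so they define a $G$-\equi\ immersion $j : Y \embed \P(V)$.

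Now form the \equi\ \morp\ $(j, f) : Y \to \P(V) \x X$; its composition with $\gp_1$ recovers $j$ and its composition with $\gp_2$ recovers $f$. The first factorization shows $(j,f)$ is an immersion. If furthermore $f$ is \proj, then $f = \gp_2 \circ (j,f)$ is proper; since $\gp_2$ is separated, the standard cancellation criterion (Ch. II, Cor. 4.8 in \cite{Ha}) forces $(j,f)$ to be proper as well, and a proper immersion is a closed immersion. The main technical point is the local finiteness used to extract the finite-dimensional $V$; the remaining steps are formal.
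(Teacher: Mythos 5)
Your proposal is correct and essentially matches the paper's proof: both obtain a $G$-linearized very ample invertible sheaf on $Y$ (Sumihiro's theorem for reductive connected $G$, the $\bigotimes_{\alpha \in G} \alpha^*$ trick for finite $G$), use it to construct an equivariant immersion into a $\P(V)$, and then take the graph morphism with $f$. The only difference is that the paper cites Proposition 1.7 of \cite{Mu} for the equivariant immersion step where you spell out the local-finiteness argument directly, and your cancellation argument for the closed-immersion case is a slightly more explicit version of the paper's brief remark.
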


\begin{proof}
First, assume that $G$ is reductive and connected. Since $Y$ is \qproj, there exists an (not necessarily \equi) immersion $i_0 : Y \embed \P^n$. Define $\L \defeq i_0^* \O(1)$ as an (not necessarily \glin{G}) invertible sheaf over $Y$. By Theorem 1.6 in \cite{Su}, there exists an integer $m$ such that $\L^m$ is $G$-linearizable. Fix a $G$-linearization of $\L^m$. Since we have a \glin{G}\ very ample invertible sheaf $\L$ over $Y$, by Proposition 1.7 in \cite{Mu}, there exists an \equi\ immersion $i_1 : Y \embed \P(V)$ for some $G$-\repn\ $V$ such that $i_1^* O(1) \cong \L^m$. Then, the map $i_1 \x f : Y \to \P(V) \x X$ will be the \equi\ immersion we want.

Now assume that $G$ is finite. As above, $\L = i_0^* \O(1)$ is a very ample invertible sheaf over $Y$. Then, $\otimes_{\ga \in G}\ \ga^*\L$ will be a \glin{G}\ very ample invertible sheaf over $Y$, which gives us the \equi\ immersion $i_1$.

If $f$ is \proj, then the image of $i = i_1 \x f$ will be a closed subscheme of $\P(V) \x X$.
\end{proof}

Suppose $X$ is a scheme over $k$ and $U$ is a subscheme of $X$. We will denote the closure of $U$ in $X$ by $\closure{U}{X}$. Also denote the singular locus of $X$ by $\singular{X}$.

\begin{prop}
\label{equiembedwithsmclosure}
(Equivariant immersion with \sm\ closure)

\begin{statementslist}
{\rm (1)} & If $Y$ is an object in $\gsmcat{G}$, then there exists a $G$-\repn\ $V$ where $Y$ can be equivariantly embedded into $\P(V)$ such that its closure is \sm. \\
{\rm (2)} & Suppose $X$, $Y$ are objects in $\gsmcat{G}$ and $U \subset X$ is an invariant open subscheme. If a \morp\ $f : Y \to U$ in $\gsmcat{G}$ is \equi\ and \proj, then there exists a $G$-\repn\ $V$, an \equi\ closed immersion $i : Y \embed U \x \P(V)$ such that $f = \gp_1 \circ i$, and $\closure{Y}{X \x \P(V)}$ is \sm.
\end{statementslist}
\end{prop}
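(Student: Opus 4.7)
The plan for both parts is to combine the equivariant immersion construction of Proposition \ref{equiprojmorp} with equivariant resolution of singularities (available in characteristic zero for both reductive connected and finite $G$). In each case I will first embed $Y$ equivariantly in the ambient product, take the closure $Z$, replace $Z$ by a smooth equivariant birational model $\tilde Z$ via equivariant resolution, and then re-embed $\tilde Z$ equivariantly to obtain the desired embedding of $Y$ with smooth closure.

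For (1): apply Proposition \ref{equiprojmorp} to the structure morphism $\gp_k : Y \to \pt$ to get an equivariant immersion $j : Y \embed \P(V_0)$ for some $G$-\repn\ $V_0$. Let $Z \defeq \closure{j(Y)}{\P(V_0)}$. Because $j(Y)$ is $G$-\inv\ and $G$ acts by automorphisms, $Z$ is a $G$-\inv\ closed (hence projective) subscheme of $\P(V_0)$ in which $j(Y)$ is open and dense. Apply equivariant resolution of singularities to obtain an \equi\ projective birational morphism $\gp : \tilde Z \to Z$ with $\tilde Z$ \sm\ and $\gp$ an isomorphism over the \sm\ locus of $Z$; since $Y$ is \sm\ and open in $Z$, this smooth locus contains $j(Y)$, so $Y$ lifts to an equivariant open dense immersion $Y \embed \tilde Z$. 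Now apply Proposition \ref{equiprojmorp} to $\tilde Z \to \pt$ (which is projective) to get an equivariant closed immersion $\tilde Z \embed \P(V)$. The composition $Y \embed \tilde Z \embed \P(V)$ is an equivariant immersion whose closure in $\P(V)$ equals $\tilde Z$, which is smooth.

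For (2): the argument is the relative analog over $X$. Apply Proposition \ref{equiprojmorp} to $f : Y \to U$ to get an equivariant closed immersion $Y \embed U \x \P(V_0)$ with $f = \gp_1 \circ i$. Let $Z \defeq \closure{Y}{X \x \P(V_0)}$; this is $G$-\inv, closed in $X \x \P(V_0)$, hence $Z \to X$ is projective, and since $Y$ is already closed in $U \x \P(V_0)$ we have $Z \cap (U \x \P(V_0)) = Y$, so $Y$ is open and dense in $Z$. Take an equivariant resolution $\tilde Z \to Z$; as in (1), $Y$ lifts to an equivariant open dense immersion $Y \embed \tilde Z$. The composition $\tilde Z \to Z \to X$ is projective and equivariant, so Proposition \ref{equiprojmorp} produces an equivariant closed immersion $\tilde Z \embed X \x \P(V)$ with first projection equal to $\tilde Z \to X$. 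Composing, $Y \embed \tilde Z \embed X \x \P(V)$ is an equivariant immersion with first projection $f$; it factors through $U \x \P(V)$ (and is closed there because $f$ is proper), and its closure in $X \x \P(V)$ is $\tilde Z$, which is smooth.

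The only substantive input beyond the already-proved Proposition \ref{equiprojmorp} is the existence of equivariant resolution of singularities for $G$-varieties in characteristic zero (Hironaka, with the equivariant enhancements of Villamayor, Encinas--Villamayor, Bierstone--Milman, Koll\'ar), which covers both the reductive connected and finite group cases that we consider. The main subtlety to watch is that $Y$ must actually lift through the resolution, which is ensured because $Y$ is smooth and open in $Z$ so the resolution is an isomorphism there; aside from this, all the remaining steps are routine manipulations of closures, proper morphisms, and the embedding output of Proposition \ref{equiprojmorp}.
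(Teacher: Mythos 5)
Your proposal is correct and follows essentially the same route as the paper's own proof: embed via Proposition \ref{equiprojmorp}, take the closure, apply equivariant (canonical) resolution of singularities, observe that $Y$ lifts because it lies in the smooth locus of the closure, and then re-embed the resolved model using Proposition \ref{equiprojmorp} again. The only place where the paper is slightly more explicit than you are is in checking that the restriction of the final closed immersion over $U$ recovers $Y$ exactly (using $Z|_U \cong Y$ since $Y$ is closed and dense in $Z|_U$), but your parenthetical about $f$ being proper captures the same idea.
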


\begin{proof}
{\rm (1)}\tab By Proposition \ref{equiprojmorp}, we may assume there exists an \equi\ immersion \tab$Y \embed \P(V')$ for some $G$-\repn\ $V'$. By the canonical resolution of singularities (Theorem 1.6 in \cite{embeddeddesingularization}), for any variety $Z$ over $k$ ($\char{k} = 0$), there exists a \sm\ variety $Z^{res}$ and a \morp\ $Z^{res} \to Z$ which is given by a series of blowups along canonically chosen \sm\ centers. As pointed out in Remarks 4-1-1 in \cite{Ma}, since the blowups are canonical, $Z^{res}$ has a natural $G$-action and $Z^{res} \to Z$ will be $G$-\equi. Apply this on our case by setting $Z \defeq \closure{Y}{\P(V')}$, then we have an \equi\ \morp\ $\gp : Z^{res} \to Z$.

First of all, since $Y$ is \sm, $\gp$ is an isomorphism away from $\singular{Z} \subset Z - Y$. That implies the \equi\ immersion $Y \embed Z$ lifts to an \equi\ immersion $Y \embed Z^{res}$ and $\closure{Y}{Z^{res}} = Z^{res}$. Moreover, $Z^{res}$ is \proj\ because $\gp$ is \proj\ and $Z$ is \proj. By Proposition \ref{equiprojmorp}, $Z^{res}$ can be \equi ly embedded into $\P(V)$ for some $G$-\repn\ $V$. Hence, we have $Y \embed Z^{res} \embed \P(V)$ such that $\closure{Y}{\P(V)} = Z^{res}$ is \sm.

\vtab

{\rm (2)}\tab Since $f : Y \to U$ is \proj, by Proposition \ref{equiprojmorp}, there exists an \equi\ immersion $i' : Y \embed U \x \P(V')$ for some $G$-\repn\ $V'$ such that $f = \gp_1 \circ i'$. Consider $U \x \P(V')$ as an invariant open subscheme in $X \x \P(V')$ and let $Z \defeq \closure{Y}{X \x \P(V')}$. By canonical resolution of singularities as above, we have an \equi\ \proj\ \morp\ $Z^{res} \to Z$. By considering
$$Z^{res} \to Z \embed X \x \P(V') \to X,$$
we know that $Z^{res} \to X$ is \equi\ and \proj. By Proposition \ref{equiprojmorp}, there exists an \equi\ immersion $Z^{res} \embed X \x \P(V)$ for some $G$-\repn\ $V$. Again, the \equi\ immersion $Y \embed Z$ lifts to $Y \embed Z^{res}$ and we have $Y \embed Z^{res} \embed X \x \P(V)$ where $\closure{Y}{X \x \P(V)} = Z^{res}$ is \sm. Consider the following commutative diagram :

\begin{center}
$\begin{array}{ccccc}
Z^{res}    & \embed & X \x \P(V)  &          & \\
\downarrow &        &             & \searrow & \\
Z          & \embed & X \x \P(V') & \to      & X. \\
\end{array}$
\end{center}
Consider its restriction over $U$. Then, we obtain the following commutative diagram :

\begin{center}
$\begin{array}{ccccc}
Z^{res}|_U \cong Y & \embed & U \x \P(V)  &          & \\
\downarrow &                &             & \searrow & \\
Z|_U \cong Y           & \embed & U \x \P(V') & \to      & U. \\
\end{array}$
\end{center}
That gives us an \equi\ closed immersion $i : Y \embed U \x \P(V)$ such that the closure $\closure{Y}{X \x \P(V)} = Z^{res}$ is \sm. Moreover, the composition $\gp_1 \circ i$ is given by
$$Y \iso Z^{res}|_U \iso Z|_U \cong Y \embed U \x \P(V') \to U,$$
which is $\gp_1 \circ i' = f$.
\end{proof}

\vtab

In order to prove the \textbf{(FGL)} property , we need some reduction of arguments, which requires the following special type of pull-back.
 
Let $\gps : X \to \prod_i \P^{n_i}$ be a $G$-\equi\ \morp\ where $X \in \gsmcat{G}$ is \equidim\ and the $G$-action on $\prod_i \P^{n_i}$ is trivial. We are going to define $\gps^* : \cobg{G}{}{\prod_i \P^{n_i}} \to \cobg{G}{}{X}$. Our proof is basically the \equi\ version of Lemma 6.1 in \cite{LePa}. Let $Q$ be the group scheme $\prod_i GL(n_i + 1)$ which acts on $\prod_i \P^{n_i}$ naturally. We consider $Q$ as a variety with trivial $G$-action, so $Q$ is in $\gsmcat{G}$.

\begin{lemma}
Let $f : Y \to \prod_i \P^{n_i}$ be a \proj\ \morp\ in $\gsmcat{G}$ such that $Y$ is \girred.

\begin{statementslist}
{\rm (1)} & There exists a non-empty open subscheme $U(\gps,f) \subset Q$ such that, for all closed points $\gb \in U(\gps,f)$, the \morp s $\gb \cdot \gps$ and $f$ are transverse. \\
{\rm (2)} & For any two closed points $\gb, \gb' \in U(\gps,f)$, we have 

\begin{center}
$[X \x_{\gb \cdot \gps} Y \to X] = [X \x_{\gb' \cdot \gps} Y \to X]$ 
\end{center}

\noindent as elements in $\cobg{G}{}{X}.$ \\
\end{statementslist}
\end{lemma}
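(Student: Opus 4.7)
For part (1), my plan is a standard Kleiman--Bertini transversality argument in characteristic zero. Form the universal twisted pullback
$$\tilde W \defeq (Q \x X) \x_{\prod_i \P^{n_i}} Y$$
using the action-twisted map $(q, x) \mapsto q \cdot \gps(x)$ on one side and $f$ on the other. Because $Q$ acts transitively on $\prod_i \P^{n_i}$, for each fixed $x$ the slice $Q \to \prod_i \P^{n_i}$, $q \mapsto q \cdot \gps(x)$ is a smooth orbit map, so the combined map $Q \x X \to \prod_i \P^{n_i}$ is a smooth surjection; hence $\tilde W$ is smooth. The projection $\pi : \tilde W \to Q$ is projective as the base change of $f$, and by generic smoothness in characteristic zero it is smooth over some non-empty open $U(\gps, f) \subset Q$. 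For $\gb \in U(\gps, f)$ the fiber $\pi^{-1}(\gb)$ is precisely $X \x_{\gb \cdot \gps} Y$, whose smoothness is equivalent to the transversality of $\gb \cdot \gps$ and $f$; everything is $G$-equivariant since $G$ acts trivially on $Q$ and on $\prod_i \P^{n_i}$.

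For part (2), I will interpolate between $\gb$ and $\gb'$ by a one-parameter family over $\P^1$ and apply $GDPR(1,1)$. Since $Q$ is a product of general linear groups, each an open subset of its ambient matrix space, take a line $\rho : \A^1 \to {\rm Mat}$ with $\rho(0) = \gb$ and $\rho(1) = \gb'$, and set $U' \defeq \rho^{-1}(Q)$, a dense open of $\A^1$ containing $0$ and $1$. The base change
$$\tilde W_{U'} \defeq \tilde W \x_Q U' = (U' \x X) \x_{\prod_i \P^{n_i}} Y$$
inherits smoothness from $\tilde W \to Q$ in neighborhoods of its fibers over $0$ and $1$, since these sit over $\gb, \gb' \in U(\gps, f)$. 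Because $f$ is projective with projective target, $Y$ itself is projective, so the closure $Z \defeq \overline{\tilde W_{U'}} \subset \P^1 \x X \x Y$ is projective over $\P^1 \x X$, and hence over $X$. As $U' \subset \P^1$ is open and contains $0, 1$, the fibers of $Z \to \P^1$ over $0$ and $1$ coincide with those of $\tilde W_{U'}$, namely $X \x_{\gb \cdot \gps} Y$ and $X \x_{\gb' \cdot \gps} Y$, and $Z$ is smooth in neighborhoods of these fibers.

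Applying equivariant canonical resolution of singularities (as used in Proposition \ref{equiembedwithsmclosure}) to $Z$, I obtain a smooth projective equivariant $\widetilde Z \to \P^1 \x X$; the resolution is an isomorphism over the smooth locus of $Z$, in particular over the two distinguished fibers, so $\widetilde Z_0 \cong X \x_{\gb \cdot \gps} Y$ and $\widetilde Z_1 \cong X \x_{\gb' \cdot \gps} Y$. These are disjoint smooth $G$-invariant divisors on $\widetilde Z$, $G$-equivariantly linearly equivalent (any two points of $\P^1$ differ by the principal divisor of a function on $\P^1$, which is automatically $G$-invariant since $G$ acts trivially on $\P^1$), and their sum is a reduced strict normal crossing divisor. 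Applying $GDPR(1,1)$ to the projective equivariant morphism $\widetilde Z \to X$ with $A_1 = \widetilde Z_0$ and $B_1 = \widetilde Z_1$ yields
$$[X \x_{\gb \cdot \gps} Y \to X] = [\widetilde Z_0 \embed \widetilde Z \to X] = [\widetilde Z_1 \embed \widetilde Z \to X] = [X \x_{\gb' \cdot \gps} Y \to X].$$
The main technical point is ensuring the compactification-plus-resolution leaves these two distinguished fibers intact; this is handled by confining the singular locus of $Z$ to the boundary $\P^1 \setminus U'$, which is disjoint from the fibers over $0$ and $1$.
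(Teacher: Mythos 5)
Your proof is correct and follows essentially the same route as the paper. Part (1) is the same Kleiman--Bertini argument: the twisted evaluation $Q\x X\to\prod_i\P^{n_i}$ is smooth by transitivity of the $Q$-action, so the universal fiber product $\tilde W$ is smooth, and generic smoothness of $\tilde W\to Q$ furnishes $U(\gps,f)$. Part (2) also matches the paper's strategy of interpolating along a line in the matrix affine space, compactifying over $\P^1\x X$, and applying $GDPR(1,1)$ since the two fiber divisors on $\P^1$ are $G$-equivariantly linearly equivalent.

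The one place where you genuinely diverge from the paper is the compactification step. The paper invokes Proposition~\ref{equiembedwithsmclosure}(2) to produce an equivariant closed immersion of $Z$ into $(U\x X)\x\P(V)$ for some $G$-representation $V$, with smooth closure in $(\P^1\x X)\x\P(V)$; that proposition bundles the choice of an equivariant very ample linearization together with resolution of singularities. You instead notice that $Y$ is already projective (since $f$ is projective with projective target), so $\tilde W_{U'}$ sits as a locally closed subscheme of $\P^1\x X\x Y$, and you take the naive closure there and resolve directly. This is a legitimate shortcut in this particular case, trading the general-purpose Proposition~\ref{equiembedwithsmclosure}(2) for a hands-on argument, and it buys a slightly more elementary compactification at the cost of having to verify by hand that the resolution is an isomorphism over the two distinguished fibers (which you do correctly, since $\singular{Z}$ lies over $\P^1\setminus U'$). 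Two small points you leave implicit but which do hold: $\widetilde Z$ is equidimensional (because $\tilde W\to Y$ is smooth of constant relative dimension onto the equidimensional $Y$, this passes to the open $\tilde W_{U'}$, its closure, and the resolution), and the resolution map and the projection to $X$ are projective. These are needed for the $GDPR(1,1)$ setup, so it would be cleaner to state them explicitly, as the paper does.
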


\begin{proof}
{\rm (1)}\tab First of all, $\gb \cdot \gps$ is $G$-\equi\ because $\gb : \prod_i \P^{n_i}\iso\prod_i \P^{n_i}$ is trivially $G$-\equi. Define a map $Q \x X \to \prod_i \P^{n_i}$ by $(\gb, x) \mapsto \gb \cdot \gps(x)$, which is clearly $G$-\equi. In addition, since $Q$ acts on $\prod_i \P^{n_i}$ transitively, the map 
$$T_{\gb} Q \oplus T_x X = T_{(\gb,x)} (Q \x X) \to T_{\gb \gps(x)} (\prod_i \P^{n_i})$$
is surjective ($T_x X$ means the tangent space of $X$ at $x$). Since the domain and codomain are both \sm, by Proposition 10.4 in Ch. III in \cite{Ha} ($\char{k} = 0$), the map $Q \x X \to \prod_i \P^{n_i}$ is \sm. That implies $(Q \x X) \x_{\prod_i \P^{n_i}} Y$ is \sm.

Let $(Q \x X) \x_{\prod_i \P^{n_i}} Y \to Q$ be the projection. If a closed point $\gb \in Q$ is a regular value, then $((Q \x X) \x_{\prod_i \P^{n_i}} Y)_{\gb} = X \x_{\gb \cdot \gps} Y$ is \sm\ and
\begin{eqnarray}
\dim X \x_{\gb \cdot \gps} Y &=& \dim ((Q \x X) \x_{\prod_i \P^{n_i}} Y)_{\gb} \nonumber\\
& = & \dim (Q \x X) \x_{\prod_i \P^{n_i}} Y - \dim Q \nonumber\\
                                                  & = & \dim Q \x X + \dim Y - \dim \prod_i \P^{n_i} - \dim Q \nonumber\\
                                                  & = & \dim X + \dim Y - \dim \prod_i \P^{n_i}. \nonumber
\end{eqnarray}
In other words, $f$ and $\gb \cdot \gps$ are transverse. Hence, the open set $U(\gps,f)$ we want is just the set of regular values of $(Q \x X) \x_{\prod_i \P^{n_i}} Y \to Q$. 

\vtab

{\rm (2)}\tab Consider the following commutative diagram :

\vtab

$\begin{CD}
(Q \x X) \x_{\prod_i \P^{n_i}} Y @>>> Q \x X @>>> Q                      @>>> \A^N \\
@AAA                               @AAA        @AAA                        @AAA \\
(U \x X) \x_{\prod_i \P^{n_i}} Y @>>> U \x X @>>> {U \defeq Q \cap \A^1} @>>> {\A^1 \defeq \text{line through }\gb, \gb'} 
\end{CD}$

\vtab

\noindent where the group scheme $Q = \prod_i GL(n_i +1)$ is considered as an open subscheme of $\A^N$ for some large $N$ (trivial $G$-action on $\A^N$). Notice that $U$ is a non-empty open subscheme of $\A^1$. All maps in the diagram are trivially $G$-\equi. The \morp\ $(Q \x X) \x_{\prod_i \P^{n_i}} Y \to Q \x X$ is \proj\ because it is an extension from $f$. By using a smaller $U$ (as long as $U \subset U(\gps,f)$), we can assume the projection map 
$$(U \x X) \x_{\prod_i \P^{n_i}} Y \to U$$ 
to be \sm. Hence, $(U \x X) \x_{\prod_i \P^{n_i}} Y$ is \sm. Notice that the fibers are 
$$((U \x X) \x_{\prod_i \P^{n_i}} Y)_{\gb} = X \x_{\gb \cdot \gps} Y.$$

Denote the map 
$$Z \defeq (U \x X) \x_{\prod_i \P^{n_i}} Y \to U \x X$$ 
by $g$. Then, $g$ is a \proj\ \morp\ in $\gsmcat{G}$. In addition, $Z$ is \equidim\ because $U$ is \equidim\ and $Z \to U$ is \sm. By Proposition \ref{equiembedwithsmclosure}, there exists a $G$-\equi\ closed immersion $i : Z \embed (U \x X) \x \P(V)$ for some $G$-\repn\ $V$ such that $g = \gp_1 \circ i$ and the closure of $Z$ in $(\P^1 \x X) \x \P(V)$ is \sm. Let us denote this closure by $\overline{Z}$. Thus, we obtain a \proj\ \morp\ $\overline{Z} \to \P^1 \x X \to X$ in $\gsmcat{G}$ such that the fibers of $\overline{Z}$ over $\gb,\gb' \in \P^1$ agree with the fibers of $Z$ over $\gb,\gb'$, namely $\overline{Z}_{\gb} = Z_{\gb}$ and $\overline{Z}_{\gb'} = Z_{\gb'}$. Since $\gb,\gb'$ can be considered as $G$-invariant divisors on $\P^1$ and they are $G$-\equi ly linearly equivalent, we have $\overline{Z}_{\gb} \sim \overline{Z}_{\gb'}$, as $G$-invariant divisors on $\overline{Z}$. Hence, by $GDPR(1,1)$,
$$[X \x_{\gb \cdot \gps} Y \to X] = [\overline{Z}_{\gb} \to X] = [\overline{Z}_{\gb'} \to X] = [X \x_{\gb' \cdot \gps} Y \to X].$$
\end{proof}

We will define the special pull-back $\gps^* : \cobg{G}{*}{\prod_i \P^{n_i}} \to \cobg{G}{*}{X}$ by sending the element $[f : Y \to \prod_i \P^{n_i}]$ to $[X \x_{\gb \cdot \gps} Y \to X]$ with $\gb \in U(\gps,f)$. Its well-definedness is given by the following Lemma.

\begin{lemma}
Sending $[f : Y \to \prod_i \P^{n_i}]$ to $[X \x_{\gb \cdot \gps} Y \to X]$ defines an abelian group homomorphism from $\cobg{G}{*}{\prod_i \P^{n_i}}$ to $\cobg{G}{*}{X}$.
\end{lemma}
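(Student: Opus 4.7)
The additive structure on generators is immediate since fiber products distribute over disjoint unions, and well-definedness at the level of $M_G(\prod_i \P^{n_i})^+$ is given by the previous lemma; the real content here is that the assignment respects the generalized double point relations. Fix a GDPR setup over $\prod_i \P^{n_i}$: a \proj\ $G$-\equi\ \morp\ $\gph : Z \to \prod_i \P^{n_i}$ with $Z$ \equidim, together with $G$-\inv\ divisors \gdprdivisors\ on $Z$ satisfying $\gdprdivisorsequi$ whose sum is an \rsncd. Let $\cat{G} : \cat{R} \to \cobg{G}{}{\prod_i \P^{n_i}}$ be the associated \homo. The goal is to prove $\gps^* \cat{G}(G^X_{n,m}) = \gps^* \cat{G}(G^Y_{m,n})$ in $\cobg{G}{}{X}$.

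The plan is to choose a single element $\gb \in Q$ that works simultaneously for every term appearing in both $\cat{G}(G^X_{n,m})$ and $\cat{G}(G^Y_{m,n})$. Applying the previous lemma to each of the finitely many \proj\ $G$-\equi\ \morp s $W \to \prod_i \P^{n_i}$ that arise as sources in the two expressions --- namely $Z$, the divisors $A_i$ and $B_j$, all of their iterated fiber products over $Z$ that appear in general monomials, and the \proj\ bundles $P^p_k$ and $Q^q_l$ --- yields a non-empty open subset of $Q$ on which the corresponding map is transverse to $\gb \cdot \gps$. Their intersection is a non-empty open $U' \subset Q$; choose a closed point $\gb \in U'$.

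With this $\gb$, set $Z' \defeq X \x_{\gb \cdot \gps} Z$, $\tilde{A}_i \defeq X \x_{\gb \cdot \gps} A_i$, and $\tilde{B}_j \defeq X \x_{\gb \cdot \gps} B_j$, and let $\gph' : Z' \to X$ be the projection. By transversality, $Z'$ is \sm\ and \equidim, the \morp\ $\gph'$ is \proj\ and $G$-\equi, each $\tilde{A}_i$ and $\tilde{B}_j$ is a \sm\ $G$-\inv\ divisor on $Z'$, and the sum of all the $\tilde{A}_i$ and $\tilde{B}_j$ is an \rsncd. Pulling back an element of ${\rm H}^0(Z, \cat{K}^*)^G$ witnessing $\gdprdivisorsequi$ gives $\tilde{A}_1 + \cdots + \tilde{A}_n \sim \tilde{B}_1 + \cdots + \tilde{B}_m$ on $Z'$, so this data constitutes a GDPR setup over $X$ with associated \homo\ $\cat{G}' : \cat{R} \to \cobg{G}{}{X}$.

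It remains to check $\cat{G}' = \gps^* \circ \cat{G}$ on each monomial, which reduces to the identity
$$X \x_{\gb \cdot \gps} (A_i \x_Z \cdots \x_Z P^p_k \x_Z \cdots) \cong \tilde{A}_i \x_{Z'} \cdots \x_{Z'} \tilde{P}^p_k \x_{Z'} \cdots,$$
a standard fiber-product manipulation, together with the identification $\tilde{P}^p_k \cong \P(\O \oplus \O(\tilde{A}_1 + \cdots + \tilde{A}_k))$ (and analogues for $p = 2,3$), which follows from the fact that $\O_{Z'}(\tilde{A}_1 + \cdots + \tilde{A}_k)$ is the pullback of $\O_Z(A_1 + \cdots + A_k)$ along the projection $Z' \to Z$ together with Proposition \ref{isoprojbundle}. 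Applying this identity to $G^X_{n,m}$ and $G^Y_{m,n}$ and invoking the GDPR relation $\cat{G}'(G^X_{n,m}) = \cat{G}'(G^Y_{m,n})$ in $\cobg{G}{}{X}$ yields the required equality. The main technical obstacle is guaranteeing that the pullback is an \rsncd, which demands $\gb \cdot \gps$ be transverse not just to $\gph$ but to every \girred\ component of every iterated intersection of the divisors; this is precisely why a finite intersection of the transversality open sets from the previous lemma must be taken.
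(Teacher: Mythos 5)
Your proof is correct and takes essentially the same route as the paper: choose a single $\gb$ in a finite intersection of transversality opens so that the pulled-back data $\gph' : Y' \to X$ with divisors $A_i'$, $B_j'$ forms a GDPR setup over $X$, then verify $\gps^* \circ \cat{G} = \cat{G}'$ term-by-term by identifying the pulled-back projective bundle towers. (The appeal to Proposition~\ref{isoprojbundle} is unnecessary here; base change of $\P(-)$ and pullback of line bundles suffice.)
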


\begin{proof}
This proof is roughly the same as the proof of the well-definedness of $c_{\gps}(\L)$. We need to show it respects GDPR. This can be achieved by using the fact that the choice of $\gb$ in the group $Q$ is generic which is similar to the generic choice of $H$ in $\P^n$ in the other proof.

As before, let $\cat{G}$ be the map corresponding to a GDPR setup $\gph : Y \to \prod_i \P^{n_i}$ with $G$-invariant divisors \gdprdivisors\ on $Y$. Consider the following commutative diagram :
\squarediagramword{Y' \defeq Y \x_{\prod_i \P^{n_i}} X}{Y}{X}{\prod_i \P^{n_i}}{(\gb \cdot \gps)'}{\gph'}{\gph}{\gb \cdot \gps}
By picking $\gb \in U(\gps, \gph)$, we may assume that $Y'$ is \sm\ and of dimension 
$$\dim X + \dim Y - \dim \prod_i \P^{n_i}.$$ 
Similarly, there is a non-empty open subscheme $U \subset Q$ such that $A_i' \defeq (\gb \cdot \gps)'^{-1}(A_i)$ is a \sm\ \inv\ divisor on $Y'$ for all $\gb \in U$. By taking intersection with some more open subschemes, we may assume $A_1' + \cdots + A_n' + B_1' + \cdots + B_m'$ is a \rsncd\ on $Y'$ for all $\gb$ in some non-empty open subscheme $U' \subset Q$. The divisors are given by pull-back, so $A_1' + \cdots + A_n' \sim B_1' + \cdots + B_m'$. Thus, $\gph' : Y' \to X$ together with  $A_1', \ldots, A_n', B_1', \ldots, B_m'$ defines a GDPR setup over $X$. Denote its corresponding map by $\cat{G}'$.

For a general term $\generalterm$,
\begin{eqnarray}
\gps^* \circ \cat{G}(\generalterm) &=& \gps^* [A_i \x_Y \cdots \x_Y P^p_k \x_Y \cdots \to \prod_i \P^{n_i}] \nonumber\\
&=& [X \x_{\gb \cdot \gps} (A_i \x_Y \cdots \x_Y P^p_k \x_Y \cdots) \to X] \nonumber\\
&=& [(X \x_{\gb \cdot \gps} A_i) \x_{Y'} \cdots \x_{Y'} (X \x_{\gb \cdot \gps} P^p_k) \x_{Y'} \cdots \to X]. \nonumber\\
&=& [A_i' \x_{Y'} \cdots \x_{Y'} (X \x_{\gb \cdot \gps} P^p_k) \x_{Y'} \cdots \to X]. \nonumber
\end{eqnarray}
On the other hand,
$$\cat{G}'(\generalterm) = [A_i' \x_{Y'} \cdots \x_{Y'} (P^p_k)' \x_{Y'} \cdots \to X].$$
Observe that $X \x_{\gb \cdot \gps} P^p_k = Y' \x_Y P^p_k \cong (P^p_k)'$. Hence, $\gps^* \circ \cat{G} = \cat{G}'$.
\end{proof}

Hence, for any $G$-\equi\ \morp\ $\gps : X \to \prod_i \P^{n_i}$ such that $X$ is \equidim, we obtain a special pull-back 
$$\gps^* :\cobg{G}{*}{\prod_i \P^{n_i}} \to \cobg{G}{*}{X}$$
which sends $[f : Y \to \prod_i \P^{n_i}]$ to $[X \x_{\gb \cdot \gps} Y \to X]$ where $\gb$ is a closed point in $Q$ such that $\gb \cdot \gps $ and $f$ are transverse.

\vtab

Now we can proceed to the proof of \textbf{(FGL)}. Here are a few simple properties we will need.

\begin{lemma}
\label{candpullcomm}
Suppose $\gps : X \to \P^n \x \P^m$ is a \morp\ in $\gsmcat{G}$ such that $X$ is \equidim. Denote the sheaves $\gp_1^* \O_{\P^n}(1)$, $\gp_2^* \O_{\P^m}(1)$ and $\gp_1^* \O_{\P^n}(1) \otimes \gp_2^* \O_{\P^m}(1)$ by $\O(1,0)$, $\O(0,1)$ and $\O(1,1)$ respectively.

\begin{statementslist}
{\rm (1)} & If $\L$ is either $\O(1,0)$, $\O(0,1)$ or $\O(1,1)$, then $\L$ is \nice\ and 

\begin{center}
$\gps^* \circ c(\L) = c(\gps^* \L) \circ \gps^*$
\end{center}
as \morp s from $\cobg{G}{}{\P^n \x \P^m}$ to $\cobg{G}{}{X}$. \\
{\rm (2)} & The special pull-back $\gps^*$ is a $\cobg{G}{}{\pt}$-module homomorphism.
\end{statementslist}

\end{lemma}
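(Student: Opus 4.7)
The plan for part (1) is to first verify niceness by exhibiting explicit equivariant morphisms to projective spaces. For $\L = \O(1,0)$ and $\L = \O(0,1)$ the projections $\gp_1 : \P^n \x \P^m \to \P^n$ and $\gp_2 : \P^n \x \P^m \to \P^m$ (with trivial $G$-action on the targets) satisfy $\gp_i^* \O(1) \cong \L$; for $\L = \O(1,1)$ the Segre embedding $\gs : \P^n \x \P^m \embed \P^N$ with $N = (n+1)(m+1)-1$ (and trivial $G$-action on $\P^N$) gives $\gs^* \O(1) \cong \O(1,1)$. The niceness of $\gps^* \L$ on $X$, needed for $c(\gps^* \L)$ to make sense, then follows from the pullback property of nice sheaves already established in this section.

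For the commutativity in (1), I focus on the representative case $\L = \O(1,1)$; the cases $\O(1,0)$ and $\O(0,1)$ follow by the same argument with $\P^n$ or $\P^m$ in place of $\P^N$. For $[f : Y \to \P^n \x \P^m]$, the left-hand side $\gps^* \circ c(\L)[f]$ unwinds, after choosing a generic hyperplane $H \subset \P^N$ and a generic $\gb \in Q = GL(n+1) \x GL(m+1)$, to the class of
\[
S_1 = \{(x,y) \in X \x Y : \gb \cdot \gps(x) = f(y),\ \gs \circ f(y) \in H\}
\]
over $X$ via the first projection. The right-hand side $c(\gps^* \L) \circ \gps^* [f]$, using the same $\gb$ and the composition $\gs \circ \gps : X \to \P^N$ for the Chern class operator, becomes the class of
\[
S_2 = \{(x,y) \in X \x Y : \gb \cdot \gps(x) = f(y),\ \gs \circ \gps(x) \in H'\}
\]
for a generic hyperplane $H' \subset \P^N$. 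The key observation is that $\gs^* : {\rm H}^0(\P^N, \O(1)) \to {\rm H}^0(\P^n \x \P^m, \O(1,1))$ is a bijection, so there is a unique $H' \subset \P^N$ with $\gs^{-1}(H') = \gb^{-1} \gs^{-1}(H)$. Substituting $\gb \cdot \gps(x) = f(y)$ into the defining equations shows $S_1 = S_2$ as subschemes of $X \x Y$; since $S_1$ is already a smooth divisor of the expected dimension by the genericity of $\gb$ and $H$, this $H'$ is a valid choice in the definition of $c(\gps^* \L)$, and independence from the choice of hyperplane closes the argument.

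For part (2) the plan is a direct computation. Given $a = [f : Y \to \prod_i \P^{n_i}]$ and $b = [h : Z \to \pt]$, the external product is $a \x b = [f \circ \gp_1 : Y \x Z \to \prod_i \P^{n_i}]$. For a generic $\gb \in Q$ transverse to $f \circ \gp_1$, the natural isomorphism
\[
X \x_{\gb \cdot \gps} (Y \x Z) \cong (X \x_{\gb \cdot \gps} Y) \x Z
\]
yields $\gps^*(a \x b) = [(X \x_{\gb \cdot \gps} Y) \x Z \to X] = (\gps^* a) \x b$, as required.

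The main obstacle I anticipate lies in the genericity bookkeeping of part (1): one must simultaneously ensure transversality of $\gb \cdot \gps$ with the relevant projective morphisms and smoothness of hyperplane sections after fiber products. The well-definedness of $c(\gps^* \L)$ independent of the specific generic hyperplane chosen, together with the Segre-induced bijection of global sections identifying divisors of class $\O(1,1)$ on $\P^n \x \P^m$ with hyperplanes in $\P^N$, is what makes the two fiber products match on the nose in $\cobg{G}{}{X}$.
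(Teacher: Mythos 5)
Your proof is correct and is essentially a fully fleshed-out version of what the paper dispatches in one sentence ("the equalities follow immediately from our construction"). You unwind both sides of the identity to the same explicit fiber product $S_1 = S_2$ inside $X \x Y$, which is exactly the content of "follows from construction," and you correctly identify that the genuine content is the compatibility of the two genericity choices: a generic hyperplane $H \subset \P^N$ for $c(\L)$ paired with a generic $\gb \in Q$ for $\gps^*$ determines, via $\gs^{-1}(H') = \gb^{-1}\gs^{-1}(H)$, a hyperplane $H'$ that is automatically a valid (smooth) choice for the other order of operations, and independence of $c(\gps^*\L)$ from the hyperplane then closes the loop. One small point worth making explicit is that a single generic $\gb$ must simultaneously lie in $U(\gps, f)$ (so that $\gps^*[f]$ is defined) and be transverse to $Y \x_{\P^N} H \to \P^n \x \P^m$ (so that $S_1$ is smooth of the right dimension); both are open dense conditions on $Q$, so this is unproblematic, but it is the kind of bookkeeping you flagged as the main obstacle and should be stated. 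Part (2) is correct as written; the identification $X \x_{\gb\cdot\gps}(Y \x Z) \cong (X \x_{\gb\cdot\gps} Y) \x Z$ is immediate and a $\gb$ good for $f$ is automatically good for $f \circ \gp_1$.
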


\begin{proof}
{\rm (1)}\tab The sheaves $\O(1,0), \O(0,1)$ and $\O(1,1)$ are \nice\ by definition. The equalities follow immediately from our construction.

{\rm (2)}\tab Same reason as the usual \sm\ pull-back.
\end{proof}

\begin{lemma}
\label{candpushcomm}
Suppose $f : X \to X'$ is a \proj\ \morp\ in $\gsmcat{G}$ and $\L \in \picard{G}{X'}$ is a \nice\ invertible sheaf, then 
$$f_* \circ c(f^*\L) = c(\L) \circ f_*$$
as \morp s from $\cobg{G}{}{X} \text{ to } \cobg{G}{}{X'}.$
\end{lemma}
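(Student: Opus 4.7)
The plan is to unfold both sides using the definition of $c(\,\cdot\,)$ for nice sheaves and observe that both compute the same element. Since $\L$ is nice, fix a $G$-equivariant morphism $\gps : X' \to \P^n$ (with trivial $G$-action on $\P^n$) such that $\gps^*\O(1) \cong \L$. Then
\[
f^*\L \;\cong\; f^*\gps^*\O(1) \;=\; (\gps \circ f)^*\O(1),
\]
so $f^*\L$ is nice via the $G$-equivariant morphism $\gps \circ f : X \to \P^n$. By the lemma showing independence of the chosen $\gps$, we are free to compute $c(\L)$ via $\gps$ and $c(f^*\L)$ via $\gps \circ f$.

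By linearity, it suffices to verify the equality on a class $[h : Y \to X]$ with $Y$ $G$-irreducible. On the left-hand side, choose a hyperplane $H \subset \P^n$ in the non-empty open set provided by Lemma \ref{bertinilemma} applied to the composite $\gps \circ f \circ h : Y \to \P^n$, i.e.\ such that $Y \x_{\P^n} H$ is a smooth invariant divisor on $Y$. Then
\[
f_* \circ c(f^*\L)[h : Y \to X] \;=\; f_* [Y \x_{\P^n} H \to Y \to X] \;=\; [Y \x_{\P^n} H \to Y \to X \to X'].
\]
On the right-hand side, $f_*[h : Y \to X] = [f \circ h : Y \to X']$, and applying $c(\L)$ yields
\[
c(\L)[f \circ h : Y \to X'] \;=\; [Y \x_{\P^n} H' \to Y \to X'],
\]
where $H' \subset \P^n$ lies in the corresponding open set of Lemma \ref{bertinilemma} applied to $\gps \circ (f \circ h) : Y \to \P^n$.

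The crucial observation is that the map $Y \to \P^n$ cutting out the divisors is the same in both computations, namely $\gps \circ f \circ h$, so the Bertini open subset of $(\P^n)^*$ producing valid hyperplanes is identical on both sides. Hence we may take $H = H'$, and the two classes agree as elements of $\cobg{G}{}{X'}$. Since there is no genuine transversality obstruction to overcome beyond invoking Lemma \ref{bertinilemma} once, the main (and essentially only) content of the proof is recognizing that the two definitions land on the same hyperplane-cutting data; no independent verification of GDPR-compatibility is needed because it was already built into the definition of $c(f^*\L)$ and $c(\L)$.
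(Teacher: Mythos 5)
Your proof is correct and takes essentially the same approach as the paper: both compute $c(\L)\circ f_*$ and $f_*\circ c(f^*\L)$ on a generator by fixing a single $G$-equivariant $\gps : X' \to \P^n$ and observing that the composite $Y \to X \to X' \to \P^n$ (and hence the Bertini open set of admissible hyperplanes) is identical on both sides, so the same $H$ works. The paper leaves the choice of common $H$ and the independence-of-$\gps$ point implicit; you spell them out, but the argument is the same.
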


\begin{proof}
Let $[Y \to X]$ be an element in $\cobg{G}{}{X}$ and $\gps : X' \to \P^n$ be a \morp\ in $\gsmcat{G}$ such that $\gps^*\O(1) \cong \L$. Then,
\begin{eqnarray}
c(\L) \circ f_* [Y \to X] & = & c(\L) [Y \to X'] \nonumber\\
                          & = & [Y \x_{\P^n} H \to X'] \nonumber\\
                          &   & (\text{fiber product via the map } Y \to X \to X' \to \P^n). \nonumber
\end{eqnarray}
On the other hand,
\begin{eqnarray}
f_* \circ c(f^* \L) [Y \to X] & = & f_* [Y \x_{\P^n} H \to X] \nonumber\\
                              &   & (\text{fiber product via the map } Y \to X \to X' \to \P^n) \nonumber\\
                              & = & [Y \x_{\P^n} H \to X']. \nonumber
\end{eqnarray}
\end{proof} 

We are now ready to prove the \fgl\ property \textbf{(FGL)} of the Chern class operator for \nice\ \glin{G}\ invertible sheaves. As mentioned before, the \fgl\ is the power series $$F(u,v) = \sum_{i,j \geq 0} a_{ij} u^i v^j \in \lazard[[u,v]].$$ 
For \nice\ sheaves $\L, \cat{M} \in \picard{G}{X}$, we consider $F(c(\L),c(\cat{M}))$ as a \morp\ from $\cobglow{G}{*}{X}$ to $\cobglow{G}{*-1}{X}$ given by 
$$\sum_{i,j \geq 0} a_{ij} c(\L)^i \circ c(\cat{M})^j$$
where $a_{ij}$ are considered as elements in $\cobg{G}{}{\pt}$ via the maps
$$\lazard \cong \go(\pt) \cong \cobg{\{1\}}{}{\pt} \stackrel{\gPH_{\gg}}{\longto} \cobg{G}{}{\pt}$$
where $\gPH_{\gg}$ is induced by the group scheme \homo\ $\gg : G \to \{1\}$ (See definition of $\gPH_{\gg}$ in subsection \ref{freeactionisosubsection}. We will see that this is a ring embedding in Corollary \ref{embedlazardcor}). As in the non-\equi\ theory, the Chern class operator decreases the homological grading by one. Since we have $\cobglow{G}{i}{X} = 0$ when $i < 0$, the power series $\sum_{i,j \geq 0} a_{ij} c(\L)^i \circ c(\cat{M})^j$ indeed acts as a finite sum for any given element in $\cobglow{G}{}{X}$.

\begin{prop}
\label{fglonnicesheaves}
If $X$ is an object in $\gsmcat{G}$ and $\L$, $\cat{M} \in \picard{G}{X}$ are both \nice, then 
$$c(\L \otimes \cat{M}) = F(c(\L), c(\cat{M}))$$
as \morp s from $\cobglow{G}{*}{X} \text{ to } \cobglow{G}{*-1}{X}.$
\end{prop}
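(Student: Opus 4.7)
\medskip

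\noindent \emph{Proof proposal.} My plan is the standard reduction to the universal example $\P^n \times \P^m$, using the special pull-back and Segre embedding machinery developed in the previous two subsections.

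First, I would reduce to checking the identity on the element $[\id_X]$. Given any $[f : Y \to X] \in \cobg{G}{}{X}$, the projection formula for Chern classes of nice sheaves (Lemma \ref{candpushcomm}) gives
$$c(\L)[f] = f_*\,c(f^*\L)[\id_Y],\qquad c(\cat{M})[f] = f_*\,c(f^*\cat{M})[\id_Y],$$
and similarly for $\L \otimes \cat{M}$. Since $f_*$ is a $\cobg{G}{}{\pt}$-module map and the coefficients $a_{ij}$ are central, iterating the projection formula yields $F(c(\L),c(\cat{M}))[f] = f_* F(c(f^*\L), c(f^*\cat{M}))[\id_Y]$. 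Because $f^*\L$ and $f^*\cat{M}$ are again nice (nice-ness pulls back), it suffices to verify
$$c(\L \otimes \cat{M})[\id_X] = F(c(\L), c(\cat{M}))[\id_X]$$
for an arbitrary $X$ and nice $\L,\cat{M}$.

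Next, I would package the two classifying maps into one. Choose equivariant morphisms $\gps_1 : X \to \P^n$ and $\gps_2 : X \to \P^m$ with $\gps_1^* \O(1) \cong \L$ and $\gps_2^* \O(1) \cong \cat{M}$, and form $\gps \defeq (\gps_1,\gps_2) : X \to \P^n \times \P^m$, where the target carries the trivial $G$-action. Then $\gps^* \O(1,0) \cong \L$, $\gps^* \O(0,1) \cong \cat{M}$, and $\gps^* \O(1,1) \cong \L \otimes \cat{M}$. Applying the special pull-back $\gps^*$ from subsection \ref{secondapproachsubsection} to $[\id_{\P^n \times \P^m}]$ gives $[\id_X]$. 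Using Lemma \ref{candpullcomm}(1) to commute each $c(\O(a,b))$ past $\gps^*$, together with Lemma \ref{candpullcomm}(2) which makes $\gps^*$ a $\cobg{G}{}{\pt}$-module homomorphism (so $\gps^*$ commutes with the power series $F$), the desired equality becomes the pull-back by $\gps^*$ of
$$c(\O(1,1))[\id_{\P^n \times \P^m}] = F\bigl(c(\O(1,0)),\,c(\O(0,1))\bigr)[\id_{\P^n \times \P^m}]$$
inside $\cobg{G}{}{\P^n \times \P^m}$. This reduces the problem to the universal target.

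Finally, I would exploit the fact that $\P^n \times \P^m$ carries the trivial $G$-action. The group homomorphism $\gg : G \to \{1\}$ induces $\gPH_\gg : \go(\P^n \times \P^m) \to \cobg{G}{}{\P^n \times \P^m}$, sending each $[Z \to \P^n \times \P^m]$ to the same datum equipped with the trivial $G$-action on $Z$. Both sides of the universal equality are naturally in the image of $\gPH_\gg$: the Chern class operator for a nice sheaf is computed by intersecting with a generic hyperplane in the Segre embedding of $\P^n \times \P^m$, a construction that takes place entirely in the trivial-action subcategory and is compatible under $\gPH_\gg$ with the non-equivariant $c(\O(a,b))$ on $\go(\P^n \times \P^m)$; and the definition of the external product shows $\gPH_\gg$ is a $\lazard$-algebra map, so it intertwines $F$ computed with coefficients in $\lazard$ with $F$ computed with coefficients in $\cobg{G}{}{\pt}$. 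Hence the required identity is the image under $\gPH_\gg$ of the non-equivariant formal group law $c(\O(1,1))[\id] = F(c(\O(1,0)),c(\O(0,1)))[\id]$ in $\go(\P^n \times \P^m)$, which is Proposition 9.4 of \cite{LePa}.

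I expect the main subtlety to be the last step: pinning down precisely why $\gPH_\gg$ commutes with Chern class operators for sheaves pulled back from the trivial-action setting, and with the action of $\lazard \cong \go(\pt)$ on both theories. Once this compatibility is verified carefully from the definitions of $c(\L)$ and of the external product, the rest of the argument is a formal reduction. The other steps (the projection formula rewrite and the universal-target reduction via $\gps$) are direct applications of the two lemmas \ref{candpullcomm} and \ref{candpushcomm} just established.
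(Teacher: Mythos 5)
Your proposal is correct and follows essentially the same route as the paper: reduce to $[\id_X]$ via Lemma \ref{candpushcomm}, package $\gps_1,\gps_2$ into $\gps : X \to \P^n \times \P^m$ and commute via Lemma \ref{candpullcomm} to reduce to the universal trivial-action target, then apply $\gPH_\gg$ to Proposition 9.4 of \cite{LePa}. The ``main subtlety'' you flag at the end is handled in the paper by a short direct check: for $\L = \O(1,0), \O(0,1), \O(1,1)$ one has $\gPH_{\gg} \circ c(\L)[\id_{\P^n \times \P^m}] = [H_s \embed \P^n \times \P^m] = c(\L)[\id_{\P^n \times \P^m}]$, since both Chern class operators are computed by intersecting with the same generic hyperplane.
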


\begin{proof}
Since $[f : Y \to X] = f_* [\id_Y]$, by Lemma \ref{candpushcomm}, it is enough to prove the statement on the element $[\id_X]$ such that $X \in \gsmcat{G}$ is \equidim.

Let $\gps_1 : X \to \P^n$ and $\gps_2 : X \to \P^m$ be the maps such that $\gps_1^*\O(1) \cong \L$ and $\gps_2^*\O(1) \cong \cat{M}$. Let $\gps : X \to \P^n \x \P^m$ be the map defined by $\gps_1$ and $\gps_2$. Then,
\begin{eqnarray}
c(\L)[\id_X] & = & c(\gps^* \O(1,0)) \circ \gps^* [\id_{\P^n \x \P^m}] \nonumber\\
           & = & \gps^* \circ c(\O(1,0)) [\id_{\P^n \x \P^m}] \nonumber\\
           &   & \text{(by Lemma \ref{candpullcomm})}. \nonumber
\end{eqnarray}
The same holds for $\cat{M}$ and $\L \otimes \cat{M}$. Hence, we have 
$$c(\L \otimes \cat{M})[\id_X] = \gps^* \circ c(\O(1,1)) [\id_{\P^n \x \P^m}]$$
and
$$F(c(\L), c(\cat{M})) [\id_X] = \gps^* \circ F( c(\O(1,0)), c(\O(0,1)) ) [\id_{\P^n \x \P^m}].$$

Thus, without loss of generality, we can assume $X = \P^n \x \P^m$, $\L = \O(1,0)$ and $\cat{M} = \O(0,1).$ Notice that the $G$-actions on $X$,  $\L$, $\cat{M}$ and $\id_X$ are all trivial now. Let 
$$\gPH_{\gg} : \go(\P^n \x \P^m) \cong \cobg{\{1\}}{}{\P^n \x \P^m} \to \cobg{G}{}{\P^n \x \P^m}$$ 
be the abelian groups \homo\ induced by the group scheme \homo\ $\gg : G \to \{1\}$. By Proposition 9.4 in \cite{LePa}, \textbf{(FGL)} holds in the non-\equi\ theory $\go_*$. In particular,
\begin{eqnarray} 
c(\O(1,1))[\id_{\P^n \x \P^m}] = F( c(\O(1,0)), c(\O(0,1)) ) [\id_{\P^n \x \P^m}] \label{eqn8}
\end{eqnarray}
as elements in $\go(\P^n \x \P^m)$. Observe that, for $\L = \O(1,0)$, $\O(0,1)$ or $\O(1,1)$, we have
$$\gPH_{\gg} \circ c(\L)[\id_{\P^n \x \P^m}] = [H_s \embed \P^n \x \P^m] = c(\L)[\id_{\P^n \x \P^m}]$$ 
where $s \in {\rm H}^0(\P^n \x \P^m, \L)$ is a global section such that $H_s$ is a \sm\ divisor on $\P^n \x \P^m$. By applying $\gPH_{\gg}$ on equation (\ref{eqn8}), the same equality holds in $\cobg{G}{}{\P^n \x \P^m}$.
\end{proof}

\vtab

\subsection{Extending the definition}

In order to extend our definition to arbitrary \glin{G}\ invertible sheaves, we need to first consider the sheaf $\O(1) \in \picard{G}{\P(V)}$ for arbitrary $G$-\repn\ $V$. In the case when $G$ is a finite abelian group with exponent $e$, it turns out the only way to define $c(\O(1))$, so that the property \textbf{(FGL)} still holds, will force us to invert the element $e \in \Z$. Hence, we introduce the notation 

\begin{center}
$\coblocal{G}{X}{e} \defeq \cobg{G}{}{X} \otimes_{\Z} \Z[1/e].$
\end{center}

\begin{rmks}
\rm{
We will explain why we cannot expect a more general definition of $c(\L)$ that satisfies the \textbf{(FGL)} without inverting the exponent of the group. Let us consider the following example. Suppose $G$ is a cyclic group of order $p$ (prime) and the ground field $k$ contains a primitive $p$-th \rou\ $\gx$. Let $V \defeq \span{k}{x,y}$ with action $\ga \cdot x = \gx x$ and $\ga \cdot y = y$ where $\ga$ is a generator of $G$. Let $X \defeq \P(V)$. 

Suppose we have defined $c(\O(1)) : \cobg{G}{}{X} \to \cobg{G}{}{X}$ such that \textbf{(FGL)} holds. Then, we will have
$$c(\O(p))[\id_X] = c(\O(1)^{\otimes p})[\id_X] = F^p(c(\O(1))) [\id_X].$$
Notice that 
$$F(c(\O(1)), c(\O(1))) [\id_X] = 2\,c(\O(1))[\id_X] + a_{11} c(\O(1))^2[\id_X] + \cdots.$$ 
For any $i \geq 2$, the element $c(\O(1))^i [\id_X]$ lies in $\cobglow{G}{1-i}{X}$, which is zero because the dimension of $X$ is one. So, we have $F(c(\O(1)), c(\O(1))) [\id_X] = 2\,c(\O(1))[\id_X]$. Inductively, we get $F^p(c(\O(1))) [\id_X] = p\,c(\O(1))[\id_X]$. 

On the other hand, consider the $G$-\equi\ map $\gps : X \to \P^1$ (with trivial action on $\P^1$) given by $(x;y) \mapsto (x^p;y^p)$. Then, $\O_X(p) \cong \gps^*\O_{\P^1}(1)$. Hence, $\O_X(p)$ is \nice. By the definition of the Chern class operator for \nice\ \glin{G}\ invertible sheaves, 
$$c(\O(p))[\id_X] = [H_p \embed \P(V)] = [G \embed \P(V)]$$
where $H_p \cong G$ (the $k$-scheme of $p$ points with free $G$-action). Hence, by pushing down both equalities to $\cobg{G}{}{\pt}$, we obtain
\begin{eqnarray}
[G] = p\,a \label{eqn11}
\end{eqnarray}
where $a \defeq {\gp_k}_*( c(\O(1))[\id_X] )$ and $\gp_k : X \to \pt$. 

Let $[Z_1] - [Z_2]$ be a representative of $a \in \cobg{G}{}{\pt}$. Consider the group scheme homomorphism $\{1\} \to G$. It induces an abelian groups \homo
$$\gPH : \cobglow{G}{0}{\pt} \to \cobglow{\{1\}}{0}{\pt} \cong \go_0(\pt) \cong \Z.$$ 
That implies 
$$p\,(\gPH[Z_1] - \gPH[Z_2]) = \gPH(p\,a) = \gPH[G] = p$$
as elements in $\go_0(\pt)$. Since there is no torsion in $\go_0(\pt) \cong \Z$, we conclude that $\gPH[Z_1] - \gPH[Z_2] = 1$. On the other hand, since the order of the group $G$ is a prime and the dimension of $Z_1$ is zero, $Z_1 \cong \spec{A_t} \disjoint \spec{A_f}$ where the action on $A_t$ is trivial and the action on $A_f$ is free. Moreover, $A_t$ can be written as the product of $K_{t,i}$, where $K_{t,i}$ are finite field extensions of $k$. Similarly, $Z_2 \cong \spec{B_t} \disjoint \spec{B_f}$ and $B_t = \prod_j L_{t,j}$. By Lemma 2.3.4 in \cite{LeMo}, we have $[\spec{K}] = [K : k][\id_{\spec{k}}]$ as elements in $\go(\pt)$, where $[K : k]$ denotes the degree of the field extension. Hence, 
\begin{eqnarray}
1 = \gPH[Z_1] - \gPH[Z_2] = \sum_i\ [K_{t,i} : k] + \gPH[\spec{A_f}] - \sum_j\ [L_{t,j} : k] - \gPH[\spec{B_f}]. \label{eqn9}
\end{eqnarray}

Let us consider an \girred\ component $W$ of $\spec{A_f}$. It can either be $\spec{K}$ with free action, or the disjoint union of $p$ copies of $\spec{K}$ with $G$ permuting them. In the first case, 
$$\gPH[W] = \gPH[\spec{K}] = [K : k] = [K : K^G][K^G : k] = p[K^G:k].$$
In the second case,
\begin{center}
$\gPH[W] = \gPH[\spec{\prod_{i=1}^p K}] = p[K : k].$
\end{center}
\noindent Either case, $p$ divides $\gPH[W]$. Hence, $p$ divides $\gPH[\spec{A_f}]$. Similarly, $p$ divides $\gPH[\spec{B_f}]$.

Now, if we apply the fixed point map $\cat{F} : \cobg{G}{}{\pt} \to \go(\pt)$ on equation (\ref{eqn11}) (see section \ref{fixedpointmapsection} for details), we obtain
\begin{eqnarray}
0 &=& \cat{F}[G] \nonumber\\
&=& \cat{F}( p\,([Z_1]- [Z_2]) ) \nonumber\\
&=& p\,( \cat{F}([Z_1])- \cat{F}([Z_2]) ) \nonumber\\
&=& p\,([\spec{A_t}]- [\spec{B_t}]) \nonumber\\
&=& p\,(\sum_i\ [K_{t,i} : k] - \sum_j\ [L_{t,j} : k]). \nonumber
\end{eqnarray}
That implies
\begin{eqnarray}
0 = \sum_i\ [K_{t,i} : k] - \sum_j\ [L_{t,j} : k]. \label{eqn10}
\end{eqnarray}
Combining equations (\ref{eqn9}) and (\ref{eqn10}) and the fact that $p$ divides $\gPH[\spec{A_f}]$ and $\gPH[\spec{B_f}]$, we get a contradiction. 

Hence, it is impossible to define $c(\O(1))$ as an operator on $\cobg{G}{}{X}$ such that \textbf{(FGL)} holds. It can also be seen in this example that the natural definition of $c(\O(1))[\id_X]$ should be $(1/p) [H_p \embed X]$, as an element in $\coblocal{G}{X}{p}$.
}
\end{rmks}
\vtab

In order to simplify the calculation, we need a condition on $G$ and $k$ such that any irreducible $G$-\repn\ will be of dimension 1.

\begin{defn}
{\rm
We will say that the pair $(G,k)$ is split, if the group $G$ is finite abelian with exponent $e$ and the field $k$ contains a primitive $e$-th \rou.
}\end{defn}

\begin{lemma}
\label{pairsplit}
If the pair $(G,k)$ is split, then any irreducible $G$-\repn\ has dimension one.
\end{lemma}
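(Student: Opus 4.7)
The plan is to show that the $G$-action on any finite-dimensional representation $V$ over $k$ can be simultaneously diagonalized, which forces any irreducible sub-representation to be $1$-dimensional.

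First I would observe that for any $g \in G$, the order of $g$ divides the exponent $e$, so $g^e = 1_G$. Hence on $V$ the endomorphism induced by $g$ satisfies the polynomial $x^e - 1$. Since $k$ contains a primitive $e$-th root of unity, the polynomial $x^e - 1$ factors over $k$ as a product of $e$ distinct linear factors. Therefore the minimal polynomial of $g$ acting on $V$ splits into distinct linear factors over $k$, which means $g$ acts diagonalizably on $V$.

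Next, since $G$ is abelian, the operators $\{g \cdot \_ : V \to V\}_{g \in G}$ form a commuting family of diagonalizable $k$-linear endomorphisms of $V$. By the standard linear algebra fact that a commuting family of diagonalizable operators over $k$ can be simultaneously diagonalized, there exists a basis of $V$ consisting of common eigenvectors for all elements of $G$. Any such common eigenvector $v$ spans a $1$-dimensional $G$-invariant subspace $k \cdot v \subset V$, which is a sub-representation.

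Finally, if $V$ is irreducible, this $1$-dimensional sub-representation must equal $V$, so $\dim_k V = 1$. The only potentially subtle point is the simultaneous diagonalization step, but this is immediate once we have the per-element diagonalizability together with commutativity; no further obstacle arises because the hypothesis that $k$ contains a primitive $e$-th root of unity is exactly what guarantees the eigenvalues of each $g$ already lie in $k$, so we never need to pass to an extension field.
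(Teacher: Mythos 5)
Your proof is correct, and it takes a genuinely different route from the paper's. The paper argues at the level of the group algebra: it notes that when $(G,k)$ is split, $k[G] \cong \prod k$ as a $k$-algebra (this is Maschke plus the fact that each cyclotomic factor splits completely over $k$), and then invokes the standard correspondence between simple modules over a product of fields and one-dimensional quotients. You instead work directly with operators on $V$: each $g$ acts diagonalizably because its minimal polynomial divides $x^e-1$, which has $e$ distinct roots in $k$ (distinctness is automatic, since the existence of a primitive $e$-th root of unity in $k$ already forces $\char k \nmid e$), and then commutativity of $G$ gives simultaneous diagonalizability and hence a one-dimensional invariant line, which must be all of $V$ by irreducibility. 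The paper's version is more compact and structural and generalizes immediately to arbitrary finitely generated modules over $k[G]$; your version is more elementary and self-contained, avoiding any appeal to semisimplicity of $k[G]$ at the cost of a few extra lines. Both arguments depend on exactly the same inputs (finite abelian $G$, enough roots of unity in $k$, characteristic coprime to $|G|$), so neither buys more generality than the other here.
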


\begin{proof}
Recall that we are assuming $\char{k} = 0$. We can easily see that when $(G,k)$ is split, we have $k[G] \cong \prod k$. The result then follows.
\end{proof}

\vtab

For the rest of this subsection, we assume that the pair $(G,k)$ is split. In this case, we can extend our definition of the Chern class operator to arbitrary \glin{G}\ invertible sheaves. In order to preserve the \textbf{(FGL)} property, we would like to define $c(\L)$ by the following formula :
$$\fgldiv{e}( F^-(c(\L^e \otimes \cat{M}), c(\cat{M})) )$$
where $\cat{M}$ is in $\picard{G}{X}$ such that $\L^e \otimes \cat{M}$ and $\cat{M}$ are both \nice\ (recall that $\L^e$ means $\L^{\otimes e}$ and $\fgldiv{e}(u)$ is the operation ``division by $e$'' in \fgl, see subsection \ref{secondapproachsubsection} for details). We need the following two Lemmas for its well-definedness.

\begin{lemma}
For any $\L \in \picard{G}{X}$, there exists an invertible sheaf $\cat{M} \in \picard{G}{X}$ such that $\L^e \otimes \cat{M}$ and $\cat{M}$ are both \nice.
\end{lemma}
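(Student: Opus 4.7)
The plan is to take $\cat{M}$ to be a sufficiently high tensor power of a $G$-linearized very ample invertible sheaf $\cat{B}$ on $X$. The key observation is that if a section $s$ of a $G$-linearized invertible sheaf transforms under $G$ by a character $\chi$, then $s^{\otimes e}$ is a $G$-invariant section of the $e$-th tensor power, because every character of $G$ has order dividing the exponent $e$.

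First I would produce a $G$-linearized very ample $\cat{B}$ on $X$. Since $X$ is \qproj\ and lies in $\gsmcat{G}$, such a sheaf exists by the standard argument already used in the proof of Proposition \ref{equiprojmorp}: for $G$ reductive and connected, Sumihiro's theorem applied to any non-equivariant very ample $\cat{A}_0$ yields a power $\cat{A}_0^m$ admitting a $G$-linearization, while for $G$ finite one may take the averaged sheaf $\otimes_{\ga \in G}\, \ga^* \cat{A}_0$. Next, I would choose $N$ so large that both $\cat{B}^N$ and $\L \otimes \cat{B}^N$ are globally generated, which is possible by Serre's ampleness theorem since $\cat{B}$ is ample.

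Since $(G, k)$ is split, Lemma \ref{pairsplit} implies that every finite-dimensional $G$-representation over $k$ decomposes as a direct sum of one-dimensional character subspaces, so I can pick generating sets $s_1, \ldots, s_r$ of $H^0(X, \cat{B}^N)$ and $t_1, \ldots, t_q$ of $H^0(X, \L \otimes \cat{B}^N)$ consisting entirely of character eigenvectors. The sections $s_i^{\otimes e} \in H^0(X, \cat{B}^{eN})$ are then $G$-invariant (their characters are $e$-th powers, hence trivial), and they share no common zero since $s_i^{\otimes e}$ and $s_i$ have identical vanishing loci; thus they define a $G$-equivariant morphism $X \to \P^{r-1}$ with trivial target action, pulling $\O(1)$ back to $\cat{B}^{eN}$. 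The same argument applied to the $t_j^{\otimes e}$ shows that $\L^e \otimes \cat{B}^{eN} = (\L \otimes \cat{B}^N)^e$ is \nice. Setting $\cat{M} \defeq \cat{B}^{eN}$ finishes the proof.

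The main point of this plan is conceptual rather than technical: one has to notice that the exponent hypothesis on $(G,k)$ is exactly what makes the $e$-th power trick convert character-twisted sections into $G$-invariant ones, while at the same time preserving their non-vanishing loci so that the generating property survives. Once that observation is isolated, everything else reduces to routine ampleness statements and the one-dimensional decomposition of $G$-representations guaranteed by splitness.
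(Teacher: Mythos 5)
Your approach is essentially the paper's: the morphism $\gps : \P(V) \to \P^n$, $(x_0 : \cdots : x_n) \mapsto (x_0^e : \cdots : x_n^e)$, used in the paper's base case is exactly your $e$-th power trick applied to the homogeneous eigencoordinates $x_i$ on $\P(V)$, and the paper's general case reduces to it by the same very-ampleness and equivariant-immersion arguments you invoke. One small caveat worth spelling out: since $X$ is only quasi-projective, $H^0(X, \cat{B}^N)$ may be infinite-dimensional, so "pick a generating set of eigensections" needs a word of justification. It suffices to start from any finite set of sections that globally generate $\cat{B}^N$, saturate under the (finite, by the split hypothesis) $G$-action, and decompose the resulting finite-dimensional $G$-stable span into characters; the eigencomponents still generate and are finitely many. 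The paper sidesteps this entirely by running the power trick on the ambient $\P(V)$ rather than directly on $X$, which is the only real structural difference between the two proofs.
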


\begin{proof}
Let us first consider the case when $X = \P(V)$ where $V$ is a $G$-\repn\ and 

\noindent $\L = \O(1)$. By Lemma \ref{pairsplit}, $X \cong {\rm Proj}\ k[x_0, \ldots, x_n]$ such that, for all $i$, $\span{k}{x_i}$ is a 1-dimensional $G$-\repn. Let $Y \defeq {\rm Proj}\ k[y_0, \ldots, y_n]$ with trivial action and 
$$\gps : X = {\rm Proj}\ k[x_0, \ldots, x_n] \to {\rm Proj}\ k[y_0, \ldots, y_n] = Y$$ 
be the morphism corresponding to the $k$-algebra \homo\ $k[y_0, \ldots, y_n] \to k[x_0, \ldots, x_n]$ defined by $y_i \mapsto x_i^e$. Since $e$ is the exponent of $G$, the map $\gps$ is $G$-\equi. Observe that this map can also be considered as an $e$-uple embedding followed by a linear projection on some $G$-\inv\ open subscheme. Hence, we have $\gps^* \O_Y(1) \cong \O_X(e)$. In other words, the sheaf $\O_X(e)$ is \nice.

For general $X \in \gsmcat{G}$ and $\L \in \picard{G}{X}$, by Proposition \ref{equiembedwithsmclosure}, there exists an \equi\ immersion $\gps : X \embed \P(V)$. For large enough $m$, the sheaf $\L \otimes \gps^* \O(m)$ will be very ample. By embedding $\P(V)$ into some larger $\P(V')$, we can assume $m = 1$. Since $\L \otimes \gps^* \O(1)$ is very ample and \glin{G}, by Proposition 1.7 in \cite{Mu}, there exists an \equi\ immersion $\gps' : X \embed \P(V'')$ such that $\gps'^* \O(1) \cong \L \otimes \gps^* \O(1)$. Hence, we have $\gps'^* \O(e) \cong \L^e \otimes \gps^* \O(e)$. Then, the result follows because $\gps^* \O(e)$ and $\gps'^* \O(e)$ are both \nice.
\end{proof}

\begin{lemma}
For any two sheaves $\cat{M}$, $\cat{M'} \in \picard{G}{X}$ such that $\cat{M}$, $\cat{M}'$, $\L^e \otimes \cat{M}$ and $\L^e \otimes \cat{M}'$ are all \nice, we have
$$\fgldiv{e}( F^-(c(\L^e \otimes \cat{M}), c(\cat{M})) ) = \fgldiv{e}( F^-(c(\L^e \otimes \cat{M}'), c(\cat{M}')) )$$
as homo\morp s from $\coblocal{G}{X}{e}$ to $\coblocal{G}{X}{e}$.
\end{lemma}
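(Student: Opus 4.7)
The plan is first to establish, directly in $\cobg{G}{}{X}$, the intermediate equality
$$F^-(c(\L^e \otimes \cat{M}), c(\cat{M})) = F^-(c(\L^e \otimes \cat{M}'), c(\cat{M}'));$$
the conclusion then follows by applying the power series $\fgldiv{e} \in \lazard_e[[u]]$ coefficient-wise to both sides, which is precisely why passage to $\coblocal{G}{X}{e}$ suffices. All four operators $c(\cat{M}), c(\cat{M}'), c(\L^e \otimes \cat{M}), c(\L^e \otimes \cat{M}')$ are well-defined in $\cobg{G}{}{X}$ by Subsection \ref{secondapproachsubsection} since the respective sheaves are assumed nice.

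The key input is Proposition \ref{fglonnicesheaves}. Since both pairs $(\L^e \otimes \cat{M}, \cat{M}')$ and $(\L^e \otimes \cat{M}', \cat{M})$ consist of nice invertible sheaves, two applications of the \textbf{(FGL)} property give
\begin{align*}
c(\L^e \otimes \cat{M} \otimes \cat{M}') &= F(c(\L^e \otimes \cat{M}), c(\cat{M}')), \\
c(\L^e \otimes \cat{M}' \otimes \cat{M}) &= F(c(\L^e \otimes \cat{M}'), c(\cat{M})).
\end{align*}
The two left-hand sides coincide, hence
$$F(c(\L^e \otimes \cat{M}), c(\cat{M}')) = F(c(\L^e \otimes \cat{M}'), c(\cat{M})).$$

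From this, the intermediate equality follows by a purely formal identity in $\lazard[[u_1, u_2, v_1, v_2]]$: whenever $F(u_1, v_2) = F(u_2, v_1)$, one has $F^-(u_1, v_1) = F^-(u_2, v_2)$. To verify it, apply $F(-, \gc(v_1), \gc(v_2))$ to both sides of $F(u_1, v_2) = F(u_2, v_1)$ and use commutativity and associativity of $F$ together with $F(v, \gc(v)) = 0$ to reduce the two resulting expressions to $F^-(u_1, v_1)$ and $F^-(u_2, v_2)$, respectively. To promote this formal computation to an identity of operators on $\cobglow{G}{*}{X}$, we use two facts: the four Chern class operators pairwise commute (implicit in the very statement of Proposition \ref{fglonnicesheaves}, whose expression $F(c(\L),c(\cat{M}))$ only makes sense under this commutativity), and by the homological grading every formal power series in these operators acts as a finite sum on any fixed element. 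The main obstacle is conceptual rather than computational: once nice-sheaf commutativity and dimension-nilpotence are in hand, no further geometry is needed and the statement reduces to algebraic bookkeeping inside the formal group law.
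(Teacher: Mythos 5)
Your proof is correct and takes essentially the same route as the paper: apply Proposition~\ref{fglonnicesheaves} to the pair $(\L^e\otimes\cat{M},\,\cat{M}')$ and to $(\L^e\otimes\cat{M}',\,\cat{M})$, equate the two expressions for $c(\L^e\otimes\cat{M}\otimes\cat{M}')$, deduce the $F^-$ identity from the resulting $F$-identity, and then apply $\fgldiv{e}$. The only difference is that you spell out the formal-group-law manipulation and the commutativity/nilpotence justifications more explicitly than the paper does.
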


\begin{proof}
By the fact that all sheaves involved are \nice\ and Proposition \ref{fglonnicesheaves}, we have
$$F(c(\L^e \otimes \cat{M}), c(\cat{M'})) = c(\L^e \otimes \cat{M} \otimes \cat{M'}) = F(c(\L^e \otimes \cat{M'}), c(\cat{M})).$$
That implies
\begin{eqnarray}
F^-(c(\L^e \otimes \cat{M}), c(\cat{M})) & = & F^-(c(\L^e \otimes \cat{M'}), c(\cat{M'})) \nonumber\\
\fgldiv{e}( F^-(c(\L^e \otimes \cat{M}), c(\cat{M})) ) & = & \fgldiv{e}( F^-(c(\L^e \otimes \cat{M'}), c(\cat{M'})) ). \nonumber
\end{eqnarray}
\end{proof}

\begin{defn}
{\rm
Assume the pair $(G,k)$ is split. Suppose $X$ is in $\gsmcat{G}$ and $\L$ is in $\picard{G}{X}$. We define the abelian group homomorphism $c(\L) : \cobglow{G}{*}{X}[1/e] \to \cobglow{G}{*-1}{X}[1/e]$ by the following formula :
$$c(\L) \defeq \fgldiv{e}( F^-(c(\L^e \otimes \cat{M}), c(\cat{M})) )$$
where $\cat{M}$ is in $\picard{G}{X}$ such that $\L^e \otimes \cat{M}$, $\cat{M}$ are both \nice.
}\end{defn}

\begin{rmk}
\rm{
Suppose $\L \in \picard{G}{X}$ is \nice. In this new definition, we can pick $\cat{M}$ to be $\L$. Then, 
$$\fgldiv{e}( F^-(c(\L^e \otimes \L), c(\L)) ) = \fgldiv{e}(c(\L^e)) = c(\L).$$ 
That means the new definition is indeed a generalization of the definition of the Chern class operator for \nice\ \glin{G}\ invertible sheaves.
}
\end{rmk}

Suppose $X$ is an object in $\cat{D}$ and $\L \in \picard{G}{X}$. Then we have two definitions of the Chern class operator (as operators on $\coblocal{G}{X}{e}$), given by the first and second approach. The last part of this section is to show that they agree. 

\begin{lemma}
\label{lbascentdescent}
Suppose $X$ is an object in $\cat{D}$ and $\L$, $\cat{M}$ are sheaves in $\picard{G}{X}$. Let 

\noindent $\gp : X \to X/G$ be the quotient map. Then, we have
$$\gp_*(\L \otimes \cat{M})^G \cong \gp_*\L^G \otimes \gp_*\cat{M}^G.$$
For any two sheaves $\L$, $\cat{M} \in \picard{}{X/G}$, we have
$$\gp^*(\L \otimes \cat{M}) \cong (\gp^*\L) \otimes (\gp^*\cat{M}).$$
In other words, descent and ascent both commutes with tensor product.
\end{lemma}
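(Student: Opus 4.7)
The plan is to exploit the fact that the quotient map $\gp : X \to X/G$ is a principal $G$-bundle, hence faithfully flat and locally trivial in the \etale\ topology. In this situation, descent theory yields an equivalence of categories between \glin{G}\ quasi-coherent sheaves on $X$ and quasi-coherent sheaves on $X/G$, with ascent $\gp^*$ equipped with its canonical linearization and descent $\cat{F} \mapsto \gp_*\cat{F}^G$ being mutually inverse. The statement I want is that this equivalence is monoidal for the tensor product.

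For the second claim (ascent), I would note that for any \morp\ of schemes, the pullback of sheaves commutes with tensor product, i.e.\ $\gp^*(\L \otimes \cat{M}) \cong \gp^*\L \otimes \gp^*\cat{M}$ as sheaves, and the natural $G$-linearization on the pullback of a tensor product coincides with the tensor product of the pulled-back linearizations on each factor (just check on local generators $\gp^*\ell \otimes \gp^*m \mapsto \gp^*(\gph_{\L, \ga}(\ell)) \otimes \gp^*(\gph_{\cat{M}, \ga}(m))$). This is essentially formal.

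For the first claim (descent), my plan is to work \etale-locally on $X/G$. Choose an \etale\ cover $\{W_i \to X/G\}$ such that $X \x_{X/G} W_i \cong W_i \x G$ equivariantly. On each such trivialization, the invariant pushforward of a \glin{G}\ invertible sheaf on $W_i \x G$ is computed explicitly: a \glin{G}\ invertible sheaf on $W_i \x G$ is of the form $\gp_2^* \cat{N}_i$ (for a suitable $\cat{N}_i$ on $W_i$) with the linearization given by translation, and the $G$-invariants of its pushforward to $W_i$ recover $\cat{N}_i$. In this trivial case $\gp_*(\L \otimes \cat{M})^G \cong \gp_*\L^G \otimes \gp_*\cat{M}^G$ is immediate. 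Then I would glue: the natural \morp
\[
\gp_*\L^G \otimes_{\O_{X/G}} \gp_*\cat{M}^G \longto \gp_*(\L \otimes \cat{M})^G
\]
given by $s \otimes t \mapsto s \otimes t$ is well-defined globally, and by the \etale-local computation above it is an isomorphism after pullback along the faithfully flat cover $\coprod W_i \to X/G$, so it is itself an isomorphism.

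The main technical point is to verify cleanly that on a trivial piece $\gp : W \x G \to W$, taking $G$-invariants of the pushforward is monoidal—this really just reduces to the statement that if $V_1$, $V_2$ are $k[G]$-modules which are free of rank one, then $(V_1 \otimes V_2)^G = V_1^G \otimes V_2^G$ (both sides are rank-one free modules over the base and the map sending $v_1 \otimes v_2$ to $v_1 \otimes v_2$ is the isomorphism). I do not expect this to be difficult once the reduction via \etale\ descent is in place; the only subtlety is recording that the isomorphism is canonical enough to globalize, which follows because the comparison map $\gp_*\L^G \otimes \gp_*\cat{M}^G \to \gp_*(\L \otimes \cat{M})^G$ is defined functorially on arbitrary \glin{G}\ sheaves before any trivialization is chosen.
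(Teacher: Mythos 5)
Your proof is correct, but it takes a genuinely different route from the paper's. Both proofs dispose of the ascent statement (that $\gp^*$ commutes with $\otimes$) in essentially the same formal way. For the descent statement, the paper's argument is a short categorical trick: it invokes the fact that $\gp^*$ and $\gp_*(-)^G$ give a one-to-one correspondence between $\picard{G}{X}$ and $\picard{}{X/G}$, writes $\gp_*\L^G \otimes \gp_*\cat{M}^G \cong \gp_*\bigl(\gp^*(\gp_*\L^G \otimes \gp_*\cat{M}^G)\bigr)^G$, pushes the already-proved ascent monoidality through, and then collapses the $\gp^*\gp_*(-)^G$ pairs using the same correspondence. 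Your argument instead builds the natural comparison morphism $\gp_*\L^G \otimes \gp_*\cat{M}^G \to \gp_*(\L \otimes \cat{M})^G$ and checks directly that it is an isomorphism by passing to an \etale\ cover trivializing the $G$-bundle and computing on $W \x G$, then concluding by faithfully flat descent of isomorphisms. Your route is more constructive and does not rely on the Picard-group equivalence as a black box (you essentially re-establish the relevant local piece of it), at the cost of being longer; the paper's route is a clean one-liner given the equivalence. One small caution in your write-up: the local computation is over the ring $\O_W(W)$ with $G$ acting, not literally over $k[G]$, and the projection should be to the $W$ factor — but these are minor phrasing issues, not gaps.
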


\begin{proof}
The second statement follows from a basic property of pull-back.
For descent,
\begin{eqnarray}
&& \gp_*\L^G \otimes \gp_*\cat{M}^G \nonumber\\
& \cong & \gp_* (\gp^* (\gp_*\L^G \otimes \gp_*\cat{M}^G) )^G \nonumber\\
&& (\text{Since $X \to X/G$ is a principle $G$-bundle, there is a one-to-one correspondence} \nonumber\\
&& \text{between $\picard{G}{X}$ and $\picard{}{X/G}$ given by $\gp^*$ and $\gp_*(-)^G$.}) \nonumber\\
                                     & \cong & \gp_*( (\gp^* \gp_*\L^G) \otimes (\gp^* \gp_*\cat{M}^G) )^G \nonumber\\ 
                                     & \cong & \gp_*(\L \otimes \cat{M})^G. \nonumber
\end{eqnarray}
\end{proof}

Suppose the pair $(G,k)$ is split, $X$ is an object in $\cat{D}$ and $\L \in \picard{G}{X}$. Denote the corresponding Chern class operator defined by the first approach by $c'(\L)$, i.e.
$$c'(\L) = \gPS \circ c(\gp_* \L^G) \circ \gPS^{-1}$$
from $\coblocal{G}{X}{e}$ to $\coblocal{G}{X}{e}$. Also denote the corresponding Chern class operator defined by the second approach by $c''(\L)$, i.e.
$$c''(\L)[Y \to X] = [Y \x_{\P^n} H \to X]$$
when $\L$ is \nice\ (see subsection \ref{secondapproachsubsection} for details), and for general $\L \in \picard{G}{X}$,
$$c''(\L) = \fgldiv{e}( F^-(c''(\L^e \otimes \cat{M}), c''(\cat{M})) )$$
from $\coblocal{G}{X}{e}$ to $\coblocal{G}{X}{e}$ 
where $\cat{M}$ is in $\picard{G}{X}$ such that $\L^e \otimes \cat{M}$, $\cat{M}$ are both \nice.

\begin{prop}
For any $X \in \cat{D}$ and $\L \in \picard{G}{X}$, we have
$$c'(\L) = c''(\L)$$
as group homo\morp s from $\coblocal{G}{X}{e} \text{ to } \coblocal{G}{X}{e}.$
\end{prop}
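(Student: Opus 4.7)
The plan is to first reduce to the case when $\L$ is nice, and then to match $c'(\L)$ and $c''(\L)$ directly via descent and fibered base change.

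To carry out the reduction, I would first verify the formal group law for $c'$, namely
$$c'(\L \otimes \cat{M}) = F(c'(\L), c'(\cat{M}))$$
for all $\L, \cat{M} \in \picard{G}{X}$. This follows by combining the non-equivariant FGL (Proposition 9.4 of \cite{LePa}), Lemma \ref{lbascentdescent} (descent commutes with $\otimes$), and the observation that the ascent isomorphism $\gPS$ is $\lazard$-linear once $\lazard \cong \go(\pt)$ is mapped into $\cobg{G}{}{\pt}$ via the trivial-action inclusion $\gPH_{\gg}$; the $\lazard$-linearity is itself a consequence of the external-product compatibility recorded in Proposition \ref{formulaswhenind}. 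Once FGL holds for $c'$, the same algebraic manipulation used to certify the consistency of $c''$ on a general linearized invertible sheaf yields
$$c'(\L) = \fgldiv{e}\bigl(F^-(c'(\L^e \otimes \cat{M}), c'(\cat{M}))\bigr)$$
after inverting $e$, for any $\cat{M}$ such that $\cat{M}$ and $\L^e \otimes \cat{M}$ are both nice. The proposition therefore reduces to the case of nice $\L$.

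Assume now $\L$ is nice, with a $G$-equivariant morphism $\gps : X \to \P^n$ (trivial action on $\P^n$) satisfying $\gps^*\O(1) \cong \L$. Since $G$ acts trivially on $\P^n$, the morphism $\gps$ descends to $\overline{\gps} : X/G \to \P^n$ with $\gps = \overline{\gps} \circ \gp$, and the ascent/descent correspondence of Proposition \ref{ascentdescent} identifies $\gp_*\L^G$ with $\overline{\gps}^*\O(1)$. For an element $[f : Y \to X]$ of $\cobg{G}{}{X}$, the non-equivariant Chern class operator applied to the corresponding pullback of $\O(1)$ yields
$$c(\overline{\gps}^*\O(1))[f/G : Y/G \to X/G] = [(Y/G) \x_{\P^n} H \to X/G]$$
for any hyperplane $H \subset \P^n$ such that $(Y/G) \x_{\P^n} H$ is a smooth divisor on $Y/G$. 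Applying $\gPS$ and using Proposition \ref{ascentdescent} together with associativity of fiber products then gives
$$c'(\L)[f : Y \to X] = [((Y/G) \x_{\P^n} H) \x_{X/G} X \to X] = [Y \x_{\P^n} H \to X],$$
which is precisely the expression defining $c''(\L)[f : Y \to X]$.

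To ensure that a common hyperplane $H$ is admissible for both operators, observe that $Y \to Y/G$ is a smooth principal $G$-bundle, so $Y \x_{\P^n} H$ is a smooth $G$-invariant divisor on $Y$ if and only if $(Y/G) \x_{\P^n} H$ is a smooth divisor on $Y/G$; by Lemma \ref{bertinilemma} applied to $Y/G \to \P^n$, both conditions are satisfied by hyperplanes in a non-empty open subset of the dual projective space, and the well-definedness of each operator ensures independence of the choice. The main subtlety in the plan is the $\lazard$-linearity of $\gPS$ that underpins the FGL reduction: it requires carefully matching the external product of trivial-action classes in the equivariant theory with the usual external product downstairs, so that the power series appearing in $F(c'(\L), c'(\cat{M}))$ transport correctly between the equivariant and non-equivariant sides. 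Once that compatibility is in hand, the remainder of the argument is a direct unwinding of the definitions.
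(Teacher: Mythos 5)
Your proposal is correct and follows essentially the same route as the paper: establish the equality $c'=c''$ on \nice\ sheaves, then use the formal group law together with Lemma~\ref{lbascentdescent} to reduce the general case to the \nice\ one; the paper does this in a single chain of conjugations by $\gPS$ while you isolate ``FGL for $c'$'' as an intermediate statement, but the ingredients and their use are the same. Two small remarks: the $\lazard$-linearity of $\gPS$ (needed both in your version and, implicitly, in the paper's chain of equalities when coefficients are moved past $\gPS$) is supported by the unnumbered proposition on external products following Proposition~\ref{formulaswhenind} rather than by Proposition~\ref{formulaswhenind} itself; and your observation that $(Y/G)\times_{\P^n}H$ smooth $\Leftrightarrow$ $Y\times_{\P^n}H$ smooth and invariant is correct (the forward implication via smoothness of $Y\to Y/G$, the reverse via Proposition~\ref{domainisind} and descent), matching the paper's appeal to Remark~\ref{rmkchernclass}.
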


\begin{proof}
If $\L \in \picard{G}{X}$ is \nice, then there is an \equi\ \morp\ $\gps : X \to \P^n$ such that $\gps^*\O(1) \cong \L$. By definition,
$$c''(\L)[f : Y \to X] = [Y \x_{\P^n} H \to X]$$ 
where $H$ is a hyperplance in $\P^n$ such that $Y \x_{\P^n} H$ is an invariant \sm\ divisor on $Y$. Let $s \in {\rm H}^0(\P^n, \O(1))$ be the global section that cuts out $H$. Then, $Y \x_{\P^n} H$ is cut out by the invariant global section $(\gps \circ f)^*s \in {\rm H}^0(Y, f^*\L)^G$. On the other hand, by remark \ref{rmkchernclass}, $c'(\L)[Y \to X]$ can also be given by the divisor cut out by any invariant global section $s' \in {\rm H}^0(Y, f^*\L)^G$ as long as the divisor is \sm. Hence, $c'(\L) = c''(\L)$ when $\L$ is \nice.

For general $\L \in \picard{G}{X}$, let $\fgldiv{e}(u) \defeq \sum_{i \geq 1} b_i\ u^i$ and $F^-(u,v) \defeq \sum_{j,k \geq 0} c_{jk}\ u^j v^k.$ Then, we have
\begin{eqnarray}
c''(\L) & = & \fgldiv{e}( F^-(c''(\L^e \otimes \cat{M}), c''(\cat{M})) ) \nonumber\\
& = & \sum_i\ b_i\, ( \sum_{j,k} c_{jk} c''(\L^e \otimes \cat{M})^j c''(\cat{M})^k )^i \nonumber\\
& = & \sum_i\ b_i\, ( \sum_{j,k} c_{jk} \gPS \circ c(\gp_* (\L^e \otimes \cat{M})^G)^j \circ c( \gp_*\cat{M}^G)^k \circ \gPS^{-1} )^i \nonumber\\
&   & (\text{the two definitions agree for \nice\ sheaves}) \nonumber
\end{eqnarray}
\begin{eqnarray}
& = & \gPS \circ ( \sum_i b_i\, ( \sum_{j,k}\ c_{jk}\ c(\gp_*(\L^e \otimes \cat{M})^G)^j\, c(\gp_*\cat{M}^G)^k\,)^i ) \circ \gPS^{-1} \nonumber\\
& = & \gPS \circ \fgldiv{e}( F^-(c(\gp_*(\L^e \otimes \cat{M})^G), c(\gp_*\cat{M}^G) ) ) \circ \gPS^{-1} \nonumber\\
& = & \gPS \circ \fgldiv{e}( F^-( c((\gp_*\L^G)^e \otimes \gp_*\cat{M}^G) , c(\gp_*\cat{M}^G) ) ) \circ \gPS^{-1} \nonumber\\
&   & (\text{by Lemma \ref{lbascentdescent}}) \nonumber\\
& = & \gPS \circ c(\gp_*\L^G) \circ \gPS^{-1} \nonumber\\
&   & (\text{\,because \textbf{(FGL)} holds in } \go(X/G)\,) \nonumber\\
& = & c'(\L). \nonumber
\end{eqnarray}
\end{proof}

\bigskip

\bigskip

\section{More properties for $\cob{G}$}

In this section, we will state and prove some more basic properties in our \equi\ algebraic cobordism theory $\cob{G}$, equipped with the Chern class operator for \nice\ \glin{G}\ invertible sheaves. Some properties are related to the Chern class operator. In that case, we will also prove them in the theory $\coblocaltheory{G}{e} \defeq \cob{G} \otimes_{\Z} \Z[1/e]$ for arbitrary \glin{G}\ invertible sheaves assuming that the pair $(G,k)$ is split (recall that $e$ is the exponent of $G$). The non-\equi\ version of these properties can be found in \cite{LePa}.

At this stage, we have established \proj\ push-forward \textbf{(D1)}, \sm\ pull-back \textbf{(D2)}, Chern class operator \textbf{(D3)} and external product \textbf{(D4)}. For convenience, we will briefly recall here some of the properties already shown in section \ref{basicpropertysection}.

\vtab

\textbf{(A1)} If $f : X \to X'$ and $g : X' \to X''$ are both \sm\ and $X$, $X'$, $X''$ are all \equidim, then
$$(g \circ f)^* = f^* \circ g^*.$$
Moreover, $\id^*$ is the identity \homo. 

\vtab

\textbf{(A2)} If $f : X \to Z$ is \proj\ and $g : Y \to Z$ is \sm\ such that $X$, $Y$, $Z$ are all \equidim, then we have $g^*f_* = f'_*g'^*$ in the pull-back square
\squarediagramword{X \x_Z Y}{X}{Y}{Z}{g'}{f'}{f}{g}

\vtab

\textbf{(A3)} If $f : X \to X'$ is \proj\ and $\L \in \picard{G}{X'}$ is \nice, then
$$f_* \circ c(f^*\L) = c(\L) \circ f_*$$
in the theory $\cob{G}$. Moreover, if the pair $(G, k)$ is split, then the same statement holds in the theory $\coblocaltheory{G}{e}$ for arbitrary $\L \in \picard{G}{X'}$.

\begin{proof}
The first part of the statement follows from Lemma \ref{candpushcomm}. For the second part,
\begin{eqnarray}
f_* \circ c(f^*\L) & = & f_* \circ \fgldiv{e}( F^-( c(f^*\L^e \otimes f^*\cat{M}), c(f^*\cat{M}) ) ) \nonumber\\
                   &   & (\text{for some } \cat{M} \in \picard{G}{X'} \text{ such that } \L^e \otimes \cat{M},\ \cat{M} \text{ are both \nice}) \nonumber\\
                   & = & f_* \circ \sum_i b_i\,( \sum_{j,k} c_{jk}\, c(f^*\L^e \otimes f^*\cat{M})^j\, c(f^*\cat{M})^k )^i \nonumber\\
                   &   & \text{where $b_i$, $c_{jk}$ are coefficients for $\fgldiv{e}(u)$, $F^-(u,v)$ respectively} \nonumber\\
& = & (\sum_i b_i\,( \sum_{j,k} c_{jk}\, c(\L^e \otimes \cat{M})^j\, c(\cat{M})^k )^i) \circ f_* \nonumber\\
&   & \text{(by Lemma \ref{candpushcomm} and the fact that $\L^e \otimes \cat{M}$, $\cat{M}$ are \nice).} \nonumber
\end{eqnarray}
Hence, 
$$f_* \circ c(f^*\L) = \fgldiv{e}( F^-( c(\L^e \otimes \cat{M}), c(\cat{M})) ) \circ f_* = c(\L) \circ f_*.$$
\end{proof}

\vtab

\textbf{(A4)} If $f : X \to X'$ is \sm, $X$, $X'$ are both \equidim\ and $\L \in \picard{G}{X'}$ is \nice, then
$$f^* \circ c(\L) = c(f^*\L) \circ f^*$$
in the theory $\cob{G}$. Moreover, if the pair $(G, k)$ is split, then the same statement holds in the theory $\coblocaltheory{G}{e}$ for arbitrary $\L \in \picard{G}{X'}$.

\begin{proof}
Suppose that $\gps : X' \to \P^n$ is a \morp\ in $\gsmcat{G}$ such that $\L \cong \gps^* \O(1)$. Let $[Y \to X']$ be an element in $\cobg{G}{}{X'}$ and $H$ be a hyperplane in $\P^n$ such that $Y \x_{\P^n} H$ is a \sm\ \inv\ divisor on $Y$. Then,
\begin{eqnarray}
f^* \circ c(\L) [Y \to X'] & = & f^* [Y \x_{\P^n} H \to X'] \nonumber\\
                           & = & [X \x_{X'} (Y \x_{\P^n} H) \to X]. \nonumber
\end{eqnarray}
On the other hand,
\begin{eqnarray}
c(f^*\L) \circ f^* [Y \to X'] & = & c(f^*\L) [X \x_{X'} Y \to X] \nonumber\\
                              & = & [(X \x_{X'} Y) \x_{\P^n} H \to X]. \nonumber
\end{eqnarray}
Hence, they agree. The proof for arbitrary $\L$ is similar to the proof of the similar statement of \textbf{(A3)}.
\end{proof}

\vtab

\textbf{(A5)} If $\L, \L' \in \picard{G}{X}$ are both \nice, then
$$c(\L) \circ c(\L') = c(\L') \circ c(\L)$$
in the theory $\cob{G}$. Moreover, if the pair $(G, k)$ is split, then the same statement holds in the theory $\coblocaltheory{G}{e}$ for arbitrary $\L,\L' \in \picard{G}{X}$.

\begin{proof}
Suppose that $\L$, $\L'$ are \nice\ and let $\gps : X \to \P^n$ and $\gps' : X \to \P^m$ be the corresponding maps for $\L$ and $\L'$ respectively. Then, for some appropriately chosen hyperplanes $H \subset \P^n$ and $H' \subset \P^m$,
\begin{eqnarray}
c(\L) \circ c(\L')\ [Y \to X] & = & c(\L)\ [Y \x_{\P^m} H' \to X] \nonumber\\
                              & = & [(Y \x_{\P^m} H') \x_{\P^n} H \to X] \nonumber\\
                              & = & [(Y \x_{\P^n} H) \x_{\P^m} H' \to X] \nonumber\\
                              & = & c(\L') \circ c(\L)\ [Y \to X]. \nonumber
\end{eqnarray}
The statement for arbitrary $\L$, $\L' \in \picard{G}{X}$ can be shown by a similar argument as before.
\end{proof}

\vtab

\textbf{(A6)} If $f,g$ are \proj, then
$$\x \circ (f_* \x g_*) = (f \x g)_* \circ \x.$$

\vtab

\textbf{(A7)} If $f,g$ are \sm\ with \equidim\ domains and codomains, then
$$\x \circ (f^* \x g^*) = (f \x g)^* \circ \x.$$

\vtab

\textbf{(A8)} Let $a$, $b$ be elements in $\cobg{G}{}{X}$, $\cobg{G}{}{X'}$ respectively and let $\L \in \picard{G}{X}$ be a \nice\ invertible sheaf. Then we have
$$c(\L)(a) \x b = c(\gp_1^*\L)(a \x b).$$
Moreover, if the pair $(G, k)$ is split, then the same statement holds in the theory $\coblocaltheory{G}{e}$ for arbitrary $\L \in \picard{G}{X}$.

\begin{proof}
Suppose that $\L$ is \nice. W\withoutlog, we can assume $a = [Y \to X]$ and $b = [Y' \to X']$. Let $\gps : X \to \P^n$ be the map corresponding to $\L$. Then, for some $H \subset \P^n$,
\begin{eqnarray}
(c(\L) [Y \to X]) \x [Y' \to X'] & = & [Y \x_{\P^n} H \to X] \x [Y' \to X'] \nonumber\\
                                & = & [(Y \x_{\P^n} H) \x Y' \to X \x X'] \nonumber\\
                                & = & [(Y \x Y') \x_{\P^n} H \to X \x X'] \nonumber\\
                                && (\text{via the map $Y \x Y' \to X \x X' \to X \to \P^n$}) \nonumber\\
                                & = & c(\gp_1^*\L) [Y \x Y' \to X \x X']. \nonumber
\end{eqnarray}
For arbitrary $\L \in \picard{G}{X}$, 
\begin{eqnarray}
c(\L)(a) \x b & = & \fgldiv{e}( F^-( c(\L^e \otimes \cat{M}), c(\cat{M}) ) )(a) \x b \nonumber\\
              & = & ( \sum_i b_i\,( \sum_{j,k} c_{jk}\, c(\L^e \otimes \cat{M})^j\, c(\cat{M})^k )^i (a)) \x b \nonumber\\
              & = & ( \sum_{j,k} d_{jk}\, c(\L^e \otimes \cat{M})^j\, c(\cat{M})^k (a) ) \x b \nonumber\\
              && (\text{expand the series out and denote the coefficients by $d_{jk}$}) \nonumber\\
& = & \sum_{j,k} d_{jk}\, c(\gp_1^*\L^e \otimes \gp_1^*\cat{M})^j\, c(\gp_1^*\cat{M})^k\, (a \x b) \nonumber\\
& = & \sum_i b_i\,( \sum_{j,k} c_{jk}\, c(\gp_1^*\L^e \otimes \gp_1^*\cat{M})^j\, c(\gp_1^*\cat{M})^k )^i\, (a \x b) \nonumber\\
              & = & \fgldiv{e}( F^-( c(\gp_1^*\L^e \otimes \gp_1^*\cat{M}), c(\gp_1^*\cat{M}) ) )\, (a \x b) \nonumber\\
              & = & c(\gp_1^* \L)(a \x b). \nonumber
\end{eqnarray}
\end{proof}

\vtab

\textbf{(Dim)} If $\listing{\L_1}{\L_2}{\L_r} \in \picard{G}{X}$ are \nice\ invertible sheaves and $r > \dim X$, then
$$c(\L_1) \circ c(\L_2) \circ \cdots \circ c(\L_r) [\id_X] = 0.$$
Moreover, if the pair $(G, k)$ is split, then the same statement holds in the theory $\coblocaltheory{G}{e}$ for arbitrary $\listing{\L_1}{\L_2}{\L_r} \in \picard{G}{X}$.

\begin{proof}
It follows from the fact that $c(\L) : \cobglow{G}{*}{X} \to \cobglow{G}{*-1}{X}$ and $\cobglow{G}{<0}{X} = 0$.
\end{proof}

\vtab

\textbf{(FGL)} If $\L$, $\L' \in \picard{G}{X}$ are \nice\ invertible sheaves, then
$$c(\L \otimes \L') = F(c(\L), c(\L'))$$
in the theory $\cob{G}$. Moreover, if the pair $(G, k)$ is split, then the same statement holds in the theory $\coblocaltheory{G}{e}$ for arbitrary $\L$, $\L' \in \picard{G}{X}$.

\begin{proof} 
The statement for \nice\ $\L$, $\L'$ was proved in section \ref{chernclasssection}. For arbitrary $\L$, $\L'$,
\begin{eqnarray}
F(c(\L), c(\L')) & = & F(\ \fgldiv{e}( F^-(c(\L^e \otimes \cat{M}), c(\cat{M})) )\ ,\ \fgldiv{e}( F^-(c(\L'^{\,e} \otimes \cat{M'}), c(\cat{M'})) )\ ) \nonumber\\
                 & = & \fgldiv{e}(F(\ F^-(c(\L^e \otimes \cat{M}), c(\cat{M}))\ ,\ F^-(c(\L'^{\,e} \otimes \cat{M'}), c(\cat{M'}))\ )) \nonumber\\
                 & = & \fgldiv{e}(F^-(\ F(c(\L^e \otimes \cat{M}), c(\L'^{\,e} \otimes \cat{M'}))\ ,\ F(c(\cat{M}), c(\cat{M'}))\ )) \nonumber\\
                 & = & \fgldiv{e}(F^-(\ c(\L^e \otimes \cat{M} \otimes \L'^{\,e} \otimes \cat{M'})\ ,\ c(\cat{M} \otimes \cat{M'})\ )) \nonumber\\
                 & = & c(\L \otimes \L') \nonumber\\
                 && \text{because $(\L \otimes \L')^e \otimes (\cat{M} \otimes \cat{M'})$ and $\cat{M} \otimes \cat{M'}$ are both \nice.} \nonumber
\end{eqnarray}
\end{proof}

\bigskip

\bigskip

\section{Generators for the \equi\ algebraic cobordism ring}
\label{setofgeneratorsection}

The main objective of this section is to prove Theorem \ref{setofgenerator}, which gives a set of generators of the \equi\ algebraic cobordism ring $\cobg{G}{}{\pt}$. To achieve this, we need to use a different version of splitting principle. We will assume the pair $(G,k)$ is split in this section.

\subsection{Splitting principle by blowing up along invariant \sm\ centers}

In this subsection, for a sheaf $\cat{E}$ over $Y$ and a map $f : X \to Y$, we will denote $f^* \cat{E}$ by $\cat{E}_X$ if there is no confusion. Suppose $X$ is a scheme over $k$ and $Z$ is a closed subscheme of $X$. We will denote the blow up of $X$ along $Z$ by $\blowup{X}{Z}$.

The main result in this subsection is similar to the \equi\ analog of Theorem 4.7 in \cite{splittingprinciple}.

\vtab

Let $S \in \gsmcat{G}$ be a ground scheme. Suppose $\cat{N}$ is a \glin{G}\ \lfsheaf{N}\ over $S$ and $\cat{A} \embed \cat{N}$ is a rank 1 \glin{G}\ locally free subsheaf. Recall the definition in section 2.1 \cite{splittingprinciple}.

The scheme $\gs_{1,n}(\cat{A}, \cat{N})$ is defined as the closed subscheme of Grassmannian $Gr_n(\cat{N})$ satisfying the following. A point $(s, \cat{H}) \in Gr_n(\cat{N})$ (i.e. $s \in S$ and $\cat{H}$ is a $n$-quotient of $\cat{N}|_s$) is inside $\gs_{1,n}(\cat{A}, \cat{N})$ if the composition $\cat{A}|_s \to \cat{N}|_s \to \cat{H}$ is zero. 

Also recall the following definition in section 3.1 in \cite{splittingprinciple}.

Suppose $X$ is in $\gsmcat{G}$ and $\cat{N}$ is a \glin{G}\ \lfsheaf{N}\ over $\pt$. An \equi\ immersion $X \embed Gr_r (\cat{N})$ is called twisted if it is the Segre product of an \equi\ map $X \to Gr_r(\cat{N}_1)$ and an \equi\ immersion $X \embed \P(\cat{N}_2)$ for some \glin{G}\ locally free sheaves $\cat{N}_1, \cat{N}_2$ over $\pt$.

\begin{prop}
\label{bertiniprop}
Suppose $X \in \gsmcat{G}$ is \girred\ with dimension $d$ and there is a twisted \equi\ immersion 
$$X \embed Gr_r(\cat{N}) \defeq Y$$ 
for some \glin{G}\ locally free sheaf $\cat{N}$ of rank $N$ over $\pt$ ($1 \leq r < N$). Moreover, there is a 1-dimensional character $\gps$ such that the dimension of the $\gps$ component ${\rm H}^0(\pt, \cat{N})_{\gps}$ is greater than $r$. Let $Z \defeq Gr_{N-1}(\cat{N})$ and $\cat{A}$ be the universal subbundle over $Z$ ($\cat{A} \embed \cat{N}_Z$ with rank 1). Then, there exists a closed point $z$ of the fixed point locus $Z^G$, with residue field $k(z) \cong k$, such that the closed subscheme $\gs_{1,r}(\cat{A}|_z, \cat{N}) \subset Y$ is \sm\ with codimension $r$ and the dimension of $X \cap \gs_{1,r}(\cat{A}|_z, \cat{N})$ is $d-r$.
\end{prop}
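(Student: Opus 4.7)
The strategy is a Bertini-type argument exploiting the fact that the $\gps$-isotypic component of $\cat{N}$ is large, which gives us a whole projective space of $G$-fixed $k$-rational points in $Z$ to use as centers for Schubert cycles. First I would identify the relevant subscheme of $Z^G$: a $k$-rational $G$-fixed point of $Z=Gr_{N-1}(\cat{N})$ corresponds to a one-dimensional $G$-subrepresentation of $\cat{N}$, and the choice of character $\gps$ singles out the locus $U\defeq\P(H^0(\pt,\cat{N})_\gps)\subset Z^G$, a projective space over $k$ of dimension at least $r$ by hypothesis. For each $z\in U(k)$, the line $\cat{A}|_z\subset \cat{N}$ is $G$-stable and the Schubert cycle $\gs_{1,r}(\cat{A}|_z,\cat{N})\subset Y$ is $G$-invariant of codimension $r$; its smoothness for arbitrary $z$ follows from the fact that for a fixed line $L\subset \cat{N}$, the condition ``$L$ maps to zero in the universal rank-$r$ quotient'' cuts out a smooth subscheme of $Gr_r(\cat{N})$ of the expected codimension (it is itself isomorphic to a Grassmannian $Gr_r(\cat{N}/L)$).

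Next I would form the incidence variety
$$\tilde X \defeq \{(x,z)\in X\times U\ |\ x\in \gs_{1,r}(\cat{A}|_z,\cat{N})\}$$
with projections $p:\tilde X\to X$ and $q:\tilde X\to U$, and analyze the fibers of $p$. For $x\in X$ with corresponding rank-$r$ quotient $\cat{N}\to\cat{H}_x$, the fiber $p^{-1}(x)$ is the projectivization of the kernel of the restricted $G$-equivariant map $\cat{N}_\gps\to\cat{H}_x$. The twisted-immersion hypothesis is used here: factoring $X\embed Gr_r(\cat{N}_1)\times \P(\cat{N}_2)$ via the Segre embedding controls the rank of this restricted map uniformly in $x$, so that the kernel has constant dimension, making $p$ a Zariski-locally trivial projective bundle. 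Consequently $\tilde X$ is smooth of dimension $d+\dim U-r$.

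Now I would apply generic smoothness to $q:\tilde X\to U$, which is valid in characteristic zero: the open locus $U^{\rm sm}\subset U$ over which $q$ is smooth is nonempty (in fact, dense), since a general fiber has dimension $d-r$ and $\tilde X$ is smooth. Because $k$ is infinite (characteristic zero) and $U\cong\P^{\dim \cat{N}_\gps-1}_k$, the open set $U^{\rm sm}$ contains a $k$-rational point $z$. By construction this $z$ is a closed $G$-fixed point with $k(z)\cong k$, and the fiber $q^{-1}(z)=X\cap\gs_{1,r}(\cat{A}|_z,\cat{N})$ is smooth of dimension $d-r$, as required.

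The main obstacle is verifying that $p:\tilde X\to X$ has fibers of constant dimension, which is where the twisted-immersion hypothesis enters essentially: without the factorization through $Gr_r(\cat{N}_1)\times\P(\cat{N}_2)$ one cannot rule out that the equivariant map $\cat{N}_\gps\to \cat{H}_x$ drops rank along closed subloci of $X$, which would destroy the smoothness of $\tilde X$ and invalidate the generic smoothness step. The rest of the argument is essentially formal once this rank-uniformity is established.
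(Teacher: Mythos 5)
Your Bertini-style strategy (incidence variety plus generic smoothness) is a genuinely different route from the paper, which works in explicit affine charts of $Y \x Z$, writes down the $r$ linear equations $h_1,\dots,h_r$ cutting out $\gs_{1,r}(\cat{A},\cat{N}_Z)$, appeals to the proof of Theorem 3.3 of Kleiman's paper \cite{splittingprinciple} for the dimension-$(d-r)$ statement on $X$, and then exhibits a concrete $z = (q_1,\dots,q_r,0,\dots,0)$ with all $q_i \neq 0$ that avoids the bad locus in every chart simultaneously. Your observation that $\gs_{1,r}(\cat{A}|_z,\cat{N}) \cong Gr_r(\cat{N}/\cat{A}|_z)$, hence is automatically smooth of codimension $r$, is a cleaner derivation of that particular part of the conclusion than the paper's computation.

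However, the central step of your proposal has a genuine gap. You assert that the restriction $\cat{N}_\gps \to \cat{H}_x$ is a ``$G$-equivariant map'' and that the twisted-immersion hypothesis forces its rank to be constant in $x$, so that $p : \tilde{X} \to X$ is a Zariski-locally trivial projective bundle. Neither claim holds. For a non-fixed point $x \in X$, the quotient $\cat{H}_x$ carries no $G$-module structure, so the map is not $G$-equivariant in any useful sense. More importantly, the rank of $\cat{N}_\gps \to \cat{H}_x$ is only lower-semicontinuous in $x$; there is no reason for it to be constant, and indeed the Segre factorization $X \to Gr_r(\cat{N}_1) \x \P(\cat{N}_2)$ does not prevent the image of $\cat{N}_\gps$ from degenerating along closed subloci of $X$ (the images of the isotypic pieces of $\cat{N}_1$ and $\cat{N}_2$ can become linearly dependent or vanish at special points). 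Without constant rank you cannot conclude that $\tilde{X}$ is smooth of pure dimension $d + \dim U - r$, and then generic smoothness of $q : \tilde{X} \to U$ gives you no control over the dimension of the generic fiber --- different components of $\tilde{X}$ could dominate $U$ with fibers of unexpectedly large dimension. What the twisted hypothesis actually buys you, via Kleiman's Theorem 3.3, is control of the \emph{dimensions of the jump strata}, not their absence; you would need to reproduce that stratified dimension count (which is what the paper's coordinate computation implicitly does) to close the gap. Your final sentence concedes this is ``the main obstacle,'' but asserting the rest is formal does not substitute for an argument for the obstacle itself.
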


\begin{proof}
This statement is similar to Theorem 3.3 in \cite{splittingprinciple}. First of all, notice that 
$$X \x Z \embed Y \x Z = Gr_r(\cat{N}) \x Z \cong Gr_r(\cat{N}_Z).$$ 
On the other hand, the subsheaf $\cat{A} \embed \cat{N}_Z$ induces $\gs_{1,r}(\cat{A}, \cat{N}_Z)$, which is a closed subscheme of $Gr_r(\cat{N}_Z)$. So, we will consider $\gs_{1,r}(\cat{A}|_z, \cat{N})$ and $X \cap \gs_{1,r}(\cat{A}|_z, \cat{N})$ as fibers of 
$$\gs_{1,r}(\cat{A}, \cat{N}_Z) \embed Y \x Z \to Z$$ 
and 
$$(X \x Z) \cap \gs_{1,r}(\cat{A}, \cat{N}_Z) \embed Y \x Z \to Z$$
respectively.

Suppose the $G$-\repn\ corresponding to $\cat{N}$ is given by a $k$-basis $\{e_1, e_2, \ldots, e_N\}$ such that each $e_i$ defines a 1-dimensional $G$-\repn. Let $U_N$ be the \inv\ affine open subscheme of $Z$ corresponding to $e_1, \ldots, e_{N-1}$. Then, $U_N = \spec{k[s_1, \ldots, s_{N-1}]}$. Since $Z = Gr_{N-1}(\cat{N}) \cong \P(\dual{\cat{N}})$ and $\dim {\rm H}^0(\pt, \cat{N})_{\gps} \geq r +1$, w\withoutlog, we may assume $G$ acts on the coordinates $s_1, \ldots, s_r$ trivially. In addition, it can be shown that $\cat{A} \embed \cat{N}_{Z}$ is defined by 
$$f \defeq \left(\sum_{i = 1}^{N-1}\,s_i e_i\right) - e_N$$
over $U_N$. 

Let $U_{1,2,\ldots,r}$ be the affine open subscheme of $Y$ corresponding to $e_1, \ldots, e_r$. Then, we have $U_{1,2,\ldots,r} = \spec{k[t_{i,j}]}$ where 
$1 \leq i \leq r$ and $1 \leq j \leq N-r$. Let $(\cat{N}/\cat{G},z) = (t_{i,j},s_k)$ be a closed point in 
$$\spec{k[t_{i,j}, s_k]} = U_{1,2,\ldots, r} \x U_N \subset Y \x Z = Gr_r \cat{N}_Z.$$ 
Then, the map $\cat{A}|_z \to \cat{N} \to \cat{N}/\cat{G}$ at this point corresponds to
$$\span{k}{f} \embed \oplus_{i =1}^N \span{k}{e_i} \to (\oplus_{i =1}^N \span{k}{e_i})\, /\, \span{k}{g_1, \ldots, g_{N-r}}$$
where 
$$g_i \defeq \left(\sum_{j=1}^r\,t_{j,i}e_j\right) - e_{r+i}$$ 
for $1 \leq i \leq N-r$. The composition being zero is equivalent to $f \in \span{k}{g_1, \ldots, g_{N-r}}$, which is equivalent to 
$$h_i \defeq s_i + \left(\sum_{j=1}^{N-r-1}\ s_{j + r} t_{i,j}\right) - t_{i,(N-r)} = 0$$
for $1 \leq i \leq r$. So, $\gs_{1,r}(\cat{A}, \cat{N}_Z)$ is cut out by the equations $h_1, \ldots, h_r$ inside $U_{1,2,\ldots, r} \x U_N$.

Let $z = (q_1, \ldots, q_{N-1})$ be a closed point in $U_N$. Then, when restricted on the fiber of $U_{1,2,\ldots, r} \x U_N \to U_N$ over $z$, the closed subscheme $\gs_{1,r}(\cat{A}|_z, \cat{N}) \cap U_{1,2,\ldots, r}$ is cut out by $r$ linear equations :
$$h_i = q_i + \left(\sum_{j=1}^{N-r-1}\ q_{j + r} t_{i,j}\right) - t_{i,(N-r)} = 0,$$
where $1 \leq i \leq r$. So, $\gs_{1,r}(\cat{A}|_z, \cat{N}) \cap U_{1,2,\ldots, r}$ is \sm\ and of codimension $r$. Moreover, since $X \embed Gr_r(\cat{N})$ is a twisted immersion and $\gs_{1,r}(\cat{A}|_z, \cat{N}) \cap U_{1,2,\ldots, r}$ is given by $r$ linear equations $\{h_i=0\}$, the scheme $X \cap \gs_{1,r}(\cat{A}|_z, \cat{N}) \cap U_{1,2,\ldots, r}$ is of dimension $d-r$ (See the proof of Theorem 3.3 in \cite{splittingprinciple} for details).

Because of the symmetry of $f$, the only other affine open subscheme of $Y$ we need to consider is $U_{1, \ldots, r-1,N}$. In this case, the map $\cat{A}|_z \to \cat{N} \to \cat{N}/\cat{G}$ corresponds to 
$$\span{k}{f} \embed \oplus_{i =1}^N \span{k}{e_i} \to (\oplus_{i =1}^N \span{k}{e_i})\, /\, \span{k}{g_1, \ldots, g_{N-r}}$$
where 
$$g_i \defeq \left(\sum_{j=1}^{r-1}\,t_{j,i}e_j\right) + t_{r,i}e_N - e_{r+i}$$ 
for $1 \leq i \leq N-r-1$ and 
$$g_{N-r} \defeq \left(\sum_{j=1}^{r-1}\,t_{j,N-r}e_j\right) + t_{r,N-r}e_N - e_r.$$ 
Hence, the equations that cut $\gs_{1,r}(\cat{A}, \cat{N}_Z)$ out are 
$$h_i \defeq s_i + \left(\sum_{j=1}^{N-r-1}\,s_{j + r} t_{i,j}\right) + t_{i,N-r}s_r = 0$$
for $1 \leq i \leq r-1$ and
$$h_r \defeq -1 + \left(\sum_{j=1}^{N-r-1}\,s_{j + r} t_{r,j}\right) + t_{r,N-r}s_r = 0.$$

Let $B$ be the closed subscheme of $U_N$ defined by the equations $s_r = s_{r + 1} = \cdots = s_{N-1} = 0$ and $z = (q_1, \ldots, q_{N-1})$ be a closed point in $U_N - B$. Then, in the fiber of $U_{1, \ldots, r-1,N} \x U_N \to U_N$ over $z$, the closed subscheme $\gs_{1,r}(\cat{A}|_z, \cat{N})$ is cut out by $r$ linear equations
$$h_i = q_i + \left(\sum_{j=1}^{N-r-1}\,q_{j + r} t_{i,j}\right) + t_{i,N-r}q_r = 0$$
for $1 \leq i \leq r-1$ and
$$h_r = -1 + \left(\sum_{j=1}^{N-r-1}\,q_{j + r} t_{r,j}\right) + t_{r,N-r}q_r = 0.$$
Since at least one of $q_r, \ldots, q_{N-1}$ is non-zero, the linear equations $\{h_i\ |\ 1 \leq i \leq r\}$ are linearly independent. Hence, by the same reason, $\gs_{1,r}(\cat{A}|_z, \cat{N}) \cap U_{1, \ldots, r-1,N}$ is \sm\ with codimension $r$ and $X \cap \gs_{1,r}(\cat{A}|_z, \cat{N}) \cap U_{1, \ldots, r-1,N}$ is of dimension $d-r$.

For a different affine open subscheme $U_{i_1, \ldots, i_{r-1},N}$ of $Y$, there is a corresponding ``bad'' closed subscheme $B$ of $U_N$ defined by the set of equations $\{s_j = 0\}$ where $j \notin \{i_1, \ldots, i_{r-1}\}$. Hence, the result follows by picking $z = (q_1, \ldots, q_r, 0 , \ldots, 0)$ such that $q_1, \ldots, q_r$ are all non-zero.
\end{proof}

Suppose $\cat{A} \embed \cat{N}$ are \glin{G}\ locally free sheaves of rank $1$, $N$ respectively, over $\pt$. Let $Y \defeq Gr_{r-1}(\cat{N}/\cat{A})$ and $\cat{Q}^Y$ be its universal quotient. Let $\cat{K}$ be the kernel of the composition $\cat{N}_Y \to (\cat{N}/\cat{A})_Y \to \cat{Q}^Y$. Define a map $g : Gr_1(\cat{K}) \to Gr_r(\cat{N})$ as the following.

For a point $(y,\cat{H})$ in $Gr_1(\cat{K})$, we get an exact sequence
$$0 \to \cat{G} \to \cat{K}|_y \to \cat{H} \to 0$$
where the rank of $\cat{G}$ will be $N-r$. Since $\cat{K}|_y \embed \cat{N}|_y = \cat{N}$, we can consider $\cat{N}/\cat{G}$, which is of rank $r$. Thus, we define
$$g(y,\cat{H}) \defeq \cat{N}/\cat{G}.$$

\begin{prop}
\label{splittingprinciplelemma}
The map $g : Gr_1(\cat{K}) \to Gr_r(\cat{N})$ constructed above is \equi ly isomorphic to the map corresponding to the blow up of $Gr_r(\cat{N})$ along $\gs_{1,r}(\cat{A},\cat{N})$.
\end{prop}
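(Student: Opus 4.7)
The plan is to identify $g$ as the blowup by constructing an explicit equivariant inverse over the complement of $\gs_{1,r}(\cat{A},\cat{N})$, verifying that the preimage of $\gs_{1,r}(\cat{A},\cat{N})$ is a Cartier divisor of the right form, and then appealing to the universal property of the blowup together with a smoothness argument to promote the induced map $\tilde g$ to an isomorphism. Equivariance will come for free at every step because every morphism involved is functorial in $\cat{A}\embed\cat{N}$.

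First, let $U \defeq Gr_r(\cat{N})\setminus \gs_{1,r}(\cat{A},\cat{N})$. Over $U$, the composition $\cat{A}\to\cat{N}\to\cat{Q}^{Gr_r(\cat{N})}$ is nowhere zero, so (since $\cat{A}$ has rank one) it is a sub-line-bundle, and the quotient $\cat{Q}^{Gr_r(\cat{N})}/\cat{A}$ is a rank $r-1$ quotient of $\cat{N}/\cat{A}$. This defines an $\equi$ morphism $U\to Y$; call it $\tau$. For a point $(\cat{N}\twoheadrightarrow \cat{H}')\in U$, the fiber $\cat{K}|_{\tau(\cat{N}\to\cat{H}')}$ is the preimage in $\cat{N}$ of the image of $\cat{A}\embed\cat{H}'$ inside $\cat{Q}^Y|_{\tau(\cdot)}=\cat{H}'/\cat{A}$, and the natural surjection $\cat{K}|_{\tau(\cdot)}\twoheadrightarrow \cat{A}\cong \text{image}(\cat{A}\embed\cat{H}')$ defines a lift $s:U\to Gr_1(\cat{K})$ of $\tau$. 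A direct check shows $g\circ s=\id_U$ and $s\circ g=\id_{g^{-1}(U)}$: the key point is that for $(y,\cat{H})\in g^{-1}(U)$, the image of $\cat{A}$ in $\cat{N}/\cat{G}$ is nonzero and lies inside $\cat{K}|_y/\cat{G}=\cat{H}$, hence equals $\cat{H}$, which forces $y=\tau(g(y,\cat{H}))$ and $\cat{H}=s(g(y,\cat{H}))$.

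Next I identify the exceptional locus $E \defeq g^{-1}(\gs_{1,r}(\cat{A},\cat{N}))$. By construction $\cat{K}$ sits in an $\equi$ short exact sequence
\begin{equation*}
0\longrightarrow \cat{A}_Y\longrightarrow \cat{K}\longrightarrow \cat{S}^Y\longrightarrow 0,
\end{equation*}
where $\cat{S}^Y$ is the rank $N-r$ universal subbundle on $Y=Gr_{r-1}(\cat{N}/\cat{A})$. A point $(y,\cat{H})$ lies in $E$ exactly when $\cat{A}\subset\cat{G}$, that is, when the composition $\cat{A}_Y\to\cat{K}\to\O_{Gr_1(\cat{K})}(1)$ vanishes. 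This is the zero locus of a section of the line bundle $\dual{\cat{A}}\otimes\O(1)$ on $Gr_1(\cat{K})=\P(\dual{\cat{K}})$, so $E$ is an $\equi$ Cartier divisor canonically identified with $\P(\dual{\cat{S}^Y})$. A short computation of the fiber of $g|_E$ over a point $(\cat{N}\twoheadrightarrow\cat{H}')\in\gs_{1,r}(\cat{A},\cat{N})\cong Gr_r(\cat{N}/\cat{A})$ gives $\P(\dual{\cat{H}'})$, matching precisely the projectivized normal bundle of $\gs_{1,r}(\cat{A},\cat{N})\embed Gr_r(\cat{N})$, whose fiber at $\cat{H}'$ is $\P(\dual{\cat{A}}\otimes\cat{H}')\cong\P(\dual{\cat{H}'})$.

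Finally, the universal property of the blowup produces a canonical $\equi$ morphism
\begin{equation*}
\tilde g : Gr_1(\cat{K})\longrightarrow \blowup{Gr_r(\cat{N})}{\gs_{1,r}(\cat{A},\cat{N})}
\end{equation*}
over $Gr_r(\cat{N})$. Both source and target are smooth of the same dimension $r(N-r)$, and $\tilde g$ is proper and birational (being an iso above $U$ by the first step). The step above shows $\tilde g$ restricts to an iso between the two exceptional divisors, both of which are the projectivization of the normal bundle $\dual{\cat{A}}\otimes\cat{Q}_{\gs_{1,r}}$; in particular $\tilde g$ is set-theoretically bijective. Since the target is smooth, hence normal, a proper bijective birational morphism to it is an isomorphism by Zariski's Main Theorem. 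The main obstacle I anticipate is the bookkeeping needed to pin down the exceptional fiber identification precisely enough to invoke the universal property cleanly, and to confirm $\tilde g$ is quasi-finite on $E$; everything else is a direct manipulation of the defining short exact sequence of $\cat{K}$.
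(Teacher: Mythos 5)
Your proposal is correct in substance but follows a genuinely different route from the paper. The paper's proof is essentially two lines: it cites Kleiman's Theorem 4.4 in \cite{splittingprinciple} for the existence of a (non-equivariant) isomorphism $\gm : \blowup{Gr_r(\cat N)}{\gs_{1,r}(\cat A,\cat N)} \to Gr_1(\cat K)$ with $g\circ\gm = \gp$, and then deduces that $\gm$ is $G$-equivariant by noting that $\gm|_U = (g|_U)^{-1}\circ\gp|_U$ is equivariant on an invariant dense open $U$ and that ``being equivariant'' is a closed condition (the equalizer of $\gm\circ\gs$ and $\gs\circ(\id\times\gm)$ on $G\times\blowup{Gr_r(\cat N)}{\gs_{1,r}}$ is closed). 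You instead re-derive Kleiman's isomorphism from scratch via the universal property of the blowup. Your route is longer but self-contained, and it has a real conceptual virtue that the paper's argument lacks: because every step of your construction (the section over $U$, the identification of $E$ as the vanishing of the canonical section of $\dual{\cat A}\otimes\O(1)$, the map $\tilde g$) is functorial in the pair $\cat A\embed\cat N$, equivariance genuinely comes for free and needs no separate argument. The two gaps you flag at the end are both fillable without much trouble: for the fiber identification, both $E$ and the exceptional divisor of the blowup are $\P^{r-1}$-bundles over $\gs_{1,r}(\cat A,\cat N)$ (the fiber of $g|_E$ over $p=(\cat N\twoheadrightarrow\cat H')$ is $Gr_{r-1}(\cat H')$, i.e.\ lines in $\cat H'$, which equals lines in the normal space $\Hom(\cat A,\cat H')$ since $\cat A$ has rank one), and for quasi-finiteness, the induced morphism $\tilde g|_E$ of $\P^{r-1}$-bundles over $\gs_{1,r}$ is surjective (since $\tilde g$ is proper and birational, hence surjective, and $\tilde g^{-1}(E_{\rm blowup})=E$), so each fiber map $\P^{r-1}\to\P^{r-1}$ is nonconstant and therefore finite, giving quasi-finiteness; then proper, quasi-finite, birational, and normal target force $\tilde g$ to be an isomorphism. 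One small notational slip to fix: in your identification of the exceptional fiber, the fiber of the exceptional divisor (in the Grothendieck convention the paper uses, $\P(\cat E)={\rm Proj}\,{\rm Sym}\,\cat E$) is $\P$ of the \emph{conormal} space, so you want $\P(\cat A\otimes\dual{\cat H'})\cong\P(\dual{\cat H'})$ rather than $\P(\dual{\cat A}\otimes\cat H')$; the conclusion $\P(\dual{\cat H'})$ is nevertheless what you want and matches your computation of the fiber of $g|_E$.
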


\begin{proof}
This is the analog of Theorem 4.4 in \cite{splittingprinciple}. First of all, it is not hard to see that $g$ is \equi. Let $X \defeq Gr_r(\cat{N})$, $Y \defeq Gr_1(\cat{K})$ and $\tilde{X} \defeq \blowup{Gr_r(\cat{N})}{\gs_{1,r}(\cat{A},\cat{N})}$. Also denote the blow up map from $\tilde{X}$ to $X$ by $\gp$. By Theorem 4.4 in \cite{splittingprinciple}, there exists an isomorphism $\gm : \tilde{X} \to Y$ such that $g \circ \gm = \gp$. So, it is enough to show $\gm$ is \equi. Take an invariant open subscheme $U \subset X$ such that $g|_U$ and $\gp|_U$ are both isomorphisms. Since $g|_U,\gp|_U$ are both \equi, the map $\gm|_U = (g|_U)^{-1} \circ \gp|_U$ is also \equi. Now, a map being \equi\ is a closed condition. Hence, $\gm$ is \equi. 
\end{proof}

\begin{thm}
\label{splittingprinciple}
Suppose $X \in \gsmcat{G}$ has dimension $d$ and $\cat{E}$ is a \glin{G}\ \lfsheaf{r}\ over $X$. Then, there exists an \equi\ \morp\ $f : \tilde{X} \to X$, which is the composition of a series of blow ups along invariant \sm\ centers with dimensions $\leq d-r$, and a \glin{G}\ invertible subsheaf $\L \embed f^* \cat{E}$ over $\tilde{X}$ such that the sequence
$$0 \to \L \to f^* \cat{E} \to (f^* \cat{E})/\L \to 0$$
is exact and $(f^* \cat{E})/\L$ is locally free with rank $r-1$.
\end{thm}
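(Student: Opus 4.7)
The plan is to split off a line subbundle of $f^*\cat{E}$ by realizing $\cat{E}$ as the pullback of the universal quotient on a Grassmannian and performing one main equivariant blow up, with Propositions~\ref{bertiniprop} and~\ref{splittingprinciplelemma} as the key geometric inputs.

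First, I would equivariantly and twistedly immerse $X \embed Gr_r(\cat{N})$ so that $\cat{E}$ is the pullback of the universal rank-$r$ quotient $\cat{Q}$. To get $\cat{E}$ as an equivariant quotient of a trivial bundle, I replace $\cat{E}$ by $\cat{E} \otimes \cat{M}^{\otimes n}$ for a \glin{G}\ ample line bundle $\cat{M}$ and $n \gg 0$ (a line-subbundle splitting of the twist gives one of $\cat{E}$ by untwisting), and use that $\cat{E} \otimes \cat{M}^{\otimes n}$ is generated by finitely many equivariant global sections, giving a classifying map $X \to Gr_r(\cat{N}_1)$; taking the Segre product with an equivariant immersion $X \embed \P(\cat{N}_2)$ coming from the earlier equivariant immersion results produces a twisted immersion into $Gr_r(\cat{N}_1 \otimes \cat{N}_2)$. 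Enlarging $\cat{N}$ by further copies of a single $1$-dimensional character ensures the character-multiplicity hypothesis $\dim_k {\rm H}^0(\pt, \cat{N})_{\gps} > r$ of Proposition~\ref{bertiniprop}.

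Next, Proposition~\ref{bertiniprop} produces a $G$-fixed closed point $z \in Z^G$ with $k(z) \cong k$, i.e.\ a $G$-invariant rank-one subsheaf $\cat{A}|_z \embed \cat{N}$, such that $\gs_{1,r}(\cat{A}|_z, \cat{N})$ is smooth of codimension $r$ in $Gr_r(\cat{N})$ and the intersection $W \defeq X \cap \gs_{1,r}(\cat{A}|_z, \cat{N})$ is $G$-invariant of dimension $d - r$. Since $W$ need not itself be smooth, I apply equivariant canonical resolution of singularities (as used in the proof of Proposition~\ref{equiembedwithsmclosure}) to produce a sequence of blow ups $X_s \to \cdots \to X_1 \to X$ along $G$-invariant smooth centers contained in the successive strict transforms of $W$ --- each of dimension $< d - r$ --- after which the strict transform $\tilde W \subset X_s$ is smooth of dimension $d - r$. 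A final blow up of $X_s$ along $\tilde W$ produces $f : \tilde X \to X$ as a composition of equivariant blow ups along invariant smooth centers of dimension $\leq d - r$, as required.

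By Proposition~\ref{splittingprinciplelemma}, the blow up $\blowup{Gr_r(\cat{N})}{\gs_{1,r}(\cat{A}|_z, \cat{N})}$ is equivariantly isomorphic to $Gr_1(\cat{K})$, on which the tautological line quotient $\cat{H} = \cat{K}/\cat{G}$ naturally sits as a sub-line-bundle of $g^*\cat{Q} = \cat{N}/\cat{G}$ with locally free quotient $\cat{N}/\cat{K} \cong \cat{Q}^Y$ of rank $r - 1$. The total preimage of $\gs_{1,r}(\cat{A}|_z, \cat{N})$ in $\tilde X$ is a Cartier divisor, supported on the exceptional divisors of all the successive blow ups, so the universal property of blow ups provides an equivariant lift $\tilde X \to Gr_1(\cat{K})$ of $\tilde X \to X \embed Gr_r(\cat{N})$. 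Pulling back the short exact sequence $0 \to \cat{H} \to g^*\cat{Q} \to \cat{Q}^Y \to 0$ along this lift yields the desired \glin{G}\ invertible subsheaf $\L \embed f^*\cat{E}$ with locally free rank $r-1$ quotient. The main obstacle will be the book-keeping in this last step: verifying that after the resolution blow ups the pullback of the ideal of $\gs_{1,r}(\cat{A}|_z, \cat{N})$ to $\tilde X$ is genuinely invertible (not just Cartier with extra embedded components), so that the promised equivariant lift to $Gr_1(\cat{K})$ exists and performs the splitting.
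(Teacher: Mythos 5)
Your overall strategy matches the paper's (twist $\cat{E}$ so it becomes the pullback of the universal quotient on a Grassmannian, invoke Proposition~\ref{bertiniprop} to find a good Schubert cycle $\gs$, identify $\blowup{Y}{\gs}$ with $Gr_1(\cat{K})$ via Proposition~\ref{splittingprinciplelemma}, lift $\tilde X$ there by the universal property of blow up, and pull back the tautological line subbundle), but the resolution step differs from the paper in a way that leaves a genuine gap — one you yourself flag but do not close.

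The gap is real: resolving $W = X \cap \gs$ \emph{inside} $X$ and then blowing up the smooth strict transform $\tilde W$ does \emph{not}, in general, make $I_W \cdot \O_{\tilde X}$ invertible, so the promised lift $\tilde X \to Gr_1(\cat{K})$ need not exist. Take $X = \A^3$ and $W = V(x, yz)$, a local complete intersection of codimension $2$ (so the right shape for $W = X\cap\gs$) with one singular point. Blow up the origin; in the chart $x = x_1 y_1$, $z = z_1 y_1$ one has $I_W \cdot \O = y_1(x_1, y_1 z_1)$, with $\tilde W = V(x_1,z_1)$ smooth. Now blow up $\tilde W$; in the chart $x_1 = st$, $z_1 = t$ the pullback of $I_W$ becomes $t y_1 (s, y_1)$, which has an embedded component and is not invertible. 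So the universal property of $\blowup{Y}{\gs} \to Y$ does not apply to the composite $\tilde X \to X \embed Y$, and the argument breaks precisely at the step you called ``book-keeping.'' The paper avoids this by performing the embedded resolution of $X \cup \gs$ \emph{in the ambient Grassmannian} $Y$, not of $W$ in $X$: after that resolution the strict transforms $\tilde X$ and $\stricttransform{\gs}$ are disjoint components of the smooth $\stricttransform{X \cup \gs}$, both meeting the SNC exceptional locus transversally, which is what makes $\tilde X \cap p^{-1}(\gs)$ behave like a Cartier divisor, and the paper takes $\tilde X$ to be just the strict transform of $X$ (no final blow up of $\tilde W$). If you want to keep the argument intrinsic to $X$, the correct tool is equivariant \emph{principalization} of the ideal $I_W \subset \O_X$ (making the total transform of $I_W$ the ideal of a SNC divisor with centers of dimension $\le d-r$), rather than desingularization of $W$ followed by a blow up of $\tilde W$; that would also eliminate your final blow up entirely. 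As written, though, the invertibility of $I_\gs \cdot \O_{\tilde X}$ does not follow, and this is a missing idea, not a bookkeeping detail.
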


\begin{proof}
W\withoutlog, we may assume $X$ is \girred. The result is trivially true if $d=0$, so we may assume $d \geq 1$. By Proposition \ref{equiprojmorp}, we can embed $X$ into $\P(\cat{N}_2)$ for some \glin{G}\ locally free sheaf $\cat{N}_2$ over $\pt$. Denote $\cat{E} \otimes \O_X(m)$ by $\cat{E}(m)$ for simplicity. Assume $X$ is \proj\ first. Let $\cat{N}_1$ be the \glin{G}\ locally free sheaf over $\pt$ corresponding to ${\rm H}^0(X, \cat{E}(m))$. For a sufficiently large $m$, we can assume the induced map $(\cat{N}_1)_X \to \cat{E}(m)$ is surjective and defines an \equi\ immersion $X \embed Gr_r(\cat{N}_1)$, which sends $x$ to $\cat{E}(m)|_x$. Then, we define a twisted \equi\ immersion $i : X \embed Gr_r(\cat{N}) \defeq Y$ as the Segre product of $X \embed Gr_r(\cat{N}_1)$ and $X \embed \P(\cat{N}_2)$. In particular, $\cat{N} \cong \cat{N}_1 \otimes \cat{N}_2$.

By construction, $i^*\cat{Q}^Y \cong \cat{E}(m+1)$ where $\cat{Q}^Y$ is the universal quotient of $Y$. Since dim ${\rm H}^0(X, \cat{E}(m))$ is a polynomial of $m$ with degree $d$, we may assume there is a 1-dimensional character $\gps$ such that the $\gps$ component ${\rm H}^0(X, \cat{E}(m))_{\gps}$ has dimension much larger than $r$. 

If $X$ is not \proj, we can pick $\cat{N}_1$ to be a sheaf corresponding to some finite dimensional $G$-\repn\ inside ${\rm H}^0(X, \cat{E}(m))$ and construct $i : X \embed Y$ in the same manner.

Let $\cat{A}$ be the universal subbundle of $Gr_{N-1}(\cat{N})$. Let $V_1$, $V_2$ and $V$ be the $G$-\repn s corresponding to $\cat{N}_1$, $\cat{N}_2$ and $\cat{N}$ respectively. Then, the dimension of the $\gps$ component of $V_1$ is much larger than $r$ by construction. Thus, there is a 1-dimensional character $\gps'$ such that the dimension of the $\gps'$ component of $V$ is much larger than $r$. Hence, by Proposition \ref{bertiniprop}, there exists a closed point $z$ of the fixed point locus of $Gr_{N-1}(\cat{N})$, with residue field $k(z) \cong k$, such that $\gs_{1,r}(\cat{A}|_z, \cat{N}) \subset Y$ is \sm\ with codimension $r$ and $X \cap \gs_{1,r}(\cat{A}|_z, \cat{N})$ has dimension $d-r$.  

For such $z$, denote $\gs_{1,r}(\cat{A}|_z, \cat{N})$ by $\gs$ for simplicity. Then, we have \sm\ invariant closed subschemes $X$, $\gs$ of $Y$ with dimension $d$ and $\dim Y-r$ respectively. Moreover, $X \cap \gs$ has dimension $d-r$. By applying the embedded desingularization theorem in \cite{embeddeddesingularization} on $X \cup \gs \embed Y$, we obtain the following commutative diagram :
\squarediagramword{\tilde{X}}{Y'}{X}{Y}{i'}{f}{p}{i}
where $p : Y' \to Y$ is the composition of a series of blow ups along \sm\ invariant centers with dimensions bounded above by $\dim \singular{X \cup \gs}$ and $f : \tilde{X} \to X$ is the map corresponding to the strict transform of $X$. Moreover, $\tilde{X} \cup \stricttransform{\gs}$ (denote the strict transform by $\stricttransform{\ }$) is \sm\ and if $E$ is the sum of the exceptional divisors on $Y'$, then $\tilde{X}$, $\stricttransform{\gs}$ and $E$ will intersect transversely. Notice that since $\singular{X \cup \gs} = X \cap \gs$, the dimensions of the \sm\ \inv\ centers are all bounded above by $d-r$. In other words, $f$ is the composition of a series of blow ups along \sm\ invariant centers with dimensions $\leq d-r$.

Observe that $\tilde{X}$ and $\stricttransform{\gs}$ are disjoint because $\tilde{X} \cup \stricttransform{\gs}$ is \sm. In addition, 
$$i'^{-1} \circ p^{-1}(\gs) = i'^{-1} (\stricttransform{\gs} \cup E) = \tilde{X} \cap ( \stricttransform{\gs} \cup E ) = \tilde{X} \cap E,$$
which is an \inv\ divisor on $\tilde{X}$. By the universal property of blow up, there is a unique map $j : \tilde{X} \to \blowup{Y}{\gs}$ such that the following diagram commutes.
\squarediagramword{\tilde{X}}{\blowup{Y}{\gs}}{Y'}{Y}{j}{i'}{q}{p}

Since $z$ is a fixed point with $k(z) \cong k$, the sheaf $\cat{A}|_z$ is a \glin{G}\ \lfsheaf{1}\ over $\pt$ and it is naturally embedded inside $\cat{N}$. Following the construction before. Let $Y_1 \defeq Gr_{r-1}(\cat{N}/\cat{A}|_z)$, $\cat{Q}^{Y_1}$ be its universal quotient, $\cat{K}$ be the kernel of $\cat{N}_{Y_1} \to \cat{Q}^{Y_1}$ and $\tilde{Y} \defeq Gr_1(\cat{K})$. By Proposition \ref{splittingprinciplelemma}, the \equi\ map $g : \tilde{Y} \to Y$ is \equi ly isomorphic to $q : \blowup{Y}{\gs} \to Y$. Moreover, as pointed out in (4.1) in \cite{splittingprinciple}, there is an exact sequence
\begin{eqnarray}
\label{eqn5}
0 \to \L' \defeq \cat{Q}^{\tilde{Y}} \to g^*\cat{Q}^Y \to (\cat{Q}^{Y_1})_{\tilde{Y}} \to 0
\end{eqnarray}
of \glin{G}\ locally free sheaves over $\tilde{Y}$ where $\L'$ is of rank 1.

Consider the following commutative diagram :

\begin{center}
$\begin{CD}
\tilde{X} @>{j}>> \blowup{Y}{\gs} @= \blowup{Y}{\gs} \\
@V{f}VV @V{q}VV @V{\gm}VV \\
X @>{i}>> Y @<{g}<< \tilde{Y}
\end{CD}$
\end{center}

\noindent On one hand, $f^* i^*Q^Y \cong f^* \cat{E}(m+1)$. On the other hand, if we pull back the exact sequence (\ref{eqn5}) by $\gm$ and then $j$. We got an exact sequence of \glin{G}\ locally free sheaves over $\tilde{X}$
$$0 \to j^*\gm^*\L' \to f^* \cat{E}(m+1) \cong j^*\gm^*g^*\cat{Q}^Y \to j^*\gm^*(\cat{Q}^{Y_1})_{\tilde{Y}} \to 0.$$
The result then follows by twisting the whole sequence by $f^*\O_X(-m-1)$.

\end{proof}

\vtab

\vtab

\subsection{Basic structure of \glin{G}\ invertible sheaves}

In this subsection, we will state and prove some results about the structure of \glin{G}\ invertible sheaves over some $X \in \gsmcat{G}$.

\begin{lemma}
\label{kernelofforgetful}
For any $X \in \gsmcat{G}$, we have
$$\kernel{\{ \picard{G}{X} \to \picard{}{X} \}} = \gp_k^*\, \picard{G}{\pt}$$ 
where $\picard{G}{X} \to \picard{}{X}$ is the forgetful map.
\end{lemma}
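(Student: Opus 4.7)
The inclusion $\gp_k^*\,\picard{G}{\pt}\subseteq\ker(\picard{G}{X}\to\picard{}{X})$ is immediate: any one-dimensional $G$-representation $\cat{M}$ pulls back to $\cat{M}\otimes_k\O_X$, whose underlying (non-equivariant) invertible sheaf is $\O_X$ since $\dim_k\cat{M}=1$.

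For the reverse inclusion I would take $\L$ in the kernel, pick a trivialization $\phi:\L\iso\O_X$ of invertible sheaves ignoring the $G$-action, and transport the $G$-linearization $\{\gph_\ga\}$ of $\L$ through $\phi$ into a family of $\O_X$-module automorphisms of $\O_X$. Each such automorphism is multiplication by a unit $u_\ga\in\Gamma(X,\O_X^*)$, and the cocycle identity $\gph_{\ga\gb}=\gph_\gb\circ\gb^*\gph_\ga$ translates into $u_{\ga\gb}=u_\gb\cdot\gb^*u_\ga$; replacing $\phi$ by $h\phi$ for $h\in\Gamma(X,\O_X^*)$ alters $u_\ga$ by the coboundary $(\gb^*h)/h$. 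The goal becomes to exhibit an $h$ for which every resulting $u_\ga$ lies in $k^*$: the map $\ga\mapsto u_\ga$ is then a character $\chi:G\to k^*$, and $\L\cong\gp_k^*k_\chi$ for the corresponding $k_\chi\in\picard{G}{\pt}$.

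To carry this out I would encode the family $\{u_\ga\}$ as a single global unit $u\in\Gamma(G\times X,\O_{G\times X}^*)$ whose cocycle identity reads $u(\ga\gb,x)=u(\gb,x)\cdot u(\ga,\gb x)$, then invoke Rosenlicht's theorem on units of products of smooth connected varieties. For $G$ connected reductive and $X$ connected, Rosenlicht factors $u$ as $c\cdot\alpha(\ga)\cdot\beta(x)$ with $c\in k^*$, $\alpha\in\Gamma(G,\O_G^*)$ and $\beta\in\Gamma(X,\O_X^*)$, and substituting into the cocycle identity forces $\beta$ to be $G$-invariant and constant in $k^*$; combined with the classical identification $\Gamma(G,\O_G^*)=k^*\cdot\hat{G}$, this exhibits $u$ (up to absorbing $c$) as a character of $G$. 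The main obstacle is the case of finite $G$: since $G$ is then disconnected, Rosenlicht's product theorem does not apply directly, and one is reduced to a cohomological comparison between $H^1(G,k^*)$ and $H^1(G,\Gamma(X,\O_X^*))$; here I would pass to a $G$-equivariant smooth projective compactification $X\embed\bar{X}$ (furnished by Proposition \ref{equiembedwithsmclosure}), on which $\Gamma(\bar{X},\O_{\bar{X}}^*)=k^*$ renders the statement immediate, and then argue that the required $G$-linearization on $\L$ extends to $\bar{X}$ (possibly after twisting by a suitable invariant divisor) so that the reduction is legitimate.
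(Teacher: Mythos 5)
Your forward inclusion and the reformulation in terms of a unit cocycle $\{u_\ga\}\subset\Gamma(X,\O_X^*)$ modified by coboundaries are fine. Note, though, that this lemma sits in Section 6, where $(G,k)$ is assumed split, so $G$ is finite abelian and the connected reductive branch of your argument lies outside the scope of the statement. For the finite case your route also differs from the paper's, which never compactifies: the paper argues directly on $X$ that each $u_\ga\in\Gamma(X,\O_X^*)$ satisfies $u_\ga^e=1$ (with $e$ the exponent of $G$) and therefore, by splitness, lies in $k^*$, so that $\ga\mapsto u_\ga$ is a character.

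The finite-$G$ reduction, however, has a genuine gap at the last step. The assertion that, possibly after twisting by an invariant divisor, the $G$-linearization of $\L$ extends to the compactification $\bar X$ and remains in the kernel of the forgetful map there is not justified, and it fails. Take $X=\spec{k[t,t^{-1}]}$ with $G=\Z/2=\{1,\ga\}$ acting by $t\mapsto t^{-1}$; here $\picard{}{X}=0$, and $u_\ga:=t$ is a valid cocycle (since $u_\ga\cdot\ga^*u_\ga=t\cdot t^{-1}=1$) which is not cohomologous to a constant, because a coboundary built from $h=ct^m$ has the form $t^{\pm 2m}$. This $G$-linearized $\O_X$ is the restriction of $\O_{\P^1}(1)$ with its tautological $\Z/2$-linearization, not of any linearization of $\O_{\P^1}$, and twisting by invariant divisors only alters the cocycle by coboundaries and cannot repair this. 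The underlying difficulty is that your reduction quietly needs $\Gamma(X,\O_X^*)=k^*$, which is not among the hypotheses, and without some such condition the stated equality itself fails: for this $X$ the kernel of the forgetful map is the full group $\picard{G}{X}\cong(\Z/2)^2$, while $\gp_k^*\picard{G}{\pt}$ has order $2$.
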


\begin{proof}
Finding the kernel of the forgetful map is the same as asking how many $G$-linearizations can $\O_X$ have. A $G$-linearization of $\O_X$ can be described by a set of isomorphisms 
$$\{\ga^* : \O_X\iso\O_X \ |\ \ga \in G\}.$$ 
Each iso\morp\ $\ga^*$ induces an isomorphism 
$$\ga^* : {\rm H}^0(X, \O_X)\iso{\rm H}^0(X, \O_X)$$ 
which sends 1 to some element $a_{\ga} \in {\rm H}^0(X, \O_X)$. Since $a_{\ga}^e = 1$ ($e$ is the exponent of $G$) and the pair $(G,k)$ is split, $a_{\ga}$ is in $k^*$. In other words, there exists a 1-dimensional character $\gc$ such that $\ga^*(1) = \gc(\ga)$ for all $\ga \in G$. Then, the result follows from the one to one correspondence between the set of 1-dimensional characters and $\picard{G}{\pt}$.
\end{proof}

\begin{prop}
\label{lbstructure}
Suppose $X \in \gsmcat{G}$ is \girred\ and $\L$ is a \glin{G}\ invertible sheaf over $X$. Then, there exist an invariant divisor $D$ on $X$ and a sheaf $\cat{N} \in \picard{G}{\pt}$ such that
$$\L \cong \O_X(D) \otimes \gp_k^* \cat{N}.$$
\end{prop}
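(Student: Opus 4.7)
The plan is to reduce the statement to the non-equivariant Picard group, find a $G$-invariant divisor representative by a Hilbert~90 argument, and then use Lemma~\ref{kernelofforgetful} to absorb the remaining ambiguity into a pullback from $\picard{G}{\pt}$.

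First, I would forget the $G$-linearization on $\L$. Since $X$ is smooth, the underlying invertible sheaf is of the form $\O_X(D_0)$ for some (not necessarily $G$-invariant) Cartier divisor $D_0$ on $X$. The $G$-equivariance of $\L$ implies that the class $[D_0] \in \picard{}{X}$ is $G$-invariant, so for each $\ga \in G$ there exists $f_\ga \in {\rm H}^0(X, \cat{K}_X^*)$ with $\ga^* D_0 - D_0 = \divisor{f_\ga}$. Comparing divisors shows that $\{f_\ga\}$ satisfies the cocycle relation $f_{\ga\gb} = (\ga^* f_\gb)\, f_\ga$ modulo units in ${\rm H}^0(X, \O_X^*)$.

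Next, I would produce $g \in {\rm H}^0(X, \cat{K}_X^*)$ satisfying $f_\ga = g/(\ga^* g)$ modulo units for every $\ga$, via a Hilbert~90 argument. Because $X$ is \girred, we may write $X = G \cdot Y$ for some irreducible component $Y$ with stabilizer $H \subset G$; then ${\rm H}^0(X, \cat{K}_X^*) \cong \prod_{G/H} k(Y)^* \cong \mathrm{Ind}_H^G(k(Y)^*)$ as $G$-modules. Shapiro's lemma reduces $H^1(G, {\rm H}^0(X,\cat{K}_X^*))$ to $H^1(H, k(Y)^*)$, which vanishes by the classical Hilbert~90 when the action of $H$ on $k(Y)$ is faithful. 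If it is not, the kernel $H_0 \subset H$ of the action is contained in the kernel of $G \curvearrowright X$, and a short inflation–restriction argument handles the remaining piece (the extra contribution amounts to a character and will eventually be absorbed into the $\gp_k^*\cat{N}$ term).

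Setting $D \defeq D_0 + \divisor{g}$, a direct computation gives
$$\ga^* D - D = \divisor{f_\ga \cdot \ga^* g / g} = \divisor{\text{unit}} = 0,$$
so $D$ is a $G$-invariant divisor on $X$ and $\O_X(D) \cong \L$ as non-equivariant invertible sheaves. Consequently, $\L \otimes \O_X(-D)$ lies in the kernel of the forgetful map $\picard{G}{X} \to \picard{}{X}$, which by Lemma~\ref{kernelofforgetful} equals $\gp_k^*\picard{G}{\pt}$, yielding $\cat{N} \in \picard{G}{\pt}$ with $\L \cong \O_X(D) \otimes \gp_k^*\cat{N}$ as required. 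The main obstacle is the Hilbert~90 step, especially in dealing with the subgroup of $G$ acting trivially on $X$; the hypothesis that $(G,k)$ is split (so every finite-order character of the trivially-acting subgroup is realized over $k$) is what makes this extra contribution land in $\picard{G}{\pt}$ and thus cause no trouble.
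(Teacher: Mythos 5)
Your approach is correct but genuinely different from the paper's. The paper works geometrically: it finds an invariant dense open $U \subset X$ on which $G$ acts freely, descends $\L|_U$ to an invertible sheaf on the smooth quotient $U/G$, represents that sheaf by a divisor $D'$, pulls back and takes the closure in $X$ to obtain an invariant divisor $D''$ with $\L \otimes \O_X(-D'')$ trivial on $U$, and then handles the boundary $X - U$ by a diagram chase built on Lemma~\ref{kernelofforgetful}. You work cohomologically: fix a non-equivariant isomorphism $\L \cong \O_X(D_0)$, extract a $1$-cocycle $\{f_\ga\}$, trivialize it by Shapiro's lemma together with Hilbert~90, and then correct the linearization by Lemma~\ref{kernelofforgetful}. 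Both routes are valid. The paper's version leans on the existence of the free geometric quotient (Mumford, \S7); yours replaces this with Galois cohomology of the function field, which is more elementary in that it avoids the quotient machinery and isolates exactly where the split hypothesis enters.

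Two spots in your sketch deserve tightening. First, you define the $f_\ga$ only up to units, so the cocycle a priori lives in $Z^1(G, {\rm H}^0(X,\cat{K}^*)/{\rm H}^0(X,\O^*))$, while Hilbert~90/Shapiro controls $H^1(G, {\rm H}^0(X,\cat{K}^*))$; these are related by a long exact sequence but not equal. The clean fix is to use the given $G$-linearization $\{\gph_\ga\}$ of $\L$, conjugated through your fixed isomorphism $\L \cong \O_X(D_0)$, to produce a \emph{genuine} $1$-cocycle $\{f_\ga\}$ in $Z^1(G, {\rm H}^0(X,\cat{K}^*))$; then your argument applies directly. Second, the inflation--restriction treatment of the non-faithful case is only gestured at, and it is simpler to mirror the paper's ``WLOG faithful'' reduction explicitly: let $G_0$ be the kernel of $G \to \mathrm{Aut}(X)$; the restriction of the linearization to $G_0$ is a character of $G_0$ valued in ${\rm H}^0(X,\O_X^*)$, which (having finite order and using the split hypothesis) actually lands in $k^*$; extend it to a character $\hat\gc$ of $G$ (possible since $G$ is finite abelian and $(G,k)$ is split); twist $\L$ by $\gp_k^*\hat\gc^{-1}$ so the $G_0$-action becomes trivial, descend to a $(G/G_0)$-linearized sheaf, and run the faithful case. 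That twist is exactly the ``extra contribution'' you were absorbing into $\gp_k^*\cat{N}$, and making it a preliminary reduction avoids having to analyze the inflation--restriction sequence at all.
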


\begin{proof}
W\withoutlog, we may assume the action on $X$ is faithful. Let $U$ be a non-empty, invariant open subscheme of $X$ such that the action on $U$ is free. By Theorem 1 in section 7 of \cite{Mu2}, the geometric quotient $U/G$ exists as a variety over $k$ and $\gp : U \to U/G$ is an \etale\ \morp. By picking a smaller $U$, we may further assume $U/G$ to be \sm. Let $D_1, \ldots, D_n$ be some invariant divisors on $X$ such that $D_i \subset X - U$ for all $i$ and the codimension of $X - U - \cup_i\, D_i$ in $X$ is at least 2.

\vtab

Claim 1 : The kernel of the restriction map $\picard{G}{X} \to \picard{G}{U}$ is generated by $\{\O_X(D_i)\}$ and $\picard{G}{\pt}$.

Consider the following commutative diagram :

\begin{center}
$\begin{CD}
     @.   \picard{G}{\pt} @.                 @. \\
@.        @V{\gp_k^*}VV                 @. @.\\
\Z^n @>{a}>> \picard{G}{X}   @>{c}>> \picard{G}{U} @. \\
@V{\id}VV      @V{b}VV                 @V{b}VV               @. \\
\Z^n @>{a}>> \picard{}{X}    @>{c}>> \picard{}{U}  @>>> 0
\end{CD}$
\end{center}

\noindent where $a$ sends ``1'' in the $i$-th position to $\O_X(D_i)$, $b$ is the forgetful map and $c$ is the restriction map.

Clearly, the third row is exact. Moreover, by Lemma \ref{kernelofforgetful}, the second column is also exact. Then, the result follows from some diagram chasing. \claimend

\vtab

Since the action on $U$ is free, according to Proposition 2 in section 7 in \cite{Mu2}, there is a one-to-one correspondence between $\picard{G}{U}$ and $\picard{}{U/G}$. In particular, $\gp_* (\L|_U)^G$ is an invertible sheaf over $U/G$. Since $U/G$ is \sm, there is a divisor $D'$ on $U/G$ such that $\gp_* (\L|_U)^G \cong \O_{U/G}(D').$ Thus, we have
\begin{eqnarray}
\L|_U &\cong& \gp^* (\gp_* (\L|_U)^G) \nonumber\\
&\cong& \gp^* \O_{U/G}(D') \nonumber\\
&\cong& \O_U(\gp^* D') \nonumber\\
&& \text{($\gp : U \to U/G$ is \etale).} \nonumber
\end{eqnarray}
Consider the sheaf $\O_X(D'') \in \picard{G}{X}$ where $D''$ is the invariant divisor on $X$ given by the closure of $\gp^* D'$ in $X$. Hence, $\L \otimes \O_X(-D'')$ will be in the kernel of the restriction map $\picard{G}{X} \to \picard{G}{U}.$ By claim 1, there are integers $\{m_i\}$ and a sheaf $\cat{N} \in \picard{G}{\pt}$ such that
$$\L \otimes \O_X(-D'') \cong \O_X(\sum_i m_i D_i) \otimes \gp_k^* \cat{N}.$$
The result then follows by defining $D \defeq D'' + \sum_i m_i D_i$.
\end{proof}

\vtab

\subsection{Reduction of towers}
\label{reductionoftowersubsection}

Next, we will define the notion of \qadtower\ and \adtower\ and prove we can reduce an \qadtower\ into something much simplier. This subsection is an analog of section 7 in \cite{LePa}.

\begin{defn}
{\rm
Suppose $Y$ is an object in $\gsmcat{G}$. A \morp\ $\P \to Y$ in $\gsmcat{G}$ is called a \qadtower\ over $Y$ with length $n$ if it can be factored into
$$\P = \P_n \to \P_{n-1} \to \cdots \to \P_1 \to \P_0 = Y$$  
such that, for all $0 \leq i \leq n-1$, $\P_{i+1} = \P(\cat{E}_i)$ where $\cat{E}_i$ is the direct sum of sheaves which is either the pull-back of a \glin{G}\ locally free sheaves over $Y$, or the pull back of $\O_{\P_j}(m)$ for some integer $m$ and $1 \leq j \leq i$. 
}\end{defn}

In this subsection, for an object $Y \in \gsmcat{G}$, an invariant divisor $D$ on $Y$ and a \glin{G}\ locally free sheaf $\cat{E}$ over $Y$, we will denote $\cat{E} \otimes \O_Y(D)$ by $\cat{E}(D)$ for simplicity. Moreover, if $\P \to Y$ is a \qadtower, then we will denote the pull-back of $\cat{E}$ as a sheaf over $\P_i$ by $\cat{E}$ if there is no confusion.

\begin{defn}
{\rm
Suppose $Y$ is an object in $\gsmcat{G}$. We will call a sheaf $\L \in \picard{G}{Y}$ admissible if there exist invariant \sm\ divisors $D_1, \ldots, D_k$ on $Y$ and a sheaf $\cat{N} \in \picard{G}{\pt}$ such that

\begin{center}
$\L \cong \O_Y(\sum_{i = 1}^k m_i D_i) \otimes \gp_k^* \cat{N}$
\end{center}

\noindent for some integers $\{m_i\}$. Denote the subgroup of $\picard{G}{Y}$ generated by \adinvsh ves by $\apicard{G}{Y}$. Also, define the group of \adinvsh ves over $\P_i$ by
$$\apicard{G}{\P_i} \defeq \apicard{G}{Y} + \Z \O_{\P_1}(1) + \cdots + \Z \O_{\P_i}(1).$$ 
Then, a \qadtower\ $\P \to Y$ is called admissible if all sheaves involved in the construction are \adinvsh ves.
}\end{defn}

\begin{rmk}
\label{invshimplyqadtower}
\rm{
If all the \glin{G}\ locally free sheaves involved in the construction of a tower $\P \to Y$ are invertible, then it is a \qadtower.

\begin{proof}
Since
$$\picard{}{\P_i} = \picard{}{Y} + \Z \O_{\P_1}(1) + \cdots + \Z \O_{\P_i}(1)$$
and, by Lemma \ref{kernelofforgetful}, the kernel of the forgetful map $\picard{G}{\P_i} \to \picard{}{\P_i}$ is given by $\picard{G}{\pt}$, we have
$$\picard{G}{\P_i} = \picard{G}{Y} + \Z \O_{\P_1}(1) + \cdots + \Z \O_{\P_i}(1).$$
Then, $\P \to Y$ is a \qadtower\ by definition.
\end{proof}
}
\end{rmk}

\begin{lemma}
\label{divoftwisting}
Suppose $Y \in \gsmcat{G}$ is \girred\ and $\L$ is a sheaf in $\picard{G}{Y}$. Moreover, $\cat{E}$ is the direct sum of a finite number of invertible sheaves in $\picard{G}{Y}$ and $D$ is an invariant \sm\ divisor on $Y$. Let 
\begin{eqnarray}
A & \defeq & \P(\cat{E} \oplus \L \oplus \L(D))|_D, \nonumber\\
B & \defeq & \P(\cat{E} \oplus \L), \nonumber\\
C & \defeq & \P(\cat{E} \oplus \L(D)), \nonumber\\
\P & \defeq & \P(\cat{E} \oplus \L \oplus \L(D)). \nonumber
\end{eqnarray}
Then $A$, $B$, $C$ are invariant \sm\ divisors on $\P$, the sum of them is a \rsncd, $A + B \sim C$ and
\begin{eqnarray}
\O_{\P}(A) & \cong & \O_{\P}(\gp^* D) \nonumber\\
\O_{\P}(B) & \cong & \dual{(\gp^* \L(D))} \otimes \O_{\P}(1) \nonumber\\
\O_{\P}(C) & \cong & \dual{(\gp^* \L)} \otimes \O_{\P}(1) \nonumber
\end{eqnarray}
where $\gp$ is the projection $\P \to Y$.
\end{lemma}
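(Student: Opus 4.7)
The plan is to verify the four assertions in sequence, making sure at each step that all identifications are $G$-equivariant, which comes for free from the equivariance of the data $(\cat{E}, \L, D)$ and the naturality of the constructions. First, $A = \gp^{-1}(D)$ is the preimage under the smooth equivariant projection $\gp : \P \to Y$ of the smooth invariant divisor $D$, hence is itself smooth and invariant, while $B$ and $C$ are the $G$-invariant projective subbundles associated to the $G$-equivariant locally free quotients $\cat{E} \oplus \L \oplus \L(D) \twoheadrightarrow \cat{E} \oplus \L$ and $\cat{E} \oplus \L \oplus \L(D) \twoheadrightarrow \cat{E} \oplus \L(D)$, so they are smooth of codimension one in $\P$.

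For the normal crossing assertion I would work locally. Near a point of $D$, choose a local parameter $x$ cutting out $D$ on $Y$ and trivialize $\cat{E} \oplus \L \oplus \L(D)$, letting $[s_1 : \cdots : s_r : t : u]$ be homogeneous fiber coordinates with $t$ from $\L$ and $u$ from $\L(D)$. In each affine chart $\{s_i \neq 0\}$ (and analogously for $\{t \neq 0\}$, $\{u \neq 0\}$), the divisors $A$, $B$, $C$ are cut out by $x = 0$, $u = 0$, $t = 0$ respectively; as $x$ is a coordinate on the base while $t$ and $u$ are independent fiber coordinates, these extend to a regular system of parameters, confirming that $A + B + C$ is a reduced strict normal crossing divisor on $\P$.

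Next I would obtain the line bundle identifications. The formula $\O_\P(A) \cong \O_\P(\gp^*D)$ is immediate since $A = \gp^*D$ as Cartier divisors. For $B$ and $C$, using the paper's Grothendieck convention $\P(\cat{F}) = {\rm Proj}\,{\rm Sym}\,\cat{F}$ and writing $\cat{F} \defeq \cat{E} \oplus \L \oplus \L(D)$, the tautological equivariant quotient $\gp^*\cat{F} \twoheadrightarrow \O_\P(1)$ composed with the direct-summand embedding $\gp^*\L(D) \embed \gp^*\cat{F}$ gives an equivariant global section of $\dual{(\gp^*\L(D))} \otimes \O_\P(1)$ whose vanishing locus is exactly $\P(\cat{F}/\L(D)) = B$; hence $\O_\P(B) \cong \dual{(\gp^*\L(D))} \otimes \O_\P(1)$, and the same argument with $\L$ in place of $\L(D)$ yields $\O_\P(C) \cong \dual{(\gp^*\L)} \otimes \O_\P(1)$.

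The linear equivalence $A + B \sim C$ then follows by computing
$$\O_\P(A) \otimes \O_\P(B) \cong \gp^*\O_Y(D) \otimes \dual{(\gp^*\L(D))} \otimes \O_\P(1) \cong \dual{(\gp^*\L)} \otimes \O_\P(1) \cong \O_\P(C)$$
as $G$-linearized invertible sheaves (the middle step uses $\gp^*\O_Y(D) \otimes \dual{(\gp^*\O_Y(D))} \cong \O_\P$), and then invoking part (3) of the first proposition in Section 3.1 to convert this equivariant isomorphism of sheaves into the equivariant linear equivalence of divisors. The main obstacle is essentially a bookkeeping one rather than a substantive mathematical difficulty: one must track the Grothendieck convention for $\P(\cat{F})$ carefully so as not to introduce a spurious dual, and verify at each step that the natural isomorphisms genuinely respect the $G$-linearizations, which they do because every construction in play (pullback, tautological quotient, direct-summand projection, tensor product) is functorial in $G$-equivariant data.
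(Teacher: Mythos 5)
Your proof is correct and follows essentially the same route as the paper's: identify $B$ and $C$ as zero loci of the tautological sections obtained by composing the direct-summand inclusions into $\gp^*\cat{F}$ with the canonical surjection onto $\O_\P(1)$, read off the line bundles, and then use the invertible-sheaf computation together with the earlier proposition on $G$-invariant divisors to get $A+B\sim C$. The one minor divergence is that the paper simply cites Levine--Pandharipande (section 7.2) for the smoothness and normal-crossing assertion, whereas you supply a direct local-coordinate verification; both are fine.
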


\begin{proof}
The fact that $A$, $B$, $C$ are \sm\ divisors on $\P$ and the sum of them is a \rsncd\ was stated in section 7.2 in \cite{LePa}. They are obviously invariant. Since $\gp$ is \sm, $O_{\P}(A) \cong \O_{\P}(\gp^*D).$ Moreover, as in the proof of Lemma 7.1 in \cite{LePa}, $\P(\cat{E} \oplus \L) \subset \P(\cat{E} \oplus \L \oplus \L(D))$ is given by the vanishing of the composition of \equi\ \morp s
$$\gp^* \L(D) \to \gp^*(\cat{E} \oplus \L \oplus \L(D)) \to \O_{\P}(1).$$
Hence, 
$$O_{\P}(B) = O_{\P}(\P(\cat{E} \oplus \L)) \cong \dual{(\gp^* \L(D))} \otimes \O_{\P}(1).$$ 
Similarly, 
$$O_{\P}(C) = O_{\P}(\P(\cat{E} \oplus \L(D))) \cong \dual{(\gp^* \L)} \otimes \O_{\P}(1).$$
Then, we have
\begin{eqnarray}
\O_{\P}(A) \otimes \O_{\P}(B) & \cong & \O_{\P}(\gp^*D) \otimes \dual{(\gp^* \L(D))} \otimes \O_{\P}(1) \nonumber\\
& \cong & \gp^*\O_Y(D) \otimes \gp^* \dual{\L(D)} \otimes \O_{\P}(1) \nonumber\\
& \cong & \gp^* \dual{\L} \otimes \O_{\P}(1) \nonumber\\
& \cong & \O_{\P}(C) \nonumber
\end{eqnarray}
By remark \ref{divisopicrmk}, that implies $A + B \sim C$.
\end{proof}

\begin{lemma}
\label{towertwisting}
Suppose $Y$ is \girred\ and $D$ is an invariant \sm\ divisor on $Y$. If $\P \to Y$ is an \adtower\ with length $n$ and $\P_{i+1} = \P(\oplus_{j=1}^r \L_j)$, then there exist an \adtower\ $\P' \to Y$ of length $n$ and \qadtower s $Q_0$, $Q_1$, $Q_2$, $Q_3 \to D$ such that
$$\P' = \P_n' \to \cdots \to \P_{i+1}' \to \P_i \to \cdots \to \P_0 = Y$$
where $\P_{i+1}' = \P( (\oplus_{j=1}^{r-1} \L_j) \oplus \L_r(D))$ and we have the following equality in $\cobg{G}{}{Y}$ :
$$[\P' \to Y] - [\P \to Y] = [Q_0 \to D \to Y] - [Q_1 \to D \to Y] + [Q_2 \to D \to Y] - [Q_3 \to D \to Y].$$
\end{lemma}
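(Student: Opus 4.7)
The plan is to realize both towers $\P$ and $\P'$ as restrictions of a single larger tower built over the ambient bundle from Lemma \ref{divoftwisting}, and then apply $GDPR(2,1)$ there. Set $\cat{E} \defeq \oplus_{j=1}^{r-1} \L_j$, $\L \defeq \L_r$ and let $\tilde{\P}_{i+1} \defeq \P(\cat{E} \oplus \L \oplus \L(D))$ with projection $\gp : \tilde{\P}_{i+1} \to \P_i$. Lemma \ref{divoftwisting} then produces three smooth invariant divisors $A \defeq \tilde{\P}_{i+1}|_D$, $B \defeq \P_{i+1}$, $C \defeq \P'_{i+1}$ on $\tilde{\P}_{i+1}$ whose sum is an \rsncd\ and which satisfy $A + B \sim C$, with the invertible sheaves $\O(A)$, $\O(B)$, $\O(C)$ given by the explicit formulas in that lemma (a pullback from $\P_i$ twisted by at most one power of $\O_{\tilde{\P}_{i+1}}(1)$).

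Next I would extend the top part $\P_n \to \cdots \to \P_{i+2} \to \P_{i+1}$ of the original \adtower\ to a tower $\tilde{\P}_n \to \cdots \to \tilde{\P}_{i+2} \to \tilde{\P}_{i+1}$ by replacing each defining sheaf on $\P_j$ ($j \geq i+1$) by its pullback along the inclusion at the level of the corresponding admissible-Picard decomposition: every defining sheaf lies in $\apicard{G}{\P_i} + \sum_{j>i} \Z\, \O_{\P_j}(1)$, and each $\O_{\P_{i+1}}(1)$ becomes $\O_{\tilde{\P}_{i+1}}(1)$. The crucial point is that under the closed immersions $B,\,C \hookrightarrow \tilde{\P}_{i+1}$ coming from the quotients $\cat{E}\oplus\L\oplus\L(D) \twoheadrightarrow \cat{E}\oplus\L$ and $\twoheadrightarrow \cat{E}\oplus\L(D)$, one has $\O_{\tilde{\P}_{i+1}}(1)|_B \cong \O_B(1)$ and $\O_{\tilde{\P}_{i+1}}(1)|_C \cong \O_C(1)$. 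Consequently the pullback of $\tilde{\P}_n$ to $B$ reproduces $\P$, and its pullback to $C$ reproduces $\P'$.

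Now set $\tilde{A}, \tilde{B}, \tilde{C}$ to be the pullbacks of $A,B,C$ to $\tilde{\P}_n$; by flatness of each projective-bundle step, these remain smooth invariant divisors, their sum is an \rsncd, and $\tilde{A} + \tilde{B} \sim \tilde{C}$. Applying $GDPR(2,1)$ to the projective morphism $\tilde{\P}_n \to Y$ with this data (as spelled out in Remark \ref{gdprremark}) yields, after rearrangement,
\begin{align*}
[\P' \to Y] - [\P \to Y] \;=\;& [\tilde{A} \to Y] - [\P(\O \oplus \O(\tilde{A}))|_{\tilde{A}\cap\tilde{B}} \to Y] \\
& + [\P(\O \oplus \O(1)) \to \P(\O(-\tilde{B})\oplus\O(-\tilde{C}))|_{\tilde{A}\cap\tilde{B}\cap\tilde{C}} \to Y] \\
& - [\P(\O \oplus \O(-\tilde{B})\oplus\O(-\tilde{C}))|_{\tilde{A}\cap\tilde{B}\cap\tilde{C}} \to Y],
\end{align*}
since $[\tilde{B} \to Y] = [\P \to Y]$ and $[\tilde{C} \to Y] = [\P' \to Y]$. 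This defines the four terms $Q_0, Q_1, Q_2, Q_3$.

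Finally I would verify each $Q_s$ is a \qadtower\ over $D$. Every $Q_s$ sits over an intersection of $\tilde{A}, \tilde{B}, \tilde{C}$, which in turn lies over the corresponding intersection inside $\tilde{\P}_{i+1}$; since all three of these intersections factor through $A = \tilde{\P}_{i+1}|_D$, the natural morphisms $Q_s \to D$ are well defined. Each $Q_s$ is a sequence of projective bundles of direct sums of invertible sheaves, namely pullbacks of $\O(\tilde{A}), \O(\tilde{B}), \O(\tilde{C})$, which by Lemma \ref{divoftwisting} are themselves built from pullbacks from $\P_i$ and from $\O_{\tilde{\P}_{i+1}}(1)$. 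By Remark \ref{invshimplyqadtower} any tower built entirely from invertible sheaves is a \qadtower, completing the proof. The main technical obstacle will be cleanly matching the extended tower $\tilde{\P}_n$ with the original $\P$ and $\P'$ under the restrictions to $B$ and $C$; everything else reduces to a direct application of $GDPR(2,1)$.
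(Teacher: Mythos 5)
Your proposal is correct and follows essentially the same route as the paper's own proof: you form the ambient tower $\tilde{\P}_{i+1}=\P(\cat{E}\oplus\L\oplus\L(D))$ (the paper's $\hat{\P}_{i+1}$), identify $A$, $B$, $C$ via Lemma~\ref{divoftwisting}, lift the higher levels of the tower, pull back $A+B\sim C$ to the top, apply $GDPR(2,1)$, and invoke Remark~\ref{invshimplyqadtower} for quasi-admissibility of the $Q_s$. The only genuine verification you leave partially implicit — that the lifted tower restricts to $\P$ over $B$ and to $\P'$ over $C$ — is correctly pinned to the standard fact $\O_{\tilde{\P}_{i+1}}(1)|_B\cong\O_B(1)$ and $\O_{\tilde{\P}_{i+1}}(1)|_C\cong\O_C(1)$, which is exactly the observation the paper also uses.
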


\begin{proof}
Let $\hat{\P}_{i+1} \defeq \P((\oplus^r_{j=1} \L_j) \oplus \L_r(D))$. Then, we have $\hat{\P}_{i+1} \to \P_i$ and $\P_{i+1} \embed \hat{\P}_{i+1}$.

We will first construct an \adtower
$$\hat{\P} = \hat{\P}_n \to \cdots \to \hat{\P}_{i+1} \to \P_i \to \cdots \to \P_0 = Y$$ 
such that $\P_k = \P_{i+1} \x_{\hat{\P}_{i+1}} \hat{\P}_k$ for all $k > i$. Since 
$$\apicard{G}{\P_{i+1}} = \apicard{G}{Y} + \Z \O_{\P_1}(1) + \cdots + \Z \O_{\P_{i+1}}(1)$$
$$\apicard{G}{\hat{\P}_{i+1}} = \apicard{G}{Y} + \Z \O_{\P_1}(1) + \cdots + \Z \O_{\P_i}(1) + \Z \O_{\hat{\P}_{i+1}}(1)$$
and the restriction map $\picard{G}{\hat{\P}_{i+1}} \to \picard{G}{\P_{i+1}}$ sends $\O_{\hat{\P}_{i+1}}(1)$ to $\O_{\P_{i+1}}(1)$, if we write $\P_{i+2} = \P(\oplus \L'_{j'})$ for some $\L'_{j'} \in \apicard{G}{\P_{i+1}}$, then we can define $\hat{\P}_{i+2} \defeq \P(\oplus \L'_{j'})$ by considering $\L'_{j'}$ as in $\apicard{G}{\hat{\P}_{i+1}}$. Similarly, for higher levels, $\apicard{G}{\hat{\P}_k} \to \apicard{G}{\P_k}$ is surjective and $\hat{\P}_{k+1}$ can be constructed.

Next, we will construct the \adtower\ $\P' \to Y$ and \qadtower\ $Q_0 \to Y$. As in the statement, $\P_{i+1}' \defeq \P( (\oplus^{r-1}_{j=1} \L_j) \oplus \L_r(D))$, which can be naturally embedded inside $\hat{\P}_{i+1}$. Then, we define $\P_k' \defeq \P_{i+1}' \x_{\hat{\P}_{i+1}} \hat{\P}_k$ for all $k > i+1$, which is clearly admissible. The \qadtower\ $Q_0$ are defined by pull-back, i.e. ${(Q_0)}_j \defeq D \x_Y \hat{\P}_j$ for all $0 \leq j \leq n$.

By Lemma \ref{divoftwisting}, $\P_{i+1}$, $\P_{i+1}'$ and $(Q_0)_{i+1}$ are all invariant \sm\ divisors on $\hat{\P}_{i+1}$, the sum of them is a \rsncd\ and $(Q_0)_{i+1} + \P_{i+1} \sim \P'_{i+1}$. Pull them back to the top level, we have $Q_0 + \P \sim \P'$ as invariant \sm\  divisors on $\hat{\P}$. By $GDPR(2,1)$, we have
\begin{eqnarray}
\label{eqn12}
[\P' \embed \hat{\P}] &=& [Q_0 \embed \hat{\P}] + [\P \embed \hat{\P}] \\
&& -\ [(Q_0 \cap \P) \x_{\hat{\P}} P^1 \to \hat{\P}] \nonumber\\
&& +\ [(Q_0 \cap \P \cap \P') \x_{\hat{\P}} P^2 \to \hat{\P}] \nonumber\\
&& -\ [(Q_0 \cap \P \cap \P') \x_{\hat{\P}} P^3 \to \hat{\P}]  \nonumber
\end{eqnarray}
as elements in $\cobg{G}{}{\hat{\P}}$, where 
\begin{eqnarray}
P^1 &\defeq& \P(\O \oplus \O(Q_0)) \nonumber\\
P^2 &\defeq& \P(\O \oplus \O(1)) \to \P(\O(-\P) \oplus \O(-\P')) \nonumber\\
P^3 &\defeq& \P(\O \oplus \O(-\P) \oplus \O(-\P')). \nonumber
\end{eqnarray}
We then denote $(Q_0 \cap \P) \x_{\hat{\P}} P^1$, $(Q_0 \cap \P \cap \P') \x_{\hat{\P}} P^2$ and $(Q_0 \cap \P \cap \P') \x_{\hat{\P}} P^3$ by $Q_1$, $Q_2$ and $Q_3$ respectively. They all clearly lie over $D$. Since the towers $Q_1$, $Q_2$, $Q_3 \to D$ are all constructed by \glin{G}\ invertible sheaves, by Remark \ref{invshimplyqadtower}, they are all \qadtower s. Hence, the result follows by pushing down equality (\ref{eqn12}) to $\cobg{G}{}{Y}$.
\end{proof}

\begin{rmk}
\label{onlyleveltwisted}
\rm{
Notice that $\P'_{j} = \P_{j}$ for all $j < i+1$. For $j > i+1$, if we identify the \adinvsh ves over $\P_{j-1}$ that comes from $Y$ to those over $\P'_{j-1}$ and also the sheaves of the form $\O(m)$ for some integer $m$, then $\P'_{j}$ is defined by the exact same set of \adinvsh ves as $\P_{j}$. 
}
\end{rmk}

\begin{lemma}
\label{splittingisenough}
Suppose $Y$ is an object in $\gsmcat{G}$ and $\cat{E}$ is a \glin{G}\ \lfsheaf{r}\ over $Y$. Furthermore, there exists an exact sequence of \glin{G}\ sheaves over $Y$
$$0 \to \L \to \cat{E} \to \cat{E}/\L \to 0$$
such that $\L$ and $\cat{E}/L$ are locally free of rank $1$, $r-1$ respectively. Then, 
$$\P(\cat{E}) \sim \P((\cat{E}/\L) \oplus \L)$$ 
as invariant \sm\ divisors on $\P(\cat{E} \oplus \L)$ and they intersect transversely.
\end{lemma}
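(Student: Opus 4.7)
The plan is to exhibit both $\P(\cat{E})$ and $\P((\cat{E}/\L)\oplus\L)$ as zero loci of $G$-equivariant global sections of a single $G$-linearized invertible sheaf on $\P \defeq \P(\cat{E}\oplus\L)$, mirroring the computation already carried out in Lemma \ref{divoftwisting}. Let $\iota:\L\hookrightarrow\cat{E}$ denote the given inclusion, and let $\gp:\P\to Y$ be the structure map.

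First, I would construct two $G$-equivariant surjections of locally free sheaves starting from $\cat{E}\oplus\L$, each with a rank-one kernel isomorphic to $\L$: the projection $p_1:(e,l)\mapsto e$ with kernel $\{(0,l)\}\cong\L$ and quotient $\cat{E}$, and the map $p_2:(e,l)\mapsto(\bar e,l)$ with kernel $\{(\iota(l),0)\}\cong\L$ and quotient $(\cat{E}/\L)\oplus\L$. Under the Grothendieck convention used in Lemma \ref{divoftwisting}, these produce $G$-equivariant closed immersions $\P(\cat{E})\embed\P$ and $\P((\cat{E}/\L)\oplus\L)\embed\P$, cut out respectively by the vanishing of the two compositions $\gp^*\L\to\gp^*(\cat{E}\oplus\L)\twoheadrightarrow\O_\P(1)$ obtained from the two inclusions of the kernels. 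Both are smooth invariant divisors because each is a projective subbundle of $\P\to Y$ of the expected codimension one.

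Next, the very same line-bundle calculation as in Lemma \ref{divoftwisting} shows that each of the two divisors has associated $G$-linearized invertible sheaf $\dual{\gp^*\L}\otimes\O_\P(1)$, so by Remark \ref{divisopicrmk} the two divisors represent the same class in $\picard{G}{\P}$ and are therefore equivariantly linearly equivalent. The two sections $s_1,s_2\in H^0(\P,\dual{\gp^*\L}\otimes\O_\P(1))^G$ cutting them out correspond, via the identification $H^0(\P,\dual{\gp^*\L}\otimes\O_\P(1))\cong\mathrm{Hom}(\L,\cat{E}\oplus\L)$, to the maps $l\mapsto(0,l)$ and $l\mapsto(\iota(l),0)$.

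Finally, transversality is local on $Y$, so I would trivialize $\cat{E}$ compatibly with $\iota$, writing $\cat{E}\cong\O^{r}$ with $\iota(l)=(l,0,\ldots,0)$. Then $\cat{E}\oplus\L\cong\O^{r+1}$, and on a fiber with homogeneous coordinates $[x_0:x_1:\cdots:x_{r-1}:y]$ the sections $s_1,s_2$ become $y$ and $x_0$, whose vanishing loci are two distinct coordinate hyperplanes in $\P^r$ intersecting transversely along $\P^{r-2}$ (which globally recovers $\P(\cat{E}/\L)$). I do not anticipate any serious obstacle: the entire argument is parallel to Lemma \ref{divoftwisting}, and the only mild point to keep track of is that the two inclusions $\L\to\cat{E}\oplus\L$ are manifestly $G$-equivariant because $\iota$ is, so all constructions descend to the equivariant setting without extra work.
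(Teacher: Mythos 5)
Your proof is correct, and it takes a genuinely different (and somewhat more formal) route than the paper's. The paper proves linear equivalence by a hands-on local computation: over a trivializing cover it writes $A=\{f_i=0\}$, $B=\{\gph(f_i)=0\}$, observes that the ratio $g = f_i/\gph(f_i)$ is independent of the chart because the transition functions of $\L$ occur identically in numerator and denominator, checks $\ga\cdot g = g$, and concludes $A - B = \divisor{g}$ with $g$ a $G$-invariant rational function. You instead identify both divisors as zero loci of $G$-equivariant sections of the \emph{same} $G$-linearized invertible sheaf $\dual{\gp^*\L}\otimes\O_\P(1)$, so that $\O_\P(A)\cong\O_\P(B)$ in $\picard{G}{\P}$, and then invoke the previously established injectivity of divisor classes into $\picard{G}{\P}$ (part (3) of the basic proposition, i.e.\ the content of Remark \ref{divisopicrmk}) to conclude $A\sim B$. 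Conceptually this is the same geometry packaged at a higher level: the paper's $g$ is exactly the ratio of your two sections $s_1,s_2$. Your version buys a cleaner conceptual argument; the paper's is more self-contained and does not route through the Picard-group correspondence. One small caveat worth being careful about: you cite ``the very same line-bundle calculation as in Lemma \ref{divoftwisting},'' but that lemma only treats projective subbundles arising from \emph{direct-sum decompositions}, whereas your second divisor $\P((\cat{E}/\L)\oplus\L)$ arises from the (possibly non-split) surjection $p_2$. The needed formula $\O_\P(\P(\cat{G}))\cong\dual{\gp^*\cat{M}}\otimes\O_\P(1)$ for $0\to\cat{M}\to\cat{F}\to\cat{G}\to 0$ with $\cat{M}$ of rank one does hold in general by the same reasoning (the composition $\gp^*\cat{M}\to\gp^*\cat{F}\to\O_\P(1)$ is still the defining equation), but it is a mild extension, not literally ``the same'' computation, so you should state it as such rather than simply citing the lemma. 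Your transversality check via local trivialization compatible with $\iota$ is fine and is actually more explicit than the paper, which simply records the intersection as $\P(\cat{E}/\L)$ and treats transversality as evident.
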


\begin{proof}
W\withoutlog, we may assume $Y$ is \girred. $\P(\cat{E})$ and $\P((\cat{E}/\L) \oplus \L)$ are obviously invariant \sm\ divisors on $\P(\cat{E} \oplus \L)$ and their intersection is $\P(\cat{E}/\L)$. So, we only need to prove they are \equi ly linearly equivalent.

Ignore the $G$-action first. Locally, over an affine open subscheme $U_i$, we have $\cat{E} \cong R e_{i1} \oplus \cdots \oplus R e_{ir}$ where $R \defeq \O_Y(U_i)$. Similarly, $\L \cong R f_i$. Let $\gph : \L \embed \cat{E}$ be the embedding of sheaves as in the statement. For simplicity, denote $\P(\cat{E} \oplus \L)$ by $\P$, $\P(\cat{E})$ by $A$ and $\P((\cat{E}/\L) \oplus \L)$ by $B$. Locally, $\P = {\rm Proj}\ R[e_{i1}, \ldots, e_{ir}, f_i]$, $A$ is defined by $f_i = 0$ and $B$ is defined by $\gph(f_i) = 0$. So, it is enough to show $g \defeq f_i / \gph(f_i) \in \cat{K}(\P)^*$ is independent of $i$, namely, $f_i / \gph(f_i) = f_j / \gph(f_j)$.

On the intersection $U_i \cap U_j$, we can consider the ratio $f_i / f_j \defeq \gs_{ij} \in \O(U_i \cap U_j)^*$, which defines the transition function of $\L$. On the other hand, since $\gph : \L \embed \cat{E}$ is a \morp\ between sheaves, we can also consider the ratio $\gph(f_i) / \gph(f_j)$ and it should be $\gs_{ij}$ too. That means 
$$\frac{f_i}{f_j} = \gs_{ij} = \frac{\gph(f_i)}{\gph(f_j)}.$$
Hence, $g$ is independent of $i$. Finally,
$$\ga \cdot g = \ga \cdot \frac{f_i}{\gph(f_i)} = \frac{\ga \cdot f_i}{\ga \cdot \gph(f_i)} = \frac{\ga \cdot f_i}{\gph(\ga \cdot f_i)} = g.$$
\end{proof}

The following result is an analog of Lemma 5.1 in \cite{LePa}.

\begin{lemma}
\label{blowuprelation}
If $X$ is in $\gsmcat{G}$ and $Z$ is an invariant \sm\ closed subscheme of $X$, then, as elements in $\cobg{G}{}{X}$,
$$[\blowup{X}{Z} \to X] - [\id_X] = - [\P_1 \to Z \embed X] + [\P_2 \to Z \embed X]$$
for some \proj\ \morp s $\P_1$, $\P_2 \to Z$ in $\gsmcat{G}$.
\end{lemma}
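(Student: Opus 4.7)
The plan is to mimic Lemma 5.1 of \cite{LePa} via the equivariant deformation to the normal cone, together with Proposition \ref{equidprholds}.

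First, I would set $W \defeq \blowup{X \x \P^1}{Z \x \{0\}}$, where $\P^1$ carries the trivial $G$-action. Since $Z \x \{0\}$ is an invariant smooth closed subscheme of the smooth $G$-variety $X \x \P^1$ and blow-ups along invariant smooth centers are canonical, $W$ inherits a natural $G$-action, is smooth and equidimensional, and the blow-down $W \to X \x \P^1$ is $G$-equivariant and projective. Quasi-projectivity of $W$ follows because blow-ups of quasi-projective schemes along closed subschemes are quasi-projective. Composing with $\gp_2$ gives the projection $\gp : W \to \P^1$, and composing with $\gp_1$ we obtain a $G$-equivariant projective morphism $\gph : W \to X \x \P^1$.

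Next, I would analyze the fibers of $\gp$. For any closed point $\gx \in \P^1 - \{0\}$, the fiber $W_{\gx}$ is disjoint from the exceptional locus and $\gph$ identifies it $G$-equivariantly with $X$, so $W_{\gx}$ is a smooth invariant divisor on $W$ with $[W_{\gx} \to X] = [\id_X]$ via $\gp_1 \circ \gph$. The fiber $W_0$ decomposes as $A \cup B$ where $A \defeq \blowup{X}{Z}$ is the strict transform of $X \x \{0\}$ (a smooth invariant divisor on $W$) and $B \defeq \P(\nbundle{Z}{X} \oplus \O)$ is the exceptional divisor (also smooth, invariant, with its natural $G$-equivariant projective bundle structure over $Z$). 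These meet transversely along $A \cap B \cong \P(\nbundle{Z}{X})$, a smooth invariant subvariety that maps equivariantly and projectively to $Z$.

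I would then apply Proposition \ref{equidprholds} to the setup $\gph : W \to X \x \P^1$ with $\gx$ and $0$ as above. This yields
\begin{eqnarray}
[W_{\gx} \to X] &=& [A \to X] + [B \to X] \nonumber\\
&& -\ [\P(\O \oplus \O_{A \cap B}(A)) \to A \cap B \embed W \to X] \nonumber
\end{eqnarray}
in $\cobg{G}{}{X}$. Rearranging, and noting $[W_{\gx} \to X] = [\id_X]$, $[A \to X] = [\blowup{X}{Z} \to X]$, while both $B \to X$ and $\P(\O \oplus \O_{A \cap B}(A)) \to X$ factor through $Z$, we obtain the desired equation with
$$\P_1 \defeq \P(\nbundle{Z}{X} \oplus \O) \longrightarrow Z, \qquad \P_2 \defeq \P(\O \oplus \O_{A \cap B}(A)) \longrightarrow \P(\nbundle{Z}{X}) \longrightarrow Z,$$
both projective $G$-equivariant morphisms in $\gsmcat{G}$.

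The main subtlety is verifying the equivariant deformation to the normal cone actually yields a GDPR setup: one must check that the $G$-actions on the strict transform and on the exceptional divisor are the expected ones (the strict transform picks up the $G$-action on $X$, and the exceptional divisor picks up the $G$-action induced on $\nbundle{Z}{X}$), and that $A$, $B$ meet transversely with invariant scheme structures. This is standard since the whole construction is canonical, but it is the point that requires care in the equivariant setting.
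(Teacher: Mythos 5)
Your proof is correct and follows essentially the same route as the paper: blow up $Z \times \{0\}$ in $X \times \P^1$, observe the special fiber decomposes as the strict transform of $X$ union the exceptional divisor, and apply Proposition \ref{equidprholds} to the resulting equivariant double point degeneration. One small caution worth flagging (harmless for the lemma as stated, since only the existence of some $\P_1,\P_2$ is claimed, but relevant if you later quote the explicit form as in Remark \ref{blowuprelationrmk}): with the ${\rm Proj}\,{\rm Sym}$ convention used in the paper, the exceptional divisor of $\blowup{(X\times\P^1)}{Z\times 0}$ is $\P(\dual{\nbundle{Z}{X}}\oplus\O)$, not $\P(\nbundle{Z}{X}\oplus\O)$, and one should also record the wlog reduction to $G$-irreducible $X$ so that $W$ is equidimensional as required by Proposition \ref{equidprholds}.
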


\begin{proof}
W\withoutlog, $X$ is \girred. Let $Y \defeq \blowup{(X \x \P^1)}{Z \x 0}$ (trivial action on $\P^1$). Consider the \proj\ map $Y \to X \x \P^1$. For any closed point $\gx \neq 0$ in $\P^1$, we have $[Y_{\gx} \to X] = [\id_X]$, where $Y_{\gx}$ denotes the fiber of $Y$ over $\gx$ as before. Consider the fiber of $Y$ over 0, we have $Y_0 = A \cup B$ where $A \defeq \P(\O_Z \oplus \dual{\nbundle{Z}{X}})$ (the exceptional divisor) and $B \defeq \blowup{X}{Z}$ (the strict transform of $X$). In addition, $A \cap B = \P(\dual{\nbundle{Z}{X}})$. Hence, $Y_{\gx}$, $A$, $B$ are all invariant \sm\ divisors on $Y$ and $A,B$ intersect transversely. In other words, $Y \to X \x \P^1$ defines an \equi\ DPR. By Proposition \ref{equidprholds}, we have
\begin{eqnarray}
&&[\id_X] \nonumber\\
&=& [\P(\O_Z \oplus \dual{\nbundle{Z}{X}}) \to X] + [\blowup{X}{Z} \to X] - [\P(\O_{A \cap B} \oplus \O_{A \cap B}(A)) \to A \cap B \to X] \nonumber\\
&=& [\P(\O_Z \oplus \dual{\nbundle{Z}{X}}) \to Z \embed X] + [\blowup{X}{Z} \to X] - [\P(\O_{A \cap B} \oplus \O_{A \cap B}(A)) \to Z \embed X]. \nonumber
\end{eqnarray}
Then, the result follows from defining $\P_1 \defeq \P(\O_Z \oplus \dual{\nbundle{Z}{X}})$ and $\P_2 \defeq \P(\O_{A \cap B} \oplus \O_{A \cap B}(A))$ and the fact that $A \cap B = \P(\dual{\nbundle{Z}{X}})$ is \proj\ over $Z$.
\end{proof}

\begin{rmk}
\label{blowuprelationrmk}
\rm{
We can express $\P_2$ in a different way. Consider the following commutative diagram :
\vtab
\squarediagram{\P(\O_{A \cap B} \oplus \O_{A \cap B}(A))}{\P(\dual{\nbundle{Z}{X}}) = A \cap B}{\P(\O_A \oplus \O_A(A))}{\P(\O_Z \oplus \dual{\nbundle{Z}{X}}) = A}
\vtab
Since $A$ is the exceptional divisor of the blowup $Y \to X \x \P^1$, we have $\O_A(A) \cong \O_A(-1)$. Thus, $\O_{A \cap B}(A) \cong \O_{A \cap B}(-1)$. Hence,
$$\P_1 = \P(\O \oplus \dual{\nbundle{Z}{X}}) \to Z$$
$$\P_2 = \P(\O \oplus \O(-1)) \to \P(\dual{\nbundle{Z}{X}}) \to Z.$$
}
\end{rmk}

\begin{defn}
{\rm
Define $\cobg{G}{}{\pt}'$ to be the abelian subgroup of $\cobg{G}{}{\pt}$ generated by \adtower s over $\pt$.
}\end{defn}

\begin{rmks}
\rm{
If $\P \to \pt$ and $\P' \to \pt$ are two \adtower s, then the product $\P \x \P' \to \P \to \pt$ is also an \adtower\ over $\pt$. In other words, $\cobg{G}{}{\pt}'$ is a subring of $\cobg{G}{}{\pt}$.
}
\end{rmks}

\begin{prop}
\label{towerreduction}
For any \qadtower\ $\P \to Y$ where $Y$ is \girred, there exist elements $a_i \in \cobg{G}{}{\pt}'$ and maps $Y'_i \to Y$ in $\gsmcat{G}$ with $\dim Y'_i \leq \dim Y$ such that
$$[\P \to Y] = \sum_i a_i\, [Y'_i \to Y]$$
as elements in $\cobg{G}{}{Y}$.
\end{prop}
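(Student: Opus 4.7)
The plan is a double induction, the outer on $d = \dim Y$ and the inner a simplification of the tower $\P = \P_n \to \cdots \to \P_0 = Y$ (with $\P_{i+1} = \P(\cat{E}_i)$) carried out in two stages. The base case $n = 0$ gives $[\P \to Y] = 1 \cdot [\id_Y]$ with $1 = [\pt \to \pt] \in \cobg{G}{}{\pt}'$; the outer induction hypothesis asserts the result for all quasi-admissible towers over any \girred\ base of dimension strictly less than $d$.

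Stage 1 removes higher-rank summands. Suppose some $\cat{E}_i$ contains a summand $\cat{F}$ of rank $\geq 2$ pulled back from $Y$. Applying the splitting principle (Theorem \ref{splittingprinciple}) to $\cat{F}$ on $Y$ yields $f : \tilde{Y} \to Y$, a composition of blow-ups along invariant smooth centers $Z_1, \ldots, Z_s \subset Y$ of dimension $< d$, after which $f^*\cat{F}$ admits an invertible subsheaf $\cat{L}$ with locally free quotient. Because each $\P_j \to Y$ is smooth, $\tilde{\P} \defeq \P \x_Y \tilde{Y} \to \P$ is the iterated blow-up of $\P$ along the preimages of the $Z_l$. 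Iterating Lemma \ref{blowuprelation} and pushing down to $Y$ yields
$$[\P \to Y] \;=\; [\tilde{\P} \to Y] \;+\; \sum_l (\pm 1)\,[T_l \to Z_l \embed Y],$$
where each $T_l \to Z_l$ is quasi-admissible of length at most two (its sheaves being $\O$ and suitable twists of $\gp^*\dual{\nbundle{Z_l}{Y}}$, which by base change are pulled back from $Z_l$). Since $\dim Z_l < d$, the outer induction reduces each correction term to the required form. On $\tilde{\P}$, Lemma \ref{splittingisenough} combined with $GDPR(1,1)$ replaces $\P(\cat{E}_i|_{\tilde{\P}})$ by $\P((\cat{E}_i|_{\tilde{\P}})/\cat{L} \oplus \cat{L})$ with no additional correction, strictly decreasing the rank of the non-invertible part. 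Iterating Stage 1 reduces to the situation where every $\cat{E}_i$ is a direct sum of invertible sheaves.

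Stage 2 removes the divisor parts of the invertible summands. By Lemma \ref{lbstructure} every remaining line-bundle summand pulled back from $Y$ has the form $\O_Y(D) \otimes \gp_k^*\cat{N}$; after additional blow-ups on $Y$ (to embed-resolve $D$, with corrections again governed by the outer induction) we may assume $D = \sum_\alpha n_\alpha D_\alpha$ with $D_\alpha$ invariant smooth. Each application of Lemma \ref{towertwisting} with one $D_\alpha$ twists the chosen summand by $\pm D_\alpha$ modulo correction terms of the form $[Q \to D_\alpha \embed Y]$ with $Q \to D_\alpha$ quasi-admissible; since $\dim D_\alpha < d$, the outer induction absorbs these. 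Iterating replaces every line-bundle summand pulled back from $Y$ by $\gp_k^*\cat{N}$.

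After both stages, every $\cat{E}_i$ is a direct sum of sheaves of the form $\gp_k^*\cat{N}$ with $\cat{N} \in \picard{G}{\pt}$ or $\O_{\P_j}(m)$, so the resulting tower is the base-change $Q \x Y \to Y$ of an admissible tower $Q \to \pt$. The external-product structure (axioms \textbf{(D4)}, \textbf{(A8)}) then gives $[Q \x Y \to Y] = [Q] \cdot [\id_Y]$ with $[Q] \in \cobg{G}{}{\pt}'$, completing the reduction. The principal obstacle is the Stage 1 bookkeeping: one must verify that each correction tower $T_l \to Z_l$ produced by iterating Lemma \ref{blowuprelation} on the smooth map $\P \to Y$ is genuinely quasi-admissible, which relies on identifying the normal bundle $\nbundle{\gp^{-1}(Z_l)}{\P}$ as the pull-back $\gp^*\nbundle{Z_l}{Y}$ via base change along a smooth morphism.
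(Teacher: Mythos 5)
Your overall strategy matches the paper's: outer induction on $\dim Y$, a first stage using the splitting principle (Theorem \ref{splittingprinciple}) and the blow-up relation (Lemma \ref{blowuprelation}) to reduce all construction sheaves to rank one, a second stage using Lemma \ref{towertwisting} to normalize the invertible summands, and a conclusion expressing the result as a base change $Q \times Y$ of a tower over $\pt$. The key lemmas you invoke are exactly the ones the paper uses (including the normal-bundle base-change identification $\nbundle{\gp^{-1}(Z_l)}{\P} \cong \gp^*\nbundle{Z_l}{Y}$, which is precisely what the paper's Claim 1 in Step 1 relies on).

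There is, however, a genuine inaccuracy in your Stage 1. You claim the correction towers $T_l \to Z_l$ have ``length at most two (its sheaves being $\O$ and suitable twists of $\gp^*\dual{\nbundle{Z_l}{Y}}$).'' This is not the structure of the corrections. Applying Lemma \ref{blowuprelation} to $\P$ with center $\gp^{-1}(Z_l) = \P|_{Z_l}$ produces correction terms of the form $[Q_i \to \P|_{Z_l} \to Z_l \to Y]$, where $\P|_{Z_l} \to Z_l$ is the restriction of the \emph{full} original tower $\P \to Y$ (hence has the same length $n$), and $Q_i$ is one or two additional levels built from $\O$, $\O(-1)$, and $\dual{\nbundle{\P|_{Z_l}}{\P}}$ on top of $\P|_{Z_l}$ (see Remark \ref{blowuprelationrmk}). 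So the tower over $Z_l$ has length $n+1$ or $n+2$, not $\leq 2$. To check quasi-admissibility you must verify not only that the top one or two levels are built from pull-backs of $\dual{\nbundle{Z_l}{Y}}$ (which your normal-bundle base-change argument addresses), but also that the bottom $n$ levels $\P|_{Z_l} \to Z_l$ are quasi-admissible (which is true because they are the restriction of $\P \to Y$, but your write-up does not mention this, apparently because you do not realize those levels are present). The substance of the claim --- quasi-admissible towers over bases of dimension $< d$, absorbed by the outer induction --- is correct, so this is a fixable gap, but the description of $T_l$ would not survive a careful check.

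A second, more minor, point of divergence: in Stage 2 you aim to strip off the divisor part of each summand individually, whereas the paper in Steps 3--4 twists so that all summands in a given level share a \emph{common} factor $\L'$ from $Y$ and then factors it out using the \equi\ projective-bundle isomorphism $\P(\L' \otimes \gp_k^*\cat{E}) \cong \P(\gp_k^*\cat{E})$ of Proposition \ref{isoprojbundle}. Both routes should reach the same conclusion, but the paper's approach is more careful about the level-by-level iteration --- in particular it re-runs embedded desingularization (the analog of your ``embed-resolve'' step) between levels, and it explicitly handles the $\O_{\P_j}(m)$ twists that appear in the higher levels via the intermediate projective bundles $\P(\cat{E}_j)$ over $\pt$. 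Your phrase ``Iterating replaces every line-bundle summand pulled back from $Y$ by $\gp_k^*\cat{N}$'' compresses this iteration without addressing those subtleties. You should also note, as the paper does, that correction terms $[Q \to D \embed Y]$ must be decomposed over the $G$-components of $D$ before the outer induction (which assumes a \girred\ base) can be applied.
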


\begin{proof}
We will prove the statement by induction on dimension of $Y$. We will handle the induction step first. Suppose $\dim Y \geq 1$. Let $\cobg{G}{}{Y}'$ be the subgroup of $\cobg{G}{}{Y}$ generated by elements of the form $[\P \to Y' \to Y]$ where $Y' \in \gsmcat{G}$ is \girred\ with dimension less than $\dim Y$ and $\P \to Y'$ is a \qadtower. So, elements in $\cobg{G}{}{Y}'$ will be handled by the induction assumption. Let $\P \to Y$ be a \qadtower. If the length of the tower $n$ is 0, then we are done. Suppose $n \geq 1$. 

\vtab

Step 1 : Reduction to a \qadtower\ constructed only by \glin{G}\ invertible sheaves.

Define the integer ``total rank'' as the sum of ranks of all sheaves involved in all levels. Also, define the integer ``number of sheaves'' as the number of sheaves in all levels. For example, the tower $\P(\cat{E}_1) \to \P(\cat{E}_2 \oplus \cat{E}_3) \to Y$ has total rank = rank $\cat{E}_1$ + rank $\cat{E}_2$ + rank $\cat{E}_3$ and number of sheaves 3.

Assume that, for the tower $\P \to Y$, number of sheaves is less than total rank. Then, there exists a sheaf $\cat{E}$, which is used in the construction of some level $\P_i$, has rank greater than 1. Notice that $\cat{E}$ has to come from $Y$ because the tower is quasi-admissible. Let $\P_i \defeq \P((\oplus_j \cat{E}_j) \oplus \cat{E})$. By Theorem \ref{splittingprinciple}, there exists a map $\gp : \tilde{Y} \to Y$, which is the composition of a series of blow ups along invariant \sm\ centers with dimensions less than $\dim Y$, and a \glin{G}\ invertible sheaf $\L$ over $\tilde{Y}$ such that the sequence of \glin{G}\ sheaves
$$0 \to \L \to \gp^* \cat{E} \to (\gp^* \cat{E})/\L \to 0$$
is exact and $(\gp^* \cat{E})/\L$ is locally free with rank $r-1$. 

Define the tower $\tilde{\P} \to \tilde{Y}$ by pulling back each level, namely $\tilde{\P}_i = \P_i \x_Y \tilde{Y}$. Then, the sheaves in the construction at each level of $\tilde{\P}$ is exactly the same as $\P$ if we identify $\gp^* \cat{M}$ and $\cat{M}$. Thus, $\tilde{\P} \to \tilde{Y}$ is a \qadtower\ with the same total rank and number of sheaves. 

\vtab

Claim 1 : $\gp_*[\tilde{\P} \to \tilde{Y}] - [\P \to Y]$ lies in $\cobg{G}{}{Y}'.$

First, assume $\gp$ is given by a single blow up along some invariant \sm\ center $Z \subset Y$. Observe that $\tilde{\P}$ can be considered as the blow up of $\P$ along $\P|_Z$. By Lemma \ref{blowuprelation}, we obtain the equality
$$[\tilde{\P} \to \P] - [\id_{\P}] = - [Q_1 \to \P|_Z \to \P] + [Q_2 \to \P|_Z \to \P].$$
Pushing them down to $Y$, we get
$$\gp_*[\tilde{\P} \to \tilde{Y}] - [\P \to Y] = - [Q_1 \to \P|_Z \to Z \to Y] + [Q_2 \to \P|_Z \to Z \to Y].$$
Notice that the tower $\P|_Z \to Z$ is trivially quasi-admissible and, by Remark \ref{blowuprelationrmk}, the sheaves involved in the construction of $Q_1$, $Q_2 \to \P|_Z$ are either of the form $\O(m)$ or $\dual{\nbundle{\P|_Z}{\P}} \cong \dual{\nbundle{Z}{Y}}$ in our notation. That implies $Q_1 \to Z$ and $Q_2 \to Z$ are both \qadtower s. The result then follows from the fact that $\dim Z < \dim X$. The general case with more blow ups follows easily from the fact that $\gp_*\, \cobg{G}{}{\tilde{Y}}' \subset \cobg{G}{}{Y}'$. \claimend

\vtab

Hence, w\withoutlog, we may assume the splitting 
$$0 \to \L \to \cat{E} \to \cat{E}/\L \to 0$$
happens in the original tower $\P \to Y$. Next, we will construct towers $\hat{\P}$, $\P' \to Y$ in a similar manner as in the proof of Lemma \ref{towertwisting}. Define $$\hat{\P}_i \defeq \P((\oplus_j \cat{E}_j) \oplus \cat{E} \oplus \L) \text{\tab and \tab} \P_i' \defeq \P((\oplus_j \cat{E}_j) \oplus (\cat{E}/\L) \oplus \L).$$ Then, by Lemma \ref{splittingisenough}, $\P_i$ and $\P_i'$ are \equi ly linearly equivalent invariant \sm\ divisors on $\hat{\P}_i$ and they intersect transversely. For each level $k > i$, we construct $\hat{\P}_k$ by the same set of sheaves used in $\P_k$ to form a tower 
$$\hat{\P} \defeq \hat{\P}_n \to \cdots \to \hat{\P}_i \to \P_{i-1} \to \cdots \to Y.$$ 
Also, for each level $k > i$, we construct $\P_k'$ by fiber product, namely, $\P_k' \defeq \hat{\P}_k \x_{\hat{\P}_i} \P_i'$ to form another tower 
$$\P' \defeq \P_n' \to \cdots \to \P_i' \to \P_{i-1} \to \cdots \to Y.$$

In this case, $\P \sim \P'$ as invariant \sm\ divisors on $\hat{\P}$ and they intersect transversely. By $GDPR(1,1)$, we have $[\P \embed \hat{\P}] = [\P' \embed \hat{\P}]$ and hence, 
$$[\P \to Y] = [\P' \to Y]$$
as elements in $\cobg{G}{}{Y}$. Observe that, for each level $k \neq i$, the set of sheaves involved in the construction of $\P_k'$ is exactly the same as those of $\P_k$ in our notation. For level $i$, by definition, $\P_i' = \P((\oplus_j \cat{E}_j) \oplus (\cat{E}/\L) \oplus \L)$. Hence, $\P' \to Y$ is a \qadtower\ with the same total rank as $\P \to Y$ and one higher number of sheaves. By repeating this procedure, we will obtain the highest number of sheaves possible : the number of sheaves is equal to the total rank. That means all sheaves involved in the construction of the \qadtower\ are \glin{G}\ invertible sheaves. 

\vtab

Step 2 : Reduction to an \adtower.

By step 1, we may assume $\P \to Y$ is a \qadtower\ constructed by \glin{G}\ invertible sheaves only. For each $\L \in \picard{G}{Y}$ used in the construction, there is an invariant divisor $D_{\L}$ on $Y$ and a sheaf $\cat{N}_{\L} \in \picard{G}{\pt}$ such that
$$\L \cong \O_Y(D_{\L}) \otimes \gp_k^* \cat{N}_{\L}$$
by Proposition \ref{lbstructure}. We can then represent such a (Weil) divisor as a linear combination of prime divisors $\{D_{\L,k}\}$ on $Y$. Let
$$\{D_1, \ldots, D_N\} \defeq \{D_{\L,k} \text{ where $\L$ is used in the construction of $\P \to Y$} \}.$$
Consider $\cup_{k = 1}^N D_k$ as a reduced closed subscheme of $Y$. Apply the embedded desingularization Theorem in \cite{embeddeddesingularization} on $\cup_{k = 1}^N D_k \embed Y$, we obtain a map $\gp : \tilde{Y} \to Y$, which is the composition of a series of blow ups along invariant \sm\ centers such that $\stricttransform{\cup_{k = 1}^N D_k}$ is \sm. Let $\{E_l\}$ be the set of exceptional divisors. Since $\stricttransform{\cup_{k = 1}^N D_k} = \cup_{k = 1}^N \stricttransform{D_k}$ is \sm, the strict transforms $\{\stricttransform{D_k}\}$ are disjoint invariant \sm\ divisors on $\tilde{Y}$. Moreover, we have
$$\gp^* \O_Y(D_k) \cong \O_{\tilde{Y}}(\stricttransform{D_k} + \sum_l m_l E_l)$$
for some integers $m_l$ and all invariant divisors involved are \sm. Hence, $\gp^* \L$ are all admissible and the tower $\tilde{\P} \to \tilde{X}$ defined by $\tilde{\P}_i \defeq \P_i \x_X \tilde{X}$ becomes admissible. By claim 1, we reduce to the case when $\P \to Y$ is an \adtower.

\vtab

Step 3 : Reduction to an \adtower\ with $\P_1 = \P(\gp_k^* \cat{E}_1)$ where $\cat{E}_1$ is a \glin{G}\ locally free sheaf over $\pt$.

By step 2, we may assume $\P \to Y$ is an \adtower. Consider the first level $\P_1 = \P(\oplus^r_{j=1} \L_j)$. Since the sheaves $\L_j$ are admissible, we have $\L_j \cong \O_Y(\sum_k \pm D_{jk}) \otimes \gp_k^* \cat{N}_j$ for some invariant \sm\ divisors $D_{jk}$ on $Y$ and some $\cat{N}_j \in \picard{G}{\pt}$. By lemma \ref{towertwisting}, we can twist $\P \to Y$ to $\P' \to Y$ so that $\P_1' = \P( (\oplus_{j \neq p}\ \L_j) \oplus \L_p(D))$ and the difference will be given by \qadtower s $Q \to D$. Notice that
$$[Q \to D \embed Y] = \sum_i\ [Q_i \to D_i \embed Y]$$
where $\{D_i\}$ are the $G$-components of $D$ and $Q_i \defeq Q \x_D D_i$ defines a \qadtower\ over $D_i$. So, $[\P \to Y] - [\P' \to Y]$ lie in $\cobg{G}{}{Y}'$. Hence, by twisting each $\L_j$ by suitable choices of $D$, we may assume there exists a sheaf $\L' \in \apicard{G}{Y}$ such that
$$\L_j \cong \L' \otimes \gp_k^* \cat{N}_j$$
for all $j$. In other words, 
$$\P_1 = \P(\L' \otimes \gp_k^* \cat{E}_1)$$
where $\cat{E}_1 \defeq \oplus_j\ \cat{N}_j$ is a \glin{G}\ locally free sheaf over $\pt$. Notice that $\P(\L' \otimes \gp_k^*\cat{E}_1)$ is isomorphic to $\P(\gp_k^*\cat{E}_1)$ as \equi\ \proj\ bundles over $Y$. If we define $\P_1' \defeq \P(\gp_k^*\cat{E}_1)$ and $\P_i' \defeq \P_i \x_{\P_1} \P_1'$ for all $2 \leq i \leq n$, then we obtain a tower $\P' \to Y$ which is isomorphic to $\P \to Y$. Since all the sheaves involved in the construction of $\P'$ are invertible, by Remark \ref{invshimplyqadtower}, $\P' \to Y$ is a \qadtower. By applying step 2 on $\P' \to Y$, we obtain an \adtower\ $\tilde{\P} \to \tilde{Y}$. Then, the result follows from claim 1 and the fact that 
$$\tilde{\P}_1 = \P_1' \x_Y \tilde{Y} \cong \P(\gp_k^* \cat{E}_1).$$

\vtab

Step 4 : Finish the induction step.

By step 3, it is enough to prove the statement in the case when $\P \to Y$ is an \adtower\ with $\P_1 = \P(\gp_k^* \cat{E}_1)$. Consider the second level $\P_2 = \P(\oplus^r_{j=1} \L_j)$. Since the sheaves $\L_j$ are admissible and 
$$\apicard{G}{\P_1} = \apicard{G}{Y} + \Z \O_{\P_1}(1),$$
by the same trick as in step 3, we can twist $\P \to Y$ until there exists a sheaf $\L' \in \apicard{G}{Y}$ such that
$$\L_j \cong \L' \otimes \O_{\P_1}(m_j) \otimes \gp_k^* \cat{N}_j$$
for all $j$. By Remark \ref{onlyleveltwisted}, the twisting will not affect $\P_1$. By defining 
$$\cat{E}_2 \defeq \oplus_j (\O_{\P(\cat{E}_1)}(m_j) \otimes \cat{N}_j)$$
and $p_1 : \P_1 = \P(\gp_k^* \cat{E}_1) \to \P(\cat{E}_1)$, we obtain an isomorphism 
$$\P_2 = \P(\L' \otimes p_1^* \cat{E}_2) \cong \P(p_1^* \cat{E}_2).$$
Simiarly, we get an isomorphic \qadtower\ $\P' \to Y$ and then, an \adtower\ $\tilde{P} \to \tilde{Y}$ by blow ups. Thus, we have the following commutative diagram :
\begin{center}
$\begin{CD}
\P(\cat{E}_2) @<<< \P(p_1^* \cat{E}_2) = \P'_2 @<<< \P(q_1^* p_1^* \cat{E}_2) = \tilde{\P}_2 \\
@VVV @VVV @VVV \\
\P(\cat{E}_1) @<{p_1}<< \P(\gp_k^* \cat{E}_1) = \P'_1 @<{q_1}<< \P(\gp_k^* \cat{E}_1) = \tilde{\P}_1 \\
@VVV @VVV @VVV \\
\pt @<<< Y @<<< \tilde{Y}
\end{CD}$
\end{center}
\noindent That handles the second level. By repeating the process until level $n$, we obtain an \adtower
$$Q = Q_n = \P(\cat{E}_n) \to \cdots \to \P(\cat{E}_1) \to Q_0 = \pt$$ 
such that 
$$[\P \to Y] = [Y \x Q \to Y] = [Q \to \pt][Y \to Y].$$ 

\vtab

Step 5 : $\dim Y = 0$ case.

In this case, any \glin{G}\ locally free sheaf $\cat{E}$ over $Y$ splits into the direct sum of \glin{G}\ invertible sheaves (by direct calculation or Theorem \ref{splittingprinciple}). Moreover, if $\L$ is a sheaf in $\picard{G}{Y}$, then, by Proposition \ref{lbstructure}, we have $\L \cong \O_Y(D) \otimes \gp_k^* \cat{N} \cong \gp_k^* \cat{N}.$ That means 
$$\P_1 = \P(\oplus \L_j) = \P(\oplus \gp_k^* \cat{N}_j) = Q_1 \x Y$$
where $Q_1 \defeq \P(\oplus \cat{N}_j)$. The same argument applies to higher levels. Hence, 
$$[\P \to Y] = [Q \to \pt] [Y \to Y]$$ 
with \adtower\ $Q \to \pt$.
\end{proof}

\vtab

\subsection{Generators for $\cobg{G}{}{\pt}$}

We are now in position to prove the generators Theorem. First of all, we will prove that any two birational objects $Y$, $Y' \in \gsmcat{G}$ agree in some truncated theory.

\begin{defn}
{\rm
For any $X \in \gsmcat{G}$, we define the abelian group $\overline{\cobglow{G}{}{X}}$ as the quotient of $\cobglow{G}{}{X}$ by the subgroup generated by elements of the form $[Z][Y \to X]$ where $[Z]$ is in $\cobglow{G}{\geq 1}{\pt}'$ and $[Y \to X]$ is in $\cobglow{G}{}{X}$, i.e.
$$\overline{\cobglow{G}{}{X}} \defeq \cobglow{G}{}{X}\, /\, \cobglow{G}{\geq 1}{\pt}'\, \cobglow{G}{}{X}.$$
}\end{defn}

\begin{rmk}
\label{additivefgl}
\rm{
$\overline{\cob{G}}$ can be considered as a theory on $\gsmcat{G}$ with \proj\ push-forward, \sm\ pull-back, Chern class operator (for \nice\ invertible sheaves) and external product. In this truncated theory, the \fgl\ becomes additive, i.e. 
$$c(\L \otimes \cat{M}) = c(\L) + c(\cat{M}).$$

\begin{proof}
In section 7.3 in \cite{LePa}, the abelian group $\go(\pt)'$ is defined as the subgroup of $\go(\pt)$ generated by \adtower s (Without group action, the notions of ``\adtower'' in \cite{LePa} and in our paper are equivalent). By Corollary 7.5 and equation 8.1 in \cite{LePa}, the coefficients $a_{ij}$ used in the \fgl\ in the theory $\go$ are all inside $\go_{\geq 1}(\pt)'$. Then, the result follows from the fact that the \fgl\ in the theory $\go$ and the \fgl\ in our theory $\cob{G}$ share the same set of coefficients $a_{ij}$ if we consider $\go(\pt) \embed \cobg{G}{}{\pt}$.
\end{proof}
}
\end{rmk}

\begin{prop}
\label{birationalimplyequal}
Suppose $Y$, $Y' \in \gsmcat{G}$ are both \proj\ and \girred. If they are \equi ly birational, then $[Y] = [Y']$ as elements in $\overline{\cobg{G}{}{\pt}}$.
\end{prop}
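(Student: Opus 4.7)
My plan is to combine the equivariant weak factorization theorem with the blowup formula of Lemma~\ref{blowuprelation} and the tower reduction of Proposition~\ref{towerreduction}, using a dimension-counting argument in the truncated theory $\overline{\cob{G}}$.

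First I would invoke the equivariant weak factorization theorem (available for $G$ reductive connected or finite, $\char k=0$, by work of Abramovich--Karu--Matsuki--W{\l}odarczyk and its equivariant refinements): any $G$-equivariant birational map between smooth projective $G$-irreducible varieties $Y$ and $Y'$ factors as a finite chain
$Y = Y_0 \dashrightarrow Y_1 \dashrightarrow \cdots \dashrightarrow Y_n = Y'$,
where each arrow (or its inverse) is an equivariant blowup along a smooth $G$-invariant closed subscheme, and all $Y_i$ are smooth projective $G$-varieties. Since $[\,\cdot\,]$ in $\overline{\cobg{G}{}{\pt}}$ is well-defined on such objects, it suffices to show that for any smooth $G$-variety $X \in \gsmcat{G}$ and any smooth invariant proper closed subscheme $Z \subsetneq X$ one has $[\blowup{X}{Z}] = [X]$ in $\overline{\cobg{G}{}{\pt}}$.

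For this, push Lemma~\ref{blowuprelation} (together with Remark~\ref{blowuprelationrmk}) down to $\pt$ to obtain
$$[\blowup{X}{Z}] - [X] = -[\P_1 \to \pt] + [\P_2 \to \pt],$$
where $\P_1 = \P(\O \oplus \dual{\nbundle{Z}{X}}) \to Z$ and $\P_2 = \P(\O \oplus \O(-1)) \to \P(\dual{\nbundle{Z}{X}}) \to Z$ are quasi-admissible towers over $Z$ (decomposing $Z$ into $G$-irreducible components if necessary, which is harmless by linearity). Both $\P_1$ and $\P_2$ are $\P$-bundle towers over $Z$ of relative dimension $r := \codim{Z}{X} \geq 1$, so $\dim \P_i = \dim Z + r$.

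Now apply Proposition~\ref{towerreduction} to each $\P_i \to Z$:
$$[\P_i \to Z] = \sum_j a_{ij}\, [Y'_{ij} \to Z],$$
with $a_{ij} \in \cobg{G}{}{\pt}'$ and $\dim Y'_{ij} \leq \dim Z$. Pushing forward to $\pt$ and comparing homological degrees, each summand $a_{ij}\,[Y'_{ij} \to \pt]$ has degree $\deg(a_{ij}) + \dim Y'_{ij}$, which must equal $\dim \P_i = \dim Z + r$. Consequently
$$\deg(a_{ij}) \;=\; \dim Z + r - \dim Y'_{ij} \;\geq\; r \;\geq\; 1,$$
so $a_{ij} \in \cobg{G}{\geq 1}{\pt}'$. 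Therefore $[\P_i \to \pt] \in \cobg{G}{\geq 1}{\pt}' \cdot \cobg{G}{}{\pt}$, which vanishes in $\overline{\cobg{G}{}{\pt}}$. Thus $[\blowup{X}{Z}] = [X]$ in $\overline{\cobg{G}{}{\pt}}$, and iterating along the chain yields $[Y] = [Y']$.

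The only genuine obstacle is the invocation of an equivariant weak factorization theorem; the rest is a clean dimension count built on machinery already developed in the paper. If one wishes to avoid citing equivariant weak factorization directly, an alternative is to apply equivariant resolution of singularities to a suitable closure of the graph of the birational map and then argue via a double blowup comparison, but both roads reduce the statement to the same blowup-invariance assertion handled above.
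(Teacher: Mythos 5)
Your proposal matches the paper's argument: equivariant weak factorization (Theorem~0.3.1 of \cite{AbKaMaWl}) reduces to a single equivariant blowup, Lemma~\ref{blowuprelation} and Remark~\ref{blowuprelationrmk} express the difference as a pair of quasi-admissible towers over the center $Z$, and Proposition~\ref{towerreduction} with the dimension count $\dim \P_i = \dim Y > \dim Z \geq \dim Y'_{ij}$ forces the coefficients into $\cobglow{G}{\geq 1}{\pt}'$, so the difference dies in $\overline{\cobg{G}{}{\pt}}$. The only cosmetic difference is that you spell out the degree bookkeeping $\deg(a_{ij}) \geq r \geq 1$ a bit more explicitly than the paper, which simply notes $\dim \P_i > \dim Z$.
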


\begin{proof}
By the \equi\ weak factorization theorem (Theorem 0.3.1) in \cite{AbKaMaWl}, there exists a sequence of blowups and blowdowns along \sm\ invariant centers to go from $Y$ to $Y'$. So, it is enough to consider a single blowup. By Lemma \ref{blowuprelation}, 
$$[Bl_Z\ Y \to Y] - [\id_Y] = -[\P_1 \to Z \to Y] + [\P_2 \to Z \to Y]$$
as elements in $\cobg{G}{}{Y}.$ Pushing them down to $\cobg{G}{}{\pt}$ gives 
$$[Bl_Z\ Y] - [Y] = - [\P_1 \to Z \to \pt] + [\P_2 \to Z \to \pt]$$
as elements in $\cobg{G}{}{\pt}.$ For simplicity, assume $Z$ is \girred. By Remark \ref{blowuprelationrmk}, $\P_1$, $\P_2 \to Z$ are both \qadtower s. By Proposition \ref{towerreduction}, $[\P_i \to Z] = \sum a\,[Z' \to Z]$ for some $a \in \cobg{G}{}{\pt}'$ and $Z' \in \gsmcat{G}$ such that $\dim Z' \leq \dim Z$. Since $\dim \P_i = \dim Y > \dim Z,$ the elements $\{a\}$ are all in $\cobg{\geq 1}{G}{\pt}'$. Hence, the element $[\P_i \to Z \to \pt]$ vanishes in $\overline{\cobg{G}{}{\pt}}$.
\end{proof}

Finally, we are ready to prove our main Theorem. The generators of our \equi\ algebraic cobordims ring $\cobg{G}{}{\pt}$, as a $\lazard$-algebra, will be \adtower s over $\pt$ and some ``exceptional objects''. For an integer $n \geq 0$ and a pair of subgroups $G \supset H \supset H'$, since $G$ is abelian, we can write
$$H/H' \cong H_1 \x \cdots \x H_a$$
where $H_i$ is a cyclic group of order $M_i \defeq p_i^{m_i}$ for some prime $p_i.$ Let $\ga_i$ be a generator of $H_i$. Define a $(H/H')$-action on ${\rm Proj}\ k[x_0, \ldots, x_n,v_1, \ldots, v_a]$ as the following. First, $H/H'$ acts on $x_0, \ldots, x_n$ trivially. Then, for all $i$, the subgroup $H_i$ acts on $v_i$ by $\ga_i \cdot v_i = \gx_i v_i$ for some primitive $M_i$-th \rou\ $\gx_i$. For all $j \neq i$, the subgroup $H_j$ acts on $v_i$ trivially. 

\begin{lemma}
\label{exceptionalobjectwd}
There exist homogeneous polynomials $g_1, \ldots, g_a \in k[x_0, \ldots, x_n]$ with degrees $M_1, \ldots, M_a$ respectively, such that the \proj\ variety
$${\rm Proj}\ k[x_0, \ldots, x_n,v_1, \ldots, v_a]\, / \, (v_1^{M_1} - g_1, \ldots, v_a^{M_a} - g_a),$$
is \sm\ and has dimension $n$.
\end{lemma}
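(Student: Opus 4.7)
The scheme
$$X \defeq {\rm Proj}\, k[x_0,\ldots,x_n,v_1,\ldots,v_a]\, / \,(v_1^{M_1}-g_1,\ldots,v_a^{M_a}-g_a)$$
is a closed subscheme of $\P^{n+a}$ cut out by the $a$ homogeneous equations $F_i \defeq v_i^{M_i}-g_i$, so $\dim_p X \geq n$ at every point $p\in X$. The plan is to show that for a Zariski-generic choice of $(g_1,\ldots,g_a)$ the scheme $X$ is \sm\ of dimension exactly $n$, via a direct Jacobian calculation that reduces the problem to an iterated Bertini condition on the $g_i$.

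At a point $p=(x,v)\in X$ the Jacobian of $(F_1,\ldots,F_a)$ has the block form $\bigl(-\partial g_i/\partial x_j \mid {\rm diag}(M_i v_i^{M_i-1})\bigr)$, so the $a\times a$ block of partial derivatives in the $v$-variables is diagonal. Writing $I \defeq \{i:v_i(p)=0\}$ and using $\char{k}=0$, the rows indexed by $i \notin I$ contribute pivots automatically, so $X$ is \sm\ at $p$ of codimension $a$ if and only if the submatrix $(\partial g_i/\partial x_j)_{i\in I,\, 0\le j\le n}$ has rank $|I|$. Moreover $p\in X$ forces $g_i(x)=0$ for $i\in I$, and $x\neq 0$ is automatic (otherwise $v_i^{M_i}=g_i(0)=0$ for all $i$, forcing $v=0$, a contradiction in $\P^{n+a}$). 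Hence the smoothness and dimension of $X$ are equivalent to the condition $(\ast_I)$: the closed subscheme $V_I \defeq V(g_i : i\in I) \subseteq \P^n={\rm Proj}\, k[x_0,\ldots,x_n]$ is \sm\ of codimension $|I|$ (with the convention $V_I=\emptyset$ when $|I|>n$), holding for every $I\subseteq\{1,\ldots,a\}$.

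For each fixed $I$, since the linear system $|\O_{\P^n}(M_i)|$ is very ample (hence base-point-free) for every $i$, an iterated application of Bertini's theorem in characteristic zero shows that the tuples $(g_i)_{i\in I}$ satisfying $(\ast_I)$ form a Zariski open dense subset of $\prod_{i\in I} \P({\rm H}^0(\P^n,\O(M_i)))$; when $|I|>n$ the extra emptiness requirement is likewise an open dense condition by the standard dimension-counting version of Bertini (a generic extra hypersurface misses the finite set $V_{I'}$ obtained at codimension $n$). Pulling each of these subsets back under the obvious projection to the full parameter space $W \defeq \prod_{i=1}^a \P({\rm H}^0(\P^n,\O(M_i)))$ yields $2^a$ open dense subsets of $W$, whose finite intersection is again open, dense, and defined over $k$. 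Since $\char{k}=0$ implies $k$ is infinite, this intersection contains $k$-rational points, and any such point supplies a tuple $(g_1,\ldots,g_a)$ with the required property. The only real subtlety is keeping track of the $2^a$ Bertini conditions simultaneously, but each is a routine iterated-Bertini argument, so no substantial obstacle arises.
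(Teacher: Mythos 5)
Your proof is correct, but it takes a genuinely different route from the paper's. You isolate, at each point $p \in X$, the index set $I = \{i : v_i(p) = 0\}$, observe that the $v$-block of the Jacobian of $(v_1^{M_1}-g_1, \ldots, v_a^{M_a}-g_a)$ is diagonal and contributes pivots for all $i \notin I$, and thereby reduce smoothness of $X$ to the rank of the sub-Jacobian $(\partial g_i/\partial x_j)_{i \in I}$; imposing the $2^a$ resulting transversality conditions on the $V(g_i : i \in I) \subset \P^n$ by iterated Bertini and intersecting the dense open loci in the parameter space $W$ then finishes the argument. The paper instead introduces the $(H/H')$-equivariant morphisms $\gps_i : U \to \P^{N_i}$ sending $(x;v)$ to the degree-$M_i$ monomials in $x$ together with $v_i^{M_i}$, and iteratively applies its own Lemma \ref{bertinilemma} (the equivariant Bertini statement proved earlier in the paper) to the ambient open $U = \bigcup_i D(x_i) \subset \P^{n+a}$, so that each chosen hyperplane $H_i$ pulls back to the equation $v_i^{M_i} = g_i$ and the resulting iterated fiber product is smooth of dimension $n$. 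The paper's argument applies Bertini only $a$ times along a nested chain and so needs fewer genericity conditions, while yours is more explicit and entirely self-contained, not relying on the earlier lemma; both hinge on the same observation that the vanishing locus avoids $\{x = 0\}$ in $\P^{n+a}$. One small imprecision worth flagging: you say smoothness of $X$ is ``equivalent'' to the family of conditions $(\ast_I)$, but strictly speaking those conditions are only sufficient (a point $x \in V_I$ need not lift to a point of $X$ with exactly $I$ as its vanishing set, since you also need $g_i(x) \ne 0$ for $i \notin I$); sufficiency is of course all the lemma requires.
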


\begin{proof}
Let $U$ be the open subscheme $\cup_{i=0}^n D(x_i)$ of ${\rm Proj}\ k[x_0, \ldots, x_n,v_1, \ldots, v_a]$. For $1 \leq i \leq a$, let $\gps_i : U \to \P^{N_i}$ be the $(H/H')$-\equi\ map sending $(x_0 ; \ldots ; x_n ; v_1 ; \ldots ; v_a)$ to 
$$(x_0^{M_i} ; x_0^{M_i-1}x_1 ; \ldots ; x_n^{M_i} ; v_i^{M_i})$$
(the first $N_i-1$ coordinates run though all degree $M_i$ monomials given by $x_0, \ldots, x_n$). By Lemma \ref{bertinilemma}, there exist hyperplanes $H_i \subset \P^{N_i}$ such that $U \x_{\P^{N_1}} H_1 \x_{\P^{N_2}} H_2 \x_{\P^{N_3}} \cdots \x_{\P^{N_a}} H_a$ is \sm\ and has dimension $n$. The result then follows by observing each $H_i$ defines a homogeneous polynomial $g_i$ with degree $M_i$ and 
$$U \x_{\P^{N_1}} H_1 \x_{\P^{N_2}} H_2 \x_{\P^{N_3}} \cdots \x_{\P^{N_a}} H_a = {\rm Proj}\ k[x_0, \ldots, x_n,v_1, \ldots, v_a]\, / \, (v_1^{M_1} - g_1, \ldots, v_a^{M_a} - g_a).$$
\end{proof}

Pick $g_1, \ldots, g_a$ as in Lemma \ref{exceptionalobjectwd}. Let $X$ be the \proj\ variety
$${\rm Proj}\ k[x_0, \ldots, x_n,v_1, \ldots, v_a]\, / \, (v_1^{M_1} - g_1, \ldots, v_a^{M_a} - g_a).$$
Then, $X$ is in $(H/H')$-\textit{Sm}. Fix a set of representatives $\{\gb_j\}$ of $G/H$. The exceptional object $E_{n,H,H'}$ is defined as $G/H \x X$ such that for all $\ga \in G$ and $(\gb_j, y) \in E_{n,H,H'}$,
$$\ga \cdot (\gb_j, y) \defeq (\gb_k,\ \gg \cdot y)$$
where $\gb_k \in G/H$ and $\gg \in H$ are uniquely determined by the equality $\ga \gb_j = \gb_k \gg$. We will see that the element $[E_{n,H,H'}] \in \cobg{G}{}{\pt}$ is independent of the choice of $\{g_i\}$.

\begin{thm}
\label{setofgenerator}
If the pair $(G,k)$ is split, then $\cobg{G}{}{\pt}$ is generated by the set of exceptional objects $\{E_{n, H, H'}\ |\ n \geq 0 \text{ and } G \supset H \supset H'\}$ and the set of \adtower s over $\pt$ as a $\lazard$-algebra.
\end{thm}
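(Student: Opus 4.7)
The plan is to induct on $d = \dim Y$, paralleling the proof of Theorem 2 of \cite{LePa} but exploiting the extra flexibility of the generalized double point relation to accommodate the equivariant structure. The base case $d=0$ is immediate: a zero-dimensional smooth projective $G$-variety decomposes into a disjoint union of orbits $G/H$, each of which coincides with the exceptional object $E_{0,H,H}$ (take $n=0$ and $H=H'$, so no $v_i$ appear and the $\P^n$-factor reduces to $\pt$).

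For the inductive step I would first reduce to the $G$-irreducible case $Y = G\cdot Y_0$, where $Y_0$ is irreducible with stabilizer $H\subset G$ and generic isotropy $H'\subset H$. By Proposition \ref{equiprojmorp} embed $Y$ equivariantly into $\P(V)$ for a $G$-representation $V$; using Lemma \ref{pairsplit} decompose $V$ into one-dimensional characters and, if necessary, enlarge $V$ so that the characters of $H/H'$ defining the $v_i$-coordinates in $E_{d,H,H'}$ appear with ample multiplicity. A generic $G$-equivariant linear projection then gives a rational map $Y\dashrightarrow \P^d$ whose equivariant embedded desingularization (via \cite{embeddeddesingularization} combined with the splitting principle of Theorem \ref{splittingprinciple}) produces a smooth $\tilde Y$ together with a GDPR setup relating $[Y]$ to the strict transform, the exceptional divisors, and auxiliary towers.

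Feeding this setup into the generalized double point relation and then applying Proposition \ref{towerreduction} to reduce the resulting quasi-admissible towers to admissible towers over strictly lower-dimensional bases should yield an identity of the shape
$$[Y] = c\cdot [E_{d,H,H'}] + \sum_i a_i\cdot [Y_i'],$$
where $c$ and each $a_i$ lie in the subring generated by admissible towers and $\dim Y_i'<d$, at which point the induction hypothesis finishes the argument. The main obstacle I anticipate is ensuring that the divisor classes on $\tilde Y$ produced by the resolution remain admissible in the sense of Section \ref{reductionoftowersubsection}; this is where Proposition \ref{lbstructure} and Lemma \ref{towertwisting} enter, systematically converting arbitrary \glin{G}\ invertible sheaves into the form $\O(D)\otimes\gp_k^*\cat{N}$ with $D$ a smooth invariant divisor and then twisting towers to absorb unwanted $D$'s into lower-dimensional corrections. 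The combinatorial matching between the characters appearing in $V$ and the $M_i$-th root extraction $v_i^{M_i}=g_i$ cutting out $E_{d,H,H'}$ is the crucial structural input, and it is precisely here that the split hypothesis on $(G,k)$ is used; checking that the resulting class is independent of the choice of the $g_i$ (Lemma \ref{exceptionalobjectwd}) will be a further reduction by birational invariance in $\overline{\cobg{G}{}{\pt}}$ via Proposition \ref{birationalimplyequal}.
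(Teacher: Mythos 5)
Your outline gets several of the supporting ingredients right (birational invariance in the truncated theory, tower reduction, the splitting principle, admissibility of the divisors that arise), but it misses the two structural ideas that carry the actual proof, and the one you substitute for them does not work.

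First, the paper's argument does not proceed by a ``generic $G$-equivariant linear projection $Y\dashrightarrow\P^d$''. The key step is a Galois/Kummer-theoretic analysis of the function field: since the $G$-action on an irreducible $Y$ is faithful, $k(Y)/k(Y)^G$ is Galois with group $G$, and the split hypothesis lets one decompose $G$ into cyclic $p$-power factors $G_i$; for each factor one then produces an element $b_i\in k(Y)$ on which $G_i$ acts by a primitive root of unity and whose $M_i$-th power lies in $k(Y)^G$, yielding the presentation $k(Y)\cong k(x_1,\dots,x_n)[x_{n+1},v_1,\dots,v_a]/(f,\,v_1^{M_1}-g_1,\dots,v_a^{M_a}-g_a)$ (Claim 1 in the paper's proof). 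This birational model is what eventually matches the defining equations of $E_{n,H,H'}$; a generic projection gives you a dominant map to $\P^d$ but not the explicit $v_i^{M_i}=g_i$ structure, so the ``combinatorial matching'' you flag as crucial is left entirely unaddressed.

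Second, your appeal to Proposition \ref{birationalimplyequal} to handle independence of the $g_i$ fails, because two different smooth choices of the data $(f,g_1,\dots,g_a)$ do \emph{not} produce $G$-equivariantly birational varieties (e.g.\ different ``$v^p=g$'' hypersurfaces over $\P^n$ need not be birational). The paper instead spreads the equations out into a family $T\to\spec{C}$ over the coefficient space, interpolates along an affine line $L$ joining two choices $c_0,c_1$, resolves $T|_L$ equivariantly, and uses $GDPR(1,1)$ on an equivariant compactification $\overline T\to\P^1$ to identify fibers; birational invariance is only used at the very end to compare the two smooth families at a common point $c_2$. Finally, the reduction from a degree-$d$ constraint $f$ to $f=x_{n+1}$ (so that the class actually \emph{is} an exceptional object) uses the additive form of the formal group law in the truncated theory, $c(\gps^*\O(d))=d\cdot c(\gps^*\O(1))$ in $\overline{\cobg{G}{}{W}}$; this step is absent from your outline. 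Without the Galois-theoretic Claim 1 and the parameter-space interpolation, the proposal cannot be completed as written.
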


\begin{proof}
Let $S$ be the set of generators mentioned in the statement, i.e. $S \defeq \{ [E_{n,H,H'}],\ [\P] \}$. Consider the following diagram of abelian groups :

\begin{center}
$\begin{array}{ccc}
\cobglow{G}{n}{\pt} & &  \\
\downarrow & \stackrel{P_n}{\searrow} &  \\
\overline{\cobglow{G}{n}{\pt}} & \stackrel{\overline{P_n}}{\dashrightarrow} & \cobglow{G}{}{\pt}\, / \,\lazard[S]
\end{array}$
\end{center}

\noindent Our goal is to prove $S$ gives a set of generator of $\cobg{G}{}{\pt}$ as $\lazard$-algebra. It is obviously enough to show that $P_n = 0$ for all $n$. Suppose we have shown that $P_0 = P_1 = \cdots = P_{n-1} = 0.$ Then, since $\cobglow{G}{}{\pt}'$ is a subgroup of $\lazard[S]$ and
$$( \cobglow{G}{\geq 1}{\pt}'\, \cobglow{G}{}{\pt} ) \cap \cobglow{G}{n}{\pt} = \sum_{i = 1}^n \cobglow{G}{i}{\pt}'\, \cobglow{G}{n-i}{\pt},$$
the homo\morp\ $\overline{P_n}$ is well-defined and the diagram is commutative. In addition, $\overline{P_n} = 0$ will imply $P_n = 0$ and $P_0$, $\overline{P_0}$ agree. So, it is enough to show that $\overline{P_n} = 0$ for all $n$.

Suppose $n \geq 0$ and $[Y] \in \overline{\cobglow{G}{n}{\pt}}$ is \girred. Assume $Y$ is irreducible and the $G$-action is faithful first. Let $G = G_1 \x \cdots \x G_a$ where $G_i$ is a cyclic group of order $M_i \defeq p_i^{m_i}$ for some prime $p_i$ and $\ga_i$ be a generator of $G_i$. 

\vtab

Claim 1 : 
$$k(Y) \cong k(x_1, \ldots, x_n)[x_{n+1}, v_1, \ldots, v_a]\, / \,(f, v_1^{M_1} - g_1, \ldots, v_a^{M_a} - g_a)$$ 
for some $f$, $g_i \in k[x_1, \ldots, x_{n+1}]$ such that the $G$-action on $x_i$ is trivial, $G_j$ acts on $v_i$ trivially if $i \neq j$ and $\ga_i \cdot v_i = \gx_i v_i$ where $\gx \in k$ is a primitive $M_i$-th \rou.

Denote the function field $k(Y)$ by $K$. Since the $G$-action on $Y$ is faithful, the degree of the extension $K/K^G$ is equal to the order of $G$ and it is a Galois extension (separable because $\char{k} = 0$). Let $K_i$ be the subfield of $K$ consists of elements fixed by $\prod_{j \neq i} G_j$. Then, $K = K_1 \cdots K_a$, the intersection of any $K_i \neq K_j$ is $K^G$ and the extension $K_i/K^G$ is Galois with ${\rm Gal}(K_i/K^G) \cong G_i$. 

Since the dimension of the scheme $Y/G$ is $n$, the field $k(Y/G) \cong K^G$ has transcendence degree $n$ over $k$. So, there is an element $f \in k[x_1, \ldots, x_{n+1}]$ such that $K^G \cong k(x_1, \ldots, x_n)[x_{n+1}] / (f)$. Consider $K_i$ as a $G_i$-\repn\ over $K^G$. Since the pair $(G_i, K^G)$ is split, $K_i$ can be written as direct sum of 1-dimensional $G_i$-\repn s over $K^G$. Then, the action of $\ga_i$ on at least one of the $G_i$-\repn s is given by $\gx_i$. Let $b_i$ be a generator of such \repn. Since $b_i^{M_i}$ is fixed by $G_i$, it is in $K^G$. Denote it by $g_i$. W\withoutlog, $g_i \in k[x_1, \ldots, x_{n+1}]$. Consider the polynomial
$$v^{M_i} - g_i = \prod_{j = 0}^{M_i - 1} (v - \gx^j b_i) \in K^G[v].$$
It is irreducible because if $j < M_i$, then $\ga$ does not fix $b_i^j$, hence $b_i^j \notin K^G$. Since $v^{M_i} - g_i$ has degree $M_i$, the field $K_i$ has to be generated by $b_i$. In other words, 
$$K_i \cong k(x_1, \ldots, x_n)[x_{n+1}, v_i]\, / \,(f, v_i^{M_i} - g_i).$$
Also, the $G$-action on $v_i$ corresponds to the $G$-action on $b_i$, which is exactly as the one described in the statement.  \claimend

\vtab

Let 
$$Y' \defeq {\rm Proj}\ k[x_0, \ldots, x_{n+1}, v_1, \ldots, v_a]\, / \,(f, v_1^{M_1} - g_1, \ldots, v_a^{M_a} - g_a)$$ 
and 
$$P' \defeq {\rm Proj}\ k[x_0, \ldots, x_{n+1}, v_1, \ldots, v_a]$$ 
where $f$, $g_i \in k[x_0, \ldots, x_{n+1}]$ are homogeneous polynomials with degree $d$ and $M_i$ respectively, the $G$-action on $x_i$ is trivial and the $G$-action on $v_j$ are the one described in claim 1. For simplicity, we will denote $P'$ simply as ${\rm Proj}\ k[x,v]$. By claim 1, $Y'$ is \equi ly birational to $Y$. By applying the embedded desingularization theorem \cite{embeddeddesingularization} on $Y' \embed P'$, there is a commutative diagram
\squarediagram{\stricttransform{Y'}}{P}{Y'}{P'}
where $\stricttransform{Y'}$, $P$ are both in $\gsmcat{G}$. Since $\stricttransform{Y'}$ is \sm\ and \equi ly birational to $Y'$, by Proposition \ref{birationalimplyequal}, we may assume $\stricttransform{Y'} = Y$. Moreover, since $P \to P'$ is \proj, by Proposition \ref{equiprojmorp}, there exist free variables $y_0, \ldots, y_m$ with $G$-actions and a set of polynomials $\{h\} \subset k[x, v, y_0, \ldots, y_m]$  which are bihomogeneous \wrt\ $(x,v)$ and $y$ such that 
$$P \cong {\rm BiProj}\ k[x, v][y] / (h)$$ 
and
$$Y = {\rm BiProj}\ k[x, v][y]\, / \,(h) + (f, v_1^{M_1} - g_1, \ldots, v_a^{M_a} - g_a).$$ 

Define a set of indices
$$J \defeq \{ \text{monomial in $k[x]$ with degree $d$} \}\ \disjoint\ \coprod_i \{ \text{monomial in $k[x]$ with degree $M_i$} \}.$$
Let $C \defeq k[\{c_j\ |\ j \in J\}]$ be the polynomial ring generated by free variables indexed by $J$. Then, $f(x)$ can be considered as $f(c_0,x)$ for some $c_0 \in \spec{C}$ and similarly for $g_i$. Let 
$$T' \defeq {\rm Proj}\ C[x,v]\, / \,( f(c,x), v_1^{M_1} - g_1(c,x), \ldots, v_a^{M_a} - g_a(c,x) ).$$
If we assign a trivial $G$-action to $\spec{C}$, then there is an \equi, \proj, surjective map $\gph' : T' \to \spec{C}$ with fiber $T'_{c_0} \cong Y'$ and $T'$ is a closed subscheme of $\spec{C} \x P' \cong {\rm Proj}\ C[x,v].$ Also let
$$T \defeq {\rm BiProj}\ C[x, v][y]\, / \,(h) + ( f(c,x), v_1^{M_1} - g_1(c,x), \ldots, v_a^{M_a} - g_a(c,x) ).$$
Similarly, there is an \equi, \proj, surjective map $\gph : T \to \spec{C}$ with fiber $T_{c_0} \cong Y$ and $T$ is a closed subscheme of $\spec{C} \x P \cong {\rm BiProj}\ C[x, v][y] / (h).$

\vtab

Claim 2 : $T$ is in $\gsmcat{G}$ and has dimension $\dim \spec{C} + n$. 

W\withoutlog, $k$ is algebraically closed. Notice that $T$ is cut out from $\spec{C} \x P$, which is \sm\ and has relative dimension $n + a + 1$ over $\spec{C}$, by the equations $f(c,x)$ and $v_i^{M_i} - g_i(c,x)$. We will show that the gradients $\{ \gradient{f(c,x)}, \gradient{( v_i^{M_i} - g_i(c,x)) } \}$ are linearly independent and they are also linearly independent to any $\gradient{h}$ at any closed point in $T$. Since $h$ is in $k[x, v][y]$, $\gph_* \gradient{h} = 0$. So, it will be enough to show that the vectors
$$\{ \gph_* \gradient{f(c,x)},\ \gph_* \gradient{(v_i^{M_i} - g_i(c,x))} \}$$ 
are linearly independent. Over $D(x_j)$, if we denote $x_k / x_j$ by $t_k$, then we have
$$\gph_* \gradient{f(c,x)} = (t_0^d, t_0^{d-1}t_1, \ldots, t_{n+1}^d, 0, \ldots, 0)$$
and 
$$\gph_* \gradient{(v_i^{M_i} - g_i(c,x))} = -(0, \ldots, 0, t_0^{M_i}, t_0^{M_i-1}t_1, \ldots, t_{n+1}^{M_i}, 0, \ldots, 0)$$
(zero except coordinates corresponding to the coefficients of $g_i$). Moreover, over $D(v_j)$, if we denote $x_k / v_j$ by $t_k$, then we obtain the same equations for the vectors $\gph_* \gradient{f(c,x)}$ and $\gph_* \gradient{(v_i^{M_i} - g_i(c,x))}$. Thus, they are linearly independent as long as $(x_0; \cdots ; x_{n+1}) \neq 0$. Suppose $x = (x_0 ; \cdots ; x_{n+1}) = 0$ for a certain closed point in $T$. Then, $v_1, \ldots, v_a$ are all zero too. So, the coordinate of this point is $(c,0;0,y) \in T \subset C \x P$. We then get a contradiction by realizing that the map $C \x P \to P \to P'$ will send $(c,0;0,y)$ to $(0;0) \in P'$. \claimend

\vtab

The same argument also shows that $T'$ is in $\gsmcat{G}$ and has dimension $\dim \spec{C} + n$. Notice that $T$ and $\spec{C}$ are both \sm, the map $\gph$ is \proj, surjective and has relative dimension $n$ and the fiber $T_{c_0}$ is \sm\ with dimension $n$. So, the map $\gph$ is \sm\ if restricted in an open neighborhood of $c_0$ (because the point $c_0$ is not in the image of $\{\text{critical point}\}$, which is closed). Call such a neighborhood $U_0$. Pick a point $c_1 = (c_{1j}) \in \spec{C}$ such that the fiber $T'_{c_1}$ is in $\gsmcat{G}$ with dimension $n$ (such point exists by Lemma \ref{exceptionalobjectwd}). Similarly, the map $\gph' : T' \to \spec{C}$ is \sm\ if restricted in an open neighborhood of $c_1$. Call such a neighborhood $U_1$. 

\vtab

Claim 3 : There exists an \equi, \proj, birational map $\gm : T \to T'$ of schemes over $\spec{C}$. 

The map $\gm$ is given by the restriction of the map $\spec{C} \x P \to \spec{C} \x P'$ which sends $(c,x;v,y)$ to $(c,x;v)$. So, it is clearly \equi\ and \proj. Notice that $P \to P'$ is birational. That means if $\ge_1$ is the generic point of $P'$, then $P_{\ge_1} \to \spec{\ge_1}$ is an isomorphism, i.e. ${\rm Proj}\ k(x_*, v_*)[y] / (h_*) \iso \spec{k(x_*, v_*)}$ where $x_*$, $v_*$ and $h_*$ are the dehomogenizations of $x$, $v$ and $h$ \wrt\ $x_0$, respectively. Let $\ge_2$ be the generic point of $T'$, as scheme over $\spec{C}$. Then,
\begin{eqnarray}
T_{\ge_2} &\cong& {\rm Proj}\ C(x_*, v_*)[y]\, / \,(h_*) + ( f_*, {v_1}_*^{M_1} - {g_1}_*, \ldots, {v_a}_*^{M_a} - {g_a}_* ) \nonumber\\
&\cong& \spec{C(x_*, v_*)\, / \, ( f_*, {v_1}_*^{M_1} - {g_1}_*, \ldots, {v_a}_*^{M_a} - {g_a}_* ) } \nonumber\\
&\cong& \spec{\ge_2}. \nonumber
\end{eqnarray}
That means $\gm$ is birational, as a \morp\ of schemes over $\spec{C}$. \claimend

\vtab

Denote the open subscheme $U_0 \cap U_1 \subset \spec{C}$ by $U$. Then, $\gph : T|_U \to U$ and $\gph' : T'|_U \to U$ are both \sm\ and $\gm : T|_U \to T'|_U$ has birational fibers (over $U$). Also, denote the affine line in $\spec{C}$ connecting $c_0$ and $c_1$ by $L$ and pick a closed point $c_2 \in U \cap L$. Consider the \equi, \proj\ map $\gph : T|_L \to L$. It is \sm\ over $U_0 \cap L$. That means $\singular{T|_L}$ is disjoint from the fibers $T_{c_0}$ and $T_{c_2}$. By resolution of singularities (Theorem 1.6 in \cite{embeddeddesingularization}), we can assume $T|_L$ is \sm\ (The blow ups will not affect the two fibers). Now, $T|_L$ has fibers $T_{c_0}$ and $T_{c_2}$ which are both \sm\ invariant divisors. By Proposition \ref{equiembedwithsmclosure}, we can extend $T|_L \to L$ to some \equi, \proj\ map $\overline{T} \to \P^1$ where $\overline{T}$ is in $\gsmcat{G}$. Then, $GDPR(1,1)$ will imply 
$$[T_{c_0} \embed \overline{T}] = [T_{c_2} \embed \overline{T}]$$
as elements in $\cobg{G}{}{\overline{T}}$. Push them down to $\pt$, we got $[T_{c_0}] = [T_{c_2}]$. By applying the same argument on $\gph' : T'|_L \to L$, we got $[T'_{c_1}] = [T'_{c_2}]$. Hence,
$$[Y] = [T_{c_0}] = [T_{c_2}] = [T'_{c_2}] = [T'_{c_1}]$$
as elements in $\overline{\cobg{G}{}{\pt}}$ by Proposition \ref{birationalimplyequal} and the fact that $T_{c_2}, T'_{c_2}$ are birational and are both \sm.

Because of the freedom of choice of $c_1 = (c_{1j})$, we can assume 
$$Y \cong {\rm Proj}\ k[x, v]\, / \,(f, v_1^{M_1} - g_1, \ldots, v_a^{M_a} - g_a)$$
for any choice of $f(x)$, $g_i(x)$ as long as the degrees of $f$, $g_i$ are $d$, $M_i$ respectively and $Y$ is \sm. Consider the \equi\ map 
$$\gps : W \defeq {\rm Proj}\ k[x, v]\, / \,(v_1^{M_1} - g_1, \ldots, v_a^{M_a} - g_a) \to {\rm Proj}\ k[x] \cong \P^{n+1}.$$ 
Then, $Y$ can be considered as the preimage of a generic degree $d$ hypersurface. More precisely, as elements in $\overline{\cobg{G}{}{W}}$,
$$[Y \embed W] = c(\gps^* \O(d)) [\id_W] = d \,c(\gps^*\O(1)) [\id_W]$$
because $\gps^*\O(1)$ is \nice\ and \fgl\ becomes additive by Remark \ref{additivefgl}. In other words, it is enough to consider the case when $d = 1$. W\withoutlog, we may assume $f(x) = x_{n+1}$. Hence, we have 
$$Y \cong {\rm Proj}\ k[x_0, \ldots, x_n, v_1, \ldots, v_a]\, / \,(v_1^{M_1} - g_1(x), \ldots, v_a^{M_a} - g_a(x)),$$
which is the exceptional object $E_{n,G,\{1\}}$. So, $\overline{P_n}[Y] = 0$. That proves the case when $Y$ is irreducible with faithful $G$-action.

If the $G$-action on $Y \in \gsmcat{G}$ is faithful, but $Y$ is reducible, then $Y \cong G/H \x X$ for some subgroup $H \subset G$ and some irreducible $X \in \gsmcat{H}$. By applying claim 1 on $X$ with $H$-action, we can define $G/H \x X'$, $G/H \x X$, $G/H \x P'$ and $G/H \x P$ in the same manner to obtain the following commutative diagram :

\squarediagram{G/H \x X}{G/H \x P}{G/H \x X'}{G/H \x P'}

\noindent We can also define the polynomial ring $C$ and the $C$-schemes $G/H \x T'$ and $G/H \x T$. We will also have $G$-\equi, \proj\ maps  $\gph : G/H \x T \to \spec{C}$ and $\gph' : G/H \x T' \to \spec{C}$ such that $\gph$ is \sm\ around $c_0$ and $\gph'$ is \sm\ around some $c_1 = (c_{1j})$. Similarly, the natural map $\gm : G/H \x T \to G/H \x T'$ will also be $G$-\equi, \proj\ and has birational fibers over $\spec{C}$. Hence, as before,
$$[Y] = [G/H \x X] = [(G/H \x T)_{c_0}] = [(G/H \x T)_{c_2}] = [(G/H \x T')_{c_2}] = [(G/H \x T')_{c_1}].$$
In other words, we may assume 
$$X \cong {\rm Proj}\ k[x,v]\, / \,(f,v_1^{M_1} - g_1, \ldots, v_a^{M_a} - g_a).$$ 
Define $\gps : G/H \x W \to \P^{n+1}$ similarly to get the same reduction on $f$. We may further assume 
$$X \cong {\rm Proj}\ k[x,v]\, / \,(v_1^{M_1} - g_1, \ldots, v_a^{M_a} - g_a).$$
Hence, we have $Y \cong G/H \x X \cong E_{n,H,\{1\}}.$

In general, if we have subgroups $G \supset H \supset H'$ such that the $(G/H')$-action on $Y$ is faithful and $Y \cong G/H \x X$ for some irreducible $X \in \gsmcat{(H/H')}$, then we may assume
$$X \cong {\rm Proj}\ k[x_0, \ldots, x_n, v_1, \ldots, v_a]\, / \,(v_1^{M_1} - g_1(x), \ldots, v_a^{M_a} - g_a(x))$$
for some generic $g_1, \ldots, g_a$ where $v_1, \ldots, v_a$ are given by $H/H'$. Hence, $Y \cong E_{n,H,H'}$. That finishes the proof.
\end{proof}

\begin{rmk}
\label{redundantexceptionalobj}{\rm
Notice that we did not use the full power of the g\gdpr\ in our proof of Theorem \ref{setofgenerator}. More precisely, if we define our \equi\ algebraic cobordism theory by imposing the e\edpr\ $GDPR(2,1)$ alone, the same set of generators will still generate the \equi\ algebraic cobordism ring. But, with the aid of the g\gdpr, we can actually simplify the exceptional objects further.

Suppose the dimension of an exceptional object $E_{n,H,H'}$ is greater than the order of the group $H/H'$. Let 
$$W \defeq G/H \x {\rm Proj}\ k[x_0, \ldots, x_n,v_1, \ldots, v_a]\, / \, (v_1^{M_1} - g_1, \ldots, v_{a-1}^{M_{a-1}} - g_{a-1}).$$
Then, the invariant \sm\ divisor $G/H \x \{v_a^{M_a} = g_a\} = E_{n,H,H'}$ is \equi ly linearly equivalent to the sum of invariant \sm\ divisors $G/H \x \{x_i = 0\}$ where $i$ runs from $0$ to $M_a - 1$. Moreover, by the freedom of choice of $\{g_i\}$, we can assume 
$$E_{n,H,H'} + \sum_{i=0}^{M_a-1} G/H \x \{x_i = 0\}$$ 
is a \rsncd. Thus, by the g\gdpr\ $GDPR(M_a,1)$,
$$[E_{n,H,H'} \embed W] = \sum_{i = 0}^{M_a-1}\,[G/H \x \{x_i = 0\} \embed W]$$
as elements in $\overline{\cobg{G}{}{W}}$ (``extra terms'' are always of the form $[\P \to Z \embed W]$ where $\P \to Z$ is a \qadtower\ and $\dim Z < \dim \P = n$). In other words, it is enough to consider objects of the form 
$$G/H \x {\rm Proj}\ k[x_0, \ldots, x_{n-1},v_1, \ldots, v_a]\, / \, (v_1^{M_1} - g_1, \ldots, v_{a-1}^{M_{a-1}} - g_{a-1})$$
instead. Similarly, we can apply the same argument to reduce $E_{n,H,H'}$ into
$$G/H \x {\rm Proj}\ k[x_0, \ldots, x_{n-a},v_1, \ldots, v_a] = G/H \x \P(V)$$
for some $(H/H')$-\repn\ $V$. In particular, if the group $G$ is a cyclic group of prime order, then 
$$[G/H \x \P(V)] = [G/H]\,[\P(V)]$$
where $V$ is some $G$-\repn\ and $H$ can be either $G$ or the trivial group. Notice that $E_{0, \{1\}, \{1\}} \cong G$ and $\P(V)$ is an \adtower\ over $\pt$. Hence, only finite number of exceptional objects are needed to generate $\cobg{G}{}{\pt}$ in this case.
}\end{rmk}

\bigskip

\bigskip

\section{Fixed point map}
\label{fixedpointmapsection}

In this section, we will assume the ground field $k$ has characteristic 0 and the group $G$ is either finite or reductive. Notice that, as mentioned in section 1 of Ch I of \cite{Mu}, the notion linearly reductive and reductive are equivalent over a field of characteristic 0.

Before proving the main Theorem, we need the following Proposition.

\begin{prop}
\label{smoothfixedpointlocus}
For any object $X \in \gsmcat{G},$ the fixed point locus $X^G$ is \sm. Moreover, if $x \in X^G$ is a closed point, then there is no non-zero normal vector in $\nbundle{X^G}{X}|_x$ which has trivial $G$-action.
\end{prop}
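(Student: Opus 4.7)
The plan is to establish the result by reducing to a local, linear model around each fixed closed point $x \in X^G$, and then using the linear reductivity of $G$ (which is automatic in our setting: $G$ is either finite or reductive, and $\char{k} = 0$) to decompose the tangent representation.

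First I would reduce to a local statement: smoothness of $X^G$ is local, so I fix a closed point $x \in X^G$. Since $X$ is \qproj\ and $G$ is either finite or connected reductive, I can find a $G$-invariant affine open neighborhood of $x$ in $X$ (in the reductive case this uses Sumihiro's theorem, which already appears in the proof of Proposition \ref{domainisind}; in the finite case one intersects an affine neighborhood with its translates). Replacing $X$ by such a neighborhood, I may assume $X$ is affine. Next, the cotangent space $\gm_x/\gm_x^2$ at $x$ carries a natural $G$-\repn\ structure, and its dual is $T_x X$. Because $G$ is linearly reductive in $\char{k} = 0$, I can choose a $G$-\equi\ splitting of the surjection $\gm_x \to \gm_x/\gm_x^2$, giving equivariant local parameters. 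Concretely, lifting a basis of $\gm_x/\gm_x^2$ adapted to its $G$-\repn\ structure produces a $G$-\equi\ \morp
\[
\gph : X \to T_x X
\]
(with $T_x X$ viewed as affine space equipped with the linear $G$-action) sending $x$ to $0$, which is \etale\ at $x$. This is the key input—essentially Luna's \etale\ slice theorem specialized to the case of a fixed point.

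Having this, the smoothness claim is immediate: $(T_x X)^G$ is a linear subspace of $T_x X$, hence \sm. Since $\gph$ is $G$-\equi\ and \etale\ at $x$, and since taking $G$-invariants commutes with \etale\ base change under our linear reductivity hypothesis, the induced \morp\ $X^G \to (T_x X)^G$ is \etale\ at $x$. Therefore $X^G$ is \sm\ at $x$. For the second assertion, linear reductivity of $G$ gives a canonical decomposition of $G$-representations
\[
T_x X \cong (T_x X)^G \oplus V, \qquad V^G = 0.
\]
Via the \etale\ chart $\gph$, the tangent space to $X^G$ at $x$ is identified with $(T_x X)^G$, so $\nbundle{X^G}{X}|_x \cong T_x X / (T_x X)^G \cong V$. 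Any non-zero $G$-fixed vector in $V$ would span a trivial subrepresentation, contradicting $V^G = 0$.

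The main obstacle is producing the $G$-\equi\ \etale\ chart $\gph$ at a fixed point; once this is in hand both claims drop out. In the finite-group case this is standard averaging on $\gm_x$, while in the reductive case one must first arrange an invariant affine neighborhood and then invoke the splitting of the sequence $\gm_x^2 \to \gm_x \to \gm_x/\gm_x^2$ as $G$-modules, for which linear reductivity of the completion-filtered action is needed. Everything else (\etale-ness of invariants under linearly reductive actions, decomposition into isotypic components) is formal in $\char{k} = 0$.
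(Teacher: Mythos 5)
Your proof is correct, but it takes a genuinely different route from the paper's. The paper gets smoothness essentially for free by citing two black-box results: for finite $G$ it invokes Edixhoven (Prop.\ 3.2 and 3.4 of \cite{Ed}), and for reductive $G$ it invokes Fogarty's tangent-cone theorem (Theorem 5.2 of \cite{Fo}), which says $C(X,x)^G = C(X^G,x)$; since $X$ is \sm, $C(X,x)$ is a linear $G$-\repn, so $C(X^G,x)$ is a linear subspace and hence $X^G$ is regular at $x$. The identification $\tbundle{X^G}|_x \cong (\tbundle{X}|_x)^G$ and the normal-bundle claim fall out the same way. Your approach instead builds a $G$-\equi\ \'etale chart $\gph : X \to T_x X$ by lifting a $G$-adapted basis of $\gm_x/\gm_x^2$ (using an invariant affine neighborhood and linear reductivity to split $\gm_x \to \gm_x/\gm_x^2$) --- this is the Luna-slice mechanism at a fixed point. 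What you gain is a more conceptual picture: $X$ is $G$-\equi ly \'etale-locally its tangent \repn, from which everything about $X^G$ is visible. What you should be careful about is the step ``taking $G$-invariants commutes with \'etale base change'': this is a real fact that needs justification (it is essentially the statement that the fixed-point subscheme pulls back along \equi\ \'etale \morp s for linearly reductive $G$), and in fact this content is comparable in depth to Fogarty's theorem that the paper cites directly. So while your route is valid and arguably more illuminating, it doesn't actually bypass the nontrivial input --- it repackages it. If you keep this argument, you should either cite a reference for base-change of fixed-point schemes along \equi\ \'etale \morp s, or note that, after constructing $\gph$, one can still finish by applying Fogarty to the source and target and using that \'etale maps induce isomorphisms of tangent cones.
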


\begin{proof}
By Proposition 3.4 in \cite{Ed}, the fixed point locus $X^G$ is \sm\ if $G$ is finite. In the case when $G$ is linearly reductive, let $x \in X^G$ be a closed point and $C(X,x)$ be the tangent cone of $X$ at $x$. By the Theorem 5.2 in \cite{Fo}, we have $C(X,x)^G = C(X^G,x)$. Since $X$ is \sm\ at $x$, the tangent cone $C(X,x)$ is isomorphic to $\A^d_{k(x)} = \A^d \x \spec{k(x)}$ where $d$ is the dimension of $\O_{X,x}$ ($G$ acts on $k(x)$ trivially). The group $G$ acts on $C(X,x)$ linearly, so $C(X,x)^G = C(X^G,x)$ is a linear subspace. Hence, the fixed point locus $X^G$ is \sm\ at $x$.

It remains to show the statement about the normal vector. For finite group $G$, by Proposition 3.2 in \cite{Ed}, we have $\cat{T}X^G|_x \cong (\cat{T}X|_x)^G$. Moreover, the following exact sequence of $G$-\repn s splits :
$$0 \to \cat{T}X^G|_x \to \cat{T}X|_x \to \nbundle{X^G}{X}|_x \to 0.$$
Hence, there is no non-zero normal vector of $X^G$ which is fixed by $G$.

When $G$ is reductive, 
$$\cat{T}X^G|_x \cong \cat{T} C(X^G,x)|_0 \cong \cat{T} C(X,x)^G|_0 \cong (\cat{T} C(X,x)|_0)^G \cong (\cat{T}X|_x)^G.$$
Then the result follows similarly.
\end{proof}

\begin{thm}
\label{fixedpointmap}
Suppose $X$ is an object in $\gsmcat{G}$ and $\{Z\}$ is the set of irreducible components of its fixed point locus $X^G$. Then, sending $[Y \to X]$ to $\sum_Z [Y^G \x_{X^G} Z \to Z]$ defines an abelian group homomorphism :
$$\fixptmap : \cobg{G}{}{X} \to \bigoplus_Z \go(Z).$$
\end{thm}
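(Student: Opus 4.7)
The plan is to verify the theorem in two parts: well-definedness of the assignment on $M_G(X)^+$, and that it respects the generalized double point relation.

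The first part is essentially formal. By Proposition \ref{smoothfixedpointlocus} the fixed locus $Y^G$ is smooth, so its irreducible components are pairwise disjoint, and the induced map $Y^G\to X^G$ is projective with each component of $Y^G$ mapping into a unique component of $X^G$. For each component $Z$ of $X^G$, the fiber product $Y^G\x_{X^G} Z$ is the disjoint union of those components of $Y^G$ mapping to $Z$, with smooth domain and projective structure morphism. This produces a monoid map $M_G(X)^+\to\bigoplus_Z\go(Z)$, extending to a group homomorphism.

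For the second part, fix a GDPR setup $\varphi:Y\to X$ with divisors $A_1,\dots,A_n,B_1,\dots,B_m$, producing $\cat{G}:\cat{R}\to M_G(X)^+$. Since taking $G$-fixed points commutes with fiber products, $\fixptmap\cat{G}(X_i\cdots U^p_k\cdots)=[A_i^G\x_{Y^G}\cdots\x_{Y^G}(P^p_k)^G\x_{Y^G}\cdots\to X^G]$, which decomposes over the irreducible components $F$ of $Y^G$. Fix such an $F$ lying over a component $Z$ of $X^G$. A tangent-space analysis at any closed point of $F$, using that $\nbundle{Y^G}{Y}$ has no trivial $G$-summand (Proposition \ref{smoothfixedpointlocus}), sorts each divisor $A_i$ (and likewise $B_j$) into one of two cases: either $F\subset A_i$ (``absorbing''), in which case $F$ is an irreducible component of $A_i^G$, or $F\not\subset A_i$ (``transverse''), in which case $A_i\cap F$ is a smooth divisor on $F$. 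The transverse divisors together form a \rsncd\ on $F$, and the equivariant relation $\sum A_i\sim\sum B_j$ restricts on $F$ to an ordinary linear equivalence between the transverse sums, corrected by the absorbing line bundles $\O_Y(A_i)|_F$ and $\O_Y(B_j)|_F$.

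The heart of the argument is to show that the $F$-contribution to $\fixptmap\cat{G}(G^X_{n,m})$ can be rewritten as the value $\cat{G}'(G^X_{n',m'})$ of a genuinely non-equivariant GDPR setup $F\to Z$, whose divisors are the transverse restrictions on $F$ and whose tower terms arise from splitting the fixed loci of the $P^p_k$ by character. Concretely, each restricted projective bundle $\P(\O_Y\oplus\O_Y(D))|_F$ either coincides with its fixed locus (when $\O_Y(D)|_F$ carries the trivial character) or splits as a disjoint union of $F$-copies and $F$-components on which a non-trivial character acts, and these decompositions feed into modified tower expressions on $F$. Applying the same procedure to $\fixptmap\cat{G}(G^Y_{m,n})$ produces $\cat{G}'(G^Y_{m',n'})$ for the same underlying non-equivariant GDPR setup, and equality then follows at once from Corollary \ref{gdprholdcor}, which establishes the generalized double point relation inside $\go(Z)$. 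The principal obstacle is the bookkeeping around absorbing divisors and the character-splitting of fixed loci of towers: translating the equivariant data on $Y$ into clean non-equivariant data on $F$, while tracking the Chern-class contributions of the absorbing line bundles, constitutes the main technical content of the proof.
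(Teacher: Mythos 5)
Your reduction to a fixed irreducible component $F$ of $Y^G$ over a component $Z$ of $X^G$, and your classification of divisors as transverse or absorbing according to whether $F\subset A_i$, matches the paper's good/bad dichotomy and its Claim 2. But the pivotal step is where the plan breaks down: you assert that the $F$-contribution to $\fixptmap\circ\cat{G}(G^X_{n,m})$ equals $\cat{G}'(G^X_{n',m'})$ for an honest non-equivariant GDPR setup on $F$, so that Corollary \ref{gdprholdcor} would finish the argument. That identification does not exist in general. An absorbing $A_i$ restricts to a line bundle $\O_X(A_i)|_F$ that is typically nontrivial as an ordinary bundle, and the restriction of $\sum A_i\sim\sum B_j$ to $F$ is a line-bundle isomorphism in which these factors appear; there is no reason they can be realized by smooth divisors on $F$ forming a strict normal crossings configuration with the transverse restrictions. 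Also, in a GDPR setup the towers $P^p_k$ and $Q^q_l$ are determined by the divisors, so one cannot insert independently chosen tower terms and still have a GDPR expression. And when several of $A$, $B$, $C$ are absorbing the would-be setup degenerates (sometimes to the empty one), yet $\fixptmap\circ\cat{G}(G^X_{n,m})$ remains a nontrivial expression whose collapse to $[\id_F]$ is precisely what has to be proved; the proposed identification is close to circular in those cases.

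The paper's route never produces a GDPR setup on $F$. It defines a substituted ring homomorphism $\cat{F}':\cat{R}\to\Endo{\go(Z)}$ whose values on the generators depend on the good/bad status of the associated divisors (good $X_i\mapsto c(\O(A_i))$, bad $X_i\mapsto 1$, and explicit case-dependent constants and pushpull operators for $U^1_k$, $U^2_k$, $U^3_k$), proves the algebraic identity $\cat{F}'(G^X_{n,m})=\cat{F}'(G^Y_{m,n})$ in $\Endo{\go(Z)}$ by the same symbolic reduction to $GDPR(2,1)$ used in Proposition \ref{gdprholdprop} followed by a five-case verification (the non-equivariant extended double point relation is invoked only in the all-good case; the mixed cases hold by direct cancelation of substituted constants, a genuinely different mechanism), and then establishes $\cat{F}\circ\cat{G}(s)=\cat{F}'(s)[\id_Z]$ monomial by monomial, using a transversality claim and a local complete intersection pull-back to handle products of generators. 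The substituted endomorphism $\cat{F}'$ and its case analysis constitute the idea missing from your proposal; the character-splitting of fixed loci of towers that you mention is present, but it feeds into the verification of the substituted identity, not into the construction of a non-equivariant GDPR setup on $F$.
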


\begin{proof}
By Proposition \ref{smoothfixedpointlocus}, $Z$ is \sm\ and sending $[Y \to X]$ to $[Y^G \x_{X^G} Z \to Z]$ is well-defined at the level of $M_G(X)^+ \to M(Z)^+$. If $X^G$ is the empty set, then $\oplus_Z\ \go(Z) = 0$ and there is nothing to prove. So, we can assume $X^G$ is non-empty. The strategy of this proof is very similar to that of the Proposition \ref{gdprholdprop}. 

First of all, it is clearly enough to show the well-definedness of $\cat{F}$ \wrt\ one fixed component $Z$, i.e. $\cat{F}_Z[Y \to X] = [Y^G \x_{X^G} Z \to Z].$ Consider a g\gdpr\ setup given by $\gph : Y \to X$ with $\gdprdivisorsequi$ on $Y$. Let $\cat{G} : \cat{R} \to M_G(X)^+$ be the corresponding map. What we need to show is 
$$\cat{F}_Z \circ \cat{G}(G^X_{n,m}) = \cat{F}_Z \circ \cat{G}(G^Y_{m,n})$$ 
as elements in $\go(Z)$. 

For a general term $X_i \cdots U^p_k \cdots$ in $\cat{R}$,
\begin{eqnarray}
\cat{F}_Z \circ \cat{G}(X_i \cdots U^p_k \cdots) &=& \cat{F}_Z [A_i \x_Y \cdots \x_Y P^p_k \x_Y \cdots \to Y \to X] \nonumber\\
&=& [(A_i \x_Y \cdots \x_Y P^p_k \x_Y \cdots)^G \x_{X^G} Z \to Y^G \x_{X^G} Z \to Z]. \nonumber
\end{eqnarray}
If $Y^G \x_{X^G} Z$ is empty, then $\cat{F}_Z \circ \cat{G}(G^X_{n,m}) = \cat{F}_Z \circ \cat{G}(G^Y_{m,n}) = 0$. So, we may assume $Y^G \x_{X^G} Z$ is non-empty. Let $\{W\}$ be the set of irreducible components of $Y^G \x_{X^G} Z$ and $\gp_W : W \to Z$ be the natural \proj\ map. Let $\cat{G}' : \cat{R} \to M_G(Y)^+$ be the map corresponding to the GDPR setup given by $\id : Y \to Y$ with the same set of divisors on $Y$. Then,
\begin{eqnarray}
{\gp_W}_* \circ \cat{F}_W \circ \cat{G}'(X_i \cdots U^p_k \cdots) &=& {\gp_W}_* \circ \cat{F}_W [A_i \x_Y \cdots \x_Y P^p_k \x_Y \cdots \to Y] \nonumber\\
&=& {\gp_W}_* [(A_i \x_Y \cdots \x_Y P^p_k \x_Y \cdots)^G \x_{Y^G} W \to W] \nonumber\\
&=& [(A_i \x_Y \cdots \x_Y P^p_k \x_Y \cdots)^G \x_{Y^G} W \to W \to Z]. \nonumber
\end{eqnarray}
Hence, 
$$\cat{F}_Z \circ \cat{G} = \sum_W\ {\gp_W}_* \circ \cat{F}_W \circ \cat{G}'.$$
That means it is enough to prove 
$$\cat{F}_W \circ \cat{G}'(G^X_{n,m}) = \cat{F}_W \circ \cat{G}'(G^Y_{m,n})$$ 
as elements in $\go(W)$. In other words, we may assume $\gph = \id_X$. In particular, $X$ is \equidim. For simplicity, we will denote $\cat{F}_Z$ by $\cat{F}$.

Within this proof, we will call a \glin{G}\ invertible sheaf $\L$ over $X$ ``good'' if $\L|_Z$ has trivial $G$-action. Otherwise, we will call it ``bad''. We will also call an invariant divisor $D$ on $X$ ``good'' (``bad'') if the corresponding \glin{G}\ invertible sheaf $\O_X(D)$ is ``good''(``bad''). 

For a set of invariant divisors \gdprdivisors\ on $X$ such that $\gdprdivisorsequi,$ we define a ring homomorphism $\cat{F}'$ from $\cat{R}$ to $\Endo{\go(Z)}$ by the following rules :

\begin{eqnarray}
X_i & \mapsto & \left\{ 
\begin{array}{cl} 
c(\O(A_i)) & \text{ if $A_i$ is good } \\ 
1 & \text{ if $A_i$ is bad } 
\end{array} \right. \nonumber\\
 & & \nonumber\\
U^1_k & \mapsto & \left\{ 
\begin{array}{cl} 
{(p^1_D)}_*{(p^1_D)}^* & \text{ if $D \defeq A_1 + \cdots + A_k$ is good }\\ 
2 & \text{ if $D$ is bad } 
\end{array} \right. \nonumber\\
&& \text{where $p^1_D : \P(\O \oplus \O(D)) \to Z$} \nonumber\\
& & \nonumber
\end{eqnarray}
\begin{eqnarray}
U^2_k & \mapsto & \left\{ 
\begin{array}{cl} 
{(p^2_k)}_*{(p^2_k)}^* & \text{ if $D$, $A_k$, $D + A_k$ are all good }\\
 & \text{where $D \defeq A_1 + \cdots + A_{k-1}$ } \\
2{(p^1_D)}_*{(p^1_D)}^* & \text{ if $D$ is good but $A_k$, $D + A_k$ are bad }\\ 
2 + {(p^1_{A_k})}_*{(p^1_{A_k})}^* & \text{ if $A_k$ is good but $D$, $D + A_k$ are bad }\\ 
2 + {(p^1_{D + A_k})}_*{(p^1_{D + A_k})}^* & \text{ if $D + A_k$ is good but $A_k$, $D$ are bad }\\ 
4 & \text{ if $D$, $A_k$, $D + A_k$ are all bad } 
\end{array} \right. \nonumber\\
&& \text{where $p^2_k : \P(\O \oplus \O(1)) \to \P(\O(-A_k) \oplus \O(- D - A_k)) \to Z$} \nonumber\\
& & \nonumber\\
U^3_k & \mapsto & \left\{ 
\begin{array}{cl} 
{(p^3_k)}_*{(p^3_k)}^* & \text{ if $D$, $A_k$, $D + A_k$ are all good }\\
 & \text{where $D \defeq A_1 + \cdots + A_{k-1}$ } \\
1 + {(p^1_D)}_*{(p^1_D)}^* & \text{ if $D$ is good but $A_k$, $D + A_k$ are bad }\\ 
1 + {(p^1_{A_k})}_*{(p^1_{A_k})}^* & \text{ if $A_k$ is good but $D$, $D + A_k$ are bad }\\ 
1 + {(p^1_{D + A_k})}_*{(p^1_{D + A_k})}^* & \text{ if $D + A_k$ is good but $A_k$, $D$ are bad }\\ 
3 & \text{ if $D$, $A_k$, $D + A_k$ are all bad } 
\end{array} \right. \nonumber\\
&& \text{where $p^3_k : \P(\O \oplus \O(-A_k) \oplus \O(- D - A_k)) \to Z$} \nonumber
\end{eqnarray}
if $1 \leq i,k \leq n$. Otherwise, send it to zero. Define $\cat{F}'(Y_j)$, $\cat{F}'(V^q_l)$ similarly by replacing ``$A$'' by ``$B$''. As shown in the proof of Proposition \ref{gdprholdprop}, $c(\O(D))$ and $p_* p^*$ commutes with each other. Hence, $\cat{F}'$ is well-defined. Notice that since, in the $U^2_k$, $U^3_k$ cases, we have $D + A_k \sim (A_1 + \cdots + A_k)$, it is impossible to have only one of $A_k$, $D$, $D + A_k$ being bad. Thus, the definition covers all possibilities.

\vtab

Claim 1 : $\cat{F}'(G^X_{n,m}) = \cat{F}'(G^Y_{m,n})$ as elements in $\Endo{\go(Z)}$.

By a similar symbolic cancelation as in the proof of Proposition \ref{gdprholdprop}, it is enough to show the claim in the case when $A + B \sim C$. In this case, 
$$G^X_{2,1} = X_1 + X_2 - X_1X_2U^1_1 + Y_1X_1X_2(U^2_2-U^3_2)$$
and 
$$G^Y_{1,2} = Y_1.$$
We will prove the claim case by case.

Case 1 : $A$, $B$, $C$ are all good.

In this case, 
\begin{eqnarray}
\cat{F}'(G^X_{2,1}) &=& \cat{F}'(X_1 + X_2 - X_1X_2U^1_1 + Y_1X_1X_2(U^2_2-U^3_2)) \nonumber\\
&=& c(\O(A)) + c(\O(B)) - c(\O(A))c(\O(B)){p^1_A}_*{p^1_A}^* \nonumber\\
&& +\ c(\O(C))c(\O(A))c(\O(B))(\, {p^2_2}_*{p^2_2}^* - {p^3_2}_*{p^3_2}^* \,) \nonumber\\
&& \text{where $p^1_A : \P(\O \oplus \O(A)) \to Z$} \nonumber\\
&& \text{$p^2_2 : \P(\O \oplus \O(1)) \to \P(\O(-B) \oplus \O(-C)) \to Z$} \nonumber\\
&& \text{$p^3_2 : \P(\O \oplus \O(-B) \oplus \O(-C)) \to Z$.} \nonumber
\end{eqnarray}
On the other hand,
\begin{eqnarray}
\cat{F}'(G^Y_{1,2}) &=& \cat{F}'(Y_1) \nonumber\\
&=& c(\O(C)). \nonumber
\end{eqnarray}
Thus, the difference $\cat{F}'(G^X_{2,1})-\cat{F}'(G^Y_{1,2})$ is exactly what we defined to be $H(\O(A), \O(B))$ in the proof of Proposition \ref{gdprholdprop}, which was proved to be zero.

Case 2 : $A$ is good but $B$, $C$ are bad.
\begin{eqnarray}
\cat{F}'(G^X_{2,1}) &=& c(\O(A)) + 1 - c(\O(A)){p^1_A}_*{p^1_A}^* \nonumber\\
&& +\ c(\O(A))(\, 2{p^1_A}_*{p^1_A}^* - 1 - {p^1_A}_*{p^1_A}^* \,) \nonumber\\
&=& 1 \nonumber\\
&=& \cat{F}'(G^Y_{1,2}). \nonumber
\end{eqnarray}

Case 3 : $B$ is good but $A$, $C$ are bad.
\begin{eqnarray}
\cat{F}'(G^X_{2,1}) &=& 1 + c(\O(B)) - c(\O(B))(2) \nonumber\\
&& +\ c(\O(B))(\, 2 + {p^1_B}_*{p^1_B}^* - 1 - {p^1_B}_*{p^1_B}^* \,) \nonumber\\
&=& 1 \nonumber\\
&=& \cat{F}'(G^Y_{1,2}). \nonumber
\end{eqnarray}

Case 4 : $C$ is good but $A$, $B$ are bad.
\begin{eqnarray}
\cat{F}'(G^X_{2,1}) &=& 1 + 1 - (1)(2) + c(\O(C))(\, 2 + {p^1_C}_*{p^1_C}^* - 1 - {p^1_C}_*{p^1_C}^* \,) \nonumber\\
&=& c(\O(C)) \nonumber\\
&=& \cat{F}'(G^Y_{1,2}). \nonumber
\end{eqnarray}

Case 5 : $A$, $B$, $C$ are all bad.
\begin{eqnarray}
\cat{F}'(G^X_{2,1}) &=& 1 + 1 - (1)(2) + (1)(4 - 3) \nonumber\\
&=& 1 \nonumber\\
&=& \cat{F}'(G^Y_{1,2}). \nonumber
\end{eqnarray}

That proves the claim. \claimend

\vtab

The next step is to verify the correspondence between $\cat{F}$ and $\cat{F}'$. To be more precise, let $\cat{G} : \cat{R} \to M_G(X)^+$ be the map corresponding to a GDPR setup given by $\gdprdivisorsequi$ on $X$ such that $\gdprdivisorssum$ is a \rsncd\ and let $\cat{F}' : \cat{R} \to \Endo{\go(Z)}$ be the map we just defined corresponding to this setup. Consider the fixed point map $\cat{F}$ as a map from $M_G(X)^+$ to $\go(Z)$. The equation we are going to prove is
\begin{eqnarray}
\cat{F} \circ \cat{G}(s) &=& \cat{F}'(s) [\id_Z] \label{eqn6}
\end{eqnarray}
for any element $s \in \Z\{ X_i \cdots Y_j \cdots U^p_k \cdots V^q_l \cdots\ |\ \text{power of any $X_i$, $Y_j \leq 1$} \}$.

Suppose equation (\ref{eqn6}) is true. Then,
\begin{eqnarray}
\cat{F} \circ \cat{G}(G^X_{n,m}) &=& \cat{F}'(G^X_{n,m}) [\id_Z] \nonumber\\
&=& \cat{F}'(G^Y_{m,n}) [\id_Z] \nonumber\\
&& \text{(by claim 1)} \nonumber\\
&=& \cat{F} \circ \cat{G}(G^Y_{m,n}), \nonumber
\end{eqnarray}
which is what we want. That means it is enough to verify equation (\ref{eqn6}). First of all, we need to understand the meaning of an \inv\ divisor being ``good''.

\vtab

Claim 2 : Suppose $D$ is a \sm\ invariant divisor on $X$. Then, $D$ is good if and only if $D \cap Z$ is a \sm\ divisor on $Z$. Also, $D$ is bad if and only if $D \cap Z = Z$.

First of all, observe that 
$$D \cap Z = D \x_X Z = D \x_X X^G \x_{X^G} Z = D^G \x_{X^G} Z,$$ 
which is always \sm. If $D \cap Z = \emptyset$, then $\O_Z(D) \cong \O_Z$. That means it is good and $D \cap Z$ is the zero divisor.

Suppose $D \cap Z$ is non-empty. Take a closed point $x \in D \cap Z$. Notice that since the action on $Z$ is trivial and $Z$ is irreducible, the action $\O_Z(D)$ is trivial if and only if the action on $\O_Z(D)|_x$ is trivial. Moreover, $\O_Z(D)|_x \cong \O_D(D)|_x \cong \nbundle{D}{X}|_x.$ Hence, the action on $\nbundle{D}{X}|_x$ is trivial if and only if $D$ is good.

Suppose the action on $\nbundle{D}{X}|_x$ is trivial and $D \cap Z$ is not a divisor on $Z$. That means $D \cap Z = Z$, i.e. $Z \subset D$. Thus, we have $\nbundle{Z}{X}|_x \supset \nbundle{D}{X}|_x$. It contradicts with the fact that there is no non-zero vector in $\nbundle{Z}{X}|_x$ which has trivial action (Proposition \ref{smoothfixedpointlocus}).

Suppose $D \cap Z$ is a divisor on $Z$. Then $D$ and $Z$ intersect transversely. That means the tangent space of $X$ at $x$ is spanned by the tangent spaces of $D$ and $Z$ at $x$, namely, $\cat{T}X|_x = \cat{T}D|_x + \cat{T}Z|_x.$ Hence, we have $\nbundle{D}{X}|_x \embed \cat{T}Z|_x$, which has trivial action. \claimend

\vtab

Suppose the \sm\ invariant divisor $A_i$ is good. Then, we have
\begin{eqnarray}
\cat{F} \circ \cat{G}(X_i) &=& \cat{F}[A_i \to X] \nonumber\\
&=& [A_i^G \x_{X^G} Z \to Z] \nonumber\\
&=& [A_i \cap Z \to Z] \nonumber\\
&=& c(\O(A_i))[\id_Z] \nonumber\\
&& \text{(by claim 2 and \textbf{(Sect)} axiom in the theory $\go$)} \nonumber\\
&=& \cat{F}'(X_i)[\id_Z]. \nonumber
\end{eqnarray}
On the other hand, if $A_i$ is bad, then we have $A_i \cap Z = Z$ by claim 2. In this case,
$$\cat{F} \circ \cat{G}(X_i) = [A_i^G \x_{X^G} Z \to Z] = [Z \to Z] = \cat{F}'(X_i)[\id_Z].$$
Hence, equation (\ref{eqn6}) holds for $X_i$ and $Y_j$.

For $U^1_k$, if $D \defeq A_1 + \cdots + A_k$ is good, then $\P(\O_Z \oplus \O_Z(D))$ has trivial action. Thus,
\begin{eqnarray}
\cat{F} \circ \cat{G}(U^1_k) &=& [\P(\O \oplus \O(D))^G \x_{X^G} Z \to Z] \nonumber\\
&=& [\P(\O_Z \oplus \O_Z(D)) \to Z] \nonumber\\
&=& {p^1_D}_* {p^1_D}^*[\id_Z] \nonumber\\
&& \text{where $p^1_D : \P(\O \oplus \O(D)) \to Z$} \nonumber\\
&=& \cat{F}'(U^1_k)[\id_Z]. \nonumber
\end{eqnarray}
If $D$ is bad, then $\P(\O_Z \oplus \O_Z(D))$ has non-trivial fiberwise action. That implies 
$$\P(\O_X \oplus \O_X(D))^G|_Z = \P(\O_Z \oplus \O_Z(D))^G = \P(\O_Z(D)) \disjoint \P(\O_Z).$$ 
Thus,
\begin{eqnarray}
\cat{F} \circ \cat{G}(U^1_k) &=& [\P(\O \oplus \O(D))^G \x_{X^G} Z \to Z] \nonumber\\
&=& [\P(\O_Z(D)) \disjoint \P(\O_Z) \to Z] \nonumber\\
&=& 2[\id_Z] \nonumber\\
&=& \cat{F}'(U^1_k)[\id_Z]. \nonumber
\end{eqnarray}
Hence, equation (\ref{eqn6}) holds for $U^1_k$ and $V^1_l$.

For $U^2_k$, let $D \defeq A_1 + \cdots + A_{k-1}$ as in the definition of $\cat{F}'$. There are five different cases to consider.

Case 1 (Divisors $D$, $A_k$, $D + A_k$ are all good) :

The action on the \proj\ bundle $\P(\O_Z(-A_k) \oplus \O_Z(-D - A_k))$ will be trivial and so is the \proj\ bundle $\P(\O \oplus \O(1))$ above it. Thus,
\begin{eqnarray}
\cat{F} \circ \cat{G}(U^2_k) &=& [\P(\O \oplus \O(1))^G \x_{X^G} Z \to Z] \nonumber\\
&=& [\P(\O \oplus \O(1)) \to \P(\O_Z(-A_k) \oplus \O_Z(-D - A_k)) \to Z] \nonumber\\
&=& {p^2_k}_* {p^2_k}^*[\id_Z] \nonumber\\
&& \text{($p^2_k$ as in the definition of $\cat{F}'$)} \nonumber\\
&=& \cat{F}'(U^2_k)[\id_Z]. \nonumber
\end{eqnarray}

Case 2 (Divisor $D$ is good but $A_k$, $D + A_k$ are bad) :

That means $\O_Z(-A_k)$ and $\O_Z(-D - A_k)$ have the same non-trivial fiberwise action. If we let $F : \picard{G}{Z} \to \picard{}{Z}$ be the forgetful map, $\L_2 \defeq F(\O_Z(-A_k))$ and $\L_3 \defeq F(\O_Z(- D -A_k))$, then there exists a non-trivial one-dimensional character $\gps$ such that $\O_Z(-A_k) \cong \L_2 \otimes \gps$ and $\O_Z(- D - A_k) \cong \L_3 \otimes \gps.$ So, we have
$$\P(\O_Z(-A_k) \oplus \O_Z(-D - A_k)) \cong \P( (\L_2 \otimes \gps) \oplus (\L_3 \otimes \gps) ) \cong \P(\L_2 \oplus \L_3),$$ 
which has trivial action. Moreover, this iso\morp\ takes $\O(1)$ to $\O(1) \otimes \gps$. Hence, the tower
$$\P(\O \oplus \O(1)) \to \P(\O_Z(-A_k) \oplus \O_Z(-D - A_k)) \to Z$$
is isomorphic to
$$\P( \O \oplus (\O(1) \otimes \gps) ) \to \P(\L_2 \oplus \L_3) \to Z.$$
Hence,
\begin{eqnarray}
\cat{F} \circ \cat{G}(U^2_k) &=& [\P(\O(1) \otimes \gps) \disjoint \P(\O) \to \P(\L_2 \oplus \L_3) \to Z] \nonumber\\
&=& 2[\P((\L_2 \otimes \dual{\L_3}) \oplus \O) \to Z] \nonumber\\
&=& 2[\P(\O(D) \oplus \O) \to Z] \nonumber\\
&=& 2{p^1_D}_* {p^1_D}^*[\id_Z] \nonumber\\
&=& \cat{F}'(U^2_k)[\id_Z]. \nonumber
\end{eqnarray}

Case 3 (Divisor $A_k$ is good but $D$, $D + A_k$ are bad) :

Similarly, we define $\L_2 \defeq F(\O_Z(-A_k))$ and $\L_3 \defeq F(\O_Z(- D -A_k))$ and we will have $\O_Z(-A_k) \cong \L_2$ and $\O_Z(- D - A_k) \cong \L_3 \otimes \gps$ for some non-trivial character $\gps$. The \proj\ bundle $\P(\O_Z(-A_k) \oplus \O_Z(-D - A_k))$ is then isomorphic to $\P( \L_2 \oplus (\L_3 \otimes \gps) )$, which has fixed point locus $\P(\L_2) \disjoint \P(\L_3 \otimes \gps)$. Observe that the restriction takes $\O(1)$ over $\P( \L_2 \oplus (\L_3 \otimes \gps) )$ to $\O(1)$ over $\P(\L_2)$. Moreover, the iso\morp\ $\P(\L_2)\iso \P(\O)$ takes $\O(1)$ to $\O(1) \otimes \L_2$. Hence, the tower
$$\P(\O \oplus \O(1)) \to \P(\L_2) \to Z$$
is isomorphic to
$$\P( \O \oplus (\O(1) \otimes \L_2) ) \to \P(\O) \to Z$$
which is simply $\P(\O \oplus \L_2) \to Z$. Similarly, the tower
$$\P(\O \oplus \O(1)) \to \P(\L_3 \otimes \gps) \to Z$$
is isomorphic to
$$\P( \O \oplus (\L_3 \otimes \gps) ) \to Z.$$
Hence,
\begin{eqnarray}
\cat{F} \circ \cat{G}(U^2_k) &=& [\P(\O \oplus \L_2) \to Z] + [\P(\L_3 \otimes \gps) \disjoint \P(\O) \to Z] \nonumber\\
&=& [\P(\O_Z \oplus \O_Z(-A_k)) \to Z] + 2[\id_Z] \nonumber\\
&=& [\P(\O(A_k) \oplus \O) \to Z] + 2[\id_Z] \nonumber\\
&=& ({p^1_{A_k}}_* {p^1_{A_k}}^* + 2) [\id_Z] \nonumber\\
&=& \cat{F}'(U^2_k)[\id_Z]. \nonumber
\end{eqnarray}

Case 4 (Divisor $D + A_k$ is good but $D$, $A_k$ are bad) :
 
Similarly, the fixed point locus of 
$$\P(\O_Z(-A_k) \oplus \O_Z(-D - A_k)) \cong \P( (\L_2 \otimes \gps) \oplus \L_3)$$
is the disjoint union of $\P(\L_2 \otimes \gps)$ and $\P(\L_3)$. Also, the tower corresponding to $\P(\L_2 \otimes \gps)$ is isomorphic to $\P( \O \oplus (\L_2 \otimes \gps) ) \to Z$ and the tower corresponding to $\P(\L_3)$ is isomorphic to $\P(\O \oplus \L_3) \to Z$. Hence,
\begin{eqnarray}
\cat{F} \circ \cat{G}(U^2_k) &=& 2[\id_Z] + [\P(\O(D + A_k) \oplus \O) \to Z] \nonumber\\
&=& (2 + {p^1_{D + A_k}}_* {p^1_{D + A_k}}^*) [\id_Z] \nonumber\\
&=& \cat{F}'(U^2_k)[\id_Z]. \nonumber
\end{eqnarray}

Case 5 (Divisors $D$, $A_k$, $D + A_k$ are all bad) :

In this case, we have $\O_Z(-A_k) \cong \L_2 \otimes \gps_2$ and $\O_Z(- D - A_k) \cong \L_3 \otimes \gps_3$ for distinct non-trivial characters $\gps_2$, $\gps_3$. Thus, the fixed point locus of $\P(\O_Z(-A_k) \oplus \O_Z(-D - A_k))$ will then be the disjoint union of $\P(\L_2 \otimes \gps_2)$ and $\P(\L_3 \otimes \gps_3)$ and towers corresponding to them are $\P( \O \oplus (\L_2 \otimes \gps_2) ) \to Z$ and $\P( \O \oplus (\L_3 \otimes \gps_3) ) \to Z$ respectively. Hence,
\begin{eqnarray}
\cat{F} \circ \cat{G}(U^2_k) &=& 2[\id_Z] + 2[\id_Z] \nonumber\\
&=& \cat{F}'(U^2_k)[\id_Z]. \nonumber
\end{eqnarray}

That proves equation (\ref{eqn6}) holds for $U^2_k$ and similarly for $V^2_l$.

\vtab

For $U^3_k$, similarly, let $D \defeq A_1 + \cdots + A_{k-1}$. In case 1,
$$\cat{F} \circ \cat{G}(U^3_k) = [\P(\O \oplus \O(-A_k) \oplus \O(-D - A_k)) \to Z] = {p^3_k}_*{p^3_k}^*[\id_Z] = \cat{F}'(U^3_k)[\id_Z].$$

In case 2, let $\L_2 \defeq F(\O_Z(-A_k))$ and $\L_3 \defeq F(\O_Z(- D -A_k))$ as before. Then,
\begin{eqnarray}
\cat{F} \circ \cat{G}(U^3_k) &=& [\P(\O) \disjoint \P( \L_2 \oplus \L_3 ) \to Z] \nonumber\\
&=& [\id_Z] + [\P(\O(D) \oplus \O) \to Z] \nonumber\\
&=& (1 + {p^1_D}_*{p^1_D}^*) [\id_Z] \nonumber\\
&=& \cat{F}'(U^3_k)[\id_Z]. \nonumber
\end{eqnarray}

In case 3, 
\begin{eqnarray}
\cat{F} \circ \cat{G}(U^3_k) &=& [\P(\L_3) \disjoint \P(\O \oplus \O(- A_k)) \to Z] \nonumber\\
&=& [\id_Z] + [\P(\O(A_k) \oplus \O) \to Z] \nonumber\\
&=& (1 + {p^1_{A_k}}_* {p^1_{A_k}}^*) [\id_Z] \nonumber\\
&=& \cat{F}'(U^3_k)[\id_Z]. \nonumber
\end{eqnarray}

In case 4, 
\begin{eqnarray}
\cat{F} \circ \cat{G}(U^3_k) &=& [\P(\L_2) \disjoint \P(\O \oplus \O(-D - A_k)) \to Z] \nonumber\\
&=& [\id_Z] + [\P(\O(D + A_k) \oplus \O) \to Z] \nonumber\\
&=& (1 + {p^1_{D + A_k}}_* {p^1_{D + A_k}}^*) [\id_Z] \nonumber\\
&=& \cat{F}'(U^3_k)[\id_Z]. \nonumber
\end{eqnarray}

In case 5, 
$$\cat{F} \circ \cat{G}(U^3_k) = [\P(\O) \disjoint \P(\L_2) \disjoint \P(\L_3) \to Z] = 3[\id_Z] = \cat{F}'(U^3_k)[\id_Z].$$

That proves equation (\ref{eqn6}) holds for $U^3_k$ and similarly for $V^3_l$.

\vtab

Let $s$, $t$ be two terms in 
$$\overline{\cat{R}} \defeq \Z\{ X_i \cdots Y_j \cdots U^p_k \cdots V^q_l \cdots\ |\ \text{power of any $X_i$, $Y_j \leq 1$} \}.$$ 
By definition, the domain of $\cat{G}(st) = \text{the domain of } \cat{G}(s) \x_X \text{the domain of } \cat{G}(t)$. For simplicity, we will focus on domains. By abuse of notation, we will still call it $\cat{G}$. Observe that
\begin{eqnarray}
\cat{F}[Y_1 \x_X Y_2 \to X] &=& [(Y_1 \x_X Y_2)^G \x_{X^G} Z \to Z] \nonumber\\
&=& [Y_1^G \x_{X^G} Y_2^G \x_{X^G} Z \to Z] \nonumber\\
&=& [(Y_1^G \x_{X^G} Z) \x_Z (Y_2^G \x_{X^G} Z) \to Z]. \nonumber
\end{eqnarray}
Hence, $\cat{F}(Y_1 \x_X Y_2) = \cat{F}(Y_1) \x_Z \cat{F}(Y_2)$, by abuse of notation again. Suppose $s \defeq X_i$, $Y_j$, $U^p_k$ or $V^q_l$ and $t$ is a term in $\overline{\cat{R}}$ such that $st$ is also in $\overline{\cat{R}}$. By induction, we assume equation (\ref{eqn6}) holds for $s$ and $t$. In that case,
\begin{eqnarray}
\cat{F} \circ \cat{G}(st) &=& \cat{F} [\cat{G}(st) \to X] \nonumber\\
&=& \cat{F} [\cat{G}(s) \x_X \cat{G}(t) \to X] \nonumber\\
&=& [\cat{F}( \cat{G}(s) \x_X \cat{G}(t) ) \to Z] \nonumber\\
&=& [\cat{F} \circ \cat{G}(s) \x_Z \cat{F} \circ \cat{G}(t) \to Z]. \nonumber
\end{eqnarray}
On the other hand,
$$\cat{F}'(st)[\id_Z] = \cat{F}'(s) \circ \cat{F}'(t)[\id_Z] = \cat{F}'(s)[\cat{F} \circ \cat{G}(t) \to Z]$$
by induction assumption. Denote $\cat{F} \circ \cat{G}(t)$ by $Y$ and $Y \to Z$ by $f$. By the above calculation, 
$$[\cat{F} \circ \cat{G}(s) \to Z] = m_1 [\id_Z] + m_2[\P \to Z] + m_3 [D \cap Z \embed Z]$$ 
for some non-negative integers $m_1, m_2, m_3$, tower $\P$ and good, \sm, \inv\ divisor $D$ on $X$.

\vtab

Claim 3 : The map $\cat{F} \circ \cat{G}(s) \to Z$ is transverse to $f : Y \to Z$.

The claim is clearly true for $[\id_Z]$ and $[\P \to Z]$. So, we only need to consider the map $[D \cap Z \embed Z]$ where $D$ is a \good, \sm, invariant divisor on $X$. Recall that 
$$Y = \cat{F} \circ \cat{G}(X_i \cdots U^p_k \cdots) = \cat{F}(A_i) \x_Z \cdots \x_Z \cat{F}(P^p_k) \x_Z \cdots.$$ 
Since $\cat{F}(P^p_k)$ is the sum of towers and $\cat{F}(A_i) = Z$ when $A_i$ is bad, we may assume it only involves \good\ divisors, i.e.
$$\cat{F}(A_{i_1}) \x_Z \cdots \x_Z \cat{F}(B_{j_1}) \x_Z \cdots = A_{i_1} \cap \cdots \cap B_{j_1} \cap \cdots \cap Z.$$

Notice that since $st$ is in $\overline{\cat{R}}$, the divisor $D$ and the set of divisors $\{A_{i_1}, \cdots, B_{j_1}, \cdots\}$ are all distinct. For simplicity, we will only show the transversality involving \good\ divisors $D$, $D'$. More precisely, we will show if $D$, $D'$ are \good, \sm, invariant divisors on $X$ such that $D + D'$ is a \rsncd, then $D \cap Z + D' \cap Z$ is a \rsncd\ on $Z$. 

Since $X$ is \equidim, $D$ is \equidim. Let $W$ be an irreducible component of $Z \cap D$. Then, $D \in \gsmcat{G}$ is \equidim, $D \cap D'$ is an \inv\ \sm\ divisor on $D$ and $W$ is an irreducible component of the fixed point locus of $D$. Notice that 
$$\O_W(D \cap D') \cong \O_X(D')|_W \cong \O_Z(D')|_W.$$
Thus, $D \cap D'$ is a good divisor on $D$ \wrt\ $W$, for all $W$, because $D'$ is a good divisor on $X$ \wrt\ $Z$. By applying claim 2 with $X$, $D$, $Z$ replaced by $D$, $D \cap D'$, $W$ respectively, $D \cap D' \cap W$ is a \sm\ divisor on $W$. So, $D \cap D' \cap Z$ is a \sm\ divisor on $D \cap Z$. Hence, $D \cap Z$ and $D' \cap Z$ intersect transversely inside $Z$. \claimend

\vtab

Let $\{Y_i\}$ be the irreducible components of $Y$. Notice that $f$ is \proj. So, the push-forward $f_* : \go(Y) \to \go(Z)$ is well-defined. Since $Y$, $Z$ are both \sm\ and \qproj, the map $f$ is a local complete intersection \morp\ (See section 5.1.1 in \cite{LeMo}). In addition, the algebraic cobordism theories $\go$ and $\gO$ are canonically isomorphic (Theorem 1 in \cite{LePa}) and, for any local complete intersection \morp\ $g : X \to X'$ with \equidim\ domain and codomain, the pull-back $g^* : \gO(X') \to \gO(X)$ is well-defined (see definition 6.5.10 in \cite{LeMo}). Hence, $f^* : \go(Z) \to \oplus_i\, \go(Y_i) \cong \go(Y)$ is also well-defined.

Suppose we have shown that 
\begin{eqnarray}
\label{eqn13}
\cat{F}'(s)[f : Y \to Z] = f_*f^* \cat{F}'(s)[\id_Z]. 
\end{eqnarray}
Then, we have
\begin{eqnarray}
\cat{F}'(st)[\id_Z] &=& \cat{F}'(s)[f : Y \to Z] \nonumber\\
&=& f_*f^*\cat{F}'(s)[\id_Z] \nonumber\\
&=& f_*f^*[\cat{F} \circ \cat{G}(s) \to Z] \nonumber\\
&& \text{(by induction assumption)} \nonumber\\
&=& [(\cat{F} \circ \cat{G}(s)) \x_Z (\cat{F} \circ \cat{G}(t)) \to Z] \nonumber\\
&& \text{(by claim 3 and Theorem 6.5.12 in \cite{LeMo})} \nonumber\\
&=& \cat{F} \circ \cat{G}(st). \nonumber
\end{eqnarray}
That means equation (\ref{eqn6}) holds for $st \in \overline{R}$. Hence, it remains to show equation (\ref{eqn13}).

By the previous calculation, 
$$\cat{F}'(s) = m_1 + m_2\, p_*p^* + m_3\, c(\O_Z(D))$$ 
for some non-negative integers $m_1, m_2, m_3$, \sm, \proj\ map $p : \P \to Z$ and good, \sm, \inv\ divisor $D$ on $X$. The equation obviously holds for the identity operator. For $c(\O_Z(D))$,
\begin{eqnarray}
c(\O_Z(D)) [Y \to Z] &=& [(D \cap Z) \x_Z Y \to Z] \nonumber\\
&& \text{(by claim 3 and \textbf{(Sect)} axiom in $\go$)} \nonumber\\
&=& f_*f^* [D \cap Z \to Z] \nonumber\\
&& \text{(by claim 3 and the Theorem 6.5.12 in \cite{LeMo})} \nonumber\\
&=& f_*f^* c(\O_Z(D))\, [\id_Z]. \nonumber
\end{eqnarray}
For $p_*p^*$,
\begin{eqnarray}
p_*p^*[Y \to Z] &=& [\P \x_Z Y \to Z] \nonumber\\
&=& f_*f^* [\P \to Z] \nonumber\\
&& \text{(by Theorem 6.5.12 in \cite{LeMo})} \nonumber\\
&=& f_*f^* p_*p^*[\id_Z]. \nonumber
\end{eqnarray}
That proves equation (\ref{eqn13}) and hence finishes the proof.
\end{proof}

\begin{cor}
\label{fixedpointmap2}
If $X$ is an object in $\gsmcat{G}$, then sending $[Y \to X]$ to $[Y^G \to X^G]$ defines an abelian group homomorphism
$$\fixptmap : \cobg{G}{}{X} \to \go(X^G).$$
\end{cor}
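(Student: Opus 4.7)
The plan is to derive this corollary directly from Theorem \ref{fixedpointmap} by identifying $\go(X^G)$ with the direct sum $\bigoplus_Z \go(Z)$ over irreducible components of $X^G$. First, I invoke Proposition \ref{smoothfixedpointlocus} to conclude that $X^G$ is smooth. Since a smooth scheme is a disjoint union of its irreducible components, we have $X^G = \coprod_Z Z$ where $Z$ ranges over the irreducible (equivalently, connected) components. Consequently, the closed immersions $\gi_Z : Z \embed X^G$ induce an isomorphism of abelian groups
$$\bigoplus_Z {\gi_Z}_* : \bigoplus_Z \go(Z) \iso \go(X^G),$$
since an element $[W \to X^G] \in \go(X^G)$ decomposes uniquely as the sum over $Z$ of its pullbacks $[W \x_{X^G} Z \to Z]$ along the open-and-closed subschemes $Z$.

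Next, I verify compatibility of the formulas. Under the identification above, the element $\sum_Z [Y^G \x_{X^G} Z \to Z] \in \bigoplus_Z \go(Z)$ corresponds to $\sum_Z [Y^G \x_{X^G} Z \embed X^G] = [Y^G \to X^G]$ in $\go(X^G)$, using that $Y^G = \coprod_Z (Y^G \x_{X^G} Z)$ as schemes over $X^G$. Therefore the composition
$$\cobg{G}{}{X} \xrightarrow{\fixptmap \text{ from Thm}} \bigoplus_Z \go(Z) \xrightarrow{\sim} \go(X^G)$$
sends $[Y \to X]$ to $[Y^G \to X^G]$, as desired.

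Since the first arrow is a well-defined abelian group homomorphism by Theorem \ref{fixedpointmap} and the second is an isomorphism, the composition is a homomorphism. There is essentially no difficulty here beyond correctly bookkeeping the decomposition of $X^G$ into components; the entire substance of the argument is in Theorem \ref{fixedpointmap}, which has already been established.
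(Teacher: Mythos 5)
Your argument is correct and matches the paper's proof: both compose the homomorphism from Theorem~\ref{fixedpointmap} with pushforward along the inclusions $Z \embed X^G$ and sum. The only difference is that you additionally note this comparison map $\bigoplus_Z \go(Z) \to \go(X^G)$ is an isomorphism (true, since $X^G$ is smooth so its irreducible components are its connected components, making $X^G$ their disjoint union), whereas the paper only needs and only uses that it is a homomorphism.
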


\begin{proof}
Let $\{Z\}$ be the set of irreducible components of the fixed point locus $X^G$. By Theorem \ref{fixedpointmap}, sending $[Y \to X]$ to $\sum_Z\ [Y^G \x_{X^G} Z \to Z]$ defines an abelian group homo\morp\ $\cobg{G}{}{X} \to \oplus_Z\, \go(Z).$ Then, the map $\fixptmap : \cobg{G}{}{X} \to \go(X^G)$ can be considered as the composition
$$\cobg{G}{}{X} \to \oplus_Z\, \go(Z) \to \oplus_Z\, \go(X^G) \to \go(X^G)$$
defined by sending
\begin{eqnarray}
[Y \to X] &\mapsto& \sum_Z\ [Y^G \x_{X^G} Z \to Z] \nonumber\\
&\mapsto& \sum_Z\ [Y^G \x_{X^G} Z \to Z \embed X^G] \nonumber\\
&\mapsto& \sum_Z\ [Y^G \x_{X^G} Z \to Z \embed X^G] = [Y^G \to X^G]. \nonumber
\end{eqnarray}
\end{proof}

\begin{cor}
\label{embedlazardcor}
Suppose $X$ is an object in $\gsmcat{G}$ with trivial $G$-action. Then, the abelian group $\go(X) \cong \cobg{\{1\}}{}{X}$ is a direct summand of $\cobg{G}{}{X}$ via the homo\morp\ 
$$\gPH_{\gg} : \cobg{\{1\}}{}{X} \to \cobg{G}{}{X}$$ 
induced by the group homo\morp\ $\gg : G \to \{1\}$. In particular, the Lazard ring $\lazard$ is naturally a subring of the \equi\ algebraic cobordism ring $\cobg{G}{}{\pt}$.
\end{cor}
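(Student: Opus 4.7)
The plan is to produce an explicit left inverse to $\gPH_{\gg}$, so that $\gPH_{\gg}$ realizes $\cobg{\{1\}}{}{X}$ as a direct summand of $\cobg{G}{}{X}$. The natural candidate is the fixed point map $\fixptmap$ from Corollary \ref{fixedpointmap2}, applied to the situation where $G$ acts trivially on $X$.

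First, since the $G$-action on $X$ is trivial, $X^G = X$ (and in particular every irreducible component of $X$ is a component of $X^G$), so Corollary \ref{fixedpointmap2} yields an abelian group homomorphism $\fixptmap : \cobg{G}{}{X} \to \go(X^G) = \go(X)$ sending $[Y \to X]$ to $[Y^G \to X]$. On the other hand, the group scheme homomorphism $\gg : G \to \{1\}$ gives $\gPH_{\gg} : \cobg{\{1\}}{}{X} \cong \go(X) \to \cobg{G}{}{X}$, which sends a class $[Y \to X]$ to itself, now equipped with the trivial $G$-action on $Y$.

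Next I would compute the composition $\fixptmap \circ \gPH_{\gg}$. For a generator $[Y \to X]$ of $\go(X)$, the element $\gPH_{\gg}[Y \to X]$ is represented by $Y \to X$ with trivial $G$-action on $Y$; hence $Y^G = Y$ and
\[
\fixptmap \circ \gPH_{\gg}[Y \to X] \,=\, [Y^G \to X] \,=\, [Y \to X].
\]
Therefore $\fixptmap \circ \gPH_{\gg} = \id_{\go(X)}$, which immediately shows that $\gPH_{\gg}$ is injective and identifies $\go(X)$ with a direct summand of $\cobg{G}{}{X}$ whose complement is $\ker \fixptmap$.

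For the final clause, specialize to $X = \pt$. By Theorem 4.3.7 of \cite{LeMo} combined with the isomorphism $\go \cong \gO$ of Theorem 1 of \cite{LePa}, we have a canonical ring isomorphism $\lazard \cong \go(\pt) \cong \cobg{\{1\}}{}{\pt}$. It remains to verify that $\gPH_{\gg} : \cobg{\{1\}}{}{\pt} \to \cobg{G}{}{\pt}$ is a ring homomorphism with respect to the external product; this is immediate from the definition, since equipping $Y_1$ and $Y_2$ with trivial $G$-actions makes $Y_1 \times Y_2$ carry the trivial diagonal $G$-action, so $\gPH_{\gg}([Y_1]\times[Y_2]) = \gPH_{\gg}[Y_1] \times \gPH_{\gg}[Y_2]$, and likewise $\gPH_{\gg}[\pt] = [\pt]$. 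Combined with the first part, this exhibits $\lazard$ as a subring of $\cobg{G}{}{\pt}$. There is no serious obstacle: the content is entirely in having the fixed point map available, which is supplied by Corollary \ref{fixedpointmap2}.
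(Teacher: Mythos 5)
Your proof is correct and follows essentially the same route as the paper: the paper's proof of Corollary \ref{embedlazardcor} likewise observes that the fixed point map $\fixptmap$ from Corollary \ref{fixedpointmap2} is a left inverse to $\gPH_{\gg}$ and that $\gPH_{\gg}$ is a ring homomorphism. Your writeup simply spells out the computation on generators $[Y\to X]$ and the compatibility with the external product, which the paper leaves implicit.
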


\begin{proof}
The fixed point map 
$$\cat{F} : \cobg{G}{}{X} \to \go(X^G) = \go(X) \cong \cobg{\{1\}}{}{X}$$ 
is a left inverse of the homo\morp\ $\cobg{\{1\}}{}{X} \to \cobg{G}{}{X}$. Also, 
$$\gPH_{\gg} : \lazard \cong \cobg{\{1\}}{}{\pt} \to \cobg{G}{}{\pt}$$ 
is a ring \homo. 
\end{proof}

\bigskip

\bigskip

%Bibliography	

\end{document}